\newtheorem{thm}[subsection]{Theorem}
\newtheorem{prob}[subsection]{Problem}
\newtheorem{cor}[subsection]{Corollary}
\newtheorem{lemma}[subsection]{Lemma}
\newtheorem{conj}[subsection]{Conjecture}
\theoremstyle{definition}
\newtheorem{question}[subsection]{Question}
\numberwithin{equation}{section}
\def\Z{\mathbb Z}
\def\phi{{\varphi}}
\def\ra{\rightarrow}
\def\bra{\langle}
\def\ket{\rangle}
\def\half{{1\over2}}
\def\cA{{\mathcal A}}
\def\cB{{\mathcal B}}
\def\cC{{\mathcal C}}
\def\cD{{\mathcal D}}
\def\cE{{\mathcal E}}
\def\cF{{\mathcal F}}
\def\cH{{\mathcal H}}
\def\cI{{\mathcal I}}
\def\cM{{\mathcal M}}
\def\cN{{\mathcal N}}
\def\cO{{\mathcal O}}
\def\cP{{\mathcal P}}
\def\cR{{\mathcal R}}
\def\cS{{\mathcal S}}
\def\cV{{\mathcal V}}
\def\cW{{\mathcal W}}
\def\ga{{\mathfrak a}}
\def\gb{{\mathfrak b}}
\def\gg{{\mathfrak g}}
\def\gh{{\mathfrak h}}
\def\gl{{\mathfrak l}}
\def\go{{\mathfrak o}}
\def\gp{{\mathfrak p}}
\def\gs{{\mathfrak s}}
\newfont{\german}{eufm10}
\begin{document}
\pagestyle{plain}

\title
{Vertex algebras and commutative algebras}

\author{Bong H. Lian}
\address{Brandeis University}
\email{lian@brandeis.edu}
\thanks{B. L. is supported by the Simons Collaboration on Homological Mirror Symmetry and Applications Grant \#385579 (2016-2023) }

\author{Andrew R. Linshaw} 
\address{University of Denver}
\email{andrew.linshaw@du.edu}
\thanks{A. L. is supported by Simons Foundation Grant \#635650 and NSF DMS-2001484}

\begin{abstract} 
This paper begins with a brief survey of the period prior to and soon after the creation of the theory of vertex operator algebras (VOAs). This survey is intended to highlight some of the important developments leading to the creation of VOA theory.
The paper then proceeds to describe progress made in the field of VOAs in the last 15 years which is based on fruitful analogies and connections between VOAs and commutative algebras. First, there are several functors from VOAs to commutative algebras that allow methods from commutative algebra to be used to solve VOA problems. To illustrate this, we present a method for describing orbifolds and cosets using methods of classical invariant theory. This was essential in the recent solution of a conjecture of Gaiotto and Rap\v{c}\'ak that is of current interest in physics. We also recast some old conjectures in the subject in terms of commutative algebra and give some generalizations of these conjectures.
We also give an overview of the theory of topological VOAs (TVOAs), with applications to BRST cohomology theory and conformal string theory, based on work in the 90's. We construct a functor from TVOAs to Batalin-Vilkovisky algebras -- supercommutative algebras equipped with a certain odd Poisson structure realized by a second order differential operator -- and present a number of interesting applications. This paper is based in part on the lecture given by the first author at the Harvard CMSA Math-Science Literature Lecture Series on May 22, 2020.

\end{abstract}

\maketitle
\tableofcontents
\section{Introduction} \label{sec:intro}

We begin with a brief survey of the period prior to and soon after the creation of the theory of VOAs which is intended to highlight developments that reflect the authors' views (and biases) about the subject. We make no claim of completeness. As a short survey of early history, it will inevitably miss many of the more recent important results. Emphases are placed on the mutually beneficial cross-influences between physics and VOAs in their concurrent early developments, and the early history survey is aimed for a general audience.

\subsection{Early History 1970s -- 90s: two parallel universes} 

In 1968, Veneziano proposed a model using the Euler beta function to explain the `$st$-channel crossing' symmetry in 4-meson scattering, and the Regge trajectory (an angular momentum vs binding energy plot for the Coulomb potential), and was later generalized to $n$-meson scattering by others.

Around 1970, Nambu, Nielsen, and Susskind provided the first interpretation of the Veneziano amplitude in terms of a Fock space representation of infinitely many harmonic oscillators. The $n$-particle amplitude then became an $n$-point correlation function of certain `vertex operators'
$$:e^{i k\cdot\phi(z)}:$$ 
on a Fock space representation of free bosonic fields $\phi_i(z)$ of a `string' \cite{GGRT}. This can be viewed as part of a {\it fundamental theory of strings}, producing fundamental particles of arbitrarily high spins as string resonances, including gravity \cite{SS74}.

In 1981, Polyakov proposed a path integral formulation of the bosonic string theory -- a theory with a manifest two-dimensional conformal symmetry, pointing to a fundamental connection between string theory and two-dimensional conformal field theory (2d CFT). In 1984, Belavin-Polyakov-Zamolodchikov introduced the so-called conformal bootstrap program to systematically study 2d CFTs as {\it classical string vacua}, and models for {\it universal critical phenomena}. 
They also introduced a powerful mathematical formalism -- the operator product expansion for 2d CFT observables.
For example, the 2d conformal symmetry of such a theory can be stated in terms of the the left (and right) moving 2d stress energy tensor field of central charge $c$ (conformal anomaly) whose OPE is given by:
$$T(z)T(w)\sim{c/2\over (z-w)^4}+{ 2T(w)\over (z-w)^2}+{\partial_w T(w)\over z-w}+\cdots$$
The OPE is equivalent to that the state space is a representation of the two copies of the {\it Virasoro algebra} defined by the Fourier modes of $T(z)=\sum L_n z^{-n-2}$ (and those of $\bar T(\bar z)$):
$$[L_n,L_m]=(n-m)L_{n+m}+{c\over 12}(n^3-n)\delta_{n+m}.$$

In 1985, Friedan-Martinec-Shenker showed that conformal strings and other 2d CFTs are fixed points of a renormalization group flow. A fundamental subalgebra of observables of a 2d CFT is its `chiral algebra' -- the algebra of left (or holomorphic) operator on a formal punctured disk.
For example, the so-called bosonic $bc$-ghost systems of weight $\lambda$ (and its fermionic version the $\beta\gamma$-ghost system of weight $\lambda$) is defined by the OPE
$$b(z)c(w)\sim {1\over z-w}$$
where $b,c$ are primary fields of respective conformal weights $\lambda$, and $1-\lambda$.
In connection to strings, both the $bc$ and the $\beta\gamma$ systems also appear fundamental, because they both arise naturally as {\it determinants} in Polyakov's path integral  in the {\it conformal gauge}, following the so-called Faddeev-Popov quantization procedure.

Around the same time, thanks to a new generation of young physicists led by Candelas, Dixon, Friedan, Martinec, Moore, Seiberg, Shenker, Vafa, E. Verlinde, H. Verlinde, Warner, Witten, and many others, a vast collaborative effort was emerging to
organize an important class of 2d CFTs using representations of {\it rational chiral algebras} (consisting of the left moving operators), and their so-called fusion rules. A rational chiral algebra has only finite number of irreducible representations. Notable known examples include the BPZ classification of the Virasoro central charges and highest weights of the {\it rational minimal model CFTs} using Kac's determinant formula. (Cf. Y. Zhu's thesis and W. Wang's thesis);
the {\it Wess-Zumino-Witten theory} for loop groups of compact groups gave (unitary) rational CFTs;
Goddard-Kent-Olive's {\it coset constructions} from compact Lie groups gave (unitary) rational CFTs.
Their `orbifolds' also yield many more rational CFTs. 
Around 1986, Moore-Seiberg introduced the fundamental `duality axioms' of CFTs. For the chiral algebra, they imply that matrix coefficients (or correlation functions) of left moving operators admit meromorphic continuations on $\mathbb P^1$. Most importantly, the operators are {\it formally} commutative after analytic continuations. For example, 
$$\bra T(z) T(w)\ket={c/2\over(z-w)^4}.$$

On the string theory front, a number of historical milestones were made as well.
In 1985, Green-Schwarz's discovery of an amazing anomaly cancellation for the open string with gauge group $SO(32)$, and discovery of spacetime supersymmetry in the Ramond-Neveu-Schwarz superstring theory marked the beginning of a new revival of superstring theory. At the same time, Candelas-Horowitz-Strominger-Witten (the so-called `string quartet') proposed superstring compactifications on Calabi-Yau threefolds as a new class of string vacua, which can be realized as supersymmetric `nonlinear sigma models' (NLSMs).
In 1986, Gepner made the remarkable discovery that some of these NLSMs can in fact be realized as certain explicit unitary rational $\cN=2$ SCFT's at  special points (`Gepner points') in moduli spaces of superstring compactifications on Calabi-Yau threefolds. The most famous example is the quintic threefold in $\mathbb P^4$.

\subsection{The Moonshine Universe}

Parallel to the physical 2d CFT universe is a mathematical universe in which history was in the making. In 1978, McKay found evidence of the existence of an infinite dimensional $\Z$-graded representation $V^\natural$  of the hypothetical {\it Monster group $\mathbb M$} (independently predicted by Fischer and Griess in 1973):-- the coefficients of the $q$-series of the $j$-function can be partitioned by dimensions $r_i$ of irreducible $\mathbb M$-modules:
$${\displaystyle {\begin{aligned}
j(q)=q^{-1}+196884{q}+2149&3760{q}^{2}+864299970{q}^{3}
+20245856256{q}^{4}
+\cdots \\
1&=r_{1}\\
196884&=r_{1}+r_{2}\\21493760&=r_{1}+r_{2}+r_{3}\\864299970&=2r_{1}+2r_{2}+r_{3}+r_{4}
\\20245856256&=3r_{1}+3r_{2}+r_{3}+2r_{4}+r_{5}\\ 
&=2r_{1}+3r_{2}+2r_{3}+r_{4}+r_{6}
\end{aligned}}}$$
Thompson interpreted $j(q)$ as the graded- or  $q$-trace
$$j(q)=qtr_{V^\natural} 1:=\sum_n tr_{V^\natural[n]}1 q^n.$$
This would later become the so-called genus 1 partition function $tr ~q^{L_0-{c\over24}}$ of the {\it holomorphic vertex algebra} of the underlying chiral algebra of $V^\natural$!
This led to the expectation that the $q$-trace of a general element $g\in\mathbb M$ should be interesting as well. 
Soon after, Conway and Norton computed leading terms of the hypothetical $q$-traces, and saw that they agree with $q$-series of certain special genus $0$ {\it modular functions} or hauptmodul. They gave a complete list of these functions, now know as {\it the McKay-Thompson series} $T_g$. They also formulated 

\noindent {\bf The Conway-Norton Moonshine Conjecture:}
{\sl $\exists$ a graded $\mathbb M$-module $V^\natural$ having the McKay-Thompson series $T_g$ as its $q$-traces.}

In 1980, Griess announced his construction of $\mathbb M$ (the `Friendly Giant'). It is the automorphism group of the Griess algebra, a commutative non-associative algebra of dimension $196,884$.  (Cf. Griess's lecture at the Harvard CMSA on May 6, 2020). This algebra would later become the weight $2$ piece $V^\natural[2]$ of the more elaborate {\it Moonshine Vertex Operator Algebra $V^\natural$!}
The new decade of the 80s also marked an exciting emergence of a representation theory for a class of interrelated infinite dimensional graded algebras, which include the Virasoro, Kac-Moody algebras, $\cW$-algebras, their various `coset' constructions.
For example, Frenkel and Kac gave the first mathematical construction of the level 1 irreducible representations of affine Kac-Moody Lie algebras using `free bosonic vertex operators' $e_\alpha(z)$, $\alpha\in L$, (a mathematical counterpart to physicists vertex operators $:e^{ik\cdot\phi}(z):$) for any weight lattice $L$ of $ADE$ type. A few years later,
Borcherds axiomatized the notion of a {\it vertex algebra} by an infinite set of linear operator identities. 
In 1988, Frenkel-Lepowsky-Meurman gave a new definition (including the Virasoro) of what they called {\it vertex operator algebras} (VOA), based on a Jacobi identity of formal power series. FLM's and Borcherds's formulations are logically equivalent, but FLM's is technically and conceptually a bit easier to work with. FLM also gave a general Fock space construction of a lattice VOA $V_L$ from any even lattice $L$. They also constructed the $ \mathbb{Z}/2\mathbb{Z}$ orbifold of $V_L$ (a VOA counterpart of physicists' orbifold CFT), using the notion of twisted vertex operators. For the Leech lattice $L=\mathbb L$, its $\mathbb{Z}/2\mathbb{Z}$ orbifold $V^\natural$ would yield the Moonshine VOA, with the correct genus 1 partition function $j(q)$.
Marking the beginning of a new mathematical decade in representation theory, Borcherds announced in 1992 his solution to the Conway-Norton Conjecture. The FLM construction of the Moonshine VOA $V^\natural$ plays a central role in his solution.

\subsection{A new universe is born.} We have seen that VOAs arose out of conformal field theory in the 1980s and have been developed mathematically from various points of view in the literature \cite{B,FLM,K}.

Going forward, our perspective in this survey is that VOAs should be regarded as generalizations of {\it differential graded commutative rings}, that is, commutative rings $R$ with an $\mathbb{N}$ or $\frac{1}{2} \mathbb{N}$-grading by weight, equipped with an even derivation $\partial$ which raises the weight by one. This perspective is manifest in the notion of a {\it commutative quantum operator algebra} which was introduced by the first author with Zuckerman \cite{LZII}, and is equivalent to the notion of a VOA. In this survey we shall explain various connections between VOAs and commutative rings, including a functor introduced by Zhu that attaches to a VOA $\cV$ a commutative ring $R_{\cV}$, and Li's canonical decreasing filtration which exists on any VOA such that the associated graded algebra $\text{gr}(\cV)$ is a differential graded commutative ring that contains $R_{\cV}$ as the zeroth graded piece. In a different direction, we also explain how VOAs can be {\it defined over a commutative ring $R$} instead of over a field. The underlying space of a VOA is then an $R$-module rather than a vector space, and if $R$ is the coordinate ring of some variety $X$, we can regard $\cV$ as a vector bundle over $X$. Given an ideal $I\subseteq R$, the quotient of $\cV$ by the VOA ideal generated by $I$ can be viewed as specialization along the corresponding subvariety of $X$. 

The notion of VOAs over commutative rings has two very different applications. First, if $R$ is a ring of rational functions of degree at most zero in a formal variable $\kappa$, it is meaningful to take the limit as $\kappa\rightarrow \infty$. Certain features of the original VOA can be deduced from this limit, which is generally a simpler structure. Second, VOAs with prescribed strong generating type that have as many free parameters in their operator products that are compatible with the axioms of VOAs, can be regarded as {\it universal objects} and are very useful for classifying VOAs by generating type. After introducing these ideas, we will discuss several applications. These include the {\it vertex algebra Hilbert problem}, which asks whether for a strongly finitely generated VOA $\cV$ and a reductive group of $G$ of automorphisms of $\cV$, the orbifold, or invariant subalgebra $\cV^G$, is also strongly finitely generated. We also discuss the structure of cosets of affine VOAs inside larger VOAs. Finally, using all these results we will outline the second author's recent proof with Thomas Creutzig of the {\it Gaoitto-Rap\v{c}\'ak triality conjectures}. This is the statement that the affine cosets of three different $\cW$-(super)algebras are isomorphic. Triality for $\cW$-superalgebras of type $A$ was proven in \cite{CLIV}, and a similar but more involved statement for families of $\cW$-superalgebras in types $B$, $C$, and $D$ appears in \cite{CLV}. These results simultaneously generalize several well-known results including Feigin-Frenkel duality and the coset realization of principal $\cW$-algebras. We conclude with some speculations on the interaction between Zhu's commutative algebra and the orbifold functor, and we recast an old conjecture as a special case of a much more general conjecture that these functors commute in an appropriate sense.

\subsection{Background} In Section \ref{sec:diffgraded}, we introduce the notion of a {\it differential graded commutative algebra (DGA)}. A rich source of examples is the {\it arc space} construction which associates to any commutative $\mathbb{C}$-algebra $A$ the ring $A_{\infty}$ which is the ring of functions on the arc space of the scheme $X = \text{Spec}\ A$. These rings naturally have the structure of abelian vertex algebras, so vertex algebras can be viewed as generalizations of differential commutative rings. In Section \ref{sec:vertex} we introduce the equivalent notions of commutative quantum operator algebras and vertex algebras, and we describe the examples we need, which include free field algebras, affine vertex algebras, and $\cW$-algebras. 

\subsection{From VOAs to odd Batalin-Vilkovisky algebras}
In Section \ref{sec:TVOA}, we give a brief introduction to the notion of a topological VOA (TVOA), first introduced by the first author and Zuckerman \cite{LZI}, and specialize it to a number of cases. Many old and new examples and structures can be seen to arise in this construction. In the first case, we specialize it to the absolute BRST complex of the Virasoro algebra, which is fundamental in the so-called $\cN=0$ conformal string theory. As a parallel, we point out a further application to absolute BRST complex of the $\cN=1$ Virasoro superalgebra arising in the RNS superstring theory. This was done in a joint (unpublished) work of the first author with Moore and Zuckerman \cite{LMZ92}. The main result here is that the cohomology algebra of a TVOA is naturally a Batalin-Vilkovisky (BV) algebra -- a certain odd version of a Poisson algebra. Moreover, the cohomology of a module over a TVOA, also descends to a module over the BV algebra in question. In the case of a BRST complex, the BV structure encompasses many new and old structures. There is a natural second order differential operator -- a BV operator -- that gives rise to an odd Poisson structure on cohomology first observed by Gerstenhaber on the Hochschild cohomology. In particular, the zeroth cohomology group $H^0_{BRST}$ naturally inherits a commutative algebra structure (or commutative superalgebra structure in the case of $\cN=1$ strings); the first cohomology group $H^1_{BRST}$ naturally inherits a Lie algebra structure (or a Lie superalgebra structure in the case of the $\cN=1$ strings). In the case of the $\cN=0$ string coupled to the Liouville theory, also known as the `c=1 model',  $H^0_{BRST}$ is known the `ground ring', introduced by Witten. In general $H^1_{BRST}$ with coefficient in a VOA $V$ with central charge 26 is a Lie algebra that includes the Lie algebra structure of the old `physical states'. An example of this Lie algebra is Borcherds' Monster Lie algebra.

\subsection{Zhu's commutative algebra and Li's filtration} In Section \ref{sec:zhuli}, we recall two functors from the category of VOAs to the category of commutative rings. The first one, which was introduced by Yongchang Zhu \cite{Z}, assigns to a VOA $\cV$ a commutative ring $R_{\cV}$ called {\it Zhu's commutative ring}. It is defined as the vector space quotient of $\cV$ by the span of all elements of the form $a_{(-2)} b$ for all $a,b \in \cV$. The normally ordered product on $\cV$ then descends to a commutative, associative product on $R_{\cV}$. Moreover, strong generators for $\cV$ give rise to generators for $R_{\cV}$, and $R_{\cV}$ is finitely generated if and only if $\cV$ is strongly finitely generated. Recall that $\cV$ is called {\it $C_2$-cofinite} if $R_{\cV}$ is finite-dimensional as a vector space. This is a key starting assumption in Zhu's work on modularity of characters of rational VOAs \cite{Z}, and it has the important consequence that $\cV$ has finitely many simple $\mathbb{Z}_{\geq 0}$-graded modules.

The second functor comes from a canonical decreasing filtration due to Haisheng Li \cite{LiII} that is defined on any VOA $\cV$, such that the associated graded algebra $\text{gr}(\cV)$ is a differential graded commutative ring. Typically, $\cV$ is linearly isomorphic to $\text{gr}(\cV)$, and a strong generating set for $\cV$ gives rise to a generating set for $\text{gr}(\cV)$ as a differential algebra. In fact, $R_{\cV}$ can be identified with the zeroth graded component of $\text{gr}(\cV)$, so $R_{\cV}$ generates $\text{gr}(\cV)$ as a differential algebra. 

In general, it is a difficult problem to find all differential algebraic relations in $\text{gr}(\cV)$, and this is an important and active research program. However, if $\cV$ is {\it freely generated}, meaning that it is linearly isomorphic to a differential polynomial algebra, then $\text{gr}(\cV)$ is just a differential polynomial algebra. Many interesting VOAs have this property, including all free field algebras, universal affine VOAs, and universal $\cW$-algebras. In these cases, the functor $\cV \mapsto \text{gr}(\cV)$ provides a way to systematically study $\cV$ by using the tools of commutative algebra.

\subsection{Orbifolds and the vertex algebra Hilbert problem} In Section \ref{sec:hilbert}, we discuss the {\it orbifold} construction, which begins with a VOA $\cV$ and a group of automorphisms $G$, and considers the invariant subVOA $\cV^G$ and its extensions. This construction was introduced originally in physics; see for example \cite{DVVV,DHVWI,DHVWII}, as well as \cite{FLM} for the construction of the Moonshine VOA  $V^{\natural}$ as an extension of the $ \mathbb{Z}/2\mathbb{Z}$-orbifold of the VOA associated to the Leech lattice.

A VOA $\cV$ is called {\it rational} if every $\mathbb{Z}_{\geq 0}$-graded $\cV$-module is completely reducible. A longstanding conjecture is that if $\cV$ is $C_2$-cofinite and rational, then $\cV^G$ inherits these properties for any finite automorphism group $G$. This has been proven by Carnahan and Miyamoto \cite{CM} when $G$ is a cyclic group. Very recently, McRae has shown that for any finite $G$, rationality of $\cV^G$ would follow from $C_2$-cofiniteness under mild hypothesis \cite{McRI}.

If $\cV$ is strongly finitely generated and $G$ is reductive, a natural question is whether $\cV^G$ is also strongly finitely generated. This is analogous to Hilbert's celebrated theorem that if $G$ is reductive and $V$ is a finite-dimensional $G$-module, the ring of $G$-invariant polynomial functions $\mathbb{C}[V]^G$ is always finitely generated \cite{HI,HII}. Accordingly, we call this the {\it vertex algebra Hilbert problem}. In general, it turns out that the answer is {\it no}. To illustrate the phenomenon, we consider two examples in detail: the rank one degenerate Heisenberg algebra $\cH_{\text{deg}}$ , which is just the free abelian vertex algebra with one generator, and the rank one nondegenerate Heisenberg algebra $\cH$, both with $ \mathbb{Z}/2\mathbb{Z}$ action. The orbifolds $\cH_{\text{deg}}^{ \mathbb{Z}/2\mathbb{Z}}$ and $\cH^{ \mathbb{Z}/2\mathbb{Z}}$ are linearly isomorphic and have the same graded character; however, $\cH_{\text{deg}}^{ \mathbb{Z}/2\mathbb{Z}}$ is not strongly finitely generated whereas $\cH^{ \mathbb{Z}/2\mathbb{Z}}$ is strongly generated by just two fields, by a theorem of Dong and Nagatomo \cite{DN}. The reason for this is quite subtle and depends sensitively on the nonassociativity of the Wick product. 

This theorem turns out to have a vast generalization. First, if we replace $\cH$ by the rank $n$ Heisenberg algebra $\cH(n)$, its full automorphism group is the orthogonal group $O(n)$. This is an example of a {\it free field algebra}, and there are four families of standard free field algebras whose generators are either even or odd, and whose automorphism groups are either orthogonal or symplectic. If $\cV$ is a VOA which is a tensor product of finitely many free field algebras, and $G$ is any reductive group of automorphisms of $\cV$, it was shown in \cite{CLIV} that $\cV^G$ is strongly finitely generated. The key idea is to pass to the associated graded algebra and use an interplay of ideas from classical invariant theory together with the representation theory of VOAs.

In fact, there is a much larger class of vertex algebras that includes not only free field algebras, but all universal affine VOAs and universal $\cW$-algebra, where the vertex algebra Hilbert problem has a positive solution. These vertex algebras can be defined over a suitable ring of rational functions of degree at most zero in some formal variable $\kappa$. They have the property that the $\kappa\rightarrow \infty$ limit is a tensor product of free field algebras, and that passing to the limit commutes with the $G$-invariant functor in an appropriate sense. Finally, the property of strong finite generation of the limit implies strong finite generation of these algebras as one-parameter vertex algebras. It remains an open question to find good criteria for the vertex algebra Hilbert theorem to hold in general, but we make the following conjecture.

\begin{conj} Let $\cV$ be a simple, strongly finitely generated VOA which is $\mathbb{N}$-graded by conformal weight with finite-dimensional graded pieces. Then for any reductive group of automorphisms $G$ of $\cV$, $\cV^G$ is strongly finitely generated.
\end{conj}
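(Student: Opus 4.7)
The plan is to pass from $\cV$ to its associated graded $A := \text{gr}(\cV)$ under Li's filtration. By Section~\ref{sec:zhuli}, $A$ is a differential graded commutative algebra with $A_0 = R_\cV$, and $R_\cV$ generates $A$ as a $\partial$-algebra. Since $\cV$ is strongly finitely generated, $R_\cV$ is a finitely generated commutative ring, so $A$ is finitely generated as a differential algebra. The $G$-action on $\cV$ preserves both the conformal weight grading and Li's filtration, so it induces a $G$-action on $A$ commuting with $\partial$, and $\text{gr}(\cV^G) \subseteq A^G$.

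The first substantive step is to show that $A^G$ is finitely generated as a differential algebra. Because $R_\cV$ generates $A$ under $\partial$, there is a surjection of differential algebras $(R_\cV)_\infty \twoheadrightarrow A$ from the arc algebra; setting $X = \Spec R_\cV$, this is the coordinate ring of the arc space of $X$, on which $G$ acts coordinate-wise. A Weyl-style polarization argument applied to this arc space --- the key technique used in \cite{CLIV} in the case where $X$ is a vector space --- should give that $((R_\cV)_\infty)^G$ is finitely generated as a differential algebra, and since $G$ is reductive, taking invariants commutes with the surjection, so finite differential generation descends to $A^G$. The second step is to lift: pick homogeneous differential generators $\bar a_1, \ldots, \bar a_r$ of $A^G$ and lift each to $a_i \in \cV^G$ of the same conformal weight, using that $\cV^G$ is $\mathbb{N}$-graded with finite-dimensional graded pieces and that reductivity of $G$ provides a $G$-equivariant Reynolds operator on each piece. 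A standard induction on conformal weight, using that Li's filtration is compatible with normally ordered products, then shows the $a_i$ strongly generate $\cV^G$: the leading term in $A^G$ of any element of $\cV^G$ is expressible from the $\bar a_i$, and subtracting the corresponding normally ordered polynomial in the $a_i$ strictly lowers the filtration degree.

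The main obstacle is the first step, specifically showing that $((R_\cV)_\infty)^G$ is finitely generated as a differential algebra for an arbitrary affine $G$-variety $X = \Spec R_\cV$. For $X$ a linear representation this follows from classical polarization theorems combined with the differential structure, but for general $X$ there is no immediate polarization principle, and $(R_\cV)_\infty$ may be nonreduced or reducible. A natural attack is to choose a $G$-equivariant closed embedding $X \hookrightarrow V$ into a $G$-representation, apply the free-field result to $\mathbb{C}[V_\infty]^G$, and hope that the restriction map $\mathbb{C}[V_\infty]^G \to ((R_\cV)_\infty)^G$ is surjective. Establishing this surjectivity requires controlling the defining ideal of the arc space of $X$ inside $\mathbb{C}[V_\infty]$ and its compatibility with the $G$-action, which is where the technical difficulty --- and presumably the reason the conjecture is still open --- really lies.
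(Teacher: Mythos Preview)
This statement is a \emph{conjecture} in the paper, not a theorem, and the paper does not claim to prove it. So there is no proof in the paper to compare against. But your proposed approach has a genuine and fatal gap, and the paper actually discusses precisely why.

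Your first step asserts that $A^G = (\text{gr}(\cV))^G$ is finitely generated as a differential algebra, and you suggest this should follow from a polarization argument on arc spaces. This is false, even in the simplest case where $\cV$ is a free field algebra and $X = \Spec R_{\cV}$ is a linear representation of $G$. The paper states this explicitly (citing \cite{LSS}): if $V$ is any nontrivial finite-dimensional $G$-module for a finite group $G$, then $\mathbb{C}[V_\infty]^G$ is \emph{never} finitely generated as a differential algebra. The detailed example in Section~\ref{sec:hilbert} makes this concrete: for $\cV = \cH$ the rank one Heisenberg algebra and $G = \mathbb{Z}/2\mathbb{Z}$, one has $\text{gr}(\cH) \cong \cH_{\text{deg}} \cong \mathbb{C}[\alpha,\partial\alpha,\partial^2\alpha,\dots]$, and $(\text{gr}(\cH))^{\mathbb{Z}/2\mathbb{Z}} \cong \cH_{\text{deg}}^{\mathbb{Z}/2\mathbb{Z}}$ requires the infinite minimal generating set $\{q_{2i,0} \mid i \geq 0\}$. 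The Weyl polarization theorem gives generators for the invariant ring as an ordinary algebra, not as a differential algebra, and passing to differential generators does not reduce to a finite set.

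The entire point of the paper's discussion is that strong finite generation of $\cV^G$, when it holds, does \emph{not} come from the associated graded. It comes from the nonassociativity of the Wick product: classical relations among the quadratic invariants $q_{i,j}$, which vanish in $\text{gr}(\cV)^G$, acquire nonzero lower-order ``quantum corrections'' in $\cV^G$ itself, and these corrections contain linear terms that serve as decoupling relations for the would-be higher generators (see \eqref{heisrelation:first}--\eqref{heisrelation:second} and Lemma~\ref{decoup:bootstrap}). Your lifting argument in step two presupposes finite generation at the graded level and then lifts; the actual phenomenon runs in the opposite direction --- the graded algebra is infinitely generated, and only after lifting do the extra relations appear. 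Any attack on the conjecture must confront this mechanism directly, which is why the cases handled in the paper (free field algebras, affine VOAs, $\cW$-algebras) all require explicit control of these quantum corrections rather than a purely commutative-algebra reduction.
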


\subsection{The structure of coset VOAs} In Section \ref{sec:cosets} we discuss some applications of our results on orbifolds to the structure of {\it coset VOAs}. The coset construction is a basic method for constructing new VOAs from old ones. Given a VOA $\cA$ and a subalgebra $\cV$, the coset $\cC = \text{Com}(\cV, \cA)$ is the subalgebra of $\cA$ which commutes with $\cV$. In general, $\cC \otimes \cV$ is conformally embedded in $\cA$, and there are important links between the representation theory of $\cV$, $\cC$, and $\cA$. It is expected that good properties of $\cA$ and $\cV$ such as rationality and $C_2$-cofiniteness are inherited by $\cC$. In the case where $\cV$ is a Heisenberg or lattice VOA there is a good theory of such cosets due to the second author and Creutzig, Kanade, and Ridout \cite{CKLR}. In particular, if $\cV$ is a lattice VOA and $\cA$ is rational, $\cC$ is always rational. However, few such results are known in a general setting. 

If $\cV$ is an affine VOA, we call $\cC$ an {\it affine coset}. One can ask whether a Hilbert theorem holds for affine cosets: in other words, if $\cV$ is affine and $\cA$ is strongly finitely generated, is $\cC$ also strongly finitely generated? In general this is a difficult question, but there is a large class of VOAs where this can be answered affirmatively. For example, let $\gg$ be any simple Lie superalgebra, $f$ any nilpotent element in the even part of $\mathfrak{g}$, which we complete to an $\gs\gl_2$-triple $\{e,f,h\}$ in $\gg$. Let $\cW^k(\gg,f)$ the corresponding $\cW$-algebra obtained via generalized Drinfeld-Sokolov reduction, which up to isomorphism only depends on the conjugacy class of $f$. Let $\ga$ be the centralizer of the $\gs\gl_2$ in $\gg$; then $\cW^k(\gg,f)$ contains an affine VOA $V^{k'}(\ga)$ for some shifted level $k'$. If $\gb$ is any reductive Lie subalgebra of $\ga$, and $V^{\ell}(\gb) \subseteq V^{k'}(\ga)$ is the corresponding affine subalgebra, then it was shown in \cite{CLIV} that $\text{Com}(V^{\ell}(\gb), \cW^k(\gg,f))$ is strongly finitely generated. The key idea is that such cosets also admit an appropriate limit where the limit algebra is an orbifold of a free field algebra, so the result can be deduced from our general results on orbifolds.

\subsection{Application: the Gaoitto-Rap\v{c}\'ak triality conjecture}
In Section \ref{sec:gaiotto} we outline the second author's recent proof with Thomas Creutzig of the {\it Gaoitto-Rap\v{c}\'ak triality conjectures} \cite{CLIV, CLV}. Triality is the statement that the affine cosets of three different $\cW$-(super)algebras are isomorphic as one-parameter VOAs. In type $A$, these affine cosets (tensored with a rank 1 Heisenberg algebra) are called $Y$-algebras by Gaiotto and Rap\v{c}\'ak in \cite{GR}. These are VOAs arising from local operators at the corner of interfaces of twisted $\mathcal N=4$ supersymmetric gauge theories. These interfaces should satisfy a permutation symmetry which then induces a corresponding symmetry on the associated VOAs. This led \cite{GR} to conjecture a triality of isomorphisms of $Y$-algebras. Similarly, there are families of $\cW$-(super)algebras of types $B$, $C$, and $D$ whose affine cosets are called orthosymplectic $Y$-algebras in \cite{GR}, and were expected to satisfy a similar but more complicated set of trialities.

The triality theorem is a common generalization of many known results including Feigin-Frenkel duality \cite{FFII} and the coset realization of principal $\cW$-algebras of types $A$ and $D$ proven in \cite{ACL}. Another special case is a new coset realization of principal $\cW$-algebras of type $B$ and $C$, which is quite different from the coset realizations of $\cW^k(\gg)$ in the simply-laced cases given in \cite{ACL} since it involves affine vertex superalgebras. There was substantial evidence for these conjectures before, but the key idea in the proof is that the explicit strong generating type of these cosets can be deduced using our general results on the structure of affine cosets. In the type $A$ cases, using Weyl's first and second fundamental theorems of invariant theory for $\gg\gl_m$ \cite{W}, we will show that these cosets are all of type $\cW(2,3,\dots, N)$ for some $N$. This means that they have minimal strong generating sets consisting of one field in each weight $2,3,\dots, N$. Similarly, in the case of types $B$, $C$, and $D$, the affine cosets are all of type $\cW(2,4,\dots, 2N)$ for some $N$. This is shown using Weyl's first and second fundamental theorems of invariant theory for symplectic and orthogonal groups, as well as Sergeev's theorems for the Lie supergroup $Osp(1|2n)$ \cite{SI,SII}.

The second ingredient in the proof is that VOAs of type $\cW(2,3,\dots, N)$ and $\cW(2,4,\dots, 2N)$ satisfying some mild hypotheses can be classified. There exists a universal two-parameter VOA of type $\cW(2,3,\dots)$ defined over the polynomial ring $\mathbb{C}[c,\lambda]$. Its existence and uniqueness was a longstanding conjecture in the physics literature that was recently proved by the second author in \cite{LVI}. The one-parameter quotients of this structure are in bijection with a family of curves in the plane called {\it truncation curves}. All the above affine cosets in type $A$ are such one-parameter quotients, and the triality theorem is proven by explicitly describing their truncation curves. Similarly, there is a universal two-parameter even spin VOA of type $\cW(2,4,\dots)$ which was also conjectured to exist in the physics literature \cite{CGKV}, and was constructed by Kanade and the second author in \cite{KL}. The relevant affine cosets in types $B$, $C$, and $D$ arise as one-parameter quotients of this structure. Again, triality is proven by explicitly describing the truncation curves. 

Intersection points of truncation curves yield nontrivial isomorphisms between different one-parameter quotients of $\cW(c,\lambda)$ at these points, and similarly for the even spin algebra. This yields many nontrivial isomorphisms between seemingly unrelated VOAs at special parameter values. For example, one can classify isomorphisms between the simple cosets in our triality families and various principal $\cW$-algebras. This allows us to prove new rationality results by exhibiting VOAs that were not previously known to be rational, as extensions of known rational VOAs. Many such results are obtained in \cite{CLV}, and as an example we explain the proof that $L_k(\go\gs\gp_{1|2n})$ is rational for all integers $n,k \geq 1$. This completes the classification of affine vertex superalgebras which are rational and $C_2$-cofinite. It was known previously that these were the only $C_2$-cofinite affine vertex superalgebras \cite{GK, AiLin}, but the rationality was only known in the case $n=1$.

\subsection{Orbifolds and the associated variety}
In Section \ref{sec:orbifoldandassvar}, we conclude with some speculations on the interaction between the orbifold and associated variety functors. For any VOA $\cV$, we have Zhu's commutative ring $R_{\cV}$, and Arakawa has defined the associated scheme $\tilde{X}_{\cV} = \text{Spec} \ R_{\cV}$ and the associated variety $X_{\cV}$, which is the corresponding reduced scheme. A strong generating set for $\cV$ always gives rise to a generating set for $R_{\cV}$, so if $\cV$ is strongly finitely generated $X_{\cV}$ is of finite type. In particular, $\cV$ is $C_2$-cofinite if and only if $X_{\cV}$ consists of just one point.

A longstanding open problem in the subject is the following: if $\cV$ is $C_2$-cofinite and $G$ is a finite group of automorphisms, is $\cV^G$ also $C_2$-cofinite? Miyamoto has shown the answer is affirmative if $G$ is cyclic (and hence solvable) in \cite{MiIII}. Jointly with Carnahan \cite{CM}, he has also shown that when $\cV$ is in addition rational, then $\cV^G$ is rational when $G$ is solvable. If $G$ is not solvable the $C_2$-cofiniteness question remains open.

It is perhaps fruitful to ask a more general question: for a finite group $G$, does the $G$-invariant functor commute with the associated variety functor? More precisely, if $G$ acts on a VOA $\cV$, there is an induced action of $G$ on $R_{\cV}$. The inclusion $\cV^G \hookrightarrow \cV$ induces a homomorphism of rings $R_{\cV^G} \rightarrow \cR_{\cV}$ whose image clearly lies in $(R_{\cV})^G$. It is easy to find examples where this map $R_{\cV^G} \rightarrow (\cR_{\cV})^G$ fails to be an isomorphism, but it is natural to ask whether it becomes an isomorphism at the level of reduced rings. If this is the case, and if $\cV^G$ is also strongly finitely generated, this would imply the above conjecture as a special case. We end with a few nontrivial examples where this isomorphism of reduced rings indeed holds.

\section{Differential graded algebras} \label{sec:diffgraded} 
Throughout this paper, our base field will always be $\mathbb{C}$ and all rings will be $\mathbb{C}$-algebras. A {\it differential graded algebra} (DGA) will be a commutative $\mathbb{C}$-algebra $A$ equipped with a grading which we call {\it weight}: $$A = \bigoplus_{n \geq 0} A_n,\qquad n \in \mathbb{Z} \ \text{or} \ \frac{1}{2}\mathbb{Z},$$ and a derivation $\partial$ on $A$, that is, a $\mathbb{C}$-linear operator satisfying $\partial (ab) = (\partial a) b + b \partial a$ for $a,b\in A$. We also declare that $\partial$ is homogeneous of weight $1$, meaning that $\partial (A_n) \subseteq A_{n+1}$. We will also sometimes work with differential graded superalgebras; these have an additional $ \mathbb{Z}/2\mathbb{Z}$-grading $A = A^{\bar{0}} \oplus A^{\bar{1}}$ which is compatible with the weight grading in the sense that $A = \bigoplus_{n\geq 0} A_n^{\bar{0}} \oplus A_n^{\bar{1}}$. We write $|a| = \bar{i}$ if $a \in A^{\bar{i}}$, and $A$ is supercommutative meaning that $a b - (-1)^{|a| |b|} ba = 0$. The derivation $\partial$ is then required to preserve the $ \mathbb{Z}/2\mathbb{Z}$-grading.

If $\{a_i|\ i\in I\}$ is a subset of $A$ such that $\{\partial^k a_i|\ i\in I,\ \ k \geq 0\}$ generates $A$ as a ring, we say that $A$ is {\it differentially generated} by $\{a_i|\ i\in I\}$. If $I$ can be taken to be a finite set, we say that $A$ is {\it differentially finitely generated}.
 
There is a large class of DGAs that arise naturally from commutative $\mathbb{C}$-algebras via the {\it arc space} construction, which we now recall. First, let $X$ be an irreducible scheme of finite type over $\mathbb{C}$. The zeroth jet scheme $X_0$ is just $X$, and the first jet scheme $X_1$ is the total tangent space of $X$. It is determined by its functor of points: for every $\mathbb{C}$-algebra $A$, we have a bijection
$$\text{Hom} (\text{Spec}  \ A, X_1) \cong \text{Hom} (\text{Spec}  \ A[t]/\langle t^{2}\rangle , X).$$  Similarly, for $m>1$, the $m^{\text{th}}$ jet scheme $X_m$ is determined by a bijection
$$\text{Hom} (\text{Spec} \ A, X_m) \cong \text{Hom} ( \text{Spec}  \ A[t]/\langle t^{m+1}\rangle , X)$$ for any $\mathbb{C}$-algebra $A$. The assignment $X\mapsto X_m$ is functorial, and a morphism $f:X\ra Y$ of schemes induces $f_m: X_m \ra Y_m$ for all $m\geq 1$. If $X$ is nonsingular, $X_m$ is irreducible and nonsingular for all $m$, and is just an affine bundle over $X$ with fiber the affine space of dimension $m\cdot \text{dim}(X)$. If $X$ is singular, the jet schemes are more subtle and encode information about the singularities of $X$.

There are projections $X_{m+1} \rightarrow X_{m}$ for all $m$, and the {\it arc space} is defined to be the inverse limit $$X_{\infty}=\lim_{\leftarrow} X_m.$$ Although it is generally not of finite type, $X_{\infty}$ has some better properties than the finite jet schemes. For example, by a theorem of Kolchin \cite{Kol}, $X_{\infty}$ it is always irreducible even though $X_m$ can be reducible for all $m\geq 1$. Arc spaces were first studied by Nash in \cite{Na}, and also appear in Kontsevich's theory of motivic integration \cite{Kon}. This theory has been developed by many authors including Batyrev, Craw, Denef, Ein, Loeser, Looijenga, Mustata, and Veys; see for example \cite{Bat,Cr,DLI,DLII,EM,Loo,M,Ve}.

Suppose that $X$ is the affine scheme $\text{Spec}\ R$ for a finitely generated $\mathbb{C}$-algebra $R$ with a presentation
$$R= \mathbb{C}[y_1,\dots,y_r] / \langle f_1,\dots, f_k\rangle.$$ Then $X_m$ is again an affine scheme and we can find explicit equations for it using the presentation of $R$ as follows. First, a morphism $\text{Spec}\ \mathbb{C}[t] / \langle t^{m+1} \rangle \rightarrow X$ corresponds to a ring homomorphism $\phi: R\rightarrow \mathbb{C}[t] / \langle t^{m+1} \rangle$, that is, an assignment 
\begin{equation} \phi(y_j) = y_j^{(0)} + y_j^{(1)} t + \cdots + y_j^{(m)} t^m,\qquad j=1,\dots, r,\end{equation} such that 
\begin{equation} \label{jet:requirement} f_{\ell} (\phi(y_1),\dots, \phi(y_r)) = 0,\ \text{for all} \ \ell =1,\dots, k.\end{equation}

We regard $y_i^{(i)}$ for $i=0,1,\dots,m$ as coordinate functions on $X_m$, and we define a derivation $D$ on the polynomial ring $\mathbb{C}[y_j^{(i)}|\ j = 1,\dots, r, \  i=0,\dots, m]$ as follows. On the generators, we have
$$D(y_j^{(i)}) = y_j^{(i+1)}, \ \text{for}\ i<m, \ \text{and} \ D(y_j^{(m)}) =0,$$ and this is extended by the Leibniz rule to monomials in the generators, and then by $\mathbb{C}$-linearity to the polynomial algebra.
In particular, $f_{\ell}^{(s)} = D^s( f_{\ell})$ is a well-defined polynomial in $\mathbb{C}[y_j^{(i)}|\ j = 1,\dots, r, \  i=0,\dots, m]$. The requirement \eqref{jet:requirement} translates to the condition that the coordinate functions $y_j^{(i)}$ satisfy $f_{\ell}^{(s)}$ for $\ell = 1,\dots, k$ and $0\leq s \leq m$. Therefore $X_m$ is the affine scheme $\text{Spec} \ R_m$, where 
$$R_m = \mathbb{C}[y_j^{(i)}|\ j = 1,\dots, r, \  i=0,\dots, m]/ \langle \{f_1^{(s)},\dots, f_k^{(s)}|\ s = 0,\dots, m\}\rangle.$$ By identifying $y_j$ with $y_j^{(0)}$, we see that $R$ is naturally a subalgebra of $R_m$. Additionally, we have $\mathbb{N}$-grading $R_m = \bigoplus_{n\geq 0} R_m[n]$ by weight, defined by $\text{wt}(y^{(i)}_j) = i$, and $R_m[0] = R$.

Similarly, if $X = \text{Spec}\ R$ as above, $X_{\infty}$ is an affine scheme of infinite type, and we can realize it similarly as $\text{Spec}\ R_{\infty}$ where 
$$R_{\infty} = \mathbb{C}[y_j^{(i)}|\ j = 1,\dots, r, \  i \geq 0]/ \langle \{f_1^{(s)},\dots, f_k^{(s)}|\ s \geq 0\}\rangle.$$ As above, $R_{\infty}$ is $\mathbb{N}$-graded by weight, and the weight zero component $R_{\infty}[0]$ is isomorphic to $R$, and differentially generates $R_{\infty}$. In fact, $R_{\infty}$ satisfies the following universal property: if $S$ is any differential graded ring $S = \bigoplus_{n\geq 0} S[n]$ such that $S[0] \cong R$ and $S$ is differentially generated by $S[0]$, then $S$ is a quotient of $R_{\infty}$.

\section{Vertex algebras} \label{sec:vertex}

Vertex algebras have been discussed from several points of view in the literature (see for example \cite{B,FLM,FHL,K,FBZ}). We will follow the formalism developed by the first author jointly with Zuckerman in \cite{LZII}, and partly in \cite{LiI}. 

The parallel between vertex algebras and commutative algebras was somewhat apparent in an early formulation of a chiral algebras by string theorists. Roughly, the chiral algebra of a two-dimensional conformal field theory (2d CFT) consists of the `left-moving' operators of the theory. One of the fundamental axioms of 2d CFTs is the `locality axiom', which states that the correlation functions of observables are real meromorphic functions on the Riemann sphere that are symmetric in the observables after analytic continuations \cite{MS}. In particular, when the observables in question are left-moving operators, their correlation functions are rational functions that are symmetric in the operators, again after analytic continuations. This turns out to be completely equivalent to the main axiom of a vertex algebra, as first observed in \cite{LZII}.

Let $V=V_0\oplus V_1$ be a super vector space over $\mathbb{C}$, $z,w$ be formal variables, and $\text{QO}(V)$ be the space of linear maps $$V\ra V((z))=\{\sum_{n\in\mathbb{Z}} v(n) z^{-n-1}|
v(n)\in V,\ v(n)=0\ \text{for} \ n>\!\!>0 \}.$$ Each element $a\in \text{QO}(V)$ can be represented as a power series
$$a=a(z)=\sum_{n\in\mathbb{Z}}a(n)z^{-n-1}\in \text{End}(V)[[z,z^{-1}]].$$ We assume that $a=a_0+a_1$ where $a_i:V_j\ra V_{i+j}((z))$ for $i,j\in \mathbb{Z}/2\mathbb{Z}$, and we write $|a_i| = i$.

For each $n \in \mathbb{Z}$, we have a bilinear operation on $\text{QO}(V)$, defined on homogeneous elements $a$ and $b$ by
$$ a(w)_{(n)}b(w)=\text{Res}_z a(z)b(w)\ \iota_{|z|>|w|}(z-w)^n- (-1)^{|a||b|}\text{Res}_z b(w)a(z)\ \iota_{|w|>|z|}(z-w)^n.$$
Here $\iota_{|z|>|w|}f(z,w)\in\mathbb{C}[[z,z^{-1},w,w^{-1}]]$ denotes the power series expansion of a rational function $f$ in the region $|z|>|w|$. For $a,b\in \text{QO}(V)$, we have the following identity of power series known as the {\it operator product expansion} (OPE):
 \begin{equation}\label{opeform} a(z)b(w)=\sum_{n\geq 0}a(w)_{(n)} b(w)\ (z-w)^{-n-1}+:a(z)b(w):. \end{equation}
Here $:a(z)b(w):\ =a(z)_-b(w)\ +\ (-1)^{|a||b|} b(w)a(z)_+$, where $a(z)_-=\sum_{n<0}a(n)z^{-n-1}$ and $a(z)_+=\sum_{n\geq 0}a(n)z^{-n-1}$. Often, \eqref{opeform} is written as
$$a(z)b(w)\sim\sum_{n\geq 0}a(w)_{(n)} b(w)\ (z-w)^{-n-1},$$ where $\sim$ means equal modulo the term $:a(z)b(w):$, which is regular at $z=w$. 

Note that $:a(w)b(w):$ is a well-defined element of $\text{QO}(V)$. It is called the {\it Wick product} or {\it normally ordered product} of $a$ and $b$, and it
coincides with $a_{(-1)}b$. For $n\geq 1$ we have
$$ n!\ a(z)_{(-n-1)} b(z)=\ :(\partial^n a(z))b(z):,\qquad \partial = \frac{d}{dz}.$$
For $a_1(z),\dots ,a_k(z)\in \text{QO}(V)$, the $k$-fold iterated Wick product is defined inductively by
\begin{equation}\label{iteratedwick} :a_1(z)a_2(z)\cdots a_k(z):\ =\ :a_1(z)b(z):,\qquad b(z)=\ :a_2(z)\cdots a_k(z):.\end{equation}
We usually omit the formal variable $z$ when no confusion can arise.

A subspace $\cA\subseteq \text{QO}(V)$ containing $1$ which is closed under all the above products is called a {\it quantum operator algebra} (QOA). We say that $a,b\in \text{QO}(V)$ are {\it local} if 
$$(z-w)^N [a(z),b(w)]=0$$ for some $N\geq 0$. A {\it commutative quantum operator algebra} is a QOA whose elements are pairwise local. This is well known to be equivalent to the notion of a vertex algebra in the sense of \cite{FLM}, and from now on we will use these terms interchangeably.

Many formal algebraic notions such as homomorphisms, ideals, quotients, modules, etc., are immediately clear. A vertex algebra $\cA$ is called {\it simple} if it has no nontrivial ideals. A vertex algebra $\cA$ is {\it generated} by a subset $S=\{\alpha^i|\ i\in I\}$ if $\cA$ is spanned by words in the letters $\alpha^i$, and all products, for $i\in I$ and $n\in\mathbb{Z}$. We say that $S$ {\it strongly generates} $\cA$ if $\cA$ is spanned by words in the letters $\alpha^i$, and all products for $n<0$. Equivalently, $\cA$ is spanned by $$\{ :\partial^{k_1} \alpha^{i_1}\cdots \partial^{k_m} \alpha^{i_m}:| \ i_1,\dots,i_m \in I,\ k_1,\dots,k_m \geq 0\}.$$ 
Suppose that $S$ is an ordered strong generating set $\{\alpha^1, \alpha^2,\dots\}$ for $\cA$ which is at most countable. We say that $S$ {\it freely generates} $\cA$, if $\cA$ has a Poincar\'e-Birkhoff-Witt basis
\begin{equation} \label{freegenerators} \begin{split} & :\partial^{k^1_1} \alpha^{i_1}  \cdots \partial^{k^1_{r_1}}\alpha^{i_1} \partial^{k^2_1}  \alpha^{i_2} \cdots \partial^{k^2_{r_2}}\alpha^{i_2}
 \cdots \partial^{k^n_1} \alpha^{i_n} \cdots  \partial^{k^n_{r_n}} \alpha^{i_n}:,\qquad 
 1\leq i_1 < \dots < i_n,
 \\ & k^1_1\geq k^1_2\geq \cdots \geq k^1_{r_1},\quad k^2_1\geq k^2_2\geq \cdots \geq k^2_{r_2},  \ \ \cdots,\ \  k^n_1\geq k^n_2\geq \cdots \geq k^n_{r_n},
 \\ &  k^{t}_1 > k^t_2 > \dots > k^t_{r_t} \ \ \text{if} \ \ \alpha^{i_t}\ \ \text{is odd}. 
 \end{split} \end{equation}

We recall some important identities that hold in any vertex algebra $\cA$. For fields $a,b,c \in \cA$, we have
\begin{equation} \label{VOA:identities} 
\begin{split} &  (\partial a)_{(n)} b  = -na_{n-1}, \qquad \forall n\in \mathbb{Z},
\\ & a_{(n)} b   = (-1)^{|a||b|} \sum_{p \in \mathbb{Z}} (-1)^{p+1} (b_{(p)} a)_{(n-p-1)} 1,\qquad \forall n\in \mathbb{Z},
\\ &:(:ab:)c:\  - \ :abc:\  =  \sum_{n\geq 0}\frac{1}{(n+1)!}\big( :(\partial^{n+1} a)(b_{(n)} c):
 + (-1)^{|a||b|} (\partial^{n+1} b)(a_{(n)} c):\big), 
\\ & a_{(n)} (:bc:) -\ :(a_{(n)} b)c:\ - (-1)^{|a||b|}\ :b(a_{(n)} c): \  = \sum_{i=1}^n
\binom{n}{i} (a_{(n-i)}b)_{(i-1)}c, \qquad \forall n \geq 0,
\\ & a_{(r)}(b_{(s)} c)   = (-1)^{|a||b|} b_{(s)} (a_{(r)}c) + \sum_{i =0}^r \binom{r}{i} (a_{(i)}b)_{(r+s - i)} c,\qquad \forall r,s \geq 0.\end{split} \end{equation}
The last identities above are known as {\it Jacobi identities} of type $(a,b,c)$.

\subsection{Conformal structure} A conformal structure with central charge $c$ on a vertex algebra $\cA$ is a Virasoro vector $L(z) = \sum_{n\in \mathbb{Z}} L_n z^{-n-2} \in \cA$ satisfying
\begin{equation} \label{virope} L(z) L(w) \sim \frac{c}{2}(z-w)^{-4} + 2 L(w)(z-w)^{-2} + \partial L(w)(z-w)^{-1},\end{equation} such that $L_{-1} \alpha = \partial \alpha$ for all $\alpha \in \cA$, and $L_0$ acts diagonalizably on $\cA$. We say that $\alpha$ has conformal weight $d$ if $L_0(\alpha) = d \alpha$, and we denote the conformal weight $d$ subspace by $\cA[d]$. In all our examples, this grading will be by $\mathbb{Z}_{\geq 0}$ or $\frac{1}{2} \mathbb{Z}_{\geq 0}$. We say $\cA$ is of type $\cW(d_1,d_2,\dots; e_1,e_1,\dots)$, if it has a minimal strong generating set consisting of one even field in each conformal weight $d_1, d_2, \dots $, and one odd field in each conformal weight $e_1, e_2,\dots$. A vertex algebra with a conformal structure is known as a {\it vertex operator algebra} (VOA). Most examples that appear in the literature have this structure, but there are important exceptions including abelian vertex algebras, affine vertex algebras at the critical level, and certain free field algebras which we discuss later in Section \ref{sec:freewalg}.

\subsection{Abelian vertex algebras}
The category of abelian vertex algebras is equivalent to the category of differential graded (super)commutative rings. In particular, if $A$ is such a ring with differential $D$, for any $a\in A$, we can define a field 
$$a(z) = \sum_{n \in \mathbb{Z}} a(n)z^{-n-1} \in End(A)[[z, z^{-1}]].$$ Here $a(n) = 0$ for $n\geq 0$, and $a(-n-1) \in End(A)$ is left multiplication by $\frac{1}{n!} D^n a$. All OPEs are trivial in this algebra, and it is isomorphic to $A$ as a vector space. A rich source of examples comes from arc spaces: for any commutative ring $R$, the ring $R_{\infty}$ has an abelian vertex algebra structure. Note that $R_{\infty}$ has no conformal structure, but it has an action of the subalgebra $\{L_n|\ n\geq -1\}$ of the Virasoro algebra, which is called a quasiconformal structure by Ben-Zvi and Frenkel in \cite{FBZ}. This is enough for their main construction to work, namely, the coordinate-free description of the vertex operation on any Riemann surface. Finally, we note that for abelian vertex algebras, the notion of generation and strong generation are the same. In general, these notions are very different; there are vertex algebras such as the universal $\cW_{\infty}$-algebras of type $\cW(2,3,\dots)$ and $\cW(2,4,\dots)$ \cite{LVI,KL} that are generated by just one field, but are not strongly finitely generated.

\subsection{Free field algebras} The simplest nonabelian vertex algebras are the free field algebras. These have the property that only the vacuum appears in the OPEs between the generating fields. They are important because many interesting vertex algebras admit free field realizations; that is, they can be realized as subalgebras of free field algebras. Free field algebras have another useful interpretation as large level limits of affine VOAs and $\cW$-algebras, which we shall discuss later in Sections \ref{sec:hilbertaffine} and \ref{sec:freewalg}. 

There are four standard examples of free field algebras which we describe below. They are defined starting from a finite-dimensional vector space $V$ with either a symmetric or symplectic form, and the generators are either even or odd. These are simple and freely generated, and their full automorphism groups are the same as the group of linear automorphisms of $V$ which preserve the bilinear form. These examples all have conformal structures, but later we will introduce a more general class of free field algebras that do not have conformal structures.

\bigskip

\noindent {\it Heisenberg algebra}. Let $V$ be an $n$-dimensional complex vector space equipped with a nondegenerate, symmetric bilinear form $\langle, \rangle$. Associated to $V$ is a VOA $\cH(V)$ called a Heisenberg algebra. It has even generators $\alpha^u$ for $u \in V$, and OPE relations  $$\alpha^u(z) \alpha^v(w) \sim \langle u, v \rangle (z-w)^{-2}.$$ The full automorphism group of $\cH(V)$, that is, the group of linear automorphisms which preserve the OPEs, is the orthogonal group $O(n)$, since the automorphisms of $\cH(V)$ are exactly the linear automorphisms of $V$ which preserve the pairing $\langle, \rangle$. 

Often it is convenient to choose an orthonormal basis $v_1,\dots, v_n$ for $V$ relative to $\langle, \rangle$. We denote the corresponding fields by $\alpha^{i},\dots, \alpha^n$, which then satisfy $\alpha^{i} (z) \alpha^{j}(w) \sim \delta_{i,j} (z-w)^{-2}$. We often denote $\cH(V)$ by $\cH(n)$, since it is just the tensor product of $n$ copies of the rank one Heisenberg VOA. Note that $\cH(n)$ has a Virasoro element $$L^{\cH} =  \frac{1}{2} \sum_{i=1}^n  :\alpha^i \alpha^i:$$ of central charge $n$, under which $\alpha^i$ is primary of weight one. \bigskip

{\it Free fermion algebra}. One can also associate to the $n$-dimensional vector space $V$ the free fermion algebra $\cF(V)$. It is a vertex superalgebra with odd generators $\phi^u$ which are linear in $u \in V$, and satisfy $$\phi^u(z) \phi^v(w) \sim \langle u, v \rangle (z-w)^{-1}.$$ As above, the full automorphism group of $\cF(V)$ is the orthogonal group $O(n)$. If we fix an orthonormal basis $v_1,\dots, v_n$ for $V$ relative to $\langle, \rangle$, we denote the corresponding fields by $\phi^{1},\dots, \phi^n$, and they satisfy $\phi^{i} (z) \phi^{j}(w) \sim \delta_{i,j} (z-w)^{-1}$. We often denote $\cF(V)$ by $\cF(n)$. We have the Virasoro element  $$L^{\cF} =  -\frac{1}{2} \sum_{i=1}^n  :\phi^i \partial \phi^i:$$ of central charge $\frac{n}{2}$, under which $\phi^i$ is primary of weight $\frac{1}{2}$.

\bigskip

{\it $\beta\gamma$-system}. Let $V$ now be a symplectic vector space of dimension $2n$, with skew-symmetric bilinear form $\langle, \rangle$. We can attach to $V$ a VOA $\cS(V)$ with even generators $\psi^v$ which are linear in $v\in V$, and satisfy $$\psi^u(z) \psi^v(w) \sim \langle u, v \rangle (z-w)^{-1}.$$ There is an action of the symplectic group $Sp(2n)$ on $\cS(V)$ by linear transformation on $V$ which preserve $\langle, \rangle$. As usual, we fix a symplectic basis $u_1,\dots, u_n, v_1,\dots, v_n$ for $V$, so that $\langle u_i, v_j\rangle = \delta_{i,j}$ and $\langle u_i, u_j\rangle = 0 = \langle v_i, v_j\rangle$, and denote the corresponding elements $\phi^{u_i}$ and $\phi^{v_i}$ by $\beta^i$ and $\gamma^i$, respectively. Then \begin{equation} \label{eq:betagammaope} \begin{split} \beta^i(z)\gamma^{j}(w) &\sim \delta_{i,j} (z-w)^{-1},\quad \gamma^{i}(z)\beta^j(w)\sim -\delta_{i,j} (z-w)^{-1},\\  \beta^i(z)\beta^j(w) &\sim 0,\qquad\qquad\qquad \gamma^i(z)\gamma^j (w)\sim 0.\end{split} \end{equation}We often use the notation $\cS(n)$; it has Virasoro element $$L^{\cS} = \frac{1}{2} \sum_{i=1}^n \big(:\beta^{i}\partial\gamma^{i}: - :\partial\beta^{i}\gamma^{i}:\big)$$ of central charge $-n$, under which $\beta^{i}$, $\gamma^{i}$ are primary of weight $\frac{1}{2}$. Note that $Sp(2n)$ is the full automorphism group of $\cS(n)$ preserving $L^{\cS}$. One can deform the weights of the fields $\beta^i,\gamma^i$ to $(\lambda,1-\lambda)$ by deforming the Virasoro element to
$$L_\lambda^{\cS} = \sum_{i=1}^n \big(\lambda:\beta^{i}\partial\gamma^{i}: -(1-\lambda) :\partial\beta^{i}\gamma^{i}:\big)$$ 
under which the $\beta^i$, $\gamma^i$ are primary of weight $(\lambda,1-\lambda)$. Note that for $\lambda \neq  \frac{1}{2}$, $L_{\lambda}^{\cS}$ is preserved only by the subgroup $GL(n) \subseteq Sp(2n)$.

\bigskip

{\it $bc$-system}. There is a vertex superalgebra $\cE(n)$ which is analogous to the $\beta\gamma$-system $\cS(n)$, but is now based on an orthogonal space $V$ of dimension $2n$. The bosonic fields $\beta^i,\gamma^i$ are replaced by their fermionic counterparts $b^i,c^i$ of the same conformal weights, and the defining OPEs now become
\begin{equation} \label{eq:betagammaope} \begin{split} b^i(z)c^{j}(w) &\sim \delta_{i,j} (z-w)^{-1},\quad c^{i}(z)b^j(w)\sim \delta_{i,j} (z-w)^{-1},\\  b^i(z)b^j(w) &\sim 0,\qquad\qquad\qquad c^i(z)c^j (w)\sim 0.\end{split} \end{equation}
Analogously, this algebra contains the Virasoro element
$$L_\lambda^{\cE} = \sum_{i=1}^n \big(\lambda:b^{i}\partial c^{i}: -(1-\lambda) :\partial b^{i} c^{i}:\big)$$
which gives $b^i,c^i$ the conformal weights $(\lambda,1-\lambda)$. Note that in our earlier notation, $\cE(n)$ is isomorphic to the free fermion algebra $\cF(2n)$, and the conformal vector $L^{\cF}$ above corresponds to $\lambda = \frac{1}{2}$.
In the case where $V$ is two dimensional, and $\lambda=2$, the $bc$-system is a basic ingredient of the $\cN=0$ BRST complex (playing the role of the conformal `bosonic ghosts').
For the $\cN=1$ BRST complex, the $\beta\gamma$-system is the fermionic counterpart of the $bc$-system playing the parallel role of the `fermionic ghosts' of the same weights.

\bigskip

{\it Symplectic fermion algebra}.  Finally, we can attach to the $2n$-dimensional symplectic vector space $V$ a vertex superalgebra $\cA(V)$ called a symplectic fermion algebra. It has odd generators $\xi^v$ which are linear in $v \in V$, and satisfy $$\xi^u(z) \xi^v(w) \sim \langle u, v \rangle (z-w)^{-2}.$$ As above, $\cA(V)$ has full automorphism group the symplectic group $Sp(2n)$. Fixing a symplectic basis  $u_1,\dots, u_n, v_1,\dots, v_n$ for $V$ as above, we denote the corresponding generartors $\xi^{u_i}$ and $\xi^{v_i}$ by $e^{i}$ and $f^{i}$, respectively. They satisfy  \begin{equation} \label{eq:sfope} \begin{split} e^{i} (z) f^{j}(w)&\sim \delta_{i,j} (z-w)^{-2},\quad f^{j}(z) e^{i}(w)\sim - \delta_{i,j} (z-w)^{-2},\\ e^{i} (z) e^{j} (w)&\sim 0,\qquad\qquad\qquad f^{i} (z) f^{j} (w)\sim 0.\end{split} \end{equation} It has the Virasoro element $$L^{\cA} =  - \sum_{i=1}^n  :e^i f^i:$$ of central charge $-2n$, under which $e^i, f^i$ are primary of weight one.

\subsection{Affine vertex algebras} Let $\gg$ be a simple, finite-dimensional, Lie (super)algebra with dual Coxeter number $h^{\vee}$, equipped with the standard supersymmetric invariant bilinear form $(-|-)$. The {\it universal affine vertex (super)algebra} $V^k(\gg)$ associated to $\gg$ is freely generated by elements $X^{\xi}$ which are linear in $\xi \in\gg$, and satisfy
$$X^{\xi}(z)X^{\eta} (w)\sim k(\xi |\eta) (z-w)^{-2} + X^{[\xi,\eta]}(w) (z-w)^{-1} .$$ We may choose dual bases $\{\xi\}$ and $\{\xi'\}$ of $\gg$, satisfying $(\xi'|\eta)=\delta_{\xi,\eta}$. If $k+h^\vee\neq 0$, there is a Virasoro element
\begin{equation} \label{sugawara}
L^{\gg} = \frac{1}{2(k+h^\vee)}\sum_\xi :X^{\xi}X^{\xi'}:
\end{equation} of central charge $ c= \frac{k \cdot \text{sdim}(\gg)}{k+h^\vee}$. This is known as the {\it Sugawara conformal vector}, and each $X^{\xi}$ is primary of weight one. As a module over the affine Lie (super)algebra $\hat{\gg} = \gg[t,t^{-1}] \oplus \mathbb{C}$, $V^k(\gg)$ is isomorphic to the vacuum $\hat{\gg}$-module, and is freely generated by $\{X^{\xi_i}\}$ as $\xi_i$ runs over a basis of $\gg$. We denote by $L_k(\gg)$ the simple quotient of $V^k(\gg)$ by its maximal proper ideal graded by conformal weight.

By a celebrated theorem of Frenkel and Zhu \cite{FZ}, for a simple Lie algebra $\gg$, $L_k(\gg)$ is $C_2$-cofinite and rational if and only if $k\in \mathbb{N}$. It is also known that for $\gg$ a simple Lie superalgebra, $L_k(\gg)$ is $C_2$-cofinite only in the case where $\gg = \go\gs\gp_{1|2n}$ for some $n\geq 1$ and $k \in \mathbb{N}$ \cite{GK,AiLin}. The rationality of $L_k(\go\gs\gp_{1|2n})$ was long expected, but until recently was only known for $\go\gs\gp_{1|2}$ \cite{CFK}. This was proven by the second author and Creutzig in \cite{CLV} as an application of the Gaiotto-Rap\v{c}\'ak triality conjecture.

\subsection{Affine $\cW$-algebras}
The $\cW$-algebras $\cW^k(\gg,f)$ are an important family of VOAs associated to a simple Lie (super)-algebra $\gg$ and an even nilpotent element $f \in \gg$. They are a common generalization of affine vertex superalgebras (the case $f = 0$) and the Virasoro algebra (the case $\gg = \gs\gl_2$ and $f$ the principal nilpotent element). The first example beyond these cases is the Zamolodchikov $\cW_3$-algebra which corresponds to $\gg = \gs\gl_3$ and $f$ the principal nilpotent \cite{Zam}. It is of type $\cW(2,3)$ meaning that it has one strong generator in weights $2$ and $3$, and its structure is more complicated than that of affine and Virasoro VOAs since the OPE of the weight $3$ field with itself contains nonlinear terms. For a general simple Lie algebra $\gg$, the definition of the principal $\cW$-algebra $\cW^k(\gg) = \cW^k(\gg,f_{\text{prin}})$ using quantum Drinfeld-Sokolov reduction was given by Feigin and Frenkel in \cite{FFI}. It is of type $\cW(d_1,\dots, d_m)$, where $d_1,\dots, d_m$ are the degrees of the fundamental invariants of $\gg$. The definition of $\cW^k(\gg,f)$ for an arbitrary simple Lie superalgebra $\gg$ and even nilpotent element $f \in \gg$, is due to Kac, Roan, and Wakimoto \cite{KRW}, and is a generalization of quantum Drinfeld-Sokolov reduction.

The principal $\cW$-algebras $\cW^k(\gg)$ have appeared in several important problems in mathematics and physics including the Alday-Gaiotto-Tachikawa correspondence \cite{AGT,BFN,MO,SV}, and the quantum geometric Langlands program \cite{AF,CG,FrII,FG, GI,GII}. They are closely related to the classical $\cW$-algebras which arose in work of Adler, Gelfand, Dickey, Drinfeld, and Sokolov involving integrable hierarchies of soliton equations \cite{Ad,GD,Di,DS}. For a general nilpotent element $f\in \gg$, $\cW^k(\gg,f)$ can also be regarded as a chiralization of the finite $\cW$-algebra $\cW^{\text{fin}}(\gg,f)$ of Premet \cite{Pre}; see \cite{DSKII}.

$\cW$-algebras are expected to give rise to many new examples of rational conformal field theories via the {\it Kac-Wakimoto conjecture} \cite{KWIV}, which was later refined by Arakawa and van Ekeren \cite{ArIII,AvE}. Let $\gg$ be a simple Lie algebra and $k = -h^{\vee} + \frac{p}{q}$ an admissible level for $\widehat{\gg}$. The associated variety of $L_k(\gg)$ is then the closure of a nilpotent orbit $\mathbb{O}_q$ which depends only on the denominator $q$ \cite{ArIII}. If $f\in \gg$ is a nilpotent lying in $\mathbb{O}_q$, the simple $\cW$-algebra $\cW_k(\gg,f)$ is known to be nonzero and $C_2$-cofinite \cite{ArIII}. Such pairs $(f,q)$ are called {\it exceptional pairs} in \cite{AvE}, and they generalize the notion of exceptional pair due to Kac and Wakimoto \cite{KWIV} and Elashvili, Kac, and Vinberg \cite{EKV}. The corresponding $\cW$-algebras are known as {\it exceptional $\cW$-algebras} and were conjectured by Arakawa to be rational in \cite{ArIII}, generalizing the original conjecture of \cite{KWIV}. In a celebrated paper \cite{ArIV}, Arakawa proved the rationality in full generality for principal nilpotents. More recently, Arakawa and van Ekeren have proven rationality of all exceptional $\cW$-algebras in type $A$, and all exceptional subregular $\cW$-algebras of simply laced types \cite{AvE} \footnote{After this paper was written, McRae proved the rationality of exceptional $\cW$-algebras in full generality in \cite{McRII}}. One application of the Gaiotto-Rap\v{c}\'ak triality is to prove some new cases of this conjecture involving non-principal $\cW$-algebras in types $B$ and $C$ that were not accessible with existing technology. The triality theorems also give new examples of rational $\cW$-algebras that are not exceptional.

We shall briefly recall the definition and basic properties of $\cW$-algebras, following \cite{KWIII,KRW}. Let $\gg$ be a simple Lie superalgebra with nondegenerate invariant supersymmetric bilinear form $( \ \ | \ \ ): \gg \times \gg \rightarrow \mathbb{C}$.
Fix a basis $\{q^\alpha\}_{\alpha \in S}$ of $\gg$ indexed by a set $S$, which is homogeneous with respect to the $ \mathbb{Z}/2\mathbb{Z}$-grading on $\gg$. Here  
$$|\alpha| = \begin{cases} 0 & \ q^\alpha \ \text{even} \\ 1 & \ q^\alpha \ \text{odd} \end{cases}$$
We define the structure constants in $\gg$ by
$$[q^\alpha, q^\beta] = \sum_{\gamma \in S}f^{\alpha\beta}_{\gamma} q^\gamma.$$
The affine VOA $V^k(\gg)$ is strongly generated by fields $\{X^\alpha\}_{\alpha \in S}$ satisfying
$$
X^\alpha(z)X^\beta(w) \sim k(q^\alpha|q^\beta)(z-w)^{-2} + \sum_{\gamma\in S} f^{\alpha \beta}_{\gamma} X^\gamma (w) (z-w)^{-1}.$$
We define $X_\alpha$ to be the field corresponding to $q_\alpha$, where $\{q_\alpha\}_{\alpha \in S}$ is the dual basis for $\gg$ with respect to $( \ \ | \ \  )$.

Let $f$ be a nilpotent element in the even part of $\gg$. By the Jacobson-Morozov theorem, $f$ can be completed to a copy of $\gs\gl_2$ triple $\{f, x, e\} \subseteq \gg$ satisfying
$$[x, e] =e, \qquad  [x, f]=-f,\qquad  [e, f]= 2x.$$ The $\cW$-(super)algebra $\cW^k(\gg,f)$ we are going to define depends only on the conjugacy class of $f$. First, we have the decomposition of $\gg$ as an $\gs\gl_2$-module:
$$ \gg = \bigoplus_{k \in  \frac{1}{2}\mathbb{Z}} \gg_k, \qquad \gg_k = \{  a \in \gg | [x, a] = ka \}.$$
Let $S_k$ be a basis of $\gg_k$, so that our index set $S$ decomposes as $S=\bigcup_k S_k$. Set
$$
\gg_+ = \bigoplus_{k \in  \frac{1}{2}\mathbb Z_{>0}} \gg_k, \qquad \gg_- = \bigoplus_{k \in  \frac{1}{2}\mathbb Z_{<0}} \gg_k,$$
with corresponding bases $S_+$ of $\gg_+$, and $\gg_-$ is naturally identified with the dual of $\gg_+$. 

We have an invariant bilinear form on $\gg_{\frac{1}{2}}$ given by
$$\langle a, b \rangle := ( f | [a, b] ).$$
Let $F(\gg_+)$ be the free field algebra associated to the vector superspace $\gg_+ \oplus \gg_+^*$. This vertex superalgebra is strongly generated by fields $\{\varphi_\alpha, \varphi^\alpha\}_{\alpha \in S_+}$, where $\varphi_\alpha$ and $\varphi^\alpha$ are odd if $\alpha$ is even and even if $\alpha$ is odd. The OPEs are
$$
\varphi_\alpha(z) \varphi^\beta(w) \sim \frac{\delta_{\alpha, \beta}}{(z-w)}, \qquad \varphi_\alpha(z) \varphi_\beta(w) \sim 0 \sim \varphi^\alpha(z) \varphi^\beta(w).$$

Let $F(\gg_{\frac{1}{2}})$ be the neutral vertex superalgebra associated to $\gg_{\frac{1}{2}}$ with bilinear form $\langle \ \ , \ \ \rangle$. This is strongly generated by $\{\Phi_\alpha\}_{\alpha \in S_{\frac{1}{2}}}$, where $\Phi_\alpha$ is even if $\alpha$ is even, and odd if $\alpha$ is odd. The OPEs are given by
\begin{equation}\label{eq:neutral}
\Phi_\alpha(z) \Phi_\beta(w) \sim \frac{\langle q^\alpha , q^\beta \rangle}{(z-w)}  \sim \frac{ ( f | [q^\alpha, q^\beta] )}{(z-w)},
\end{equation}
and fields corresponding to the dual basis with respect to $\langle \ \ , \ \ \rangle$ are denoted by $\Phi^\alpha$. 
The complex is defined by 
$$
C(\gg, f, k) := V^k(\gg) \otimes F(\gg_+) \otimes F(\gg_{\frac{1}{2}}).$$
One defines a $\mathbb Z$-grading by giving the $\varphi_\alpha$ charge $-1$, the $\varphi^\alpha$ charge $1$, and all others charge zero. 
One further defines the odd field $d(z)$ of charge minus one by
\begin{equation}
\begin{split}
d(z) &= \sum_{\alpha \in S_+} (-1)^{|\alpha|} X^\alpha \varphi^\alpha  
-\frac{1}{2} \sum_{\alpha, \beta, \gamma \in S_+}  (-1)^{|\alpha||\gamma|} f^{\alpha \beta}_\gamma \varphi_\gamma \varphi^\alpha \varphi^\beta + \\
&\quad  \sum_{\alpha \in S_+} (f | q^\alpha) \varphi^\alpha + \sum_{\alpha \in S_{\frac{1}{2}}} \varphi^\alpha \Phi_\alpha.
\end{split} 
\end{equation}
The zero-mode $d_0$ is a differential since $[d(z), d(w)]=0$ by \cite[Thm. 2.1]{KRW}. Set $m_\alpha=j$ if $\alpha \in S_j$. The $\mathcal W$-algebra is  defined to be its homology 
$$\cW^k(\gg, f) := H\left(C(\gg, f, k), d_0 \right).$$
Consider the Virasoro fields  
\begin{equation}
\begin{split}
L_{\text{sug}} &=  \frac{1}{2(k+h^\vee)} \sum_{\alpha \in S} (-1)^{|\alpha|} :X_\alpha X^\alpha:,\\
L_{\text{ch}} &= \sum_{\alpha \in S_+} \left(-m_\alpha :\varphi^\alpha \partial \varphi_\alpha: + (1-m_\alpha) :(\partial\varphi^\alpha )\varphi_\alpha:  \right),\\
L_{\text{ne}} &= \frac{1}{2} \sum_{\alpha \in S_{\frac{1}{2}}} :(\partial \Phi^\alpha) \Phi_\alpha : ,
\end{split}
\end{equation} and define
\begin{equation} L = L_{\text{sug}} + \partial x + 
L_{\text{ch}} + L_{\text{ne}}.\end{equation} 
Then $L$ represents an element of $\cW^k(\gg, f)$ and has central charge 
\begin{equation}\label{eq:c}
c(\gg, f, k) = \frac{k\, \text{sdim}\, \gg}{k+h^\vee} -12 k(x|x) -\sum_{\alpha \in S_+} (-1)^{|\alpha|} (12m_\alpha^2-12m_\alpha +2) -\frac{1}{2}\, \text{sdim}\, \gg_{\frac{1}{2}}.
\end{equation}
Set
\begin{equation}
J^\alpha = X^\alpha + \sum_{\beta, \gamma \in S_+} (-1)^{|\gamma|} f^{\alpha \beta }_\gamma :\varphi_\gamma\varphi^\beta:
\end{equation}
Their operator product is given in the equivalent formalism of $\lambda$-brackets as \cite[Eq. 2.5]{KWIII}
\[
[J^\alpha {}_\lambda J^\beta] = f^{\alpha \beta}_\gamma J^\gamma + \lambda\left(k(q^\alpha | q^\beta) + \frac{1}{2}\left(\kappa_\gg(q^\alpha, q^\beta) -\kappa_{\gg_0}(q^\alpha, q^\beta) \right) \right)
\]
with $\kappa_\gg$, $\kappa_{\gg_0}$ the Killing forms, that is the supertrace of the adjoint representations, of $\gg$, $\gg_0$, respectively.
The action of $d_0$ is \cite[Eq. 2.6]{KWIII}
\begin{equation}
\begin{split}
d_0(J^\alpha) &=  \sum_{\beta \in S_+} ([f, q^\alpha], q^\beta)\varphi^\beta + \sum_{\substack{ \beta \in S_+ \\ \gamma \in S_{\frac{1}{2}}}} (-1)^{|\alpha|(|\beta|+1)} f^{\alpha\beta}_\gamma \varphi^\beta \Phi_\gamma -\\
&  \sum_{\substack{\beta \in S_+ \\ \gamma \in S\setminus S_+}} (-1)^{|\beta|(|\alpha|+1)}f^{\alpha\beta}_\gamma \varphi^\beta J^\gamma  +
 \sum_{\beta \in S_+} ( k ( q^\alpha | q^\beta ) +  \text{str}_{\gg_+}\left( p_+ ( \text{ad}(q^\alpha)) \text{ad}(q^\beta) \right) \partial\varphi^\beta
\end{split}
\end{equation}
with $p_+$ the projection onto $\gg_+$ and $\text{str}_{\gg_+}$ the supertrace on $\gg_+$. Set 
\begin{equation}\label{eq:Ialpha}
I^\alpha := J^\alpha + \frac{(-1)^{|\alpha|}}{2} \sum_{\beta \in S_{\frac{1}{2}}}  f^{\beta\alpha}_\gamma \Phi^\beta\Phi_\gamma
\end{equation}
for $\alpha \in \gg_0$. Set $\ga := \gg^f \cap \gg_0$. It is a Lie subsuperalgebra of $\gg$. The next theorem tells us that $\cW^k(\gg, f)$  contains an affine vertex superalgebra of type $\ga$. 
\begin{thm} \cite[Thm 2.1]{KWIII}\label{thm:structureI}
\begin{enumerate}
\item $d_0(I^\alpha)=0$ for $q^\alpha \in \ga$ and
\[
[I^\alpha {}_\lambda I^\beta] = f^{\alpha \beta}_\gamma J^\gamma + \lambda\left(k(q^\alpha | q^\beta) + \frac{1}{2}\left(\kappa_\gg(q^\alpha, q^\beta) -\kappa_{\gg_0}(q^\alpha, q^\beta)-\kappa_{\frac{1}{2}}(q^\alpha, q^\beta) \right) \right),
\]
with $\kappa_{\frac{1}{2}}$ the supertrace of $\gg_0$ on $\gg_{\frac{1}{2}}$. 
\item
\[
 [ L {}_\lambda J^\alpha ]=(\partial +(1- j)\lambda )J^\alpha + \delta_{j, 0} \lambda^2\left(\frac{1}{2}{\rm str}_{\gg_+}({\rm ad}\ q^\alpha) -(k+h^\vee)( q^\alpha |x)\right),
 \]
 for $\alpha \in S_j$, and the same formula holds for $I^\alpha$ if $q^\alpha \in \ga$.
\end{enumerate}
\end{thm}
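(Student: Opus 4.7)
The plan is to establish both parts of the theorem by a direct computation of OPEs inside the complex $C(\gg,f,k) = V^k(\gg) \otimes F(\gg_+) \otimes F(\gg_{\frac{1}{2}})$, starting from the explicit formula for $d_0(J^\alpha)$ recorded immediately before the theorem statement and the defining OPEs of the three tensor factors. The two key structural inputs are that $q^\alpha \in \ga = \gg^f \cap \gg_0$ satisfies $[f, q^\alpha] = 0$, and that $\ad(q^\alpha)$ preserves the $\frac{1}{2}\Z$-grading on $\gg$ induced by $x$.

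For the vanishing $d_0(I^\alpha) = 0$ in part (1), I would check that among the four summands in the displayed expression for $d_0(J^\alpha)$, three vanish under the hypothesis $q^\alpha \in \ga$. The first vanishes immediately because $[f, q^\alpha]=0$. The third vanishes because $\ad(q^\alpha)$ preserves the grading, so $f^{\alpha\beta}_\gamma \neq 0$ with $\beta \in S_+$ forces $\gamma \in S_+$, contradicting the restriction $\gamma \in S\setminus S_+$. The fourth vanishes because $(q^\alpha|q^\beta) = 0$ for $\beta \in S_+$ by orthogonality of the grading under the invariant form, while the supertrace $\text{str}_{\gg_+}(p_+(\ad q^\alpha)\,\ad q^\beta)$ is zero since $\ad q^\beta$ strictly raises the grading on $\gg_+$ and so admits no fixed diagonal entry in a graded basis. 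The surviving second summand reduces, by the same grading argument, to a sum over $\beta,\gamma \in S_{\frac{1}{2}}$. A parallel calculation of $d_0$ on the correction term $\frac{(-1)^{|\alpha|}}{2}\sum_{\beta} f^{\beta\alpha}_\gamma \Phi^\beta \Phi_\gamma$, computed through the $\sum_{\delta} \varphi^\delta \Phi_\delta$ piece of $d(z)$ and the neutral-VOA OPE \eqref{eq:neutral}, should produce an equal and opposite contribution, giving $d_0(I^\alpha)=0$.

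For the $\lambda$-bracket $[I^\alpha{}_\lambda I^\beta]$, I would begin with the stated formula for $[J^\alpha{}_\lambda J^\beta]$ and add the cross contributions from the $\Phi\Phi$ corrections in each $I^\alpha$. All $J$--$\Phi$ mixed brackets vanish because $V^k(\gg)$ and $F(\gg_{\frac{1}{2}})$ sit in different tensor factors of the complex. The $\Phi\Phi$--$\Phi\Phi$ bracket, expanded using \eqref{eq:neutral}, yields an $f^{\alpha\beta}_\gamma$-weighted $\Phi\Phi$ piece that combines with the $f^{\alpha\beta}_\gamma J^\gamma$ term in $[J^\alpha{}_\lambda J^\beta]$ to upgrade $J^\gamma$ to $I^\gamma$ on the right-hand side, plus a purely central $\lambda$-piece equal to $-\frac{1}{2}\kappa_{\frac{1}{2}}(q^\alpha,q^\beta)$ by the definition of $\kappa_{\frac{1}{2}}$ as the supertrace of $\gg_0$ acting on $\gg_{\frac{1}{2}}$. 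Part (2) is then proved by splitting $L = L_{\text{sug}} + \partial x + L_{\text{ch}} + L_{\text{ne}}$ and computing $[L{}_\lambda J^\alpha]$ term by term: $L_{\text{sug}}$ makes $X^\alpha$ primary of weight $1$, $\partial x$ shifts the conformal weight by $-m_\alpha$ via $[x, q^\alpha] = m_\alpha q^\alpha$, $L_{\text{ch}}$ consistently absorbs the $\varphi\varphi^*$-correction built into $J^\alpha$, and $L_{\text{ne}}$ commutes with $J^\alpha$. A short residue computation shows that the $\lambda^2$-anomaly survives only when $m_\alpha = 0$, producing the $\delta_{j,0}$ factor; the same analysis applied to $I^\alpha$ with $q^\alpha \in \ga$ yields the identical formula because the additional $\Phi\Phi$-correction is a conformal weight-one primary for $L_{\text{ne}}$ and contributes no new anomaly.

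The main obstacle is bookkeeping rather than conceptual: the Koszul signs $(-1)^{|{\cdot}|\,|{\cdot}|}$ from super-OPEs proliferate, and one must verify that the partial cancellations in the central term assemble exactly into $k(q^\alpha|q^\beta) + \frac{1}{2}(\kappa_\gg - \kappa_{\gg_0} - \kappa_{\frac{1}{2}})(q^\alpha,q^\beta)$ with the correct sign. This sign-tracking, rather than any deeper subtlety, is what the original proof in \cite[Thm 2.1]{KWIII} carries out in detail, and my plan is essentially to reproduce that computation with the additional grading restrictions afforded by $q^\alpha \in \ga$.
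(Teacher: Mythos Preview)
The paper does not prove this theorem; it is stated with the citation \cite[Thm 2.1]{KWIII} and used as a black box, with no proof or sketch given in the paper itself. There is therefore no ``paper's own proof'' to compare your proposal against.

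That said, your proposal is a reasonable outline of the direct OPE computation that the cited reference carries out, and you have correctly identified the key structural simplifications coming from $q^\alpha \in \ga = \gg^f \cap \gg_0$: namely $[f,q^\alpha]=0$ and the preservation of the $\frac{1}{2}\mathbb{Z}$-grading by $\ad(q^\alpha)$. Your own assessment is accurate: the argument is entirely a matter of sign and grading bookkeeping rather than any conceptual obstacle, and the plan as written would succeed once the super-signs in the $\Phi\Phi$--$\Phi\Phi$ bracket and in the $d_0$-action on the correction term are tracked carefully.
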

The main structural theorem is the following.

\begin{thm} \cite[Thm 4.1]{KWIII} \label{thm:Walgmain}
Let $\gg$ be a simple finite-dimensional Lie superalgebra with an invariant bilinear
form $( \ \ |  \ \ )$, and let $x, f$ be a pair of even elements of $\gg$ such that ${\rm ad}\ x$ is diagonalizable with
eigenvalues in $\frac{1}{2} \mathbb Z$ and $[x,f] = -f$. Suppose that all eigenvalues of ${\rm ad}\ x$ on $\gg^f$ are non-positive, so that we have a decomposition $\gg^f = \bigoplus_{j\leq 0} \gg^f_j$. 
 Then
 \begin{enumerate}
\item For each $q^\alpha \in \gg^f_{-j}$, ($j\geq  0$) there exists a $d_0$-closed field $K^\alpha$ of conformal weight
$1 + j$ (with respect to $L$) such that $K^\alpha - J^\alpha$ is a linear combination of normal ordered products of the fields $J^\beta$, where $\beta \in S_{-s}, 0 \leq s < j$, the fields $\Phi_\alpha$, where $\alpha \in S_{\frac{1}{2}}$, and the derivatives of these fields.
\item The homology classes of the fields $K^\alpha$, where $\{ q^\alpha\}_{\alpha \in S^f}$ is a basis of $\gg^f$ indexed by the set $S^f$ and  compatible with its $\frac{1}{2}\mathbb Z$-gradation, strongly and freely generate the VOA $\cW^k(\gg, f)$.
\item $H_0(C(\gg, f, k), d_0) = \cW^k(\gg, f)$ and $H_j(C(\gg, f, k), d_0) = 0$ if $j \neq 0$.
\end{enumerate}
\end{thm}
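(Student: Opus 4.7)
The plan is to prove all three statements simultaneously via a spectral sequence argument on the BRST complex $C = C(\gg, f, k)$. I would introduce a decreasing filtration $F^{\bullet} C$ derived from the $\on{ad}(x)$-eigenvalues, by assigning integer weights to the generators so that the differential decomposes as $d_0 = d_- + d_+$, where $d_-$ consists of the ``Chevalley--Eilenberg'' pieces $\sum_{\alpha \in S_+} (-1)^{|\alpha|} X^\alpha \varphi^\alpha$ together with the cubic structure-constant term, while $d_+$ collects the character term $\sum_{\alpha \in S_+} (f|q^\alpha) \varphi^\alpha$ and the neutral coupling $\sum_{\alpha \in S_{\frac{1}{2}}} \varphi^\alpha \Phi_\alpha$. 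With the correct choice of weights (chosen so that the conformal-weight grading remains finite-dimensional in each degree after the $\partial x$-shift), $d_-$ preserves $F^\bullet$, $d_+$ strictly increases it, and $d_-^2 = d_+^2 = 0$ separately, so one obtains an associated spectral sequence $E_r \Rightarrow H(d_0)$.

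The $E_1$ page is the cohomology $H^{\bullet}(\on{gr} C,\, \on{gr} d_-)$, which is a vertex-algebra analogue of the Chevalley--Eilenberg cochain complex for $\gg_+$ with coefficients in $V^k(\gg) \otimes F(\gg_{\frac{1}{2}})$. A Koszul-type argument using the PBW basis of $V^k(\gg)$, together with the cancellation of $\gg_+ \subset V^k(\gg)$ against the ghosts $\varphi^\alpha, \varphi_\alpha$, collapses this cohomology onto $V^k(\gg_{\leq 0}) \otimes F(\gg_{\frac{1}{2}})$, concentrated in ghost degree zero. The induced differential $d_1$ on $E_1$ is then a vertex analogue of the Kostant moment-map shift $\mu = f$, with the $\Phi_\alpha$-terms implementing a Dixmier-type twist on $\gg_{\frac{1}{2}}$. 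A vertex-algebra Kostant theorem shows that $E_2$ is again concentrated in degree zero and, as a graded vector space, is isomorphic to the differential polynomial algebra on a basis of $\gg^f$, where a generator lifting $q^\alpha \in \gg^f_{-j}$ carries conformal weight $1+j$. Convergence is automatic because each conformal-weight subspace of $C$ is finite-dimensional and the hypothesis that $\on{ad}(x)$ has non-positive eigenvalues on $\gg^f$ keeps $F^\bullet$ bounded on each such subspace.

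Statement (3) then follows immediately. For (1), given $q^\alpha \in \gg^f_{-j}$, I lift the $E_\infty$-class represented by the $J^\alpha$-symbol to a $d_0$-cocycle $K^\alpha$ by inductively cancelling obstructions in successive filtration steps; the obstructions vanish because the higher cohomology is trivial, and the counterterms forced by the degree, parity, and conformal-weight constraints have exactly the shape described in (1) -- normal-ordered polynomials in $J^\beta$ with $q^\beta \in \gg_{-s}$, $0 \leq s < j$, the fields $\Phi_\alpha$, and their derivatives. For (2), the induced filtration on $\cW^k(\gg, f)$ has associated graded equal to $E_\infty^0$, which by the spectral sequence calculation is the differential polynomial ring freely generated by the symbols of the $K^\alpha$; lifting this PBW-type statement shows that the $K^\alpha$ strongly and freely generate $\cW^k(\gg, f)$. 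The main obstacle is verifying the identification $E_2 \cong \cW^k(\gg, f)$ in the precise form required: this demands careful combinatorial bookkeeping of how the three tensor factors $V^k(\gg)$, $F(\gg_+)$, $F(\gg_{\frac{1}{2}})$ interact under $d_+$ (so that the $I^\alpha$-correction in \eqref{eq:Ialpha} arises naturally from the $\Phi\Phi$-terms of $d_+$), together with the sign bookkeeping in the supercase -- this is where the assumption that $f$ lies in the even part and that $\gg^f$ sits in non-positive $\on{ad}(x)$-degree is essential to control both the cohomology vanishing and the boundedness of the filtration.
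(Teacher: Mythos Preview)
The paper does not prove this theorem; it is quoted verbatim from Kac and Wakimoto \cite[Thm.~4.1]{KWIII} as a structural background result, with no argument supplied. So there is no ``paper's own proof'' to compare against.

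That said, your outline is essentially the strategy used in \cite{KWIII}. They also filter the complex by a grading built from the $\operatorname{ad}(x)$-eigenvalues (assigning suitable integer weights to $X^\alpha$, $\varphi^\alpha$, $\varphi_\alpha$, $\Phi_\alpha$), split $d_0$ into a piece that preserves the filtration and a piece that raises it, and run a spectral sequence whose first nontrivial page is computed by a Koszul/Chevalley--Eilenberg argument killing the $\gg_+$ part of $V^k(\gg)$ against the charged ghosts. The identification of the surviving page with the differential polynomial algebra on $\gg^f$, together with the conformal-weight shift $q^\alpha\in\gg^f_{-j}\mapsto 1+j$, is exactly their Kostant-type step. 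Your description of the counterterm structure in (1) and the PBW lifting in (2) matches how they extract the $K^\alpha$ from the $E_\infty$-classes.

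One point to be careful about: the precise decomposition $d_0 = d_- + d_+$ you propose (Chevalley--Eilenberg plus cubic versus character plus neutral) is not quite the one Kac--Wakimoto use, and with your split it is not obvious that $d_-^2 = 0$ holds on the nose rather than only modulo lower filtration. Their actual grading is finer --- they weight each generator individually by its $\operatorname{ad}(x)$-eigenvalue and ghost number in a way that makes the associated graded differential purely ``linear'' (a tensor product of rank-one Koszul complexes), so that the $E_1$ computation is immediate. Your version forces you to do a genuine Lie-algebra cohomology computation at $E_1$, which is harder. If you want to carry out the argument in full, it is worth revisiting the exact filtration so that the first page is as simple as possible; otherwise the ``Koszul-type argument'' you invoke is where the real work hides.
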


There are often several different ways to construct $\cW$-algebras in addition to the above definition. For example, the principal $\cW$-algebra can be realized inside the Heisenberg algebra of rank equal to $\text{rank} \ \gg$ as the kernel of screening operators associated to the simple roots of $\gg$. For simply laced types, there is also a coset construction $\cW^k(\gg)$ which was conjectured by many authors and recently proven in \cite{ACL}. 

 Let $\gg$ be a simple Lie algebra with dual Coxeter number $h^{\vee}$, $\cW^k(\gg)$ the universal principal $\cW$-algebra of $\gg$ at level $k$, and $\cW_k(\gg)$ its unique simple quotient. It was proven by Arakawa \cite{ArII,ArIV} that $\cW_k(\gg)$ is rational  when $k$ is a {\it nondegenerate admissible level}. These $\cW$-algebras are called the {\it minimal series principal $\cW$-algebras} since in the case that $\gg=\mathfrak{sl}_2$ they are exactly the minimal series Virasoro VOAs. They are not necessarily unitary, but if $\gg$ is simply laced, there exists a sub-series called the {\it discrete series} which were conjectured for many years to be unitary.

We denote by $X^{\xi}(z)$ the generating fields of $V^k(\gg)$ for $\xi \in\gg$, and by abuse of notation, we also denote by $X^{\xi}(z)$ the generating fields of $L_k(\gg)$. There exists a diagonal homomorphism $$V^{k+1}(\gg) \rightarrow V^k(\gg)\otimes L_1(\gg),\qquad X^{\xi}(z) \mapsto X^{\xi}(z) \otimes 1 + 1 \otimes X^{\xi}(z),\qquad \xi \in \gg,$$ which descends to a homomorphism $L_{k+1}(\gg) \rightarrow L_k(\gg)\otimes L_1(\gg)$ for all positive integer and admissible values of $k$. The main result of \cite{ACL} is the following.

\begin{thm}  \label{ACLmain}
Let $\gg$ be simply laced and let $k,\ell$ be complex numbers related by \begin{equation*} \ell +h^{\vee}=\frac{k+h^{\vee}}{k+h^{\vee}+1}.\end{equation*} \begin{enumerate}
\item For generic values of $\ell$, we have a VOA isomorphism 
$$\cW^{\ell}(\gg)\cong  \text{Com}(V^{k+1}(\gg),V^k(\gg)\otimes L_1(\gg)).$$ 
\item Suppose that $k$ is an admissible level for $\hat{\gg}$. Then $\ell$ is a nondegenerate admissible level for $\hat{\gg}$ so that $\cW_{\ell}(\gg)$ is a minimal series $\cW$-algebra. Then $$\cW_{\ell}(\gg)\cong  \text{Com}(L_{k+1}(\gg),L_k(\gg)\otimes L_1(\gg)).$$
\end{enumerate}
\end{thm}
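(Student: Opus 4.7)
The plan is to identify both sides by first establishing matching strong-generating types, and then matching them inside a universal one-parameter family controlled by free field realizations. I would start with the diagonal homomorphism $V^{k+1}(\gg) \to V^k(\gg) \otimes L_1(\gg)$; this is a VOA map because the Sugawara stress tensor for $V^{k+1}(\gg)$ coincides, modulo a standard computation, with the sum of Sugawara stress tensors on the right once the dual Coxeter number and level shifts are accounted for. The commutant $\cC = \text{Com}(V^{k+1}(\gg), V^k(\gg) \otimes L_1(\gg))$ is then a VOA containing a ``shifted Sugawara'' Virasoro element $L^{V^k(\gg)} + L^{L_1(\gg)} - L^{V^{k+1}(\gg)}$, and a direct central-charge computation shows this matches $c(\cW^\ell(\gg))$ precisely under the relation $\ell + h^\vee = (k+h^\vee)/(k+h^\vee+1)$.

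For part (1), the next step is to identify a minimal strong generating set of $\cC$ and show it is of type $\cW(d_1,\dots,d_r)$, where the $d_i$ are the degrees of the fundamental invariants of $\gg$. My tool is free field realization. Wakimoto's construction embeds $V^k(\gg)$ into a $\beta\gamma$-Heisenberg free field algebra, and for simply laced $\gg$ the level-one factor $L_1(\gg)$ is the lattice VOA for the root lattice $Q$. Inside the resulting free field model, the commutant of the diagonal $V^{k+1}(\gg)$ can be characterized as the joint kernel of a family of screening operators attached to the simple roots; a direct comparison shows these are exactly the screenings cutting out Feigin--Frenkel's $\cW^\ell(\gg)$ under the prescribed level correspondence. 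To conclude, I would match graded characters of $\cC$ and $\cW^\ell(\gg)$ at generic $\ell$; equivalently, one may use the universal two-parameter $\cW_\infty$-algebra of type $\cW(2,3,\dots)$ (existence of which is discussed earlier in the excerpt) and verify that both algebras arise as the same one-parameter quotient by computing their truncation data.

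For part (2), the diagonal map descends to simple quotients, giving $L_{k+1}(\gg) \to L_k(\gg) \otimes L_1(\gg)$ whenever this is well-defined, which is the case for positive integer and admissible $k$. Taking commutants and applying part (1) produces a surjection $\cW^\ell(\gg) \twoheadrightarrow \text{Com}(L_{k+1}(\gg), L_k(\gg) \otimes L_1(\gg))$. The remaining task is to show that the target is simple, forcing the kernel to be the maximal ideal of $\cW^\ell(\gg)$ so that the simple quotient on the left is $\cW_\ell(\gg)$. For this one decomposes $L_k(\gg) \otimes L_1(\gg)$ as a module over $L_{k+1}(\gg) \otimes \cW_\ell(\gg)$ and argues that each multiplicity space is a simple $\cW_\ell(\gg)$-module. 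This in turn rests on admissibility of $k$ through Arakawa's analysis of the Kazhdan--Lusztig category for $\widehat{\gg}$ at admissible levels, combined with Drinfeld--Sokolov reduction relating $\widehat{\gg}$-modules to $\cW_\ell(\gg)$-modules and the branching rules needed to match the pieces.

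The hardest step will be the simplicity/semisimplicity input required in (2): namely, establishing that $L_k(\gg)\otimes L_1(\gg)$ genuinely decomposes as claimed and that each multiplicity space is simple over $\cW_\ell(\gg)$. This uses admissibility in an essential way, and relies on the full strength of Arakawa's rationality and complete reducibility results for admissible-level $\widehat{\gg}$-modules, together with the compatibility of these with quantum Drinfeld--Sokolov reduction. By contrast, the generic identification in (1) is comparatively robust: once strong generation of type $\cW(d_1,\dots,d_r)$ is verified via free field techniques, character comparison (or equivalently the universality of the two-parameter $\cW_\infty$ structure) forces the isomorphism.
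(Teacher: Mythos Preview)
Your approach to part (1) is in the right spirit---free field realizations are indeed the key---but differs in its specifics from the paper's argument. The paper does not work inside the full Wakimoto $\beta\gamma$--Heisenberg model and match screenings there. Rather, the technical heart is to realize both sides inside the \emph{same} rank-$r$ Heisenberg algebra $\pi$ (with $r=\text{rank}\,\gg$): the Miura map $\gamma_\ell:\cW^\ell(\gg)\hookrightarrow\pi$ on one side, and a second, newly constructed homomorphism $\Psi_k:\text{Com}(V^{k+1}(\gg),V^k(\gg)\otimes L_1(\gg))\hookrightarrow\pi$ on the other, and then to show their images coincide. Your alternative route via the universal two-parameter algebra of type $\cW(2,3,\dots)$ would apply only in type $A$; for $\gg$ of type $D$ or $E$ the principal $\cW$-algebra has a different strong generating type, so that classifying object is unavailable. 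Character matching alone (which was already known from work of Frenkel--Kac--Wakimoto and Arakawa) gives evidence but not an isomorphism.

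For part (2) your route diverges more substantially, and there is a gap. You assert that ``taking commutants and applying part (1) produces a surjection $\cW^\ell(\gg)\twoheadrightarrow\text{Com}(L_{k+1}(\gg),L_k(\gg)\otimes L_1(\gg))$,'' but this is not automatic: part (1) is a statement at \emph{generic} level, and passing from the universal coset to the coset of simple quotients at a \emph{specific} admissible level requires a separate specialization argument. The paper handles this by invoking a general result (Theorem 8.1 of \cite{CLIII}) which says precisely that once (1) holds generically, (2) follows at admissible levels---so in the paper's logic (2) is the \emph{easy} part, not the hard one. Your proposed direct attack on simplicity via module decompositions, Arakawa's admissible-level results, and compatibility with Drinfeld--Sokolov reduction is exactly the kind of level-by-level analysis the paper explicitly avoids as ``difficult to study \dots in a uniform manner.''
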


This was conjectured in \cite{BBSS} in the case of discrete series, which correspond to the case $k \in \mathbb{N}$, and by Kac and Wakimoto \cite{KWI,KWIII} for arbitrary minimal series $\cW$-algebras. The conjectural character formula of \cite{FKW} for minimal series representations of $\cW$-algebras that was proved in \cite{ArI}, together with the character formula of \cite{KWIII} of branching rules, proves the matching of characters. This provided strong evidence for this conjecture, although it was previously known only for $\mathfrak{sl}_2$ \cite{GKO,KWIII} for a general admissible $k$, and for $\mathfrak{sl}_3$ for positive integers $k$ \cite{AJ}.  

Previous works on this problem have focused on the case where $k$ is either a positive integer or an admissible level, and it is difficult to study all such cases in a uniform manner. In \cite{ACL}, the case of {\it generic level} was considered first; by Theorem 8.1 of \cite{CLIII}, once statement (1) is proven for generic level, statement (2) then follows. For generic level, we have the free field realization of $\cW^{\ell}(\gg)$ coming from the Miura map $\gamma_{\ell}: \cW^{\ell}(\gg) \hookrightarrow \pi$, where $\pi$ is the Heisenberg VOA of rank $d = \text{rank}(\gg)$. This is obtained by applying the Drinfeld-Sokolov reduction functor to the Wakimoto free field realization $V^{\ell}(\gg) \hookrightarrow M_{\gg} \otimes \pi$, where $M_{\gg}$ is the $\beta\gamma$-system of rank $d$ \cite{FrI}. The technical heart of \cite{ACL} is to construct {\it another} VOA homomorphism $\Psi_k: \text{Com}(V^{k+1}(\gg),V^k(\gg)\otimes L_1(\gg)) \hookrightarrow \pi$, and show that its image coincides with the image of $\gamma_{\ell}$.

Theorem \ref{ACLmain} is a starting assumption of the conformal field theory to higher spin gravity correspondence of \cite{GGI}, and has several applications. First, it implies the unitarity of all discrete series principal $\cW$-algebras. Second, it implies the rationality of several families of cosets, $\text{Com}(L_n(\mathfrak{gl}_m), L_n(\mathfrak{sl}_{m+1}))$ and $\text{Com}(L_{2n}(\mathfrak{so}_m), L_{2n}(\mathfrak{so}_{m+1}))$ for positive integers $n,m$, and a family of $\cN=2$ superconformal VOAs called Kazama-Suzuki cosets \cite{KS}. Third, if $\cV$ is a VOA, $\cA \subseteq \cV$ a subVOA, and $\cC = \text{Com}(\cA, \cV)$, there is a strong link between the representation theories of $\cA$, $\cC$ and  $\cV$. For $k$ an admissible level, Theorem \ref{ACLmain} together with results of \cite{CKM}, implies that properties of the category of ordinary modules for the affine VOA $L_k(\gg)$ such as rigidity, are inherited from known properties of the module category of the corresponding $\cW$-algebra. In \cite{CHY,C}, a certain subcategory of the Bernstein-Gelfand-Gelfand category $\cO$ for $\widehat{\gg}$ was shown to have a vertex tensor category structure, which is modular under some arithmetic conditions on $k$. This gives the first examples of such modular tensor categories for VOAs that are not $C_2$-cofinite.

Given the many applications of the coset construction of $\cW^k(\gg)$ for simply-laced $\gg$, is an important problem to find coset realizations of $\cW^k(\gg)$ for other Lie types, as well as for principal $\cW$-superalgebras and non-principal $\cW$-algebras. One of the applications of the triality theorems is to give a new coset realization of $\cW^k(\gg)$ when $\gg$ is of type $B$ or $C$. Also, another perspective on triality was given by Conjecture 1.1 of \cite{CLIV} and Conjecture 1.2 of \cite{CLV}. Here, one conjectures the existence of a simple vertex superalgebra which is a large extension of a certain affine VOA tensored with one of the $\cW$-(super)algebras appearing in the triality theorems. The existence of these structures would yield coset realizations of all the $\cW$-superalgebras considered in \cite{CLIV,CLV}, vastly generalizing the coset realization in types $A$ and $D$ given by Theorem \ref{ACLmain}.

\section{Topological vertex operator algebras}\label{sec:TVOA}

A topological vertex operator algebra (TVOA) consists of the following data:
a vertex operator superalgebra $C^*$, 
a weight one even current $F(z)$ whose charge $F_0$ is the fermion number operator,
a weight one odd primary field $J(z)$ having fermion number one and 
having a square zero charge $J_0=Q$, 
and a weight two odd primary field $G(z)$ 
having fermion number -1  and satisfying $[Q,G(z)]=L(z)$
where $L(z)$ is the stress-energy field (Virasoro element).
We denote the cohomology of the complex $(C^*,Q)$ by $H^*(C)$.

Examples of TVOAs have arisen in physics, primarily from twisting two-dimensional $\cN=2$ SCFT (see for example \cite{DVV}\cite{EguYa}).
Another prototype example of a TVOA is given by the $\cN=0$ absolute BRST complex $C^*_{BRST}=V\otimes \Lambda^*$, 
with coefficient in a VOA $V$ of central charge of 26. In this case,
$$
C^*\equiv(C^*,F,J,G)=(C^*_{BRST},:c(z)b(z):,L^V(z)+\half L^\Lambda(z))c(z): + \frac{3}{2}\partial^2 c(z),b(z)).$$
Here $L^V,L^\Lambda$ are the Virasoro elements of $V$ and the $bc$-system of weights $(2,-1)$ respectively.
This was the main object of study in \cite{LZII}, in which the following general theorem was proved.

\begin{thm}
On the cohomology $H^*(C)$ of a TVOA, the Wick product on $C^*$ descends to a supercommutative associative product. Moreover,
the Fourier coefficient $G_0$ descends a BV operator on cohomology. That is, the bracket measuring the failure of $G_0$ to be a derivation:
$$\{u,v\}:=G_0(u\cdot v)-(G_0u)\cdot v-(-1)^{|u|} u\cdot (G_0v)$$
defines a Gerstenhaber bracket or an odd Poisson bracket on cohomology $\{,\}: H^p\times H^q\longrightarrow H^{p+q-1}$. More precisely, we have
\begin{itemize}
\item $\{u,v\}=-(-1)^{(|u|-1)(|v|-1)}\{v,u\}$
\item $(-1)^{(|u|-1)(|t|-1)}\{u,\{v,t\}\} +(-1)^{(|t|-1)(|v|-1)}\{t,\{u,v\}\} +(-1)^{(|v|-1)(|u|-1)}\{v,\{t,u\}\}=0$
\item $\{u,v\cdot t\}=\{u,v\}\cdot t +(-1)^{(|u|-1)|v|}v\cdot\{u,t\}$
\item $G_0\{u,v\}=\{G_0u,v\}+(-1)^{|u|-1}\{u,G_0v\}$.
\end{itemize}
In particular $H^0$ is a commutative algebra, and $H^1$ is a Lie algebra with Lie bracket given by the Gerstenhaber bracket.
\end{thm}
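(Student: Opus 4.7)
The unifying principle is that the entire Virasoro action is $Q$-exact: from $[Q,G(z)] = L(z)$ one reads off $[Q,G_n] = L_n$ for every $n$, and in particular both $\partial = L_{-1}$ and $L_0$ are $Q$-exact as operators on $C^*$. Since $J$ is a primary weight-one current, $Q = J_0$ is an odd derivation of every $n$-th product,
$$Q(a_{(n)} b) = (Qa)_{(n)} b + (-1)^{|a|} a_{(n)}(Qb),$$
so every product on $C^*$ descends to $H^*(C)$. Because $L_0$ is $Q$-exact and acts semisimply, cohomology is concentrated in conformal weight zero; this is the subspace on which the entire BV structure will live.

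To get a supercommutative associative induced product, I would apply the standard VOA identities in \eqref{VOA:identities}: the quasi-commutator $:\!ab\!: - (-1)^{|a||b|}\!:\!ba\!:$ and the associator $:\!(\!:\!ab\!:\!)c\!: - \,:\!a(\!:\!bc\!:\!)\!:$ are both expressible as finite sums of total $\partial$-derivatives of iterated Wick products. Since $\partial = [Q,G_{-1}]$ is $Q$-exact as an operator, every such term vanishes in $H^*(C)$, giving at once supercommutativity and associativity of the induced Wick product.

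Next, $G_0$ has fermion number $-1$; on the weight-zero subspace of $C^*$ the relation $[Q,G_0] = L_0$ reads $[Q,G_0] = 0$, so $G_0$ descends to a degree $-1$ operator on $H^*(C)$. The square-zero relation $G_0^2 = 0$ is built into the TVOA axioms via $G(z)G(w)\sim 0$ (it holds automatically in the BRST example, where $G=b$ is a free fermion). Thus the bracket $\{u,v\} = G_0(u\cdot v) - (G_0 u)\cdot v - (-1)^{|u|} u\cdot(G_0 v)$ is well defined on $H^*(C)$, and the last stated identity $G_0\{u,v\} = \{G_0 u,v\} + (-1)^{|u|-1}\{u,G_0 v\}$ is an immediate formal consequence of $G_0^2 = 0$ applied to $G_0(u\cdot v)$.

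The remaining Gerstenhaber identities --- graded skew-symmetry, Jacobi, and the Poisson/Leibniz rule --- are collectively equivalent to the statement that $G_0$ is a \emph{second-order} differential operator on the supercommutative algebra $H^*(C)$, i.e.\ that its ``triple deviation'' vanishes. To prove this I would compute $G_0(:\!ab\!:)$ at the chain level using the associativity formula from \eqref{VOA:identities}: it equals $:\!(G_0 a)b\!: + (-1)^{|G||a|}\!:\!a(G_0 b)\!:$ plus correction terms built from $G_{(k)} a$ for $k\geq 1$. Each $G_{(k)} a$ involves, up to lower-order terms, Virasoro modes and $\partial$'s applied to $a$ --- hence $Q$-exact operators --- so on the weight-zero part of $H^*(C)$ these corrections collapse and assemble into at most a second-order deviation. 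The main obstacle is precisely the bookkeeping here: one must track all the $G_{(k)}$ corrections through a second application of $G_0$ on a triple Wick product, and show modulo $Q$-exact terms that all contributions of order three or higher cancel. This step uses the primary weight-two nature of $G$ and the exactness of the full Virasoro action in an essential way; once it is done, skew-symmetry, Jacobi, and the Poisson rule follow by the standard BV algebra yoga.
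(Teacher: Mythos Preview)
The paper does not itself prove this theorem; it is stated and attributed to \cite{LZII}. Your sketch has the right overall architecture --- $Q$-exactness of every $L_n$ via $[Q,G_n]=L_n$, descent of the Wick product, and the bracket as the deviation of $G_0$ from being a derivation --- and this matches the Lian--Zuckerman argument. But your treatment of the Gerstenhaber identities has a genuine gap.

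Since $G$ has conformal weight $2$, one has $G_0=G_{(1)}$, and the fourth identity in \eqref{VOA:identities} with $n=1$ yields a \emph{single} correction term:
\[
G_{(1)}(:\!ab\!:)\;-\;:\!(G_{(1)}a)b\!:\;-\;(-1)^{|a|}:\!a(G_{(1)}b)\!:\;=\;(G_{(0)}a)_{(0)}\,b,
\]
so $\{a,b\}=(G_{(0)}a)_{(0)}\,b$ already at the chain level. There are no ``$G_{(k)}a$ for $k\ge 1$'' contributions, and $G_{(0)}$ is not a $Q$-exact operator (it is $L_{(0)}=\partial$ that is exact, via $[Q,G_{(0)}]=L_{(0)}$). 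Your proposed mechanism --- that the corrections collapse because they are built from $Q$-exact Virasoro-type operators --- therefore does not work; the one correction term \emph{is} the bracket and must survive.

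The route in \cite{LZI,LZII} uses this chain-level formula directly. The Poisson rule is then immediate: every zeroth mode $x_{(0)}$ is an exact derivation of the Wick product (set $n=0$ in the same identity of \eqref{VOA:identities}), so $\{a,-\}=(G_{(0)}a)_{(0)}$ is a derivation. Skew-symmetry and the Jacobi identity follow from the quasi-symmetry formula for $(\cdot)_{(0)}$ and the Borcherds commutator formula for zeroth modes, the defects being total $\partial$-derivatives (hence $Q$-exact on closed elements) or vanishing by weight on the weight-zero subspace. This replaces your open-ended ``triple Wick product bookkeeping'' with short, direct applications of the vertex-algebra axioms.

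Two smaller points. The associator displayed in \eqref{VOA:identities} is \emph{not} a sum of total $\partial$-derivatives: each summand has a \emph{factor} $\partial^{n+1}a$ or $\partial^{n+1}b$, and it is the $Q$-exactness of $\partial$ applied to a closed element that kills these terms in cohomology --- a different (though equally effective) mechanism; your claim is correct only for the quasi-commutator. And $G(z)G(w)\sim 0$ is not among the TVOA axioms as stated here, so invoking it for $G_0^2=0$ is unjustified; moreover, the fourth bullet does not follow from $G_0^2=0$ and the definition of $\{\,,\,\}$ alone without also using the skew-symmetry of the bracket.
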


We note that if a TVOA arises as a twisted $\cN=2$ SCFT, then it can be shown that the BV operator on cohomology is identically zero. This is a special feature of
$\cN=2$ theories. (See \cite{LZII}.)

Another important new example of a TVOA is the absolute BRST complex of the $\cN=1$ Virasoro superalgebra. This was studied in an unpublished work of the first author with Moore and Zuckerman
in \cite{LMZ92}.

Finally, as an application we mention one more interesting special case of the preceding theorem, which gives a new cohomological construction of Borcherd's Monster Lie algebra. Let $L$ be the rank 2 unimodular lattice, and $V_L$ the corresponding free field VOA with momentum lattice $L$.
We choose the standard Virasoro element of central charge 2 on $V_L$ as our conformal structure. Let $V^\natural$ be the Moonshine VOA of FLM. The TVOA given by the absolute BRST complex of the $\cN=0$ Virasoro algebra with coefficient in $V^\natural\otimes V_L$ was studied in details in \cite{LZIII}. A vanishing theorem for the BV algebra $H^*(C)$ was proved. In particular that $H^1(C^*)$ in this case was shown to be precisely Borcherds' Monster Lie algebra.

\section{Zhu's commutative algebra, Li's filtration, and associated graded algebras} \label{sec:zhuli} 

Given a vertex algebra $\cV$, define \begin{equation} \label{def:zhucomm} C(\cV) = \text{Span}\{a_{(-2)} b|\ a,b\in \cV \},\qquad R_{\cV} = \cV / C(\cV).\end{equation} It is well known that $R_{\cV}$ is a commutative, associative algebra with product induced by the normally ordered product \cite{Z}. Also, if $\cV$ is graded by conformal weight, $R_{\cV}$ inherits this grading. The following notions are due to Arakawa \cite{ArII,ArIII}; see also \cite{AMI}.
\begin{enumerate} 
\item The {\it associated scheme} of $\cV$ is  $\tilde{X}_{\cV} = \text{Spec} \ R_{\cV}$,
\item The {\it associated variety} of $\cV$ is $X_{\cV} = \text{Specm} \ R_{\cV} = (\tilde{X}_{\cV})_{\text{red}}$.
\end{enumerate}
Here $(\tilde{X}_{\cV})_{\text{red}}$ denotes the reduced scheme of $\tilde{X}_{\cV}$. If $\{\alpha_i|\ i\in I\}$ is a strong generating set for $\cV$, the images of these fields in $R_{\cV}$ will generate $R_{\cV}$ as a ring. In particular, $R_{\cV}$ is finitely generated if and only if $\cV$ is strongly finitely generated.

A vertex algebra $\cV$ is called $C_2$-cofinite, or lisse, if $R_{\cV}$ is finite-dimensional as a vector space over $\mathbb{C}$. If $\cV$ is strongly finitely generated, this is equivalent to the condition that every element of $R_{\cV}$ is nilpotent. In this case, $\tilde{X}_{\cV}$ is a finite set and $X_{\cV}$ consists of just one point. This finiteness condition was introduced by Zhu \cite{Z} and plays a fundamental notion in his proof of modular invariance of characters of rational VOAs which satisfy this condition.

If $A$ is a commutative $\mathbb{C}$-algebra, the ring $A_{\infty}$ of functions on the arc space of $\text{Spec}\ A$ can be thought of as an abelian vertex algebra. Then Zhu's commutative algebra $R_{A_{\infty}}$ is isomorphic to $A$ \cite{FBZ}. In particular, {\it any} commutative $\mathbb{C}$-algebra arises as $R_{\cV}$ for some vertex algebra $\cV$. On the other hand, if we consider $R_{\cV}$ for simple vertex algebras $\cV$, it is not at all apparent which commutative rings can be obtained in this way. For example, no simple VOA $\cV$ is known where $X_{\cV}$ fails to be irreducible. In fact, if $\cV$ is simple and quasi-lisse it is known that $X_{\cV}$ is always irreducible \cite{AMII}, and this is expected to hold in a more general setting.

One case where the structure of $X_{\cV}$ has received considerable attention is when $\cV$ is a simple affine VOA $L_k(\gg)$ for an admissible level $k$. In this case an important theorem of Arakawa states that $X_{\cV}$ lies in the nilpotent cone, and this is a key starting point in his result that $L_k(\gg)$ is rational in category $\cO$ \cite{ArV}.

The ring $R_{\cV}$ turns out to be part of a much larger commutative algebra which we now recall. For any vertex algebra $\cV$, Haisheng Li has defined a canonical decreasing filtration on $\cV$ in \cite{LiII} as follows.
$$F^0(\cV) \supseteq F^1(\cV) \supseteq \cdots,$$ where $F^p(\cV)$ is spanned by elements of the form
$$:\partial^{n_1} a^1 \partial^{n_2} a^2 \cdots \partial^{n_r} a^r:,$$ 
where $a^1,\dots, a^r \in \cV$, $n_i \geq 0$, and $n_1 + \cdots + n_r \geq p$. Note that $\cV = F^0(\cV)$ and $\partial F^i(\cV) \subseteq F^{i+1}(\cV)$. Set $$\text{gr}(\cV) = \bigoplus_{p\geq 0} F^p(\cV) / F^{p+1}(\cV),$$ and for $p\geq 0$ let 
$$\sigma_p: F^p(\cV) \ra F^p(\cV) / F^{p+1}(\cV) \subseteq \text{gr}(\cV)$$ be the projection. Note that $\text{gr}(\cV)$ is a graded commutative algebra with product
$$\sigma_p(a) \sigma_q(b) = \sigma_{p+q}(a_{(-1)} b),$$ for $a \in F^p(\cV)$ and $b \in F^q(\cV)$. We say that the subspace $F^p(\cV) / F^{p+1}(\cV)$ has degree $p$. Note that $\text{gr}(\cV)$ has a differential $\partial$ defined by $$\partial( \sigma_p(a) ) = \sigma_{p+1} (\partial a),$$ for $a \in F^p(\cV)$. Finally, $\text{gr}(\cV)$ has the structure of a Poisson vertex algebra \cite{LiII}; for $n\geq 0$, we define $$\sigma_p(a)_{(n)} \sigma_q(b) = \sigma_{p+q-n} a_{(n)} b.$$ Zhu's commutative algebra $R_{\cV}$ is isomorphic to the subalgebra $F^0(\cV) / F^1(\cV)\subseteq \text{gr}(\cV)$, since $F^1(\cV)$ coincides with the space $C(\cV)$ defined by \eqref{def:zhucomm}. Moreover, $\text{gr}(\cV)$ is generated by $R_{\cV}$ as a differential graded commutative algebra \cite{LiII}.

Since $\text{gr}(\cV)$ is a differential graded algebra containing $R_{\cV}$ as the weight zero component and is generated by $R_{\cV}$, by the universal property of $\mathbb{C}[(\tilde{X}_{\cV})_{\infty}]$, we have a surjective homomorphism of differential graded rings
\begin{equation} \label{dgalgebras} \Phi_{\cV}: \mathbb{C}[(\tilde{X}_{\cV})_{\infty}] \ra \text{gr}(\cV).\end{equation} Following Arakawa \cite{ArII, ArIII}, we define the {\it singular support} of $\cV$ to be 
\begin{equation} \label{def:singsupp} \text{SS}(\cV)=\text{Spec}\ \text{gr}(\cV),\end{equation} which is then a closed subscheme of $(\tilde{X}_{\cV})_{\infty}$. A natural question which was raised by Arakawa and Moreau \cite{AMI} is when the map \eqref{dgalgebras} is an isomorphism. A vertex algebra $\cV$ for which this map is an isomorphism of schemes is called {\it classically free} by van Ekeren and Heluani \cite{EH}. This property has turned out to be important in their notion of chiral homology, and they have proven that Virasoro minimal models $\text{Vir}_{p,q}$ are classically free if and only of $p = 2$. Here $\text{Vir}_{p,q}$ denotes the simple Virasoro VOA with central charge $c = 1 - 6 \frac{(p-q)^2}{pq}$.  More examples and counterexamples of classically free VOAs have appeared in work of Andrews, van Ekeren, Heluani, Jennings-Shaffer, Li and Milas, and their characters have interesting connections with combinatorial identities \cite{L,LM,MJS,AEH}.

A weaker condition than classical freeness is that the map \eqref{dgalgebras} is an isomorphism at the level of varieties, i.e., the induced map on reduced rings is an isomorphism. The VOAs in the above papers are all $C_2$-cofinite, so the associated variety consists of just a point and this condition holds automatically since the reduced rings are both isomorphic to $\mathbb{C}$. Arakawa and Moreau have shown that \eqref{dgalgebras} is an isomorphism at the level of varieties in a much richer class of examples, namely quasi-lisse VOAs \cite{AMII}. This notion is due to Arakawa and Kawasetsu \cite{AK}, and means that the associated variety is a Poisson variety with finitely many symplectic leaves. Under these conditions, the category of ordinary modules contains finitely many irreducible objects whose normalized characters satisfy a modular linear differential equation \cite{AK}. Quasi-lisse VOAs form a rich class that includes simple affine VOAs at admissible levels, and also more exotic examples coming from four-dimensional $\cN=2$ superconformal field theories \cite{BLLPRR}. 

In \cite{AL}, Arakawa and the second author gave several examples of VOAs that are not quasi-lisse and not classically free, where \eqref{dgalgebras} is an isomorphism at the level of varieties. At the moment, there are no known examples of simple VOAs where \eqref{dgalgebras} fails to be an isomorphism at the level of varieties, and it is reasonable to expect that this holds for all simple VOAs. One of the examples in \cite{AL} involves the simple $\cW_3$-algebra at central charge $c = -2$, which is the $\cW_{2,3}$ singlet algebra \cite{AdM}. In this case, the associated variety is the cuspidal curve given by $x^2 = y^3$. 

We note that if a VOA $\cV$ is not classically free, it is a difficult problem to describe $\text{gr}(\cV)$; even though it is generated by $R_{\cV}$ as a DGA, the description of the relations is nontrivial. The kernel of the map \eqref{dgalgebras} is always a differential ideal, that is, an ideal which is closed under the derivation $D$, and a natural question raised by Arakawa and the second author in \cite{AL} is whether it is finitely generated as a differential ideal. In the above example $\cV = \cW_{2,3}$, where $X_{\cV}$ is the cuspidal curve, this kernel coincides with the nilradical of $\mathbb{C}[(\tilde{X}_{\cV})_{\infty}]$, and it is conjectured to be generated by two elements as a differential ideal; see \cite{AL} as well as \cite{KnSe}. More recently, in the case of the Virasoro minimal model $Vir_{3,4}$, which is not classically free, the kernel of \eqref{dgalgebras} was proven to be a differentially finitely generated ideal by Andrews, Heluani and van Ekeren in \cite{AEH}.

In addition to the canonical decreasing filtration that exists on any VOA $\cV$, Li has also introduced the notion of a {\it good increasing filtration} that exists under some mild conditions on $\cV$ \cite{LiII}. If $\cV$ possesses a good increasing filtration, the associated graded algebra with respect to this filtration is also a graded commutative ring, and typically coincides with the associated graded algebra with respect to the Li's decreasing filtration. In many of our applications, it is more convenient to work with good increasing filtrations, and we will use the same notation $\text{gr}(\cV)$ since in all of our examples this structure is independent of which filtration we use.

The passage from a VOA $\cV$ to its associated graded algebra $\text{gr}(\cV)$ is a powerful tool for studying $\cV$. In our applications to the structure of orbifolds and cosets of $\cV$ using classical invariant theory, this is effective when $\cV$ is either a free field algebra, a universal affine VOA, or a universal $\cW$-algebra. In these examples, $\cV$ is freely generated, hence automatically classically free. In order to apply these ideas to a wider class of VOAs, it will be necessary to gain a better understanding of the structure of $\text{gr}(\cV)$, and hence of the kernel of \eqref{dgalgebras}.

\section{Vertex algebras over commutative rings} \label{sec:voacommring} 

Let $R$ be a commutative $\mathbb{C}$-algebra. A vertex algebra over $R$ is an $R$-module $\cA$ with a vertex algebra structure which we define in the same way as before, except that all linear maps are replaced with $R$-module homomorphisms. For any $R$-module $M$, we define $\text{QO}_R(M)$ to be the set of $R$-module homomorphisms $a: M \ra M((z))$, which is itself naturally an $R$-module. An element $a \in \text{QO}_R(M)$ can be uniquely represented by a power series $$a(z) = \sum_{n\in \mathbb{Z}} a(n) z^{-n-1} \in \text{End}_R(M)[[z,z^{-1}]].$$ 
Here $a(n) \in \text{End}_R(M)$ is an $R$-module endomorphism, and for each element $v\in M$, $a(n) v = 0$ for $n>\!\!>0$. 
We define the products $a_{(n)} b$ as before. They are $R$-module homomorphisms 
$$\text{QO}_R(M) \otimes_R \text{QO}_R(M) \ra \text{QO}_R(M).$$ A QOA will be an $R$-module $\cA \subseteq \text{QO}_R(M)$ containing $1$ and closed under all the above products. Locality is defined in the same way as before, and a vertex algebra over $R$ is a QOA $\cA\subseteq \text{QO}_R(M)$ whose elements are pairwise local. We mention that a comprehensive theory of vertex algebras over commutative rings has recently been developed by Mason \cite{Ma}, but the main difficulties are not present when $R$ is a $\mathbb{C}$-algebra. In particular, the translation operator $\partial$ can be defined in the usual way and it is not necessary to replace it with a Hasse-Schmidt derivation.

We say that a subset $S = \{\alpha^i|\ \ i\in I\} \subseteq \cA$ generates $\cA$ if $\cA$ is spanned as an $R$-module by all words in $\alpha^i$ and the above products. Similarly, $S$ strongly generates $\cA$ if $\cA$ is spanned as an $R$-module by all iterated Wick products of these generators and their derivatives. If $S = \{\alpha^1, \alpha^2,\dots\}$ is an ordered strong generating set for $\cA$ which is at most countable, we say that $S$ {\it freely generates} $\cA$, if $\cA$ has an $R$-basis consisting of all normally ordered monomials of the form \eqref{freegenerators}. In particular, this implies that $\cA$ is a free $R$-module.

Let $\cV$ be a vertex algebra over $R$ and let $c\in R$. Suppose that $\cV$ contains a field $L$ satisfying the Virasoro OPE relation \eqref{virope}, such that $L_0$ acts on $\cV$ by $\partial$ and $L_1$ acts diagonalizably, and we have an $R$-module decomposition $$\cV = \bigoplus_{d\in R} \cV[d],$$ where $\cV[d]$ is the $L_0$-eigenspace with eigenvalue $d$. We then call $\cV$ as VOA over $R$. In all our examples, the grading will be by $\mathbb{Z}_{\geq 0}$ regarded as a subsemigroup of $R$, and $\cV[0] \cong R$.

Suppose that $R$ is the ring of functions on a variety $X \subseteq \mathbb{C}^n$, so that $X = \text{Specm}\ R$. We can regard the vertex algebra $\cV$ as being defined on $X$ in the sense that for each point $p\in X$, the evaluation at $p$ yields a vertex algebra over $\mathbb{C}$. Similarly, we have the notion of specialization along a subvariety. Let $I\subseteq R$ be an ideal corresponding to a closed subvariety $Y \subseteq X$, and let $I \cdot \cV$ denote the set of finite sums of the form $\sum_i f_i v_i$ where $f_i \in I$ and $v_i \in \cV$. Clearly $I \cdot \cV$ is the vertex algebra ideal generated by $I$, and the quotient
$$\cV^I = \cV / (I \cdot \cV)$$ is a vertex algebra over $R/I$.

Let $\cV$ be a vertex algebra over $R$ with weight grading 
\begin{equation} \label{eq:gradedrvoa} \cV = \bigoplus_{n\geq 0} \cV[n],\qquad \cV[0] \cong R.\end{equation}
A vertex algebra ideal $\cI \subseteq \cV$ is called {\it graded} if $$\cI = \bigoplus_{n\geq 0} \cI[n],\qquad \cI[n] = \cI \cap \cV[n].$$ 
We say that $\cV$ is {\it simple} if for every proper graded ideal $\mathcal{I}\subseteq \mathcal{V}$, $\mathcal{I}[0] \neq \{0\}$. This is different from the usual notion of simplicity, namely, that $\cV$ has no nontrivial proper graded ideals, but it agrees with the usual notion if $R$ is a field. If $I \subseteq R$ is a nontrivial proper ideal, it will generate a nontrivial proper graded vertex algebra ideal $I\cdot \cV$. Then $$\mathcal{V}^I = \mathcal{V} / (I\cdot \mathcal{V})$$ is a vertex algebra over $R/I$. Even if $\mathcal{V}$ is simple over $R$, $\mathcal{V}^I$ need not be simple over $R/I$.

Let $R$ be the ring of functions on a variety $X$, and let $\cV$ be a simple vertex algebra over $R$ with weight grading \eqref{eq:gradedrvoa}. Let $I\subseteq R$ be an ideal such that $\cV^I$ is {\it not} simple. This means that $\cV^I$ possesses a maximal proper graded ideal $\cI$ such that $\cI[0] = \{0\}$. Then the quotient $$\cV_{I} = \cV^I / \cI$$ is a simple vertex algebra over $R/I$. Letting $Y\subseteq X$ be the closed subvariety corresponding to $I$, we can regard $\cV_I$ as a simple vertex algebra defined over $Y$. Continuing this process, we can consider the poset all ideals $I\subseteq R$ for which $\cV^I$ is not simple over $R/I$. If $I_1, I_2$ are ideals in this poset, let $\cV_{I_1} = \cV^{I_1} / \cI_1$ and $\cV_{I_2} = \cV^{I_2} / \cI_2$ be the corresponding simple vertex algebras over $R/I_1$ and $R / I_2$, respectively. Let $Y_1, Y_2 \subseteq X$ be the closed subvarieties corresponding to $I_1, I_2$, and let $p \in Y_1 \cap Y_2$ be a point in the intersection. Let $\cV^p_{I_1}$ and $\cV^p_{I_2}$ be the vertex algebras over $\mathbb{C}$ obtained by evaluating at $p$. Then $\mathcal{V}^p_{I_1}$ and $ \mathcal{V}^p_{I_2}$ need not be simple, and we let $\cV_{I_1,p}$ and $\cV_{I_2,p}$ denote their simple quotients. Clearly $p$ corresponds to a maximal ideal $M \subseteq R$ containing both $I_1$ and $I_2$, and we have isomorphisms
$$\cV_{I_1,p} \cong \cV_M \cong \cV_{I_2,p}.$$ 
In general, $\cV_{I_1}$ and $\cV_{I_2}$ can be very different vertex algebras, and the above pointwise isomorphism which arises from the intersection of the varieties $Y_1$ and $Y_2$, is nontrivial.

Given a VOA $\cV = \bigoplus_{n\geq 0} \cV[n]$ over $R$ with $\cV[0] \cong R$, $\cV[n]$ has a symmetric bilinear form \begin{equation} \label{bilinearform} \langle,\rangle_n: \cV[n]\otimes_{R} \cV[n] \ra R,\qquad \langle \omega,\nu \rangle_n = \omega_{(2n-1)}\nu.\end{equation}
If each $\cV[n]$ is a free $R$-module of finite rank, we define the level $n$ Shapovalov determinant $\text{det}_n \in R$ to be the determinant of the matrix of $\langle,\rangle_n$. Under mild hypotheses, namely, $\cV[0] \cong R$, and $\cV[1]$ is annihilated by $L_1$, an element $\omega \in \cV[n]$ lies in the radical of the form $\langle,\rangle_n$ if and only of $\omega$ lies in the maximal proper graded ideal of $\cV$ \cite{LiI}. Then $\cV$ is simple if and only if $\text{det}_n \neq 0$ for all $n$. If $\cV$ is simple and $R$ is a unique factorization ring, each irreducible factor $a$ of $\text{det}_n$ give rise to a prime ideal $(a) \subseteq R$. The corresponding varieties $V(a)$ are just the irreducible subvarieties of $\text{Specm}\ R$ where degeneration of $\cV$ occurs.

\section{Orbifolds and the vertex algebra Hilbert problem} \label{sec:hilbert} 
\subsection{Orbifold construction} Let $\cV$ be a VOA and let $G$ be a group of automorphisms of $\cV$. Unless otherwise stated, we will always assume that $\cV$ has a conformal vector $L$ which is invariant under $G$. The invariant subalgebra $\cV^G$ is called an {\it orbifold} of $\cV$, and it has the same conformal vector, so the inclusion $\cV^G\hookrightarrow \cV$ is a conformal embedding. This construction was introduced in physics; see for example \cite{DVVV,DHVWI,DHVWII}, as well as \cite{FLM} for the construction of the Moonshine VOA  $V^{\natural}$ as an extension of the $ \mathbb{Z}/2\mathbb{Z}$-orbifold of the VOA associated to the Leech lattice.

One motivation for studying orbifolds is their connection with the longstanding problem of classifying rational conformal field theories. When $c<1$ this has been achieved using the classification of unitary representations of the Virasoro algebra \cite{KL}, but for $c\geq 1$ it is still out of reach. Two cases have received much attention and appear tractable: the case $c=1$ \cite{G}\cite{Ki}, and the case of holomorphic VOAs, that is, rational VOAs $\cV$ whose module category contains only one irreducible object, namely $\cV$ itself. For $c=1$, it is conjectured that in addition to the rank one lattice VOAs and their $\mathbb{Z}/2\mathbb{Z}$-orbifolds, the remaining cases are $V_L^G$ where $L$ is the $A_1$ root lattice and $G$ is one of the alternating groups $A_4$ or $A_5$, or the symmetric group $S_4$ \cite{DJ}. If $\cV$ is holomorphic, its central charge $c$ is necessarily an integer multiple of $8$ \cite{Sch}. For $c = 8$ and $c = 16$, the classification is easy and appears in \cite{DM}, but the case $c = 24$ is a famous problem. In \cite{Sch}, Schellekens showed that there are exactly $71$ possible characters of such VOAs, and conjectured that there is a unique holomorphic VOA with $c=24$ realizing each of these. Very recently, due to the efforts of many researchers, these have been constructed using orbifold theory, and their uniqueness is known with the exception of the Moonshine VOA $V^{\natural}$ \cite{EMSI,EMSII,ELMS,LS}.

Recall that a vertex algebra $\cV$ is called {\it strongly finitely generated} if there exists a finite set of generators such that the set of iterated Wick products of the generators and their derivatives spans $\cV$. This property has many important consequences, and in particular implies that Zhu's associative algebra $A(\cV)$ and Zhu's commutative algebra $R_{\cV}$ are finitely generated. Recall Hilbert's theorem that if a reductive group $G$ acts on a finite-dimensional complex vector space $V$, the invariant ring $\mathbb{C}[V]^G$ is finitely generated \cite{HI,HII}. This theorem was very influential in the development of commutative algebra and algebraic geometry. In fact, Hilbert's basis theorem, Nullstellensatz, and syzygy theorem were all introduced in connection with this problem. One can ask similar questions in the setting of noncommutative rings. There are many example such as universal enveloping algebras, Weyl algebras, etc., which admit filtrations for which the associated graded algebra is commutative, and the problem can be reduced to the commutative case. The analogous problem for vertex algebras is the following. 

\begin{prob} Given a simple, strongly finitely generated vertex algebra $\cV$ and a reductive group $G$ of automorphisms of $\cV$, is $\cV^G$ strongly finitely generated?
\end{prob}

This is much more subtle than the case of noncommutative rings, and generally fails for abelian vertex algebras. The main difficulty is that vertex algebras are not Noetherian, and this phenomenon depends sensitively on the nonassociativity of vertex algebras. 

\subsection{Example: the rank one Heisenberg algebra} Before we discuss this problem in a general setting, we want to give an illustrative example. The rank one Heisenberg algebra $\cH = \cH(1)$ is freely generated by one field $\alpha$ satisfying $\alpha(z) \alpha(w) \sim (z-w)^{-2}$. With respect to the usual Virasoro element $L = \frac{1}{2} :\alpha\alpha:$, the generator $\alpha$ is primary of weight $1$. There is only one nontrivial automorphism of $\cH$, namely, the involution $\theta$ defined by $\theta(\alpha) = -\alpha$.

We also wish to consider the {\it degenerate} Heisenberg algebra $\cH_{\text{deg}}$ which is also generated by a field $\alpha$ of weight $1$, but this field has regular OPE with itself. As vector spaces, $$\cH_{\text{deg}} \cong \mathbb{C}[\alpha, \partial \alpha, \partial^2\alpha, \dots] \cong \text{gr}(\cH) \cong \cH,$$ but $\cH_{\text{deg}}$ is just a differential graded commutative algebra, that is, $\cH_{\text{deg}} \cong \mathbb{C}[\alpha, \partial \alpha, \partial^2\alpha, \dots]$ as vertex algebras. We may also define an involution $\theta$ on $\cH_{\text{deg}}$ by $\theta(\partial^k\alpha) = -\partial^k \alpha$. The invariant algebras $\cH^{ \mathbb{Z}/2\mathbb{Z}}$ and $\cH_{\text{deg}}^{ \mathbb{Z}/2\mathbb{Z}}$ are linearly isomorphic, but they have very different behavior as vertex algebras.

We begin with $\cH_{\text{deg}}^{ \mathbb{Z}/2\mathbb{Z}}$. First, since $\theta$ acts diagonalizably on the generators, $\theta$ acts by $\pm 1$ on each monomial $\partial^{i_1} \alpha \cdots \partial^{i_r} \alpha \in \cH_{\text{deg}}$, and such a monomial is invariant under $\theta$ if and only if $r$ is even. Therefore $\cH_{\text{deg}}^{ \mathbb{Z}/2\mathbb{Z}}$ is generated by all quadratic monomials 
$$q_{i,j} = \partial^i \alpha \partial^j \alpha,\qquad i,j \geq 0.$$
 Note that $q_{i,j} = q_{j,i}$ so it suffices to take $q_{i,j}$ with $i \geq j \geq 0$. There is some redundancy in this generating set from the point of view of differential algebras because $\partial (q_{i,j}) = q_{i+1,j} + q_{i,j+1}$. It is easy to see that the sets 
$$\{\partial^j q_{2i,0}|\ i,j \geq 0\},\qquad \{q_{i,j}|\ 0 \leq i \leq j\},$$ are linearly independent and span the same vector space. Therefore 
$$\{q_{2i,0}|\ i \geq 0\}$$ generates $\cH_{\text{deg}}^{ \mathbb{Z}/2\mathbb{Z}}$ as a differential algebra; equivalently it strongly generates $\cH_{\text{deg}}^{ \mathbb{Z}/2\mathbb{Z}}$. In fact, we claim that this is a {\it minimal} generating set, that is, if we remove any of these generators the remaining elements will no longer generate. Although $\{\partial^j q_{2i,0}|\ i,j \geq 0\}$ is a linearly independent set, there are many {\it algebraic} relations among these elements. In terms of our original generating set $\{q_{i,j}|\ 0\leq i \leq j\}$, all such relations are consequences of the quadratic relations
$$q_{i,j} q_{k,\ell} - q_{i,\ell} q_{k,j} = 0.$$ Since these are homogeneous of degree two, there are no relations among $\{\partial^j q_{2i,0}|\ i,j \geq 0\}$ which have a linear term. This shows that our generating set is indeed minimal. In particular, we conclude that $\cH_{\text{deg}}^{ \mathbb{Z}/2\mathbb{Z}}$ is {\it not finitely generated as a differential algebra}. More generally, if $G$ is any finite group and $V$ is any nontrivial finite-dimensional $G$-module, $G$ then acts on the ring of functions $\mathbb{C}[V]$ and on the differential polynomial ring $\mathbb{C}[V_{\infty}]$, and $\mathbb{C}[V_{\infty}]^G$ is never finitely generated as a differential algebra \cite{LSS}.

Now we turn to the orbifold $\cH^{ \mathbb{Z}/2\mathbb{Z}}$ of the nondegenerate Heisenberg algebra, where the generator $\theta$ of $ \mathbb{Z}/2\mathbb{Z}$ acts by $\theta(\alpha) = -\alpha$ as above. The following well-known theorem is due to Dong and Nagatomo \cite{DN} and is the starting point for understanding the structure of $\mathbb{Z}/2\mathbb{Z}$-orbifolds of lattice VOAs.

\begin{thm} \label{HZ2} The $ \mathbb{Z}/2\mathbb{Z}$-orbifold of the rank one Heisenberg VOA $\cH^{ \mathbb{Z}/2\mathbb{Z}}$ is of type $\cW(2,4)$. In particular, it is strongly generated by the Virasoro field $L = \frac{1}{2} :\alpha \alpha:$ and a weight $4$ field $W$ which we can take to be $:(\partial^2 \alpha) \alpha:$.
\end{thm}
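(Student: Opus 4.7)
The plan is to show the inclusion $\cH^{\mathbb{Z}/2\mathbb{Z}} \subseteq \cV$, where $\cV$ is the subalgebra of $\cH$ strongly generated by $L$ and $W$; the reverse inclusion $\cV \subseteq \cH^{\mathbb{Z}/2\mathbb{Z}}$ is automatic since both generators are $\theta$-invariant. Since $\cH^{\mathbb{Z}/2\mathbb{Z}}$ is spanned by normally ordered monomials $:(\partial^{i_1}\alpha)\cdots(\partial^{i_{2k}}\alpha):$ with $k \geq 0$, I will argue by induction on the degree $2k$.

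The heart of the proof is the quadratic case $k=1$: every $:(\partial^i\alpha)(\partial^j\alpha):$ must be placed in $\cV$. Direct computation gives
$$\partial^{N-2}L = \tfrac{1}{2}\sum_{j=0}^{N-2}\binom{N-2}{j}:(\partial^j\alpha)(\partial^{N-2-j}\alpha):, \qquad \partial^{N-4}W = \sum_{j=0}^{N-4}\binom{N-4}{j}:(\partial^{j+2}\alpha)(\partial^{N-4-j}\alpha):,$$
which supply only two linear combinations of the $\lfloor N/2\rfloor$ basis quadratics at weight $N$, insufficient as soon as $N \geq 6$. The missing $\lfloor N/2\rfloor - 2$ combinations must arise from the \emph{quantum tails} of quartic Wick products $:(\partial^r L)(\partial^s L):$. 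Applying the non-associativity identity from \eqref{VOA:identities} together with the single non-trivial OPE $\alpha_{(1)}\alpha = 1$, each such quartic decomposes as an unambiguous four-factor normally ordered monomial plus an explicit quadratic correction, which is linear in $\partial^{2r+2s+2}L$, $\partial^{2r+2s}W$, and the other weight-$N$ quadratic fields. Collecting these tails as $r,s$ range over fixed $r+s$ yields a triangular system whose inversion expresses every quadratic invariant as an element of $\cV$.

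For the inductive step $k \to k+1$, any invariant monomial of degree $2k+2$ is rewritten via the Wick non-associativity identity as an iterated Wick product of a quadratic invariant with a degree-$2k$ invariant, up to correction terms of strictly lower Li-filtration degree which lie in $\cV$ by the inductive hypothesis. Combined with the base case this places every invariant monomial in $\cV$.

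The main obstacle is the invertibility of the Wick-correction system in the quadratic step: one must verify that the explicit quadratic tails of quartic Wick products span a complement to $\text{span}\{\partial^{N-2}L,\partial^{N-4}W\}$ inside the weight-$N$ quadratic subspace. This non-degeneracy depends crucially on the non-vanishing of $\alpha_{(1)}\alpha=1$; in the degenerate Heisenberg algebra $\cH_{\text{deg}}$ the analogous tails vanish identically, which is exactly why $\cH_{\text{deg}}^{\mathbb{Z}/2\mathbb{Z}}$ fails to be strongly finitely generated in the preceding discussion. A reassuring consistency check is provided by the graded character
$$\text{ch}(\cH^{\mathbb{Z}/2\mathbb{Z}})(q) = \tfrac{1}{2}\Bigl(\prod_{n\ge 1}(1-q^n)^{-1} + \prod_{n\ge 1}(1+q^n)^{-1}\Bigr),$$
which can be matched weight by weight against the count of independent iterated Wick monomials in $L$ and $W$ modulo any null relations found along the way; notably, a first such null relation must appear already at weight $8$, since the free $\cW(2,4)$ has dimension $13$ there while $\cH^{\mathbb{Z}/2\mathbb{Z}}$ has dimension $12$.
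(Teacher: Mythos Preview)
Your overall architecture is correct --- reduce to the quadratic layer and use quantum corrections of Wick products to decouple the higher quadratics --- but the proof as written has a genuine gap at exactly the point you flag as ``the main obstacle.'' You assert that the quadratic tails of the $:(\partial^r L)(\partial^s L):$ form a \emph{triangular} system spanning a complement to $\mathrm{span}\{\partial^{N-2}L,\partial^{N-4}W\}$, but you neither compute these tails explicitly nor verify the claimed triangularity; the observation that the mechanism fails in $\cH_{\text{deg}}$ and the character match are consistency checks, not arguments. (A minor additional point: restricting to products of derivatives of $L$ alone is likely too parsimonious --- the relevant weight-$6$ combination, once rewritten, already involves $:LW:$.)

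The paper does not attempt to invert a linear system at each weight. Instead it makes a single explicit computation at weight $6$: the quantum correction of the classical relation $q_{0,0}q_{1,1} - q_{0,1}^2 = 0$ is
\[
:\omega_{0,0}\,\omega_{1,1}:\ -\ :\omega_{0,1}\,\omega_{0,1}:\ =\ \tfrac{7}{6}\,\omega_{3,1}\ -\ \tfrac{1}{12}\,\omega_{4,0},
\]
which, after rewriting everything in terms of $\omega_{0,0}=2L$ and $\omega_{2,0}=W$, solves for $\omega_{4,0}$ as a normally ordered polynomial in $L,W$ and their derivatives. This single decoupling relation is then \emph{propagated} by applying the mode $W_{(1)}=(\omega_{2,0})_{(1)}$: one checks directly that
\[
(\omega_{2,0})_{(1)}\,\omega_{2n-2,0}\ \equiv\ (4n+4)\,\omega_{2n,0}
\]
modulo derivatives of $\omega_{2i,0}$ with $i<n$, and since $4n+4\neq 0$ this bootstraps inductively to decoupling relations $\omega_{2n,0}=P_{2n}(L,W)$ for every $n\geq 2$. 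The non-degeneracy your approach requires at every weight is thus replaced by a single explicit weight-$6$ computation together with the manifestly nonzero scalar $4n+4$. This bootstrap via a raising operator is the missing idea your proposal needs.
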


Even though $\cH^{ \mathbb{Z}/2\mathbb{Z}}$ and $\cH_{\text{deg}}^{ \mathbb{Z}/2\mathbb{Z}}$ are isomorphic as vector spaces, the former is strongly finitely generated and the latter is not. This phenomenon is very subtle and is a consequence of the nonassociativity of the Wick product. For the benefit of the reader, we provide a proof of this which is somewhat different from the original proof.

First, recall that $\cH$ has a basis consisting of the normally ordered monomials
$$\{ : \partial^{k_1} \alpha \cdots \partial^{k_r} \alpha: |\ 0\leq k_1\leq \cdots \leq k_r\}.$$ With respect to the natural good increasing filtration $\cH_{(0)} \subseteq \cH_{(1)} \subseteq \cdots$ where $\cH_{(-1)} \cong \mathbb{C}$ and $\cH_{(d)}$ is spanned by all monomials $: \partial^{k_1} \alpha \cdots \partial^{k_r} \alpha:$ with $r \leq d$, $\text{gr}(\cH) = \bigoplus_{d \geq 0} \cH_{(d)} / \cH_{(d-1)}$ is isomorphic to 
the differential polynomial algebra $\mathbb{C}[\alpha, \partial \alpha, \partial^2 \alpha,\dots]$. We then have linear isomorphisms
$$\cH \cong \text{gr}(\cH) \cong \mathbb{C}[\alpha, \partial \alpha, \partial^2 \alpha,\dots] \cong \cH_{\text{deg}}.$$

The action of $\theta \in  \mathbb{Z}/2\mathbb{Z}$ is given on generators by $\theta(\partial^i \alpha) = - \partial^i \alpha$. It follows that $\cH^{ \mathbb{Z}/2\mathbb{Z}}$ is spanned by all normally ordered monomials of even degree
$$:(\partial^{k_1} \alpha) \cdots (\partial^{k_r} \alpha):,\qquad k_1\geq \cdots \geq k_r,\qquad r \ \text{even}.$$

It is easy to see by induction on length that such monomials are strongly generated by the normally ordered quadratics
$$\omega_{i,j} = \ :(\partial^i \alpha) (\partial^j \alpha):,\qquad  i,j \geq 0.$$ As above, $\omega_{i,j} = \omega_{j,i}$ so we can take $i \geq j \geq 0$. Also, we have 
$$\partial (\omega_{i,j}) = \omega_{i+1,j} + \omega_{i,j+1},$$ so if we want generators for $\cH^{ \mathbb{Z}/2\mathbb{Z}}$ as a differential algebra, there is some redundancy in the above generating set. It is easy to see that the sets
$$\{\partial^j \omega_{2i,0}|\ i,j \geq 0\},\qquad \{\omega_{i,j}|\ \ i \geq j \geq 0\}$$ are linearly independent and span the same vector space, so that 
$\{\omega_{2i,0}|\ i \geq 0\}$ is also a strong generating set for $\cH^{ \mathbb{Z}/2\mathbb{Z}}$. However, this set is no longer a minimal generating set. Recall the classical relations among the generators $q_{i,j} \in \cH_{\text{deg}}^{ \mathbb{Z}/2\mathbb{Z}}$
$$q_{i,j} q_{k,\ell} - q_{i,\ell} q_{k,j} = 0.$$
If we consider the corresponding normally ordered elements in $\cH^{ \mathbb{Z}/2\mathbb{Z}}$, namely, 
$$:\omega_{i,j} \omega_{k,\ell}: - : \omega_{i,\ell} \omega_{k,j}:$$ this expression is no longer zero due to nonassociativity of the Wick product. For example, consider the relation of lowest weight $6$, namely $q_{0,0} q_{1,1} - q_{0,1} q_{0,1}$. The corresponding normally ordered expression in $\cH^{ \mathbb{Z}/2\mathbb{Z}}$ is
\begin{equation} \label{heisrelation:first} :\omega_{0,0} \omega_{1,1}:\  - \ : \omega_{0,1} \omega_{0,1}: \ = \frac{7}{6} \omega_{3,1} -\frac{1}{12} \omega_{4,0}.\end{equation} 
 
An easy calculation shows that 
\begin{equation} \begin{split} &\omega_{1,1} = -\omega_{2,0} + \frac{1}{2} \partial^2 \omega_{0,0},
\\ & \omega_{3,1} = -\omega_{4,0} + \frac{3}{2} \partial^2 \omega_{2,0}-\frac{1}{4} \partial^4 \omega_{0,0}.
\end{split} \end{equation}

Rescaling \eqref{heisrelation:first} by a factor of $-\frac{4}{5}$, replacing all variables with $\omega_{0,0}, \omega_{2,0}, \omega_{4,0}$ and their derivatives, and solving for $\omega_{4,0}$, we can rewrite this equation as
\begin{equation} \label{heisrelation:second} \omega_{0,4}= - \frac{2}{5} :\omega_{0,0} \partial^2\omega_{0,0}:\  +\frac{4}{5} \ : \omega_{0,0} \omega_{2,0}:\  + \frac{1}{5} : \partial  \omega_{0,0} \partial \omega_{0,0}:  +\frac{7}{5} \partial^2 \omega_{2,0}-\frac{7}{30} \partial^4 \omega_{0,0}.\end{equation}
It follows that the field $\omega_{4,0}$ is not needed in our strong generating set, since it can be expressed as a normally ordered polynomial in $\omega_{0,0}$ and $\omega_{2,0}$ and their derivatives. Accordingly, we call \eqref{heisrelation:second} a {\it decoupling relation} for $\omega_{4,0}$. This is very different from the situation of $\cH_{\text{deg}}^{ \mathbb{Z}/2\mathbb{Z}}$ where such decoupling relations do not exist because there are no relations with a linear term.

\begin{lemma} \label{decoup:bootstrap} For all $n\geq 2$, there exists a decoupling relation $$\omega_{2n,0} = P_{2n}(\omega_{0,0}, \omega_{2,0}),$$ where $P_{2n}(\omega_{0,0}, \omega_{2,0})$ is a normally ordered polynomial in $\omega_{0,0}, \omega_{2,0}$ and their derivatives. 
\end{lemma}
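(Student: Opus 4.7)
The plan is strong induction on $n$. The base case $n=2$ is provided by equation~\eqref{heisrelation:second}. For the inductive step, suppose $\omega_{2k,0}$ admits a decoupling $\omega_{2k,0} = P_{2k}(\omega_{0,0},\omega_{2,0})$ for every $2\le k\le n-1$. The strategy to decouple $\omega_{2n,0}$ is to lift a classical polynomial identity valid in $\cH_{\text{deg}}^{\mathbb{Z}/2\mathbb{Z}}$ to a nontrivial relation in $\cH^{\mathbb{Z}/2\mathbb{Z}}$ and read off the decoupling from its quantum correction.

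Concretely, set
\[
R_n \ :=\ :\omega_{0,0}\,\omega_{n-1,n-1}:\ -\ :\omega_{0,n-1}\,\omega_{0,n-1}:\ \in\ \cH^{\mathbb{Z}/2\mathbb{Z}}.
\]
With respect to the good increasing filtration by degree in $\alpha$, the image of $R_n$ in $\text{gr}(\cH^{\mathbb{Z}/2\mathbb{Z}})\cong \cH_{\text{deg}}^{\mathbb{Z}/2\mathbb{Z}}$ is the classical Pl\"ucker-type identity $q_{0,0}\,q_{n-1,n-1}-q_{0,n-1}^{\,2}=0$, so $R_n\in \cH^{\mathbb{Z}/2\mathbb{Z}}_{(2)}$; hence $R_n$ is a $\mathbb{C}$-linear combination of $\partial^{a}\omega_{i,j}$ of conformal weight $2n+2$. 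Using the symmetry $\omega_{i,j}=\omega_{j,i}$ together with $\partial\omega_{i,j}=\omega_{i+1,j}+\omega_{i,j+1}$ iteratively, $R_n$ rewrites uniquely in the basis $\{\partial^{2n-2k}\omega_{2k,0}\}_{k=0}^{n}$ as
\begin{equation}\label{eq:Rndecomp}
R_n \ =\ \sum_{k=0}^{n} c_k\,\partial^{2n-2k}\omega_{2k,0}, \qquad c_k\in\mathbb{C}.
\end{equation}
Granted $c_n\neq 0$, we may solve \eqref{eq:Rndecomp} for $\omega_{2n,0}$. By construction, $R_n$ is a normally ordered polynomial in $\omega_{0,0}$, $\omega_{n-1,n-1}$, $\omega_{0,n-1}$ and their derivatives; reducing $\omega_{n-1,n-1}$ and $\omega_{0,n-1}$ to combinations of $\partial^{b}\omega_{2m,0}$ with $m\le n-1$ via the same identity, and then invoking the inductive hypothesis on all $\omega_{2m,0}$ with $2\le m\le n-1$ (both from the reduction of $R_n$ and from the lower-order terms of \eqref{eq:Rndecomp}), yields the desired expression $\omega_{2n,0}=P_{2n}(\omega_{0,0},\omega_{2,0})$.

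The principal obstacle is thus to verify that $c_n\neq 0$. The plan to extract $c_n$ is to expand each of $:\omega_{0,0}\,\omega_{n-1,n-1}:$ and $:\omega_{0,n-1}\,\omega_{0,n-1}:$ using the quasi-associativity identity of~\eqref{VOA:identities}, exploiting the fact that the only nonvanishing $k$-th product among the generators of $\cH$ is $\alpha_{(1)}\alpha=1$. This collapses every associativity correction to a finite combinatorial sum indexed by the unique nontrivial OPE contraction. The resulting expression for $c_n$ is an explicit rational function of $n$ whose nonvanishing for $n\ge 2$ reduces to an elementary arithmetic check; the $n=2$ instance of this calculation is precisely the one producing the coefficient $-\tfrac{1}{12}$ of $\omega_{4,0}$ in~\eqref{heisrelation:first}. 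Should this particular $R_n$ ever give $c_n=0$, the fallback is to replace it by a suitable linear combination of the syzygies $q_{0,0}\,q_{i,j}-q_{0,i}\,q_{0,j}$ with $i+j=2n-2$, which by Weyl's second fundamental theorem of invariant theory for $O(1)$ generate all relations among the quadratic invariants $q_{i,j}$; a generic such combination guarantees a nonzero coefficient of $\omega_{2n,0}$ and closes the induction.
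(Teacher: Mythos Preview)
Your strategy differs from the paper's and has a genuine gap. You correctly set up the induction and correctly observe that the quantum correction $R_n$ lies in the span of $\{\partial^{2n-2k}\omega_{2k,0}\}_{k=0}^n$, so the entire argument hinges on $c_n\neq 0$. But you never actually prove this. Saying it ``reduces to an elementary arithmetic check'' is not a proof: expanding $:\omega_{0,0}\omega_{n-1,n-1}:-:\omega_{0,n-1}\omega_{0,n-1}:$ via quasi-associativity yields a combinatorial sum in $n$ whose closed form and nonvanishing you do not establish. Your fallback is equally unjustified: the assertion that ``a generic such combination guarantees a nonzero coefficient of $\omega_{2n,0}$'' presupposes that the linear functional sending a classical syzygy to the $\omega_{2n,0}$-coefficient of its quantum lift is not identically zero on the relevant space of weight-$(2n+2)$ relations. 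That is precisely the nontrivial point, and it is not a consequence of Weyl's second fundamental theorem, which says nothing about quantum corrections. Indeed, the analogous nonvanishing question for higher-rank Heisenberg orbifolds is exactly where the sharp bound in Theorem~\ref{Hilbert:fullauto}(4) remains open.

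The paper sidesteps this difficulty entirely. Rather than lifting a new classical relation at each step, it applies the mode operator $(\omega_{2,0})_{(1)}$ to the already-established decoupling relation $\omega_{2n-2,0}=P_{2n-2}(\omega_{0,0},\omega_{2,0})$. A direct OPE calculation gives
\[
(\omega_{2,0})_{(1)}\,\omega_{2n-2,0}\ \equiv\ (4n+4)\,\omega_{2n,0}\pmod{\text{span}\{\partial^{2+2k}\omega_{2n-2k-2,0}\}},
\]
so the leading coefficient is the manifestly nonzero integer $4n+4$. Applying $(\omega_{2,0})_{(1)}$ to the right-hand side produces (via the noncommutative Wick formula) another normally ordered polynomial in $\omega_{0,0},\omega_{2,0}$ and their derivatives, together with terms involving $\omega_{4,0}$, which are eliminated by the inductive hypothesis. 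This operator-propagation trick is what makes the induction close without any delicate coefficient analysis; your approach would require either carrying out the omitted computation of $c_n$ in closed form, or supplying an independent argument that the quantum-correction functional is nonzero.
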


\begin{proof}
Starting from \eqref{heisrelation:second}, we can obtain a similar relation $\omega_{6,0} = P_6(\omega_{0,0}, \omega_{2,0})$ by applying the operator $(\omega_{2,0})_{(1)}$ to both sides of \eqref{heisrelation:second}, and using the following calculations:
\begin{equation} \begin{split}  (\omega_{2,0})_{(1)} \omega_{4,0} & = 4 \omega_{4,2} + 12 \omega_{6,0}
\\ & = 16 \omega_{6,0} - 16 \partial^2 \omega_{4,0}  + 20 \partial^4 \omega_{2,0}  - 4 \partial^6 \omega_{0,0},
\\  (\omega_{2,0})_{(1)} \omega_{2,0} & = 12 \omega_{4,0} - 8 \partial^2 \omega_{2,0} + 2 \partial^4 \omega_{0,0},
\\  (\omega_{2,0})_{(1)} \omega_{0,0} & = 8 \omega_{2,0},
\\  (\omega_{2,0})_{(1)} \partial^2 \omega_{2,0} & = 24 \partial^2 \omega_{4,0} - 22 \partial^4 \omega_{2,0}  + 5 \partial^6 \omega_{0,0},
\\  (\omega_{2,0})_{(1)} \partial^2 \omega_{0,0} & = 20 \partial^2 \omega_{2,0}  - 2 \partial^4 \omega_{0,0},
\\  (\omega_{2,0})_{(1)} \partial \omega_{0,0} & = 14 \partial \omega_{2,0} - \partial^3 \omega_{0,0}.
\end{split}\end{equation}
All appearances of $\omega_{4,0}$ and its derivatives can be eliminated by replacing them with \eqref{heisrelation:second} and its derivatives, so we can express $\omega_{6,0}$ as a normally ordered polynomial in $\omega_{0,0}, \omega_{2,0}$ and their derivatives.

Inductively, assume that for all $i$ such that $2\leq i \leq n-1$, we have a decoupling relation $$\omega_{2i,0} = P_{2i}(\omega_{0,0}, \omega_{2,0}).$$ We apply $(\omega_{2,0})_{(1)}$ to both sides of $\omega_{2n-2,0} = P_{2n-2}(\omega_{0,0}, \omega_{2,0})$. It is easy to check that $$(\omega_{2,0})_{(1)} \omega_{2n-2,0} \equiv (4n+4) \omega_{2n,0}$$ modulo the span of $\partial^{2+2k} \omega_{2n-2k-2,0}$ for $k = 0,1,\dots, n-1$. Inductively, we can express all terms except for $(4n+4) \omega_{2n,0}$ as normally ordered expressions on $\omega_{0,0}, \omega_{2,0}$ and their derivatives, so solving for $\omega_{2n,0}$ we obtain a decoupling relation $\omega_{2n,0} = P_{2n}(\omega_{0,0}, \omega_{2,0})$. 
\end{proof}

Finally, since $\{\omega_{2n,0}|\ n \geq 0\}$ strongly generates $\cH^{ \mathbb{Z}/2\mathbb{Z}}$, the existence of decoupling relations for $\{\omega_{2n,0}|\ n\geq 2\}$ completes the proof of Dong-Nagatomo's theorem that $\omega_{0,0}, \omega_{2,0}$ is a strong generating set. The minimality is apparent since there are no normally ordered relations among $\omega_{0,0}, \omega_{2,0}$ and their derivatives below weight $8$.

\section{Vertex algebra Hilbert problem for free field algebras} \label{sec:hilbertfreefield} 
The following theorem which vastly generalizes Theorem \ref{HZ2}, was proven in \cite{CLIII}, and is based on a series of papers by the second author \cite{LI}-\cite{LV}.

\begin{thm}\label{Hilbert:freefield}  Let $\cV$ be any VOA which a tensor product of free field algebras of the types $\cH(n)$, $\cF(m)$, $\cS(r)$, $\cA(s)$ introduced earlier. Given any reductive group $G$ of automorphisms of $\cV$, $\cV^G$ is strongly finitely generated.
\end{thm}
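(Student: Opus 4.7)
The strategy is to pass to the associated graded algebra $\text{gr}(\cV)$ with respect to Li's good increasing filtration, and exploit classical invariant theory there. Since each of $\cH(n), \cF(m), \cS(r), \cA(s)$ is freely generated, the tensor product $\cV$ is freely generated by a finite set of fields whose span is a finite-dimensional (super)vector space $V$ carrying a linear action of $G$, and $\text{gr}(\cV)$ is isomorphic as a differential (super)commutative algebra to $\mathbb{C}[V_\infty]$, the ring of functions on the arc space of $V$. The full automorphism groups of the four free-field families are $O(n), O(m), Sp(2r), Sp(2s)$ respectively, so $G$ sits inside a product of classical groups and its induced action on $\text{gr}(\cV) \cong \mathbb{C}[V_\infty]$ is the natural arc-space action.

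First I would produce a (typically infinite) strong generating set for $\cV^G$ by invoking classical invariant theory at the graded level. Weyl's first fundamental theorem for $O(n)$ and $Sp(2n)$ and their super-analogues (including Sergeev's theorems), together with Weyl's theorem on polarizations, imply that $\mathbb{C}[V_\infty]^G$ is generated as a differential algebra by finitely many invariants, so that $\mathbb{C}[V]^G$ generates $\mathbb{C}[V_\infty]^G$ differentially. Choose a finite set of homogeneous generators of $\mathbb{C}[V]^G$ and polarize to obtain generators of $\mathbb{C}[V_\infty]^G$ as a ring; lift each polarized invariant to a corresponding normally ordered monomial in $\cV^G$. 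Standard filtration arguments then show that these lifts form a countable strong generating set $\{\omega_\lambda\}$ for $\cV^G$.

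The finiteness of a minimal strong generating set is the core of the theorem, and it is forced by the nonassociativity of the Wick product. As the Dong--Nagatomo computation for $\cH^{\mathbb{Z}/2\mathbb{Z}}$ illustrates, every classical relation from Weyl's second fundamental theorem lifts to a quantum identity of the form
\begin{equation*}
\omega_{\lambda_0} \;=\; P\bigl(\{\omega_\mu\}_{\mu<\lambda_0}\bigr) \;+\; (\text{lower-filtration corrections}),
\end{equation*}
which is a decoupling relation expressing a high-weight generator as a normally ordered polynomial in lower-weight ones. The plan is: (i) for each classical invariant type, exhibit one base decoupling relation at some explicit weight obtained by quantizing an SFT syzygy; (ii) bootstrap upward by repeatedly applying zero-modes of already-decoupled fields to the base relation, as in Lemma~\ref{decoup:bootstrap}, to produce decoupling relations for all higher weights within that type; (iii) conclude that, up to finitely many exceptional weights, every $\omega_\lambda$ decouples, leaving a finite minimal strong generating set.

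The hardest step is the bootstrap (ii). A priori it is not obvious that applying zero-modes to a single base decoupling relation inductively eliminates all higher generators: one must check that the correction terms remain expressible in terms of previously decoupled fields, and that the leading coefficient that one must invert at each step is nonzero. The right way to handle this uniformly is to treat $\cV^G$ as a VOA over a ring of $G$-invariant parameters (in the spirit of Section~\ref{sec:voacommring}), reduce the nonvanishing of the leading coefficients to the nonvanishing of a Shapovalov-type determinant on the free generating module, and use representation-theoretic decompositions of tensor powers of $V$ under $G$ to certify that only finitely many irreducible invariant types appear among the $\omega_\lambda$ of weight below the decoupling threshold. Once this is in place, the finite set of base generators (one for each irreducible invariant type, below the threshold) strongly generates $\cV^G$.
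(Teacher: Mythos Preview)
Your setup is right: pass to $\text{gr}(\cV)$, lift Weyl's first fundamental theorem and polarization to obtain a countable strong generating set for $\cV^G$. (One small correction: your claim that $\mathbb{C}[V]^G$ generates $\mathbb{C}[V_\infty]^G$ as a \emph{differential} algebra is false in general --- see the $\cH_{\text{deg}}^{\mathbb{Z}/2\mathbb{Z}}$ discussion and \cite{LSS} --- but you immediately switch to polarizations generating as a \emph{ring}, which is correct, so this does not damage your argument.)

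The genuine gap is in your finiteness step. You propose to run the decoupling-plus-bootstrap argument directly for an arbitrary reductive $G$, quantizing SFT syzygies and applying zero-modes to propagate decoupling relations upward. This is not a viable route. The bootstrap in Lemma~\ref{decoup:bootstrap} and its generalizations work only because, for the \emph{full} automorphism group $\text{Aut}\,\cV$ (one of $O(n)$, $Sp(2n)$), the invariants are quadratic, the SFT relations are explicit Pfaffians or determinants, and the leading coefficient $R_n(I)$ at each step can be computed by a recursion and shown nonzero --- and even then this is delicate (Theorem~\ref{closedform} requires a closed formula, and in the $\cO_{\text{ev}}$ case the sharp bound is still open). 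For a general reductive $G$ the invariants have arbitrary degree, there is no uniform SFT to quantize, and your proposed reduction of the nonvanishing to a Shapovalov-type determinant over a parameter ring is unsubstantiated: nothing in Section~\ref{sec:voacommring} provides such a mechanism for free field algebras, which carry no deformation parameter.

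What the paper actually does is avoid decoupling for general $G$ altogether. It first establishes strong finite generation for $\cV^{\text{Aut}\,\cV}$ (Theorem~\ref{Hilbert:fullauto}), where the decoupling machinery does apply. It then observes that $\cV^{\text{Aut}\,\cV}$ has abelian Zhu algebra, so its irreducible modules appearing in $\cV$ are highest-weight with one-dimensional top; decomposes $\cV^G$ as a $\cV^{\text{Aut}\,\cV}$-module using Dong--Li--Mason theory and Weyl's polarization theorem to show a strong generating set lies in \emph{finitely many} irreducible $\cV^{\text{Aut}\,\cV}$-modules (Corollary~\ref{cor:fingen}); and finally proves these modules are $C_1$-cofinite over $\cV^{\text{Aut}\,\cV}$, which is what forces each to contribute only finitely many strong generators. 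The $C_1$-cofiniteness argument (Step~3) via Lemma~\ref{first} and Corollary~\ref{firstcor} is the missing idea in your proposal, and it is what replaces the unavailable direct decoupling for general $G$.
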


In this section, will give an outline of the proof. Let $\cV = \cH(n) \otimes \cF(m) \otimes \cS(r) \otimes \cA(s)$ as above. By definition, we assume that $G$ preserves the total conformal vector, which is the sum of the conformal vectors for the tensor factors. In particular, both the conformal weight grading and parity grading are preserved. So the space of weight $1/2$ is preserved which is the span of the generators of $\cS(n)$ and $\cF(m)$, and these are separately preserved. Therefore $G$ must act by automorphism on $\cS(n)$ and $\cF(m)$ separately. Next, since $\cS(n) \otimes \cF(m)$ commutes with $\cH(n) \otimes \cA(s)$, each $g\in G$ must map $\cH(n) \otimes \cA(s)$ to $\cH(n) \otimes \cA(s)$. Restricting to the weight $1$ space, which is spanned by the generators of $\cH(n) \otimes \cA(s)$, $G$ must preserve this space as well as the parity, hence it also acts on $\cH(n)$ and $\cA(s)$ separately. Therefore $G$ is a subgroup of $O(n) \otimes O(m) \otimes Sp(2r) \otimes Sp(2s)$, and $\cV^G$ can be regarded as a module over $\cH(n)^{O(n)} \otimes \cF(m)^{O(m)} \otimes \cS(r)^{Sp(2r)} \otimes \cA(s)^{Sp(2s)}$.

The proof of Theorem \ref{Hilbert:freefield} consists of the following steps
\begin{enumerate}
\item First we prove it in the special case when $\cV$ is one of the standard algebras $\cH(n)$, $\cF(m)$, $\cS(r)$, $\cA(s)$, and $G$ is the full automorphism group, which is either $O(n)$ or $Sp(2n)$. Then $\text{gr}(\cV)$ is either a symmetric or exterior algebra on $\bigoplus_{i\geq 0} V_i$, where each $V_i$ is the standard $\text{Aut}\ \cV$-module. The generators for such invariant rings are given by Weyl's first fundamental theorem of invariant theory (FFT) for the orthogonal and symplectic groups \cite{W}. This yields infinite generating sets for $\text{gr}(\cV)^{\text{Aut} \ \cV}$, and hence infinite strong generating sets for $\cV^{\text{Aut} \ \cV}$, since $\text{gr}(\cV)^{\text{Aut} \ \cV} \cong \text{gr}(\cV^{\text{Aut} \ \cV})$. The relations among these generators are given by Weyl's second fundamental theorem of invariant theory (SFT) \cite{W}, and using these relations we can eliminate all but a finite subset of the generators. From our explicit description of $\cV^{\text{Aut} \ \cV}$, we will also see that the Zhu algebra $A(\cV^{\text{Aut} \ \cV})$ is abelian in all cases. Therefore all irreducible ordinary $\cV^{\text{Aut} \ \cV}$-modules have one-dimensional top component, and are parametrized by $\text{Specm}\ A(\cV^{\text{Aut} \ \cV})$. 

\smallskip

\item For $\cV = \cH(n), \cF(m), \cS(r), \cA(s)$ and $G \subseteq \text{Aut} \ \cV$ an arbitrary reductive group, we will decompose $\cV^G$ as a module over $\cV^{\text{Aut} \ \cV}$. Even though there will typically be infinitely many irreducible $\cV^{\text{Aut} \ \cV}$-modules in this decomposition, using  Weyl's theorem on polarizations \cite{W}, we will show that a strong generating set for $\cV^G$ lies in the sum of finitely many of these modules.

\smallskip

\item Given a VOA $\cA$ with weight grading $\cA = \bigoplus_{n\geq 0} \cA[n]$ with $\cA[0] \cong \mathbb{C}$, and an ordinary $\cA$-module $M$, following \cite{MiII}, we define $C_1(\cM)$ to be the subspace of $\cM$ spanned by elements of the form $$:a(z) b(z):, \qquad a(z)\in \bigoplus_{k>0}\cA[k],\qquad b(z)\in \cM.$$ Then $\cM$ is said to be $C_1$-cofinite if $\cM / C_1(\cM)$ is finite-dimensional. For $\cV = \cH(n), \cF(m), \cS(r), \cA(s)$, we show that all irreducible $\cV^{\text{Aut} \ \cV}$-modules appearing in $\cV$, and in particular in $\cV^G$, are $C_1$-cofinite. Combined with the previous steps, this implies that strong finite generation of $\cV^G$. 

\smallskip

\item Finally, if $\cV =  \cH(n) \otimes \cF(m) \otimes \cS(r) \otimes \cA(s)$ and $G$ is any reductive group of automorphisms of $\cV$, recall that $G$ preserves the tensor factors and hence $G \subseteq O(n) \otimes O(m) \otimes Sp(2r) \otimes Sp(2s)$. Therefore $\cV^G$ decomposes as a module over the subalgebra
\begin{equation} \label{nicesub} \cH(n)^{O(n)} \otimes \cF(m)^{O(m)} \otimes \cS(r)^{Sp(2r)} \otimes \cA(s)^{Sp(2s)}.\end{equation}
As such, $\cV^G$ has a strong generating set lying in the sum of finitely many modules of the form $$M \otimes N \otimes P \otimes Q,$$ where $M$, $N$, $P$, and $Q$ are irreducible modules over $\cH(n)^{O(n)}$, $\cF(m)^{O(m)}$, $\cS(r)^{Sp(2r)}$, and $\cA(s)^{Sp(2s)}$, respectively.
As above, the strong finite generation of $\cV^G$ follows from the strong finite generation of the subalgebra \eqref{nicesub} of $\cV^G$ together with $C_1$-cofiniteness of the modules $M$, $N$, $P$, and $Q$.

\end{enumerate}

We now describe the first three steps in more detail.

\subsection{Step 1: The structure of  $\cV^{\rm{Aut} \ \cV}$}
The first step in our proof of Theorem \ref{Hilbert:freefield} is to describe $\cV^{\text{Aut} \ \cV}$ when $\cV$ is one of the standard free field algebras.

\begin{thm} \label{Hilbert:fullauto} For all $n\geq 1$, we have the following:
\begin{enumerate}
\item $\cS(n)^{Sp(2n)}$ has a minimal strong generating set 
$$w^b = \frac{1}{2}\sum_{i=1}^n \big(:\beta^{i} \partial ^b \gamma^{i}: -:(\partial^b\beta^{i}) \gamma^{i}:\big),\qquad b=1,3,\dots, 2n^2+4n-1.$$ Since $w^b$ has weight $b+1$, $\cS(n)^{Sp(2n)}$ is of type $\cW(2,4,\dots, 2n^2 + 4n)$.
\item $\cF(n)^{O(n)}$ has a minimal strong generating set
$$w^b = \frac{1}{2} \sum_{i=1}^n :(\partial^b \phi^i) \phi^i:, \qquad b = 1,3,\dots, 2n-1.$$ Since $w^b$ has weight $b+1$, $\cF(n)^{O(n)}$ is of type $\cW(2,4,\dots, 2n)$.
\item $\cA(n)^{Sp(2n)}$ has a minimal strong generating set 
$$w^b = \frac{1}{2}\sum_{i=1}^n \big(:e^i \partial^b f^i :+ :(\partial^b e^i) f^i:\big),\qquad b= 0,2,\dots, 2n-2.$$ Since $w^b$ has weight $b+2$, is of type $\cW(2,4,\dots, 2n)$.
\item $\cH(n)^{O(n)}$ has a minimal strong generating set 
$$w^b = \frac{1}{2}\sum_{i=1}^n :(\partial^b \alpha^i)\alpha^i:,\qquad  b = 0,2,\dots, 2N-2,$$ for some $N \geq \frac{1}{2}(n^2 + 3n)$. Since $w^b$ has weight $b+2$, $\cH(n)^{O(n)}$ is of type $\cW(2,4,\dots, 2N)$. We expect this bound is sharp but have proven it only for $n\leq 6$.
\end{enumerate}
\end{thm}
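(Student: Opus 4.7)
The plan is to reduce each statement to classical invariant theory via Li's filtration, and then lift classical syzygies to quantum decoupling relations in the vertex algebra. First I would pass to the associated graded algebra $\text{gr}(\cV)$, which for each of the four free field algebras is a (super)commutative differential polynomial algebra on $\bigoplus_{i\geq 0} V_i$ with $V_i$ a copy of the standard module for $G = \Aut(\cV)$. Because $G$ is reductive and the filtration is $G$-invariant, $\text{gr}(\cV^G)\cong \text{gr}(\cV)^G$. Weyl's first fundamental theorem then identifies $\text{gr}(\cV)^G$ as the differential subalgebra generated by the quadratic $G$-invariant contractions, which are precisely the symbols of the proposed $w^b$; the parity of $b$ is forced by the (anti)symmetry of the defining pairing on $V$. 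Since a differential generating set for $\text{gr}(\cV^G)$ lifts to a strong generating set for $\cV^G$, the infinite family $\{w^b\}$ strongly generates $\cV^G$.

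The heart of the argument is trimming this infinite family to the finite list claimed. Weyl's second fundamental theorem provides a finite set of generators for the ideal of quadratic syzygies among the classical invariants. When such a classical syzygy is lifted to a normally ordered expression in $\cV^G$, it fails to vanish on the nose: nonassociativity of the Wick product produces a nonzero ``quantum correction'' whose highest-weight term is a single $w^b$ with $b$ above a critical threshold. Exactly as in the proof of Theorem \ref{HZ2}, I would solve the lifted lowest-weight SFT relation for its top term to obtain an initial decoupling relation $w^{b^*+2} = P(w^0,w^2,\dots,w^{b^*})$, and then, mimicking Lemma \ref{decoup:bootstrap}, iteratively apply a low-weight zero-mode such as $(w^2)_{(1)}$ to propagate decoupling upward to every $w^b$ with $b > b^*$. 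The critical weights yielding the bounds $2n^2+4n$, $2n$, $2n$, and (conjecturally) $2N$ with $N = \tfrac{1}{2}(n^2+3n)$ are governed by the degree of the lowest nontrivial SFT relation in each setting: Pfaffian-type identities for $\Sp(2n)$ on the $\beta\gamma$ system in (1), $(n+1)$-fold antisymmetrizations in the fermionic cases (2) and (3), and a determinantal syzygy among symmetric contractions for $\Orth(n)$ on $n$ bosons in (4). Minimality in (1)--(3) is then verified by comparing the graded dimensions of the subalgebra strongly generated by $\{w^0,w^2,\dots,w^{b^*}\}$ (modulo the lifted SFT relations) with those of $\text{gr}(\cV)^G$ computed via Molien's formula.

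The main obstacle is case (4), $\cH(n)^{\Orth(n)}$. In the three other cases, the fermionic parity or symplectic antisymmetry forces the leading quantum correction to the critical SFT relation to be visibly nonzero -- a simple inspection of signs suffices -- so decoupling occurs exactly at the SFT-predicted weight and the bound is sharp. In the purely bosonic orthogonal case, however, one must actually compute the leading quantum correction to the degree-$\tfrac{1}{2}(n^2+3n)$ SFT relation and verify that it does not cancel accidentally among a large number of Wick summands. This combinatorial check grows very rapidly in $n$ and so far has only been carried out by computer for $n\leq 6$; in general, the classical SFT yields only the one-sided inequality $N\geq \tfrac{1}{2}(n^2+3n)$, coming from the absence of classical relations below that degree, and the matching upper bound is still conjectural. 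A secondary technical point, which I would handle uniformly in all four cases, is that $\cV$ is freely generated, so $\cV\cong \text{gr}(\cV)$ as filtered vector spaces and no spurious relations of weight below $b^*$ are introduced in the passage from $\text{gr}(\cV^G)$ back to $\cV^G$.
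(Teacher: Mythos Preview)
Your overall strategy matches the paper's: pass to the associated graded via the good increasing filtration, invoke Weyl's first fundamental theorem to produce the infinite quadratic strong generating set, lift the second fundamental theorem relations to quantum decoupling relations, and then bootstrap upward using a raising operator as in Lemma~\ref{decoup:bootstrap}. The minimality argument is also simpler than you propose: it follows directly from the SFT, since there are no classical relations below the critical degree and hence no possible decoupling relations there.

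There is, however, a genuine gap in your treatment of case~(1). You assert that in cases (1)--(3) ``fermionic parity or symplectic antisymmetry forces the leading quantum correction to the critical SFT relation to be visibly nonzero --- a simple inspection of signs suffices.'' This is correct for the fermionic cases (2) and (3): the paper confirms that in those cases all contributions to the recursion for the relevant coefficient carry the same sign, so nonvanishing is manifest. But case~(1), the bosonic $\beta\gamma$-system $\cS(n)^{Sp(2n)}$, does \emph{not} fall to a sign argument. The paper's recursion for the coefficient $R_n(I)$ of $w^{d-1}$ in the lifted Pfaffian relation $P_I$ of weight $d$ has genuinely alternating signs, and proving $R_n(I)\neq 0$ for $I=(0,1,\dots,2n+1)$ is the hardest step in the entire theorem. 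It requires establishing the explicit closed formula of Theorem~\ref{closedform}, expressing $R_n(I)$ as a ratio of products of linear forms in the entries of $I$; the paper calls this proof ``quite involved.'' Without it you have no decoupling at weight $2n^2+4n+2$, and the sharp bound in (1) is unproven. In difficulty, case~(1) is much closer to case~(4) than to (2) and (3); you have misplaced the obstacle.

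A secondary correction on case~(4): your proposal reads as though strong finite generation itself has only been checked for $n\leq 6$ by computer. In fact the paper proves strong finite generation for all $n$ by an asymptotic analysis of the recursive formula, showing that at sufficiently high weight some lifted relation has nonzero linear coefficient; only the conjectured sharpness $N=\tfrac{1}{2}(n^2+3n)$ is limited to $n\leq 6$.
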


Statements (1) and (2) of Theorem \ref{Hilbert:fullauto} were proven in \cite{LV}, statement (3) was proven in \cite{CLII}, and statement (4) was proven in \cite{LIII,LIV}. 
We will sketch the proof of (1) only; the proofs of the others are similar and will comment briefly on the modifications that are needed, and also on why the precise bound in the case of (4) is still open.

Our starting point is that $\cS(n)$ has a good increasing filtration $$\cS(n)_{(0)} \subseteq \cS(n)_{(1)} \subseteq \cdots,\qquad \cS(n) = \bigcup_{d \geq 0} \cS(n)_{(d)}.$$ It is defined by taking $\cS(n)_{(d)}$ to be the span of all normally ordered monomials in the generators $\beta^i, \gamma^i$ and their derivatives, of total length at most $d$. Setting $\cS(n)_{(0)} \cong \mathbb{C}$ and $\cS(n)_{(-1)} \cong \{0\}$, the associated graded algebra $\text{gr}(\cS(n)) = \bigoplus_{d\geq 0} \cS(n)_{(d)} / \cS(n)_{(d-1)}$ is then isomorphic to $\mathbb{C}[U_{\infty}]$, where $U \cong \mathbb{C}^{2n}$ regarded as the standard $Sp(2n)$-module. In particular, $\text{gr}(\cS(n)) \cong \mathbb{C}[\beta^i_k, \gamma^i_k]$ where $i =1,\dots, n$ and $k\geq 0$. Here $\beta^i_k$ and $\gamma^i_k$ are the images of $\partial^k \beta^i$ and $\partial^k \gamma^i$ in $\text{gr}(\cS(n))$, and for each $k\geq 0$, $U_k = \text{Span}(\beta^i_k, \gamma^i_k)$ is a copy of the $2n$-dimensional standard $Sp(2n)$-module. As a differential algebra, note that 
\begin{equation} \label{griso1} \text{gr}(\cS(n)) \cong \mathbb{C}[U_{\infty}] \cong \mathbb{C}[ \bigoplus_{k\geq 0} U_k],\end{equation} where $\partial \beta^i_k = \beta^i_{k+1}$ and $\partial \gamma^i_k = \gamma^i_{k+1}$.

Since $Sp(2n)$ acts linearly on the space of generators $\beta^i, \gamma^i$, it preserves the filtration and \eqref{griso1} is an isomorphism of $Sp(2n)$-modules. Therefore
\begin{equation} \label{grisos} \text{gr}(\cS(n)^{Sp(2n)}) \cong \text{gr}(\cS(n))^{Sp(2n)} \cong \mathbb{C}[\bigoplus_{k\geq 0} U_k]^{Sp(2n)}.\end{equation} 

The generators and relations for $\mathbb{C}[ \bigoplus_{k\geq 0} U_k]^{Sp(2n)}$ are given by Weyl's {\it first and second fundamental theorems of invariant theory} for the standard representation of $Sp(2n)$ (Theorems 6.1.A and 6.1.B of \cite{W}).

\begin{thm} \label{weylfft} For $k\geq 0$, let $U_k$ be the copy of the standard $Sp(2n)$-module $\mathbb{C}^{2n}$ with symplectic basis $\{x_{i,k}, y_{i,k}| \ i=1,\dots,n\}$. Then $(\text{Sym} \bigoplus_{k\geq 0} U_k )^{Sp(2n)}$ is generated by the quadratics $$ q_{a,b} = \frac{1}{2}\sum_{i=1}^n \big( x_{i,a} y_{i,b} - x_{i,b} y_{i,a}\big),\qquad a,b \geq 0. $$ Note that $q_{a,b} = -q_{b,a}$. Let $\{Q_{a,b}|\ a,b\geq 0\}$ be commuting indeterminates satisfying $Q_{a,b} = -Q_{b,a}$. The kernel of the map $$ \mathbb{C}[Q_{a,b}]\ra (\text{Sym} \bigoplus_{k\geq 0} U_k)^{Sp(2n)},\qquad Q_{a,b}\mapsto q_{a,b},$$ is generated by the degree $n+1$ Pfaffians $p_I$, which are indexed by lists $I = (i_0,\dots,i_{2n+1})$ of integers satisfying $ 0\leq i_0<\cdots < i_{2n+1}$. For $n=1$ and $I = (i_0, i_1, i_2, i_3)$, we have $$p_I = q_{i_0, i_1} q_{i_2, i_3} - q_{i_0, i_2} q_{i_1, i_3}+q_{i_0, i_3} q_{i_1, i_2},$$ and for $n>1$ they are defined inductively by $$ p_I =  \sum_{r=1}^{2n+1} (-1)^{r+1} q_{i_0,i_r} p_{I_r},$$ where $I_r = (i_1,\dots, \widehat{i_r},\dots, i_{2n+1})$ is obtained from $I$ by omitting $i_0$ and $i_r$. \end{thm}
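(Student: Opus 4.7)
The plan is to reduce Theorem~\ref{weylfft} to the classical first and second fundamental theorems of invariant theory for $Sp(2n)$ acting on finitely many copies of its standard representation.

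I would first reduce to the case of finitely many copies. Any polynomial in $\mathbb{C}[\bigoplus_{k\geq 0}U_k]$ involves only finitely many of the $x_{i,k}, y_{i,k}$ and hence lies in $\mathbb{C}[\bigoplus_{k=0}^{m-1}U_k]$ for some $m$. So it suffices to prove, for each finite $m$, that $\mathbb{C}[\bigoplus_{k=0}^{m-1}U_k]^{Sp(2n)}$ is generated by $\{q_{a,b}|\ 0\leq a<b\leq m-1\}$ and that the kernel of $\mathbb{C}[Q_{a,b}]\to\mathbb{C}[\bigoplus_{k=0}^{m-1}U_k]^{Sp(2n)}$ is generated by the Pfaffians $p_I$ with $I\subseteq\{0,\dots,m-1\}$ of size $2n+2$. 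Passing to the colimit as $m\to\infty$ then gives the full statement.

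For fixed $m$, I identify $\bigoplus_{k=0}^{m-1}U_k$ with $\text{Mat}_{2n\times m}$, on which $Sp(2n)$ acts by left multiplication. The FFT asserts that $\mathbb{C}[\text{Mat}_{2n\times m}]^{Sp(2n)}$ is generated by the entries of $X^{t}JX$, where $X$ is the generic matrix and $J$ is the standard symplectic form on $\mathbb{C}^{2n}$; up to sign these are precisely the $q_{a,b}$. This is classical (Theorem 6.1.A of \cite{W}) and can be proved via polarization from the case $m=2n$, via the Capelli identity, or via Howe duality on $\mathbb{C}[\text{Mat}_{2n\times m}]$. Geometrically, the polynomial map $\mu:\text{Mat}_{2n\times m}\to \text{Skew}_m$, $X\mapsto X^{t}JX$, is surjective onto the closed subvariety $V_{2n}\subseteq \text{Skew}_m$ of skew-symmetric matrices of rank at most $2n$, and the FFT says that the pullback $\mu^{*}:\mathbb{C}[V_{2n}]\to \mathbb{C}[\text{Mat}_{2n\times m}]^{Sp(2n)}$ is surjective. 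Since $V_{2n}$ is irreducible and $\mu$ is dominant, $\mu^{*}$ is also injective, so it is an isomorphism. Consequently the kernel of $\mathbb{C}[Q_{a,b}]\to\mathbb{C}[\text{Mat}_{2n\times m}]^{Sp(2n)}$ equals the defining ideal of $V_{2n}$ inside $\mathbb{C}[\text{Skew}_m]=\mathbb{C}[Q_{a,b}]$.

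The hard part will be the SFT proper: showing that this defining ideal is generated, not merely cut out set-theoretically, by the Pfaffians $p_I$. Set-theoretically $V_{2n}$ is indeed the vanishing locus of the $p_I$, since a skew-symmetric matrix has rank at most $2n$ iff all its $(2n+2)\times(2n+2)$ Pfaffians vanish. The nontrivial content is that the Pfaffian ideal is already radical (in fact prime), so no radical has to be taken. This is a classical reducedness/primeness statement for Pfaffian varieties, proved for instance by standard monomial theory or the theory of algebras with straightening laws as in de Concini--Procesi. Invoking this last input completes the proof.
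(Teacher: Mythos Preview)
The paper does not give its own proof of this theorem: it is stated as a black-box citation of Weyl's first and second fundamental theorems for $Sp(2n)$ (Theorems 6.1.A and 6.1.B of \cite{W}), and is then used as input for the analysis of $\cS(n)^{Sp(2n)}$. So there is no ``paper's proof'' to compare against.

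Your outline is a correct and standard route to this classical result. The reduction to finitely many copies is immediate, and your identification of the invariants with the coordinate ring of the Pfaffian variety $V_{2n}\subseteq \text{Skew}_m$ via the moment-type map $X\mapsto X^tJX$ is exactly the right geometric picture. You are also right that the genuine content of the SFT is the reducedness (indeed primeness) of the Pfaffian ideal; citing de Concini--Procesi or standard monomial theory for this is appropriate, since Weyl's original argument in \cite{W} proceeds differently (via the Capelli-type identities and the symbolic method rather than via straightening laws). One small remark: your injectivity of $\mu^*$ uses that $V_{2n}$ is taken with its reduced structure, which is fine since you define it as a variety; just be aware that this is precisely the point where the radicality of the Pfaffian ideal enters, so the two halves of your argument are not independent.
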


Under \eqref{grisos}, the generators $q_{a,b}$ correspond to strong generators
\begin{equation}\label{newgenomega} \omega_{a,b} = \frac{1}{2}\sum_{i=1}^n \big(:\partial^a\beta^{i} \partial ^b \gamma^{i}: -:\partial^b\beta^{i} \partial ^a \gamma^{i}:\big)\end{equation} of $\cS(n)^{Sp(2n)}$. We have some redundancy since $\partial (\omega_{a,b}) = \omega_{a+1,b} + \omega_{a,b+1}$, and it is easy to check that $\{\partial^k \omega_{0,2m+1}|\ k, m\geq 0\}$ and $\{\omega_{a,b}|\ 0\leq a < b\}$ span the same vector space. Hence 
\begin{equation} \label{stronggensnsp} \{w^{2m+1} = \omega_{0,2m+1}|\ k, m\geq 0\},\end{equation} is a strong generating set for $\cS(n)^{Sp(2n)}$. As usual, the corresponding set $\{q_{0,2m+1}|\ k, m\geq 0\}$ is a minimal generating set for $\text{gr}(\cS(n))^{Sp(2n)}$ as a differential algebra, but \eqref{stronggensnsp} is not a minimal strong generating set for $\cS(n)^{Sp(2n)}$, due to nontrivial relations coming from the second fundamental theorem of invariant theorem.

If we replace ordinary products in the above Pfaffians with Wick products in some order, the resulting expression does not vanish, but it lies in a lower filtered component of $\cS(n)$. This expression is $Sp(2n)$-invariant, and hence be expressed as a normally ordered polynomial in the above generators $w^{2m+1}$ and their derivatives. We arrive at a relation which we denote by $P_I$, whose leading term is a normal ordering of the classical Pfaffian $p_I$. If $d$ is the weight of $P_I$, let $R_n(I)$ denote the coefficient of $w^{2m+1}$ where $2m+1 = d-1$. It can be shown that this scalar $R_n(I)$ is independent of all choices of normally ordering in any of the terms of $P_I$.

What must be shown is that the first Pfaffian, which has weight $2n^2+ 4n + 2$, and corresponds to $I = (0,1,\dots, 2n+1)$, is actually a decoupling relation for the field $w^{2n^2+4n+1}$, that is, $R_n(I) \neq 0$. Then by a similar argument to the proof of Lemma \ref{decoup:bootstrap}, one can construct decoupling relations $$w^{2m+1} = P_m(w^{1}, w^{3},\dots, w^{2n^2+4n-1}),\ \text{for all} \ m \geq n^2+2n ,$$ by applying the operator $(w^{3})_{(1)}$ repeatedly. This suffices to prove that $\{w^{2m+1}|\ 0\leq m  \leq  n^2+2n-1\}$ is a strong generating set. The minimality is apparent from Weyl's second fundamental theorem, which implies that there can be no further decoupling relations.

The most efficient way to compute $R_n(I)$ for $I = (0,1,\dots, 2n+1)$ is actually to find a recursive formula that relates $R_n(I)$ for all lists $I$ of length $2n+2$, to the corresponding quantities $R_{n-1}(J)$ for $J$ a list of length $2n$. Recall that the Pfaffian $p_I$ has an expansion $$p_I =  \sum_{r=1}^{2n+1} (-1)^{r+1} q_{i_0,i_r} p_{I_r},$$ where $I_r = (i_1,\dots, \widehat{i_r},\dots, i_{2n+1})$ is obtained from $I$ by omitting $i_0$ and $i_r$. Let $P_{I_r}$ be the vertex algebra relation corresponding to $p_{I_r}$, whose leading term is a normal ordering of $p_{I_r}$. Similarly, for $a \in \{i_0,\dots, i_{2n+1}\} \setminus \{i_0, i_r\}$, let $I_{r,a}$ be the list obtained from $I_r = (i_1,\dots, \widehat{i_r},\dots, i_{2n+1})$ by replacing $i_a$ with $i_a+i_0+i_r+1$. By combining the recursive formula for the Pfaffian with identities that measure the noncommutativity and nonassociativity of the normally ordered product, one arrives at the following formula:
$$R_n(I) = -\frac{1}{2} \sum_{r=1}^{2n+1} (-1)^{r+1} \bigg( (-1)^{i_0} \sum_{a} \frac{R_{n-1}(I_{r,a})}{i_0+i_{a} +1} + (-1)^{i_r+1} \sum_{a}  \frac{R_{n-1}(I_{r,a})}{i_r+i_{a}+1}\bigg).$$
From this formula, it is not at all obvious whether or not $R_n(I) \neq 0$ for $I = 0,1,\dots, 2n+2$. The most difficult step is to show the following:
\begin{thm} \label{closedform} \begin{enumerate} 
\item For any list $I$, $R_n(I) = 0$ if the number of even and odd entries in $I$ are different. 
\item Suppose that $I = (i_0,j_0, i_1,j_1,\dots, i_n, j_n)$ is a list of nonnegative integers such that  $i_0,\dots, i_n$ are even and $j_0,\dots, j_n$ are odd. Then the following closed formula holds:
\begin{equation} \label{closedformula} R_n(I) = \frac{ n! \bigg(n+1 + \sum_{k=0}^{n} i_k+j_k\bigg)  \bigg(\prod_{0\leq k<l\leq n} (i_k - i_l)(j_k - j_l) \bigg)}{\prod_{0\leq k \leq n, \  0\leq l \leq n} (1+i_k + j_l)}.\end{equation}
\end{enumerate} \end{thm}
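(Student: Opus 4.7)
My plan is to establish both parts simultaneously by strong induction on $n$, using the recursion displayed just above the statement. For the base case $n=1$, the Pfaffian $p_I = q_{i_0,i_1}q_{i_2,i_3} - q_{i_0,i_2}q_{i_1,i_3} + q_{i_0,i_3}q_{i_1,i_2}$ has only three summands, and the corresponding relation $P_I$ in $\cS(1)^{\Sp(2)}$ is obtained by replacing each classical product with an iterated Wick product and tracking the normal-ordering corrections via the identities in \eqref{VOA:identities}. Extracting the coefficient of $w^{d-1}=\omega_{0,d-1}$ then gives $R_1(I)$ explicitly, and one checks that it agrees with \eqref{closedformula} in the parity-balanced case and vanishes otherwise.

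For part (1), the inductive step is a parity-tracking argument. Write $e,o$ for the number of even and odd entries of $I$, with $e+o=2n+2$. Removing $i_0,i_r$ from $I$ and shifting $i_a$ to $i_a+i_0+i_r+1$ changes the parity of $i_a$ precisely when $i_0$ and $i_r$ share a parity. A short case analysis shows that whenever $i_0,i_r$ have the same parity the resulting list $I_{r,a}$ still has unequal parity counts, so $R_{n-1}(I_{r,a})=0$ by induction; the surviving contributions, where $i_0$ and $i_r$ have opposite parities, carry signs $(-1)^{i_0}$ and $(-1)^{i_r+1}$, and these combine with the antisymmetry of $R_{n-1}(\cdot)$ in its index list (inherited from the antisymmetry of Pfaffians) to force an overall cancellation whenever $e\ne o$.

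For part (2), I would reformulate \eqref{closedformula} as a weighted Cauchy determinant
\begin{equation*}
R_n(I) \;=\; n!\,\Bigl(n+1+\sum_{k=0}^{n}(i_k+j_k)\Bigr)\,\det\!\Bigl(\tfrac{1}{1+i_k+j_l}\Bigr)_{\!0\le k,l\le n},
\end{equation*}
invoking the classical Cauchy identity $\det(1/(x_k+y_l)) = \prod_{k<l}(x_l-x_k)(y_l-y_k)/\prod_{k,l}(x_k+y_l)$. Under this reformulation, the recursion, restricted by part (1) to the parity-balanced regime, becomes a Laplace expansion of the determinant: the denominators $1/(i_0+i_a+1)$ and $1/(i_r+i_a+1)$ appearing in the recursion are precisely the entries of the Cauchy matrix once the even indices $i_k$ are separated from the odd indices $j_l$, and the alternating signs $(-1)^{r+1},(-1)^{i_0},(-1)^{i_r+1}$ match the signs of a double row-and-column cofactor expansion. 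The prefactor $n+1+\sum(i_k+j_k)$ emerges after clearing denominators and collecting the resulting linear terms.

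The principal technical obstacle is this last identification: verifying, as an identity of rational functions in the $i_k,j_k$, that the recursion precisely reproduces the Laplace expansion of the weighted Cauchy determinant. I expect this to reduce to a short determinantal manipulation once the signs are aligned, but the bookkeeping is delicate because several antisymmetries overlap — the antisymmetry of the Pfaffian and the sign $(-1)^{r+1}$ of the Pfaffian recursion, the parity markers $(-1)^{i_0},(-1)^{i_r+1}$, and the signs introduced by permuting evens and odds among the rows and columns of the Cauchy matrix. Aligning these signs is the heart of the argument, and once this is done both parts of the theorem follow from the inductive hypothesis.
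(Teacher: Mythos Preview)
The paper does not actually prove this theorem in the text; it only remarks that the argument is ``quite involved'' and ``exploits the fact that the numbers $R_n(I)$ have the same symmetries under permutation of the entries as the Pfaffians,'' deferring the details to \cite{LV}. So there is no explicit proof here to compare against, only the hint that Pfaffian-type antisymmetry is the essential structural input. Your Cauchy-determinant reformulation of \eqref{closedformula} is correct and is a natural way to organise part (2); whether the recursion collapses to a Laplace-type expansion as you suggest is plausible but, as you acknowledge, the sign alignment is the whole difficulty and you have not carried it out.

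Your argument for part (1), however, contains a concrete error in the case analysis. You claim that when $i_0$ and $i_r$ share a parity the list $I_{r,a}$ is always unbalanced. This is false. If $i_0,i_r$ are both even then $i_0+i_r+1$ is odd, so the shift $i_a\mapsto i_a+i_0+i_r+1$ \emph{flips} the parity of $i_a$; starting from $e$ evens and $o$ odds with $e=o+4$, removing two evens and shifting a third even produces a list with $e-3=o+1$ evens and $o+1$ odds, which is perfectly balanced, so $R_{n-1}(I_{r,a})$ need not vanish. Conversely, when $i_0,i_r$ have \emph{opposite} parities, $i_0+i_r+1$ is even and the shift preserves the parity of $i_a$; then $I_{r,a}$ has $e-1$ evens and $o-1$ odds, still unbalanced whenever $I$ was. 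Thus it is the opposite-parity terms that die by induction, not the same-parity ones, and your dichotomy is inverted. The genuine obstruction sits at $|e-o|=4$, where same-parity removals with a like-parity shift produce balanced lists whose $R_{n-1}$ values are governed by the closed formula of part (2). Showing that these contributions cancel is not a matter of antisymmetry alone; it appears to require the explicit formula, so parts (1) and (2) must be handled together in the induction rather than sequentially as you propose.
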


The proof of this formula is quite involved, and exploits the fact that the numbers $R_n(I)$ have the same symmetries under permutation of the entries as the Pfaffians. Specializing \eqref{closedformula} to the list $I = (0,1,\dots, 2n+1)$, it is apparent that $R_n(I) \neq 0$, which completes the proof of statement (1) of Theorem \ref{Hilbert:fullauto}.

It was shown in \cite{LV} that the Zhu algebra $A(\cS(n)^{Sp(2n)})$ is abelian, but an alternative explanation is the following. Being of type $\cW(2,4,\dots, 2N)$ for some $N$ and being generated by the fields in weights $2$ and $4$, it must be a quotient of the universal $2$-parameter VOA $\cW^{\text{ev}}(c,\lambda)$ constructed in \cite{KL}; all quotients of this algebra have abelian Zhu algebras by Theorem 5.5 and Corollary 5.6 of \cite{KL}. Therefore the irreducible ordinary modules of $\cS(n)^{Sp(2n)}$ are parametrized by points on the variety $\text{Spec}\ A(\cS(n)^{Sp(2n)})$ and have one-dimensional top component.

For statements (2) and (3) of Theorem \ref{Hilbert:fullauto} which involve vertex superalgebras, the generators are quadratic and the classical relations are the fermionic analogues of determinants in the case of (2), and Pfaffians in the case of (3). There are similar normally ordered polynomial relations among the generators that are quantum corrections of the classical relations. For relations of weight $m$, the nonvanishing of the coefficient of the term $w^b$ of weight $m$ is independent of all choices, and determines whether this relation is a decoupling relations for $w^b$. As above, there is a recursive formula for these coefficients. Since all contributions to this formula have the same sign, it is manifestly nonzero and it is not necessary to find a closed formula. Finally, in the case of (4), we can find a similar recursive formula for this coefficient, but it is not a sum of terms with the same sign and finding a closed formula is out of reach. However, using some asymptotic behavior of the recursive formula, we showed that for weight sufficiently high we can find a relation where it is nonzero in \cite{LIV}. Therefore we can find a decoupling relation for $w^b$ of weight $2N$ for some $N$. In all cases, by a similar argument to Lemma \ref{decoup:bootstrap} once we have a decoupling relations for $w^b$, can use it to construct decoupling relations for $w^b$ for all $m >b$. All these algebras arise as quotients of $\cW^{\text{ev}}(c,\lambda)$ and hence have abelian Zhu algebras. As above, their irreducible ordinary modules are parametrized by the points on the variety $\text{Spec}\ A(\cV^{\text{Aut}\ \cV})$ and have one-dimensional top component.

\subsection{Step 2: The structure of $\cV^G$ as a $\cV^{\text{Aut} \ \cV}$-module} As above, we focus on the case $\cV = \cS(n)$ since the other cases involve the same approach. First, we need the decomposition of $\cS(n)$ as a sum of irreducible modules for the action of $\cS(n)^{Sp(2n)}$ as well as $Sp(2n)$. We have
 \begin{equation}\label{dlmdecomp} \cS(n) \cong \bigoplus_{\nu\in H} L(\nu)\otimes M^{\nu},\end{equation} where $H$ indexes the irreducible, finite-dimensional representations $L(\nu)$ of $Sp(2n)$, and the $M^{\nu}$'s are inequivalent, irreducible, highest-weight $\cS(n)^{Sp(2n)}$-modules. This appears as Theorem 13.2 and Corollary 14.2 of \cite{KWY}, and can also be deduced using the general results of Dong, Li, and Mason in \cite{DLM}.

Let $G\subseteq Sp(2n)$ be a reductive group of automorphisms of $\cS(n)$, and recall that $$\text{gr}(\cS(n)^G )\cong (\text{gr}(\cS(n))^G \cong (\text{Sym} \bigoplus_{k\geq 0} U_k)^G = R$$ as commutative algebras, where $U_k\cong \mathbb{C}^{2n}$. For all $p\geq 1$, $GL(p)$ acts on $\bigoplus_{k =0}^{p-1} U_k $ and commutes with the action of $G$. There is an induced action of $GL(\infty) = \lim_{p\rightarrow \infty} GL(p)$ on $\bigoplus_{k\geq 0} U_k$ which commutes with $G$, so $GL(\infty)$ acts on $R$. Elements $\sigma \in GL(\infty)$ are known as {\it polarization operators}, and given $f\in R$, $\sigma f$ is known as a polarization of $f$. The following fundamental result of Weyl appears as Theorem 2.5A of \cite{W}.

\begin{thm} \label{weylpolarization} $R$ is generated by the polarizations of any set of generators for $(\text{Sym} \bigoplus_{k = 0} ^{2n-1} U_k)^G$. Since $G$ is reductive, $(\text{Sym} \bigoplus_{k = 0} ^{2n-1} U_k)^G$ is finitely generated, so there exists a finite set $\{f_1,\dots, f_r\}$, whose polarizations generate $R$. \end{thm}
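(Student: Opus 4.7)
The plan is to reduce the statement to the classical polarization principle: every multihomogeneous $G$-invariant polynomial on $U^{\oplus p}$, for arbitrary $p$, can be written as a $\mathbb{C}$-linear combination of products of $GL(\infty)$-translates of $G$-invariant polynomials on $U^{\oplus 2n}$. Once this is granted, finite generation of $(\text{Sym}\bigoplus_{k=0}^{2n-1} U_k)^G$ by some set $\{f_1, \dots, f_r\}$ follows immediately from Hilbert's theorem, since $G$ is reductive acting on the finite-dimensional space $\bigoplus_{k=0}^{2n-1} U_k$, and then the polarizations $\{\sigma f_i : \sigma \in GL(\infty),\ 1\leq i\leq r\}$ will generate all of $R$ as a ring.

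First I would observe that an arbitrary element of $R$ involves only finitely many of the $U_k$, so lies in $(\text{Sym}\bigoplus_{k=0}^{p-1} U_k)^G$ for some finite $p$. Because $G$ is reductive and preserves multidegree, we may decompose into multihomogeneous components, so it suffices to consider a multihomogeneous invariant $f$ of multidegree $(d_0, \dots, d_{p-1})$. If $p \leq 2n$, then by relabeling indices we realize $f$ as the image of an invariant on $\bigoplus_{k=0}^{2n-1} U_k$ under an injective polarization operator in $GL(\infty)$, and we are done. The essential case is therefore $p > 2n = \dim U$.

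For this case, I would apply the polarization-restitution technique. Fully polarizing $f$ in each variable produces a multilinear $G$-invariant $F$ on $U^{\oplus D}$, where $D = d_0 + \cdots + d_{p-1}$; the original $f$ is recovered from $F$ via a restitution operator lying in the enveloping algebra of polarization operators on the original space. It then suffices to show that the space of multilinear $G$-invariants on $U^{\oplus D}$, for any $D$, is contained in the $GL(\infty)$-orbit of multilinear invariants on $U^{\oplus 2n}$. This rests on the fact that any $D$-tuple of vectors in $U$ with $D > \dim U$ is obtained from a $D$-tuple supported in a fixed $2n$-dimensional subspace by the action of $GL(D)$ on the indexing copies, combined with the $GL\times G$ duality that identifies $(U^{\otimes D})^G$ with a sum of irreducible $GL(D)$-modules whose highest-weight vectors may be arranged to be supported on at most $\dim U = 2n$ factors.

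The principal technical difficulty is controlling the restitution step: one must verify that restitution, which sets groups of distinct polarization variables equal, preserves membership in the ring generated by $GL(\infty)$-translates of $(\text{Sym}\bigoplus_{k=0}^{2n-1} U_k)^G$. This is handled by the classical Aronhold process, which expresses restitution explicitly as a polynomial in the polarization operators $D_{ij} = \sum_k u_i^k \partial_{u_j^k}$ (which together span $\mathfrak{gl}(\infty)$ and commute with the $G$-action). Combining this with Hilbert's finiteness theorem for the reductive $G$-action on $\bigoplus_{k=0}^{2n-1} U_k$ yields the desired finite set $\{f_1, \dots, f_r\}$ whose polarizations generate $R$.
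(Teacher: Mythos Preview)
The paper does not actually prove this statement: it is quoted as ``Theorem 2.5A of \cite{W}'' and attributed to Weyl without further argument. So there is no proof in the paper to compare against; what you have done is supply a sketch of the classical polarization--restitution argument behind Weyl's result, which the paper simply takes as input.

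Your outline is in the right spirit, and the reduction to a fixed finite $p$, the multihomogeneous decomposition, and the invocation of Hilbert's finiteness for the $G$-action on the finite-dimensional piece $\bigoplus_{k=0}^{2n-1}U_k$ are all correct. One point deserves tightening. In the key step you pass to fully multilinear invariants on $U^{\oplus D}$ and then invoke ``$GL\times G$ duality'' to reduce the support to $2n$ factors. But the multilinear part of $\text{Sym}(U\otimes\mathbb{C}^D)$ is a weight space for the torus of $GL(D)$, not a $GL(D)$-submodule, so $GL(D)$ does not act on it; only $S_D$ does. The cleaner route---and the one Weyl actually uses---avoids full polarization and instead works directly with the polarization operators $D_{ij}$: when $p>\dim U=2n$, the $(2n{+}1)\times(2n{+}1)$ determinant in the $D_{ij}$ annihilates functions on $U^{\oplus p}$, and expanding this determinant yields an identity that expresses an invariant genuinely involving more than $2n$ slots as a polynomial in invariants involving fewer. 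Iterating brings everything down to $2n$ slots. Your Aronhold/restitution framing can be made to work too, but you should replace the $GL(D)$ duality claim by the $S_D$-action (Schur--Weyl) or by the determinant-of-polarization-operators argument to make the reduction step rigorous.
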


An immediate consequence is that $\cS(n)^G$ has a strong generating set which lies in the direct sum of finitely many irreducible $\cS(n)^{Sp(2n)}$-modules. The reason is as follows. Each of the modules $L(\nu)$ appearing in \eqref{dlmdecomp} is a $G$-module, and since $G$ is reductive, it has a decomposition $L(\nu) =\bigoplus_{\mu\in H^{\nu}} L(\nu)_{\mu}$. Here $\mu$ runs over a finite set $H^{\nu}$ of irreducible, finite-dimensional representations $L(\nu)_{\mu}$ of $G$, possibly with multiplicity. We obtain a refinement of \eqref{dlmdecomp},
\begin{equation}\label{decompref} \cS(n) \cong \bigoplus_{\nu\in H} \bigoplus_{\mu\in H^{\nu}} L(\nu)_{\mu} \otimes M^{\nu}.\end{equation}
Under the linear isomorphism $\cS(n)^G\cong R$, let $f_i(z)$ and $(\sigma f_i)(z)$ correspond to $f_i$ and $\sigma f_i$, respectively, for $i=1,\dots, r$. Then the set $$\{(\sigma f_i)(z)\in \cS(n)^G|\ i=1,\dots,r,\ \sigma\in GL(\infty)\},$$ is a strong generating set for $\cS(n)^G$.

By enlarging the collection $f_1(z),\dots,f_r(z)$ if necessary, we may assume without loss of generality that each $f_i(z)$ lies in a single representation of the form $L(\nu_j)\otimes M^{\nu_j}$. Moreover, we may assume that $f_i(z)$ lies in a trivial $G$-submodule $L(\nu_j)_{\mu_0} \otimes M^{\nu_j}$, where $\mu_0$ denotes the trivial, one-dimensional $G$-module. In particular this means that $L(\nu_j)_{\mu_0}$ is one-dimensional. Since the actions of $GL(\infty)$ and $Sp(2n)$ on $\cS(n)$ commute, $(\sigma f_i)(z)\in L(\nu_j)_{\mu_0}\otimes M^{\nu_j}$ for all $\sigma\in GL(\infty)$. 

\begin{cor} \label{cor:fingen} $\cS(n)^G$ is a finitely generated vertex algebra.\end{cor}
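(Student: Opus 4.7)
The plan is to exploit the distinction between finite generation and strong finite generation of a vertex algebra: generation by a set $S$ includes all products $a_{(n)}b$ with $n\in\mathbb{Z}$, not merely the normally ordered ones, so irreducible $\cS(n)^{\Sp(2n)}$-submodules of $\cS(n)^G$ are \emph{cyclic}. Combining this with the strong finite generation of $\cS(n)^{\Sp(2n)}$ supplied by Theorem \ref{Hilbert:fullauto}(1) reduces the problem to producing one vector from each of finitely many submodules that appear in the strong generating set furnished by Theorem \ref{weylpolarization}.

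Concretely, I would first recall from the paragraph preceding the corollary that $\cS(n)^G$ admits a strong generating set $\{(\sigma f_i)(z): i=1,\dots,r,\ \sigma\in\GL(\infty)\}$ where, after possibly enlarging the collection, each $f_i(z)$ may be chosen to lie in a trivial $G$-submodule $L(\nu_{j_i})_{\mu_0}\otimes M^{\nu_{j_i}}$ of the refined decomposition \eqref{decompref}, with $L(\nu_{j_i})_{\mu_0}$ one-dimensional. Then this summand is isomorphic as a $\cS(n)^{\Sp(2n)}$-module to the irreducible module $M^{\nu_{j_i}}$ appearing in \eqref{dlmdecomp}, and all polarizations $(\sigma f_i)(z)$ lie inside it.

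Next, let $S$ be the finite set consisting of $\{f_1,\dots,f_r\}$ together with the strong generators $w^1, w^3,\dots, w^{2n^2+4n-1}$ of $\cS(n)^{\Sp(2n)}$ from Theorem \ref{Hilbert:fullauto}(1), and let $\cW\subseteq\cS(n)^G$ be the vertex subalgebra generated by $S$. Then $\cW\supseteq\cS(n)^{\Sp(2n)}$, since the strong generators of the latter lie in $S$ and vertex subalgebras are closed under Wick products and derivatives. For each $i$, every $a_{(m)}f_i$ with $a\in\cS(n)^{\Sp(2n)}$ and $m\in\mathbb{Z}$ lies in $\cW$, and iterating this we see that the $\cS(n)^{\Sp(2n)}$-submodule of $\cS(n)^G$ generated by $f_i$ is contained in $\cW$. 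By irreducibility this submodule is all of $L(\nu_{j_i})_{\mu_0}\otimes M^{\nu_{j_i}}$, so every polarization $(\sigma f_i)(z)$ lies in $\cW$. Since these polarizations strongly generate $\cS(n)^G$ and $\cW$ is closed under Wick products and derivatives, $\cW=\cS(n)^G$.

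The only step requiring real justification is the cyclicity assertion, namely that any nonzero vector in the irreducible $\cS(n)^{\Sp(2n)}$-module $M^{\nu_{j_i}}$ generates it under the mode actions $v\mapsto a_{(m)}v$. This is the standard fact that a nonzero invariant submodule of an irreducible module is the whole module. No decoupling relations, recursive Pfaffian identities, or filtration arguments are needed at this stage; those tools will enter only at the subsequent step, when one upgrades finite generation to strong finite generation. The same argument, with $\Sp(2n)$ replaced by the appropriate automorphism group, applies verbatim to the orbifolds $\cH(n)^G$, $\cF(m)^G$, and $\cA(s)^G$, and by tensoring to the general case of Theorem \ref{Hilbert:freefield}.
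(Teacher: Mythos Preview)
Your proof is correct and follows essentially the same approach as the paper: both use that the polarizations $(\sigma f_i)(z)$ lie in an irreducible $\cS(n)^{Sp(2n)}$-module containing $f_i(z)$, so that acting by modes of $\cS(n)^{Sp(2n)}$ on $f_i(z)$ recovers all polarizations, and hence $\cS(n)^G$ is generated as a vertex algebra by $\{f_1,\dots,f_r\}$ together with generators of $\cS(n)^{Sp(2n)}$. The only difference is cosmetic: the paper notes that $\cS(n)^{Sp(2n)}$ is generated as a vertex algebra by the single field $w^3$ (not merely strongly generated by $w^1,w^3,\dots,w^{2n^2+4n-1}$), so it obtains the slightly sharper conclusion that $\cS(n)^G$ is generated by $\{f_1,\dots,f_r,w^3\}$.
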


\begin{proof} Since $\cS(n)^G$ is strongly generated by $\{ (\sigma f_i)(z)|\ i=1,\dots,r,\  \sigma\in GL(\infty)\}$, and each $M^{\nu_j}$ is an irreducible $\cS(n)^{Sp(2n)}$-module, $\cS(n)^G$ is generated by $f_1(z),\dots,f_r(z)$ as an algebra over $\cS(n)^{Sp(2n)}$. Since $\cS(n)^{Sp(2n)}$ is generated by $w^3$ as a vertex algebra, $\cS(n)^G$ is finitely generated. \end{proof}

\subsection{Step 3: $C_1$-cofiniteness of $\cV^{\text{Aut} \ \cV}$-modules} The final step is to prove that each irreducible $\cS(n)^{Sp(2n)}$-module appearing in the decomposition of $\cS(n)$ has the $C_1$-cofiniteness property. For this, we need an elementary fact about representations of associative algebras which can be found in \cite{LII}. Let $A$ be an associative $\mathbb{C}$-algebra, and let $W$ be a linear representation of $A$, via an algebra homomorphism $\rho: A\ra \text{End}(W)$. We may also regard $A$ as a Lie algebra with commutator as bracket. We now let $\rho_{\text{Lie}}:A\ra \text{End}(W)$ denote the same map $\rho$, which we now regard as a Lie algebra homomorphism. Clearly $\rho_{\text{Lie}}$ can be extended to a Lie algebra homomorphism $$\hat{\rho}_{\text{Lie}}: A\ra \text{End}(\text{Sym}(W)),$$ where $\hat{\rho}_{\text{Lie}}(a)$ acts by derivation on the $d^{\text{th}}$ symmetric power $\text{Sym}^d(W)$ as follows:
 $$\hat{\rho}_{\text{Lie}}(a)( w_1\cdots w_d) = \sum_{i=1}^d w_1 \cdots  \hat{\rho}_{\text{Lie}}(a)(w_i)  \cdots  w_d.$$ Letting $U(A)$ denote the universal enveloping algebra of $A$ (regarded as a Lie algebra), we finally extend this map to an algebra homomorphism $U(A)\ra \text{End}(\text{Sym}(W))$ which we also denote by $\hat{\rho}_{\text{Lie}}$. The following result appears as Lemma 3 of \cite{LII}.

\begin{lemma} \label{first} Given $\mu \in U(A)$ and $d\geq 1$, let $\Phi^d_{\mu}  = \hat{\rho}_{\text{Lie}}(\mu) \big|_{\text{Sym}^d(W)} \in \text{End}(\text{Sym}^d(W))$. Let $E$ denote the subspace of $\text{End}(\text{Sym}^d(W))$ spanned by $\{\Phi^d_{\mu}|\ \mu\in U(A)\}$, which has a filtration $$E_1\subseteq E_2\subseteq \cdots,\qquad  E = \bigcup_{r\geq 1} E_r.$$ Here $E_r$ is spanned by $\{\Phi^d_{\mu}|\  \mu \in U(A),\ \text{deg}(\mu) \leq r\}$. Then $E = E_d$.\end{lemma}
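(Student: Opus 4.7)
The plan is to realize $\Phi^d_\mu$, for every $\mu \in U(A)$, as an element of the subspace of symmetric $d$-tensors in $\rho(A)$ acting on $\text{Sym}^d(W)$, and then to realize every such symmetric tensor by some $\Phi^d_\nu$ with $\deg(\nu) \leq d$. Throughout, one must distinguish carefully between the multiplication $\star$ in $U(A)$ and the given associative multiplication in $A$: the Lie homomorphism $\rho_{\text{Lie}}\colon A \to \operatorname{End}(W)$ is in fact an associative algebra homomorphism, so that iterated $\star$-products of images $\rho(a_i)$ collapse to ordinary $A$-products $\rho(a_{i_1} a_{i_2} \cdots a_{i_s})$, and this collapse is what drives the argument.

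First, by linearity it suffices to handle a PBW monomial $\mu = a_1 \star a_2 \star \cdots \star a_r$ with $a_i \in A$. Applying $\hat\rho_{\text{Lie}}(a_r), \hat\rho_{\text{Lie}}(a_{r-1}), \ldots, \hat\rho_{\text{Lie}}(a_1)$ in turn as derivations on $w_1 \cdots w_d \in \text{Sym}^d(W)$, and indexing the result by the function $f\colon \{1,\dots,r\} \to \{1,\dots,d\}$ that records which factor $w_{f(i)}$ each $\hat\rho_{\text{Lie}}(a_i)$ attacks, a direct bookkeeping yields
\begin{equation*}
\Phi^d_\mu(w_1 \cdots w_d) \;=\; \sum_{f \colon \{1,\dots,r\} \to \{1,\dots,d\}}\ \prod_{k=1}^{d} \rho(a^f_k)(w_k),
\end{equation*}
where $a^f_k := a_{i_1} a_{i_2} \cdots a_{i_{s_k}} \in A$ if $f^{-1}(k) = \{i_1 < \cdots < i_{s_k}\}$, and $a^f_k := 1$ if $f^{-1}(k) = \emptyset$. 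Lifted to $W^{\otimes d}$, the operator $\Phi^d_\mu$ is therefore represented by an element of $\operatorname{End}(W)^{\otimes d}$ that is a sum of pure tensors $\rho(a^f_1) \otimes \cdots \otimes \rho(a^f_d)$; this sum is manifestly invariant under the natural $S_d$-action permuting tensor slots, so $\Phi^d_\mu$ lies in the subspace $\text{Sym}^d(\rho(A)) \subseteq \operatorname{End}(W)^{\otimes d}$ of symmetric tensors with entries in $\rho(A)$ (where we treat $1 \in A$ as allowed).

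Second, I would prove by induction on $t \in \{0,1,\dots,d\}$ that every symmetric tensor
\begin{equation*}
\text{Sym}\bigl(\rho(c_1) \otimes \cdots \otimes \rho(c_t) \otimes 1^{\otimes (d-t)}\bigr),\qquad c_1,\dots,c_t \in A,
\end{equation*}
lies in $E_d$. The base case $t=0$ is $\Phi^d_1 = \operatorname{id}$. For the inductive step, take $\nu = c_1 \star c_2 \star \cdots \star c_t \in U(A)$, which has degree $t \leq d$, and expand $\Phi^d_\nu$ via the formula of Step 1. The injective $f\colon \{1,\dots,t\} \hookrightarrow \{1,\dots,d\}$ together contribute a positive scalar multiple (namely $d!/(d-t)!$) of the desired symmetric tensor, while each non-injective $f$ has $|\operatorname{Im}(f)| < t$ and contributes a pure tensor with strictly fewer than $t$ non-identity slots (at least one slot receives an $A$-product of two or more of the $c_i$'s, and correspondingly at least one slot receives $1$). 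Grouping the non-injective contributions by fibre type produces symmetric tensors with fewer non-identity slots, which by the inductive hypothesis lie in $E_d$, so solving for the injective part places the desired tensor in $E_d$ as well. Combining Steps 1 and 2 gives $\Phi^d_\mu \in \text{Sym}^d(\rho(A)) \subseteq E_d$ for every $\mu \in U(A)$, whence $E = E_d$. The principal obstacle is Step 1: one must verify carefully that the ordered product assigned to each fibre of $f$ indeed collapses, via the algebra homomorphism $\rho$, to a single element $\rho(a^f_k)$ of $\rho(A)$, for this collapse is precisely what forces $\Phi^d_\mu$ into the tractable subspace $\text{Sym}^d(\rho(A))$ and makes the induction in Step 2 close.
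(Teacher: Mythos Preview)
Your proof is correct. The paper itself does not supply a proof of this lemma; it only records the statement with a citation to Lemma~3 of \cite{LII}, so there is nothing in the present text to compare against directly. Your two-step strategy---lifting $\Phi^d_\mu$ to an $S_d$-invariant element of $(\rho(A)+\mathbb{C}\,\mathrm{id})^{\otimes d}\subseteq\operatorname{End}(W)^{\otimes d}$ by expanding the iterated derivation as a sum over maps $f\colon\{1,\dots,r\}\to\{1,\dots,d\}$ and collapsing each fiber product $\rho(a_{i_1})\cdots\rho(a_{i_s})$ to a single $\rho(a_{i_1}\cdots a_{i_s})$ via the associative homomorphism $\rho$, then stratifying by the fiber partition of $f$ and running a downward induction on the number of non-identity slots---is precisely the natural argument and is presumably what the cited reference carries out. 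One small remark: the constant $d!/(d-t)!$ you quote in Step~2 presumes that $\mathrm{Sym}$ denotes the average $\frac{1}{d!}\sum_{\sigma\in S_d}\sigma$; the argument only requires this scalar to be nonzero, which it is, so the normalization is immaterial.
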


\begin{cor} \label{firstcor} Let $f\in \text{Sym}^d(W)$, and let $M\subseteq \text{Sym}^d(W)$ be the cyclic $U(A)$-module generated by $f$. Then $\{\hat{\rho}_{\text{Lie}}(\mu)(f)|\ \mu\in U(A),\ \text{deg}(\mu)\leq d\}$ spans $M$.\end{cor}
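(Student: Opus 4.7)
The corollary is an almost immediate consequence of Lemma \ref{first}, so my plan is to treat it as essentially a formal unwinding of definitions once the lemma is in hand.

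First I would recall that by the definition of the cyclic $U(A)$-module generated by $f$, the subspace $M \subseteq \text{Sym}^d(W)$ is spanned by the set $\{\hat{\rho}_{\text{Lie}}(\mu)(f) \mid \mu \in U(A)\}$, with no restriction on the degree of $\mu$. So the content of the corollary is precisely that one can impose the restriction $\deg(\mu) \leq d$ without shrinking the span. Since $f \in \text{Sym}^d(W)$ and each $\hat{\rho}_{\text{Lie}}(a)$ acts as a derivation that preserves symmetric degree (it sends each factor $w_i$ to $\rho_{\text{Lie}}(a)(w_i) \in W$), the whole module $M$ lies inside $\text{Sym}^d(W)$. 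Therefore every element of $M$ is of the form $\Phi^d_{\mu}(f)$ for some $\mu \in U(A)$, where $\Phi^d_{\mu} = \hat{\rho}_{\text{Lie}}(\mu)|_{\text{Sym}^d(W)}$ is the operator considered in Lemma \ref{first}.

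Next I would invoke Lemma \ref{first} directly: it asserts that the subspace $E \subseteq \text{End}(\text{Sym}^d(W))$ spanned by $\{\Phi^d_{\mu} \mid \mu \in U(A)\}$ coincides with its filtered piece $E_d$, spanned by those $\Phi^d_{\mu}$ with $\deg(\mu) \leq d$. Consequently, for any $\mu \in U(A)$ the operator $\Phi^d_{\mu}$ can be written as a finite linear combination $\Phi^d_{\mu} = \sum_i c_i \Phi^d_{\mu_i}$ with $\deg(\mu_i) \leq d$. Applying this identity to $f$ yields $\hat{\rho}_{\text{Lie}}(\mu)(f) = \sum_i c_i \, \hat{\rho}_{\text{Lie}}(\mu_i)(f)$, and hence every spanning element of $M$ is already a linear combination of elements in the claimed set.

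There is no real obstacle here; the entire substance of the statement is contained in Lemma \ref{first}, which provides the degree bound at the level of operators on $\text{Sym}^d(W)$. The only thing to watch is the elementary but essential observation that the derivation action of $\hat{\rho}_{\text{Lie}}(A)$ preserves symmetric degree, so that both $f$ and everything produced from it live inside the single homogeneous piece $\text{Sym}^d(W)$ on which the lemma applies. With that in place, the corollary follows in one line.
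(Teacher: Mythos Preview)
Your proposal is correct and matches the paper's approach: the paper states the corollary with no proof, treating it as an immediate consequence of Lemma \ref{first}, and your unwinding is exactly the intended one-line derivation.
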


It is not difficult to check that the fields $\{w^{2m+1}|\ m \geq 0\}$ close linearly under OPE, and hence the modes 
$$\{w^{2m+1}(k)|\ m\geq 0, \ k\geq 0\}$$ span a Lie algebra, which we denote by $\cP^+$. We have a decomposition
 $$\cP^+ = \cP^+_{<0} \oplus \cP^+_0\oplus \cP^+_{>0},$$ where $\cP^+_{<0}$, $\cP^+_0$, and $\cP^+_{>0}$ are the Lie algebras spanned by $$\{w^{2m+1}(k)|\ 0\leq k< 2m+1\},\qquad \{w^{2m+1}(2m+1)\},\qquad \{w^{2m+1}(k)|\ k>2m+1\},$$ respectively. It is also apparent that $\cP^+$ preserves the filtration on $\cS(n)$, so each element of $\cP^+$ acts by a derivation of degree zero on $\text{gr}(\cS(n))$. 

Let $\cM$ be an irreducible, highest-weight $\cS(n)^{Sp(2n)}$-submodule of $\cS(n)$ with generator $f(z)$, and let $\cM'\subseteq \cM$ denote the $\cP^+$-submodule generated by $f(z)$. Since $f(z)$ has minimal weight among elements of $\cM$ and $\cP^+_{>0}$ lowers weight, $f(z)$ is annihilated by $\cP^+_{>0}$. Moreover, $\cP^+_0$ acts diagonalizably on $f(z)$, so $f(z)$ generates a one-dimensional $\cP^+_0\oplus \cP^+_{>0}$-module. By the Poincar\'e-Birkhoff-Witt theorem, $\cM'$ is a quotient of $$U(\cP^+)\otimes_{U(\cP^+_0\oplus \cP^+_{>0})} \mathbb{C} f(z),$$ and in particular is a cyclic $\cP^+_{<0}$-module with generator $f(z)$. Suppose that $f(z)$ has degree $d$, that is, $f(z)\in \cS(n)_{(d)} \setminus \cS(n)_{(d-1)}$. Since $\cP^+$ preserves the filtration on $\cS(n)$, and $\cM$ is irreducible, the nonzero elements of $\cM'$ lie in $\cS(n)_{(d)} \setminus \cS(n)_{(d-1)}$. Therefore, the projection $\cS(n)_{(d)}\ra \cS(n)_{(d)}/\cS(n)_{(d-1)} \subseteq \text{gr}(\cS(n))$ restricts to an isomorphism of $\cP^+$-modules \begin{equation}\label{isopmod} \cM'\cong \text{gr}(\cM')\subseteq \text{gr}(\cS(n)).\end{equation} By Corollary \ref{firstcor}, $\cM'$ is spanned by elements of the form \begin{equation} \label{spanningset} \{w^{2l_1+1}(k_1)\cdots w^{2l_r+1}(k_r) f(z) |\ w^{2l_i+1}(k_i)\in \cP^+_{<0},\ r\leq d\}.\end{equation}

We then need some refinements that restrict the possible modes that can appear in the spanning set \eqref{spanningset}. First, fix $m$ so that $f\in \text{Sym}^d(W_m)$, where $W_m\subseteq \text{gr}(\cS(n))$ has basis $\{\beta^i_j, \gamma^i_j|\ 1\leq  i \leq n,\ 0\leq j\leq m\}$. Then $\cM'$ is spanned by 
\begin{equation} \label{refinement:1} \{w^{2l_1+1}(k_1) \cdots w^{2l_r+1}(k_r) f(z)|\ w^{2l_i+1}(k_i)\in \cP^+_{<0},\ \  r\leq d,\ \ 0\leq k_i \leq 2m+1\}.\end{equation}
Since each $k_i \leq 2m+1$, for each fixed $N$, there are only finitely many elements of the form \eqref{refinement:1} of weight $N$. By the same argument as Lemma 8 of \cite{LII}, using the minimal strong generating type of $\cS(n)^{Sp(2n)}$ one can show such elements of sufficiently high weight all lie in $C_1(\cM)$. This proves that $\cM$ is $C_1$-cofinite.

Finally, by combining Steps 1, 2, and 3 in the case of $\cS(n)$, we obtain
\begin{thm} For any reductive group $G$ of automorphisms of $\cS(n)$, $\cS(n)^G$ is strongly finitely generated.\end{thm}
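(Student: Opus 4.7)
The plan is to assemble Steps 1, 2, and 3 in sequence. First, I would argue that $G \subseteq Sp(2n)$. Since $G$ fixes the chosen Virasoro vector, it preserves the weight grading; the weight-$\tfrac12$ subspace of $\cS(n)$ is precisely the span of $\beta^i,\gamma^i$, and $G$ acts linearly on it preserving all OPEs, hence preserving the symplectic pairing. So $G$ is a reductive subgroup of the full automorphism group $Sp(2n)$, and $\cS(n)^{Sp(2n)} \subseteq \cS(n)^{G}$.

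Next I would invoke Step 1, i.e.\ Theorem \ref{Hilbert:fullauto}(1), to fix a finite strong generating set $w^{1},w^{3},\dots,w^{2n^{2}+4n-1}$ for $\cS(n)^{Sp(2n)}$. I would then invoke the analysis of Step 2: applying Weyl's polarization theorem to the commutative model \eqref{grisos} of $\cS(n)^{G}$ and lifting to $\cS(n)^{G}$ via the decomposition \eqref{decompref}, one obtains finitely many fields $f_{1}(z),\dots,f_{r}(z)\in \cS(n)^{G}$, each lying in a single $G$-trivial isotypic component $L(\nu_{j})_{\mu_{0}}\otimes M^{\nu_{j}}$ for a finite list $\nu_{1},\dots,\nu_{r}$, such that the polarizations of the $f_{i}$ strongly generate $\cS(n)^{G}$. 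Equivalently, $\cS(n)^{G}$ is generated as a module over $\cS(n)^{Sp(2n)}$ by the finitely many cyclic vectors $f_{1},\dots,f_{r}$, and the submodule they generate lies in a finite direct sum of irreducible modules $M^{\nu_{1}},\dots,M^{\nu_{r}}$ (each with finite multiplicity equal to $\dim L(\nu_{j})^{G}$).

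Now I would apply Step 3: each $M^{\nu_{j}}$ is $C_{1}$-cofinite as a module over $\cS(n)^{Sp(2n)}$. Hence for each $j$, there is a finite set $T_{j}\subseteq M^{\nu_{j}}$ whose image spans $M^{\nu_{j}}/C_{1}(M^{\nu_{j}})$. Let $T\subseteq \cS(n)^{G}$ be a finite set obtained by choosing, inside each of the finitely many copies of $M^{\nu_{j}}$ in $L(\nu_{j})_{\mu_{0}}\otimes M^{\nu_{j}}$ that contain a generator $f_{i}$, a finite lift of such a set $T_{j}$. Define
\[
S \;=\; \{w^{1},w^{3},\dots,w^{2n^{2}+4n-1}\}\;\cup\; T.
\]
I claim $S$ is a finite strong generating set for $\cS(n)^{G}$. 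The verification is by induction on conformal weight. In each relevant copy of $M^{\nu_{j}}$, any element is congruent modulo $C_{1}(M^{\nu_{j}})$ to a $\mathbb{C}$-linear combination of elements of $T$; but $C_{1}$-elements are, by definition, iterated Wick products of positive-weight elements of $\cS(n)^{Sp(2n)}$ with strictly lower-weight elements of $M^{\nu_{j}}$, and the former are themselves Wick products of the $w^{2k+1}$ and their derivatives. The induction terminates because in each fixed weight $N$ only finitely many such Wick products arise, and once the weight exceeds the maximum weight of elements of $T$, the induction step produces no new generators. Combined with the fact that $\cS(n)^{Sp(2n)}$ itself is strongly generated by the $w^{2k+1}$, this shows every element of $\cS(n)^{G}$ is a Wick polynomial in $S$ and its derivatives.

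The main obstacle is the last step: one must verify that $C_{1}$-cofiniteness of each $M^{\nu_{j}}$ over $\cS(n)^{Sp(2n)}$ really does translate, in the presence of the minimal strong generating type of $\cS(n)^{Sp(2n)}$ from Step 1, into strong finite generation of $\cS(n)^{G}$ as a vertex algebra. This is precisely the content of the argument in the paragraph preceding Step 1 in the paper (modeled on Lemma 8 of \cite{LII}), where the bounded number of admissible modes $w^{2l+1}(k)$ with $0\le k\le 2m+1$ is what keeps the induction finite at each fixed weight. Everything else is a clean invocation of Steps 1--3.
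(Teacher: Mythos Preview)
Your proposal is correct and follows essentially the same approach as the paper: assemble Steps 1--3 to produce finitely many $f_i(z)$ lying in finitely many irreducible $\cS(n)^{Sp(2n)}$-modules $\cM_i$, choose finite sets $S_i$ mapping onto bases of $\cM_i/C_1(\cM_i)$, and take $S=\{w^1,\dots,w^{2n^2+4n-1}\}\cup\bigcup_i S_i$. Your induction-on-weight explanation of why $C_1$-cofiniteness of the $\cM_i$ together with strong finite generation of $\cS(n)^{Sp(2n)}$ forces $S$ to strongly generate $\cS(n)^G$ is exactly the mechanism the paper leaves implicit in its final sentence.
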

\begin{proof} By Corollary \ref{cor:fingen}, we can find $f_1(z),\dots, f_r(z) \in \cS(n)^G$ such that $\text{gr}(\cS(n))^G$ is generated by the corresponding polynomials $f_1,\dots, f_r\in \text{gr}(\cS(n))^G$, together with their polarizations. We may assume that each $f_i(z)$ lies in an irreducible, highest-weight $\cS(n)^{Sp(2n)}$-module $\cM_i$ of the form $L(\nu)_{\mu_0}\otimes M^{\nu}$, where $L(\nu)_{\mu_0}$ is a trivial, one-dimensional $G$-module. Furthermore, we may assume that $f_1(z),\dots, f_r(z)$ are highest-weight vectors for the action of $\cS(n)^{Sp(2n)}$. For each $\cM_i$, choose a finite set $S_i$ whose image in $\cM_i / C_1(\cM_i)$ is a basis for this space, and define $$S=\{w^1,w^3,\dots, w^{2n^2+4n-1} \} \cup \big(\bigcup_{i=1}^r S_i \big).$$ Since $\{w^1,w^3,\dots, w^{2n^2+4n-1}\}$ strongly generates $\cS(n)^{Sp(2n)}$ and $\bigoplus_{i=1}^r \cM_i$ contains a strong generating set for $\cS(n)^G$, $S$ must strongly generate $\cS(n)^G$. \end{proof}

\section{Hilbert problem for affine VOAs} \label{sec:hilbertaffine} 
It is tempting to try to study orbifolds of affine VOAs using a similar approach to the case of free field algebras. In other words, first describe $V^k(\gg)^{\text{Aut}\ V^k(\gg)}$, recalling that $\text{Aut}\ V^k(\gg)$ is the same as the automorphism group of $\gg$, and then for a general group $G$, decompose $V^k(\gg)^G$ as a $V^k(\gg)^{\text{Aut}\ V^k(\gg)}$-module. The difficulty with this approach is that the structure of $V^k(\gg)^{\text{Aut}\ V^k(\gg)}$ is much more complicated than that of $\cV^{\text{Aut}\ \cV}$ when $\cV$ is a free field algebra. Additionally, $V^k(\gg)^{\text{Aut}\ V^k(\gg)}$ does not have an abelian Zhu algebra in general, and it is much harder to establish the $C_1$-cofiniteness of the irreducible $V^k(\gg)^{\text{Aut}\ V^k(\gg)}$-modules appearing in $V^k(\gg)$.

There is another approach to this problem which we outline in this section. It is an application of the notion of a vertex algebra over a commutative ring introduced in Section \ref{sec:voacommring}. Let $\gg$ be a simple, finite-dimensional Lie algebra of dimension $n$, equipped with its standard bilinear for $(,)$. Fix a basis $\{\xi_1,\dots, \xi_n\}$ for $\gg$ which is orthonormal with respect to $(,)$, so that the corresponding generators $X^{\xi_i} \in V^k(\gg)$ satisfy
\begin{equation} \label{dfam:affine} X^{\xi_i}(z) X^{\xi_j}(w) \sim \delta_{i,j} k (z-w)^{-2} + X^{[\xi_i, \xi_j]}(w) (z-w)^{-1}.\end{equation}

Next, let $\kappa$ be a formal variable satisfying $\kappa^2 = k$, and let $F$ be the ring of complex, rational functions of $\kappa$ degree at most zero, with possible poles only at $\kappa = 0$. Let $\cV$ be the vertex algebra with coefficients in $F$ which is freely generated by 
$\{a^{\xi_i} |\ i=1,\dots, n\}$, satisfying
\begin{equation}\label{dfam:affinenew} a^{\xi_i}(z) a^{\xi_j}(w) \sim \delta_{i,j} (z-w)^{-2} + \frac{1}{\kappa}a^{[\xi_i, \xi_j]}(w) (z-w)^{-1}.\end{equation}
For all $k\neq 0$, we have a surjective vertex algebra homomorphism $$\cV \ra V^k(\gg),\qquad a^{\xi_i} \mapsto \frac{1}{\sqrt{k}} X^{\xi_i},$$ whose kernel is the vertex algebra ideal $(\kappa - \sqrt{k}) \cV$ generated by $\kappa - \sqrt{k}$, which is regarded as a element of the weight-zero subspace $\cV[0] \cong F$. In particular, for all $k\neq 0$ we can realize $V^k(\gg)$ as a quotient
 $$V^k(\gg) \cong \cV/ (\kappa - \sqrt{k}) \cV.$$
 
 Next, observe that all structure constants appearing in the OPE algebra \eqref{dfam:affinenew} have degree at most zero in $\kappa$, so the $\kappa\rightarrow \infty$ limit is well defined. Since the first-order poles vanish in the limit and only the second order poles survive, we see that $\cV^{\infty} = \lim_{\kappa \ra\infty} \cV$ has generators $\alpha^{\xi_i} = \lim_{\kappa \rightarrow \infty} a^{\xi_i}$ satisfying
\begin{equation} \alpha^{\xi_i}(z) \alpha^{\xi_j}(w) \sim \delta_{i,j} (z-w)^{-2}.\end{equation} Therefore $\cV^{\infty}$ is isomorphic to the rank $n$ Heisenberg algebra $\cH(n)$.

The vertex algebra $\cV$ is a special case of the notion of a {\it deformable family} that was developed by the second author and Creutzig \cite{CLI,CLIII}. These are vertex algebras defined over a ring $F_K$ of rational functions in a formal variable $\kappa$ over $\mathbb{C}$, with possible poles in some subset $K\subseteq \mathbb{C}$. In particular, $\cB$ is required to be a free $F_K$-module with a grading
\begin{equation} \label{gradingonb}  \cB = \bigoplus_{d \geq 0} \cB[d],\end{equation} where $d\in \mathbb{Z}$ or $\frac{1}{2} \mathbb{N}$. The weight $d$ component $\cB[d]$ is a free $F_K$-module of finite rank, and we assume that $\cB[0] \cong F_K$.

For each $k\in \mathbb{C} \setminus K$, the ideal $(\kappa - k) \subseteq F_K$ is a maximal ideal and $F_K / (\kappa - k) \cong \mathbb{C}$. Let $(\kappa - k) \cB$ denote the vertex algebra ideal generated by $\kappa - k$, which is regarded as an element of $\cB[0] \cong F_K$. Clearly $(\kappa - k) \cB$ consists of all finite sums $\sum_i f_i b_i$, where $b_i \in \cB$ and $f_i \in (\kappa - k) \subseteq F_K$.
The quotient 
$$\cB^k = \cB / (\kappa - k)\cB$$ is then a vertex algebra over $\mathbb{C}$, and $\cB$ and $\cB^k$ have the same graded character, i.e.,
$$\chi(\cB ,q) = \sum_{d \geq 0} \text{rank}_{F_K}(\cB[d]) q^d = \sum_{d \geq 0} \text{dim}_{\mathbb{C}}(\cB^k[d]) q^d = \chi(\cB^k ,q).$$

Since $F_K$ consists of rational functions of degree at most zero, $\cB$ has a well-defined limit $\cB^{\infty} = \lim_{\kappa\ra \infty} \cB$.  Fix a basis $\{a_i|\ i \in I\}$ of $\cB$ as an $F_K$-module, where each $a_i$ is homogeneous with respect to weight, and define $\cB^{\infty}$ to be the vector space over $\mathbb{C}$ with basis $\{\alpha_i|\ i \in I\}$. It inherits the grading $\cB^{\infty} = \bigoplus_{m\geq 0} \cB^{\infty}[m]$ if we define $\text{wt} \ \alpha_i = \text{wt} \ a_i$. We have a map $$\phi: \cB \ra \cB^{\infty},\qquad \phi(\sum_{i\in I} f_i a_i) = \sum_{i\in I} (\lim_{\kappa \ra \infty} f_i) \alpha_i,\qquad f_i \in F_K$$ which satisfies
 $$\phi( f \omega +g \nu) = (\lim_{\kappa \ra \infty} f) \phi(\omega) + (\lim_{\kappa \ra \infty} g) \phi(\nu).$$ The vertex algebra structure on $\cB^{\infty}$ is defined first on the basis $\{a_i|\ i \in I\}$ by 
\begin{equation} \label{phimorphism} \alpha_i \circ_n \alpha_j = \phi( a_i \circ_n a_j), \qquad i,j\in I, \qquad n\in \mathbb{Z}, \end{equation} and then extended by linearity. For all $\omega,\nu \in \cB$ and $n \in \mathbb{Z}$, we have by construction
 \begin{equation} \label{eq:preserve} \phi(\omega \circ_n \nu) = \phi(\omega) \circ_n \phi(\nu).\end{equation}
Then $\cB^{\infty}$ is a vertex algebra over $\mathbb{C}$ with graded character $\chi(\cB^{\infty} ,q) = \chi(\cB ,q)$, and the vertex algebra structure is independent of our choice of basis of $\cB$.

The most important feature of deformable families is that a strong generating set for the limit will give rise to a strong generating sets for the family after tensoring with a ring of the form $F_S$ for a subset $S\subseteq \mathbb{C}$ containing $K$. The precise statement appears in \cite{CLI,CLIII}. 

\begin{thm} Let $K\subseteq \mathbb{C}$ be at most countable, and let $\cB$ be a vertex algebra over $F_K$ with weight grading \eqref{gradingonb} such that $\cB[0] \cong F_K$. Let $U = \{\alpha_i|\ i\in I\}$ be a strong generating set for $\cB^{\infty}$, and let $T = \{a_i|\ i\in I\}$ be a subset of $\cB$ such that $\phi(a_i) = \alpha_i$. There exists a subset $S\subseteq \mathbb{C}$ containing $K$ which is at most countable, such that $F_S \otimes_{F_K}\cB$ is strongly generated by $T$. Here we have identified $T$ with the set $\{1 \otimes a_i|\ i\in I\} \subseteq F_S \otimes_{F_K} \cB$. \end{thm}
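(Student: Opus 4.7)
The plan is to exploit the finite-rank condition on each weight space of $\cB$ together with the nonvanishing of limits to reduce strong generation to a weight-by-weight determinant argument.

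First, I would reduce strong generation to a statement in each fixed conformal weight. Because $\cB$ has a weight grading with $\cB[d]$ a free $F_K$-module of finite rank $r_d$, it suffices to exhibit, for each $d \geq 0$, a subset $S_d \subseteq \mathbb{C}$ containing $K$ which is at most countable, such that the iterated normally ordered products in the elements of $T$ and their derivatives of total weight $d$ span $F_{S_d} \otimes_{F_K} \cB[d]$ as an $F_{S_d}$-module. Taking $S = \bigcup_d S_d$ gives a countable union of countable sets, hence a countable set, and strong generation of $F_S \otimes_{F_K} \cB$ by $T$ then follows.

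The next step is the lifting. For each weight $d$, let $M_d$ denote the finite set of all normally ordered monomials $:\partial^{k_1} a_{i_1}\cdots \partial^{k_m}a_{i_m}:$ in elements of $T$ of total weight $d$. By the construction of $\phi$ and the identity \eqref{eq:preserve}, the image $\phi(M_d)$ is precisely the corresponding set of normally ordered monomials in the $\alpha_i = \phi(a_i)$. Since $U$ strongly generates $\cB^\infty$, this image $\phi(M_d)$ spans $\cB^\infty[d]$, which is a $\mathbb{C}$-vector space of dimension exactly $r_d$ (equal to the rank of $\cB[d]$ as an $F_K$-module, since $\phi$ carries any $F_K$-basis of $\cB[d]$ to a $\mathbb{C}$-basis of $\cB^\infty[d]$). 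Hence we may extract $r_d$ elements $m_1,\dots, m_{r_d} \in M_d$ whose images under $\phi$ form a $\mathbb{C}$-basis of $\cB^\infty[d]$.

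Now I would apply a Shapovalov-style determinant argument. Fix an $F_K$-basis $e_1,\dots, e_{r_d}$ of $\cB[d]$ and write $m_i = \sum_j f_{ij} e_j$ with $f_{ij} \in F_K$. The coefficient matrix $(f_{ij})$ has the property that the matrix $(\lim_{\kappa\to\infty} f_{ij})$ is the change-of-basis matrix in $\cB^\infty[d]$ between two bases, hence invertible. Therefore $\Delta_d := \det(f_{ij}) \in F_K$ is a rational function of degree at most zero in $\kappa$ whose leading coefficient at infinity is nonzero; in particular $\Delta_d \not\equiv 0$ and has only finitely many zeros in $\mathbb{C}$. Setting $S_d = K \cup \{\kappa\in\mathbb{C} \mid \Delta_d(\kappa)=0\}$, the element $\Delta_d$ becomes a unit in $F_{S_d}$, so $\{m_1,\dots, m_{r_d}\}$ is an $F_{S_d}$-basis of $F_{S_d}\otimes_{F_K}\cB[d]$. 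In particular all of $M_d$, and thus any iterated Wick product of elements of $T$ of weight $d$, lies in the $F_{S_d}$-span of $\{m_1,\dots, m_{r_d}\}$, yielding strong generation in weight $d$.

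The main obstacle, and the reason the hypothesis $|K|\leq \aleph_0$ appears, is merely bookkeeping: one must verify that the union $S = \bigcup_{d\geq 0} S_d$ remains at most countable. Each $S_d$ adds only finitely many zeros of $\Delta_d$ to $K$, so $S$ is a countable union of at most countable sets and the conclusion follows. All other steps are essentially formal consequences of the fact that $\phi$ preserves all vertex algebra operations together with finite-dimensionality of each graded piece.
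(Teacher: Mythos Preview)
Your proposal is correct and follows essentially the same approach as the paper: reduce to each weight component, lift a basis of monomials from $\cB^\infty[d]$ to $\cB[d]$ using $\phi$, and invert the resulting change-of-basis determinant after adjoining its finitely many zeros to $K$. The only minor inaccuracy is calling $M_d$ a \emph{finite} set (the index set $I$ need not be finite), but this is irrelevant since you only need to extract $r_d$ monomials whose images form a basis, exactly as the paper does.
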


For the benefit of the reader we reproduce the proof of this lemma, following (and slightly paraphrasing) the argument appearing in \cite{CLIII}.

\begin{proof} For all $n >0$, let $$d_n = \text{rank}_{F_K}(\cB[n]) = \text{dim}_{\mathbb{C}} \cB^{\infty}[n],$$ and fix a basis $\{b_1,\dots, b_{d_n}\}$ for the weight $n$ submodule $\cB[n]$ as an $F_K$-module. Then the corresponding set $\{\beta_1,\dots, \beta_{d_n}\}$, where $\beta_j = \phi(b_j)$, forms a basis of $\cB^{\infty}[n]$. 

Since $U = \{\alpha_i|\ i\in I\}$ strongly generates $\cB^{\infty}$, there is a subset of normally ordered monomials in the generators
$$\{\mu_1,\dots, \mu_{d_n}\},$$ which each have the form $:\partial^{k_1} \alpha_{i_1} \cdots \partial^{k_r} \alpha_{i_r}:$ for $i_1,\dots,i_i \in I$ and $k_1,\dots, k_r \geq 0$, which is another basis for $\cB^{\infty}[n]$. Observe that $\cB^{\infty}$ need not be {\it freely} generated by $U$, so this subset may not include all possible normally ordered monomials of weight $n$. Let $M = (m_{j,k}) \in GL(n)(\mathbb{C})$ denote the change of basis matrix such that 
$$\mu_j = \sum_k m_{j,k} \beta_{k}.$$
Next, let $\{m_1,\dots, m_{d_n}\}$ be the monomials in the elements $\{a_i|\ i \in I\}$ and their derivatives obtained from $\mu_1,\dots, \mu_{d_n}$ by replacing each $\alpha_i$ by $a_i$. It follows from \eqref{phimorphism} that $\phi(m_j) = \mu_j$. Moreover, since $\{b_1,\dots, b_{d_n}\}$ is a basis for $\cB[n]$ as an $F_K$-module, we can write 
$$m_j = \sum_k m_{j,k}(\kappa) b_k,$$ for some functions $m_{j,k}(\kappa) \in F_K$. 
Taking the limit shows that $\lim_{\kappa \ra \infty} m_{j,k}(\kappa) = m_{j,k}$, and since the matrix $M = (m_{j,k})$ has nonzero determinant, this holds for $M(\kappa) = (m_{j,k}(\kappa))$ as well. Moreover, $\text{det}(M(\kappa))$ lies in $F_K$ and hence has degree zero as a rational function. Then $\frac{1}{\text{det}(M(\kappa))}$ has degree zero, but need not lie in $F_K$ since the denominator may have roots that do not lie in $K$. But if we let $S_n$ be the union of $K$ and this set of roots, $\text{det}(M(\kappa))$ will be invertible in $F_{S_n}$ and $\{m_1,\dots, m_{d_n}\}$ will form a basis of $F_{S_n} \otimes_{F_K} \cB[n]$ as an $F_{S_n}$-module. We now take $S = \bigcup_{n\geq 0} S_n$. The set $T$ clearly has the desired properties. 
\end{proof}

\begin{cor} \label{cor:passage} For $k \in \mathbb{C} \setminus S$, the vertex algebra $\cB^k = \cB/ (\kappa -k)\cB$ is strongly generated by the image of $T$ under the map $\cB \ra \cB^k$.
\end{cor}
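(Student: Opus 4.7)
The plan is to deduce this from the theorem by a straightforward specialization argument. For $k \in \mathbb{C}\setminus S$, evaluation at $\kappa = k$ gives a well-defined ring homomorphism $\mathrm{ev}_k \colon F_S \to \mathbb{C}$, since every element of $F_S$ is a rational function whose poles lie in $S$. I will use this to construct a surjective vertex algebra homomorphism
\[ \Psi \colon F_S \otimes_{F_K} \cB \longrightarrow \cB^k, \qquad f \otimes b \mapsto \mathrm{ev}_k(f)\cdot [b], \]
where $[b]$ denotes the image of $b$ in $\cB^k = \cB/(\kappa-k)\cB$. The corollary then follows immediately by transporting the strong generating set $T$ produced by the theorem through $\Psi$.

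First I would check that $\Psi$ is well-defined on the tensor product: for $g \in F_K$ we need $\mathrm{ev}_k(fg)\cdot[b] = \mathrm{ev}_k(f)\cdot[gb]$, which reduces to multiplicativity of $\mathrm{ev}_k$ together with the fact that $g \in F_K \subseteq \cB[0]$ acts on $\cB$ by multiplication, hence on $[b]$ by the scalar $g(k)$ (which makes sense because $k \notin K$). Next I would verify that $\Psi$ is a vertex algebra homomorphism: since the OPE structure constants of $\cB$ lie in $F_K \subseteq F_S$ and the vertex operations on $F_S \otimes_{F_K} \cB$ are $F_S$-bilinear extensions of those on $\cB$, applying $\mathrm{ev}_k$ coefficient-wise to the OPE of two elements gives back the corresponding OPE in $\cB^k$. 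Finally, surjectivity of $\Psi$ is automatic: the composition $\cB \hookrightarrow F_S \otimes_{F_K}\cB \xrightarrow{\ \Psi\ } \cB^k$ agrees with the quotient map $\cB \to \cB^k$, which is surjective by construction.

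With $\Psi$ established, I invoke the preceding theorem: there exists a set $T \subseteq \cB$ whose image in $F_S \otimes_{F_K}\cB$ strongly generates that family over $F_S$, so every element of $F_S \otimes_{F_K}\cB$ is a finite $F_S$-linear combination of iterated normally ordered products of elements of $T$ and their derivatives. Applying $\Psi$ term by term turns any such expression into a $\mathbb{C}$-linear combination of iterated normally ordered products of the images of $T$ in $\cB^k$ and their derivatives, because the scalar coefficients simply get evaluated via $\mathrm{ev}_k$. Combined with surjectivity of $\Psi$, this exhibits the image of $T$ as a strong generating set for $\cB^k$, which is the desired conclusion.

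There is essentially no serious obstacle here; the hard content is already packaged in the theorem being quoted, and this corollary is a formal specialization at $\kappa = k$. The only mild point requiring care is the compatibility of $\Psi$ with the vertex operations, which follows formally from the fact that $\mathrm{ev}_k$ is a ring homomorphism and that $F_S$ acts on $F_S \otimes_{F_K}\cB$ through scalars in the weight-zero component.
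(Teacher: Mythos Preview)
Your argument is correct and is precisely the natural way to make this immediate corollary explicit; the paper states it without proof, treating it as a direct consequence of the preceding theorem. If anything, one could shortcut your construction of $\Psi$ by observing directly from the theorem's proof that for each weight $n$ the change-of-basis matrix $M(\kappa)$ has $\det M(\kappa)$ nonvanishing at $\kappa = k$ whenever $k \notin S$, so the normally ordered monomials $\{m_1,\dots,m_{d_n}\}$ in $T$ remain a basis of $\cB^k[n]$ after specialization---but this is the same content as your specialization map, just unpacked weight by weight.
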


Now we restrict to the case where $\cB$ is the vertex algebra $\cV$ defined as above, which admits $V^k(\gg)$ as a quotient for all $k\neq 0$. If $G$ is a reductive group of automorphisms of $V^k(\gg)$ as a one-parameter vertex algebra, then $G$ acts on $\cV$ as well as on the limit $\cV^{\infty} \cong \cH(n)$. The following lemma is nontrivial and is a consequence of the fact that $\cV$ admits a good increasing filtration in the sense of \cite{LiII}.

\begin{lemma} \label{deffamGcommute} \cite{LIV} \label{ginfcom} $(\cV^G)^{\infty} = (\cV^{\infty})^G = \cH(n)^G$. In particular, $\cV^G$ is a deformable family with limit $\cH(n)^G$.
\end{lemma}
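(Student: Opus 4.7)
The plan is to prove the two equalities separately. The second, $(\cV^{\infty})^G = \cH(n)^G$, is immediate from $\cV^{\infty}\cong\cH(n)$: the $G$-action on $\cV^{\infty}$ induced via the limit map $\phi$ agrees, on the generating space $\mathrm{span}\{\alpha^{\xi_i}\}$, with the standard linear action of $G\subseteq\Aut(\gg)$, and it extends multiplicatively to all of $\cH(n)$. For the first equality, the inclusion $(\cV^G)^{\infty}\subseteq(\cV^{\infty})^G$ follows from the $G$-equivariance of $\phi$ (both $G$-actions are determined by the common linear action on $\{\xi_i\}\subset\gg$). The reverse inclusion will be obtained by a dimension count in each conformal weight.

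Equip $\cV$ with the good increasing filtration $\cV_{(0)}\subseteq\cV_{(1)}\subseteq\cdots$, where $\cV_{(d)}$ is the $F$-span of normally ordered monomials of total length at most $d$ in the $a^{\xi_i}$ and their derivatives. This filtration is preserved by $\partial$, by all OPE products, and by $G$ (since $G$ stabilizes the generating space and distributes over Wick products). Because $\cV$ is freely generated by $\{a^{\xi_i}\}$, the symbol map yields an isomorphism of differential graded $F$-algebras
\[
\text{gr}(\cV)\;\cong\; F\otimes_{\mathbb{C}}\text{gr}(\cH(n)).
\]
Although the action of a single $g\in G$ on a particular normally ordered PBW basis element of $\cV$ can produce $\kappa^{-1}$-dependent correction terms of strictly lower length, arising from the commutator piece $\frac{1}{\kappa}a^{[\xi_i,\xi_j]}$ in the OPE \eqref{dfam:affinenew}, these corrections are killed by the symbol map. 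Consequently, $G$ acts on $\text{gr}(\cV)$ trivially on the coefficient ring $F$ and through its natural $\mathbb{C}$-linear representation on $\text{gr}(\cH(n))$.

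Reductivity of $G$ now implies that taking $G$-invariants is exact on the free $F$-modules $\cV_{(d)}[n]$ and commutes with scalar extension $\mathbb{C}\to F$, giving $\text{gr}(\cV^G)=\text{gr}(\cV)^G=F\otimes_{\mathbb{C}}\text{gr}(\cH(n))^G$. At each conformal weight $n$ this yields $\text{rank}_F\,\cV^G[n]=\dim_{\mathbb{C}}\cH(n)^G[n]$. Since $F\cong\mathbb{C}[\kappa^{-1}]$ is a PID, $\cV^G[n]$ is itself a free $F$-module, so the limit is well defined with $\dim_{\mathbb{C}}(\cV^G)^{\infty}[n]=\text{rank}_F\,\cV^G[n]$. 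The inclusion $(\cV^G)^{\infty}\hookrightarrow\cH(n)^G$ therefore has matching finite dimensions in each weight and is an equality, which identifies $\cV^G$ as a deformable family with limit $\cH(n)^G$.

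The main technical point in this argument is the precise identification of the $G$-module structure on $\text{gr}(\cV)$, namely that $G$ acts trivially on $F$ and through its ordinary $\mathbb{C}$-linear action on $\text{gr}(\cH(n))$. This uses the good increasing filtration to discard the lower-length, $\kappa$-dependent corrections coming from the noncommutativity of the Wick product, and relies crucially on the deformable-family hypothesis that all OPE structure constants of $\cV$ have $\kappa$-degree at most zero. Everything else reduces to standard facts about reductive group actions on free modules and to the exactness of the Reynolds projection, applied separately in each finite-rank piece $\cV_{(d)}[n]$.
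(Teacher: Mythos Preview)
Your proof is correct and follows the approach the paper indicates (the result is cited from \cite{LIV}, and the paper notes it ``is a consequence of the fact that $\cV$ admits a good increasing filtration''). Your use of the filtration to identify $\text{gr}(\cV)$ as $F\otimes_{\mathbb{C}}\text{gr}(\cH(n))$ with the $\kappa$-independent $G$-action, followed by the rank count and the PID argument for freeness of $\cV^G[n]$, is exactly the intended strategy. One small point worth making explicit: your appeal to exactness of $(-)^G$ on the $F$-modules $\cV_{(d)}[n]$ relies on the Reynolds operator being $F$-linear, which holds because each $g\in G$ acts $F$-linearly and the integration (or algebraic projection) over $G$ is taken coefficientwise in $\kappa^{-1}$; this is straightforward but deserves a sentence.
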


Since $\cH(n)^G$ is strongly finitely generated by Theorem \ref{Hilbert:freefield}, the following is immediate from Corollary \ref{cor:passage} and Lemma \ref{deffamGcommute}.

\begin{thm} \label{Hilbert:affine} For any simple Lie algebra $\gg$ and any reductive group of automorphisms of $V^k(\gg)$, the orbifold $V^k(\gg)^G$ is strongly finitely generated for generic values of $k$.
\end{thm}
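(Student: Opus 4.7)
The plan is to assemble the theorem directly from the ingredients developed just before the statement, namely: the realization of $V^k(\gg)$ as a specialization of the deformable family $\cV$, the commutation of the $G$-invariant functor with the $\kappa \to \infty$ limit (Lemma \ref{deffamGcommute}), the Hilbert theorem for free field algebras (Theorem \ref{Hilbert:freefield}), and the passage of strong generators from the limit to generic members of a deformable family (Corollary \ref{cor:passage}).

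First I would observe that a group $G$ acts on $V^k(\gg)$ as a one-parameter vertex algebra precisely when it acts on $\gg$ by Lie algebra automorphisms preserving the invariant form (after noting that $G$ must preserve the weight-one space $\gg \subseteq V^k(\gg)$ and the Sugawara conformal vector), and such an action lifts tautologically to the deformable family $\cV$ generated by $a^{\xi_i}$ with OPEs \eqref{dfam:affinenew}, since the structure constants there are $G$-equivariant. Because $\kappa - \sqrt{k}$ lies in the $G$-fixed subring $F \subseteq \cV[0]$, the ideal $(\kappa - \sqrt{k})\cV$ is $G$-stable, and reductivity of $G$ ensures that the exact functor of $G$-invariants commutes with the specialization, giving a vertex algebra isomorphism
\[
\cV^G/(\kappa-\sqrt{k})\,\cV^G \;\cong\; \bigl(\cV/(\kappa-\sqrt{k})\,\cV\bigr)^G \;\cong\; V^k(\gg)^G
\]
for every $k \neq 0$.

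Next, I would apply Lemma \ref{deffamGcommute} to identify the limit $(\cV^G)^{\infty}$ with $(\cV^{\infty})^G = \cH(n)^G$, where $n = \dim \gg$. By Theorem \ref{Hilbert:freefield}, $\cH(n)^G$ is strongly finitely generated; fix a finite strong generating set $\{\alpha_1, \ldots, \alpha_r\} \subseteq \cH(n)^G$ and choose lifts $a_1, \ldots, a_r \in \cV^G$ with $\phi(a_i) = \alpha_i$. Corollary \ref{cor:passage} (together with the strong-generation theorem preceding it) then furnishes an at most countable subset $S \subseteq \mathbb{C}$ such that for every $k$ with $\sqrt{k} \notin S$, the images of $a_1, \ldots, a_r$ strongly generate the specialization $\cV^G/(\kappa - \sqrt{k})\cV^G \cong V^k(\gg)^G$. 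This proves strong finite generation on the complement of a countable set of levels, which is precisely the meaning of \emph{generic $k$}.

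The main obstacle — and the only place the argument is not purely formal — is verifying that the $G$-invariant functor does commute with the $\kappa \to \infty$ limit, i.e.\ Lemma \ref{deffamGcommute}. At the level of underlying $F$-modules this uses that $\cV$ carries a good increasing filtration whose associated graded is the symmetric algebra on a $G$-module of polynomial type, so that $G$-invariants are compatible with the filtration and with passage to the limit basis used to define $\phi$. Once this is in hand, everything else is bookkeeping: the two finite-generation theorems for $\cH(n)^G$ and for deformable families assemble the conclusion with no additional work.
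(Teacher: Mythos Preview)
Your proposal is correct and follows essentially the same approach as the paper: the paper's proof is a single sentence stating the result is immediate from Theorem \ref{Hilbert:freefield}, Lemma \ref{deffamGcommute}, and Corollary \ref{cor:passage}, and you have simply unpacked these citations and added the justification that reductivity of $G$ makes the invariant functor commute with specialization at $\kappa = \sqrt{k}$.
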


\subsection{The case of affine vertex superalgebras}
It is straightforward to generalize these results to affine vertex superalgebras. This was carried out in \cite{CLIII}, and we briefly recall the modifications that are needed. Let $\gg = \gg_{\bar 0} \oplus \gg_{\bar 1}$ be a finite-dimensional, simple Lie superalgebra over $\mathbb{C}$, where $\text{dim}\ \gg_{\bar 0} = n$ and $\text{dim}\ \gg_{\bar 1} = 2m$, with its standard bilinear form $(,)$. Fix a basis $\{\xi_1,\dots, \xi_n\}$ for $\gg_{\bar 0}$ and $\{\eta^{\pm}_1,\dots, \eta^{\pm}_m\}$ for $\gg_{\bar 1}$, so the generators $X^{\xi_i}, X^{\eta^{\pm}_j}$ of $V^k(\gg)$ satisfy \begin{equation} \begin{split} &X^{\xi_i}(z) X^{\xi_j}(w) \sim \delta_{i,j} k (z-w)^{-2} + X^{[\xi_i, \xi_j]}(w) (z-w)^{-1}, \\
&X^{\eta^{+}_i}(z) X^{\eta^{-}_j}(w) \sim \delta_{i,j} k (z-w)^{-2} + X^{[\eta^{+}_i, \eta^{-}_j]}(w) (z-w)^{-1},\\
&X^{\xi_i}(z) X^{\eta^{\pm}_j}(w) \sim X^{[\xi_i, \eta^{\pm}_j]}(w) (z-w)^{-1},\\ 
& X^{\eta^{\pm}_i}(z) X^{\eta^{\pm}_j}(w) \sim X^{[\eta^{\pm}_i, \eta^{\pm}_j]}(w) (z-w)^{-1}.\\
\end{split}
\end{equation}

As above, let $\kappa$ be a formal variable satisfying $\kappa^2 = k$, and let $F$ be the ring of rational functions of $\kappa$ degree at most zero, with poles only at $\kappa = 0$. Define a vertex algebra $\cV$ over $F$ which is freely generated by 
$\{a^{\xi_i}, a^{\eta^{\pm}_j} |\ i=1,\dots, n,\ j=1,\dots, m\}$, satisfying
\begin{equation} \begin{split} &a^{\xi_i}(z) a^{\xi_j}(w) \sim \delta_{i,j} (z-w)^{-2} + \frac{1}{\kappa}a^{[\xi_i, \xi_j]}(w) (z-w)^{-1}, \\
& a^{\eta^{+}_i}(z) a^{\eta^{-}_j}(w) \sim \delta_{i,j} (z-w)^{-2} + \frac{1}{\kappa}a^{[\eta^{+}_i, \eta^{-}_j]}(w) (z-w)^{-1},\\
& a^{\xi_i}(z) a^{\eta^{\pm}_j}(w) \sim + \frac{1}{\kappa}a^{[\xi_i, \eta^{\pm}_j]}(w) (z-w)^{-1},\\ 
& a^{\eta^{\pm}_i}(z) a^{\eta^{\pm}_j}(w) \sim + \frac{1}{\kappa}a^{[\eta^{\pm}_i, \eta^{\pm}_j]}(w) (z-w)^{-1}.\\
\end{split}
\end{equation}
For $k\neq 0$, we then have a surjective homomorphism  
$$\cV \ra V^k(\gg),\qquad a^{\xi_i} \mapsto \frac{1}{\sqrt{k}} X^{\xi_i},\qquad a^{\eta^{\pm}_j} \mapsto \frac{1}{\sqrt{k}} a^{\eta^{\pm}_j},$$ whose kernel is the ideal $(\kappa - \sqrt{k})\cV$, so $V^k(\gg) \cong \cV/ (\kappa - \sqrt{k})$. Then $\cV^{\infty} = \lim_{\kappa \ra\infty} \cV$  has even generators $\alpha^{\xi_i}$ for $i=1,\dots, n$, and odd generators $e^{\eta^+_j}, e^{\eta^-_j}$ for $j=1,\dots, m$, satisfying \begin{equation} \begin{split} \alpha^{\xi_i}(z) \alpha^{\xi_j}(w) \sim \delta_{i,j} (z-w)^{-2},\\ e^{\eta^+_i}(z) e^{\eta^{-}_j}(w) \sim \delta_{i,j} (z-w)^{-2}.\end{split} \end{equation} In particular, $\cV^{\infty} \cong \cH(n) \otimes \cA(m)$, where $\cA(m)$ is the rank $m$ symplectic fermion algebra.

The analogous result to Lemma \ref{deffamGcommute} was proved in \cite{CLIII}: for any reductive group $G$ of $V^k(\gg)$, we have 
$$(\cV^G)^{\infty} = (\cV^{\infty})^G = (\cH(n) \otimes \cA(m))^G.$$ In particular, $\cV^G$ is a deformable family with limit $(\cH(n) \otimes \cA(m))^G$. Again by Theorem \ref{Hilbert:freefield}, $(\cH(n) \otimes \cA(m))^G$ is strongly finitely generated, so this holds for $V^k(\gg)$ for generic values of $k$. We obtain

\begin{thm} \label{Hilbert:affinesuper} For any simple Lie superalgebra $\gg$ and any reductive group of automorphisms of $V^k(\gg)$, the orbifold $V^k(\gg)^G$ is strongly finitely generated for generic values of $k$.
\end{thm}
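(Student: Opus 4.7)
My plan is to mirror the strategy used for Theorem~\ref{Hilbert:affine} in the even case, adapting it to accommodate the odd generators. The starting point is the explicit deformable family $\cV$ over $F$ already described in the paragraphs preceding the theorem: it has even generators $a^{\xi_i}$ and odd generators $a^{\eta^{\pm}_j}$ with the OPEs where the second-order poles are $\delta_{i,j}$ and the first-order poles are rescaled by $\frac{1}{\kappa}$. For every $k\neq 0$ we have the specialization map $\cV \twoheadrightarrow V^k(\gg)$ realizing $V^k(\gg)$ as the quotient $\cV/(\kappa-\sqrt{k})\cV$, so any strong generating set for $\cV$ descends to one for $V^k(\gg)$. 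The large-$\kappa$ limit $\cV^\infty$ is readily computed by noting that only the second-order poles survive: this gives $\cV^\infty \cong \cH(n)\otimes \cA(m)$, a tensor product of a Heisenberg algebra with a symplectic fermion algebra.

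The next step is to observe that any reductive group $G$ of automorphisms of $V^k(\gg)$ acts on $\cV$ by automorphisms preserving the weight grading and the parity, and thus acts on the limit $\cV^\infty = \cH(n)\otimes \cA(m)$ respecting its decomposition into even and odd parts. The critical structural claim is
\begin{equation*}
(\cV^G)^\infty \;=\; (\cV^\infty)^G \;=\; \big(\cH(n)\otimes \cA(m)\big)^G.
\end{equation*}
I would prove this exactly as in Lemma~\ref{deffamGcommute}: the family $\cV$ admits a good increasing filtration in the sense of Li, $G$ preserves this filtration and its action is locally finite and rational, and the associated graded object of $\cV^G$ with respect to this filtration is identified with $(\cH(n)\otimes\cA(m))^G$. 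Reductivity of $G$ ensures that invariants commute with taking associated graded and with the limit, so the desired equality drops out.

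With this identification in hand, the result follows from the free field Hilbert theorem. Theorem~\ref{Hilbert:freefield} applies to arbitrary tensor products of the standard free field algebras under any reductive group of automorphisms, and so in particular to $\cH(n)\otimes \cA(m)$ with $G\subseteq O(n)\times Sp(2m)$. It yields a finite strong generating set $U=\{\alpha_i\}$ of $\cV^\infty$ contained in $(\cH(n)\otimes\cA(m))^G$. Lift each $\alpha_i$ to an element $a_i\in \cV^G$ under the map $\phi$, and apply Corollary~\ref{cor:passage}: there is a countable subset $S\subset\mathbb C$ such that $F_S\otimes_{F}\cV^G$ is strongly generated by $\{a_i\}$, and hence $V^k(\gg)^G$ is strongly finitely generated for all $k\notin S$.

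The only step that requires real work beyond citing the even case is the filtration argument underlying $(\cV^G)^\infty = (\cV^\infty)^G$, and I expect that to be the main obstacle. The subtlety is that one must choose a $G$-stable good increasing filtration on $\cV$ whose associated graded object is a $G$-equivariant differential (super)commutative algebra, and then check that the identifications with $\cH(n)\otimes\cA(m)$ and with the specialization at each $k$ are compatible with both $G$-action and the limit $\kappa\to\infty$. In the even case this is handled by filtering $\cV$ by length of normally ordered monomials; for superalgebras one uses the same length filtration, but one must track signs carefully and verify that the associated graded of $\cV$ is isomorphic as a $G$-module to the differential (super)polynomial algebra on the generators of $\cH(n)\otimes\cA(m)$. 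Once this is established, reductivity of $G$ gives exactness of the $G$-invariants functor, and the two commutations (with $\text{gr}$ and with $\lim_{\kappa\to\infty}$) combine to yield the desired equality, completing the proof.
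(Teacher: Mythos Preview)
Your proposal is correct and follows essentially the same approach as the paper: construct the deformable family $\cV$ with limit $\cH(n)\otimes\cA(m)$, invoke the analogue of Lemma~\ref{deffamGcommute} (which the paper cites to \cite{CLIII}) to get $(\cV^G)^\infty \cong (\cH(n)\otimes\cA(m))^G$, apply Theorem~\ref{Hilbert:freefield} to the limit, and transfer back via Corollary~\ref{cor:passage}. One small slip: your set $U=\{\alpha_i\}$ should be described as a strong generating set for $(\cV^G)^\infty = (\cH(n)\otimes\cA(m))^G$, not for $\cV^\infty$; otherwise the argument is sound and matches the paper.
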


\section{Hilbert problem for $\cW$-algebras} \label{sec:hilbertW} 
In this section, we outline the proof the the following theorem of \cite{CLIV}. 
\begin{thm} \label{thm:hilbertW} Let $\gg$ be a simple, finite-dimensional Lie superalgebra and let $f \in \gg$ be a nilpotent element. Let $G$ be a reductive automorphism group of $\cW^k(\gg,f)$. Then the orbifold $\cW^k(\gg,f)^G$ is strongly finitely generated for generic values of $k$.
\end{thm}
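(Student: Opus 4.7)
The plan is to reduce Theorem~\ref{thm:hilbertW} to Theorem~\ref{Hilbert:freefield} by realizing $\cW^k(\gg,f)$ as the specialization of a deformable family $\cV$ whose $\kappa\to\infty$ limit is a tensor product of free field algebras, and then invoking Corollary~\ref{cor:passage} together with a commutativity statement in the spirit of Lemma~\ref{deffamGcommute}. This mirrors the strategy already used for $V^k(\gg)$ in Section~\ref{sec:hilbertaffine} and Theorem~\ref{Hilbert:affinesuper}, the only new input being a free field limit adapted to the nilpotent $f$.

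First, I would set up the deformable family. Introduce a formal parameter $\kappa$ with $\kappa^2 = k+h^\vee$, and let $F$ be the ring of rational functions in $\kappa$ of degree at most zero, with poles at a suitable at most countable subset of $\mathbb{C}$. Using the strong free generating set $\{K^\alpha\}_{\alpha\in S^f}$ of $\cW^k(\gg,f)$ provided by Theorem~\ref{thm:Walgmain}, define rescaled generators $a^\alpha = \kappa^{-1}K^\alpha$. A direct computation using Theorem~\ref{thm:structureI}, together with the explicit formulas \eqref{sugawara} and \eqref{eq:Ialpha} for the Sugawara, charged and neutral pieces, shows that after this rescaling every OPE coefficient of the $a^\alpha$ is a rational function of $\kappa$ of degree at most zero: the bilinear term surviving in the top-order pole is independent of $\kappa$, while all nonlinear corrections and lower-order poles acquire strictly negative powers of $\kappa$. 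Let $\cV$ be the resulting one-parameter vertex algebra over $F$, freely generated by $\{a^\alpha\}$ with weight grading $\cV[0]\cong F$; for generic $k$ we have $\cV/(\kappa-\sqrt{k+h^\vee})\cV \cong \cW^k(\gg,f)$.

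Next I would identify the limit $\cV^\infty$. Only the top-order poles of the OPEs survive, and those are controlled by the invariant bilinear form on $\gg^f$ restricted to each graded piece $\gg^f_{-j}$, together with the $\mathbb{Z}/2\mathbb{Z}$-parity. This shows that $\cV^\infty$ is a tensor product of Heisenberg algebras $\cH(n)$ and symplectic fermion algebras $\cA(m)$ (plus, if one uses a Virasoro element that puts some generators in weight $\tfrac12$, copies of $\cF(m)$ and $\cS(n)$), i.e.\ one of the VOAs treated in Theorem~\ref{Hilbert:freefield}. By construction a reductive group $G$ of automorphisms of $\cW^k(\gg,f)$ lifts to automorphisms of $\cV$ preserving the weight grading, and hence acts on $\cV^\infty$.

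Finally, $\cV$ inherits a good increasing filtration from the BRST complex $V^k(\gg)\otimes F(\gg_+)\otimes F(\gg_{\frac12})$ that is compatible both with the $G$-action and with the $\kappa\to\infty$ limit, so the argument of Lemma~\ref{deffamGcommute}, which only uses the existence of such a filtration, applies verbatim and yields $(\cV^G)^\infty = (\cV^\infty)^G$. Theorem~\ref{Hilbert:freefield} then gives strong finite generation of $(\cV^\infty)^G$, and Corollary~\ref{cor:passage} lifts any such finite strong generating set to one for $F_S\otimes_F \cV^G$ after enlarging $F$ to a ring $F_S$ with $S\subseteq\mathbb{C}$ at most countable. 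Specializing at $\kappa=\sqrt{k+h^\vee}$ for $k$ outside the corresponding exceptional set establishes Theorem~\ref{thm:hilbertW}. The main obstacle is the first step: because $\cW^k(\gg,f)$ is defined through BRST cohomology rather than by explicit OPEs, one cannot just compute the OPEs of the $K^\alpha$ by hand. Instead one must argue indirectly — filtering the BRST complex, keeping track of the power of $k$ attached to each term produced by Theorem~\ref{thm:Walgmain}(1), and identifying the leading part as the free field algebra with the expected generators and bilinear form. Once this free field limit is secured, the rest of the proof is a formal adaptation of the affine case.
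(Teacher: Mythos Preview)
Your overall strategy is exactly the paper's: realize $\cW^k(\gg,f)$ as a deformable family, identify the $\kappa\to\infty$ limit as a free field algebra, use a good increasing filtration to show $(\cV^G)^\infty \cong (\cV^\infty)^G$, apply the Hilbert theorem for free field orbifolds, and lift back via Corollary~\ref{cor:passage}. The gap is in your identification of the limit and in which Hilbert theorem you invoke.

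You assert that $\cV^\infty$ is a tensor product of the \emph{standard} free field algebras $\cH(n)$, $\cA(m)$, $\cF(m)$, $\cS(n)$, and then appeal to Theorem~\ref{Hilbert:freefield}. This is only correct when $f=0$. By Theorem~\ref{thm:Walgmain} the strong generators $K^\alpha$ have conformal weights $1+j$ as $j$ runs over the $\mathrm{ad}\,x$-eigenvalues on $\gg^f$, so for $f\neq 0$ there are generators of weight $\geq 2$. In the limit only the top-order pole survives, and two weight-$d$ generators satisfy $a(z)a'(w)\sim c\,(z-w)^{-2d}$; this is precisely the defining relation of the \emph{generalized} free field algebras $\cO_{\text{ev}}(n,k)$, $\cS_{\text{ev}}(n,k)$, $\cO_{\text{odd}}(n,k)$, $\cS_{\text{odd}}(n,k)$ of Section~\ref{sec:hilbertW}, which for $k>2$ have no conformal vector and are not among the algebras covered by Theorem~\ref{Hilbert:freefield}. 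For example, the free field limit of the principal $\cW^k(\gs\gl_n)$ is $\bigotimes_{i=2}^n \cO_{\text{ev}}(1,2i)$, not a Heisenberg algebra.

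The paper closes this gap in two steps: Theorem~\ref{thm:wfreelimit} shows that $\cW^{\text{free}}(\gg,f)$ decomposes as a tensor product of these generalized free field algebras (with the type determined by the parity of $j$ and of the generator, as listed after that theorem), and then Theorem~\ref{Hilbert:genfreefield} establishes the Hilbert theorem for orbifolds of such tensor products, via the analogue Theorem~\ref{Hilbert:fullautogen} of Theorem~\ref{Hilbert:fullauto}. Once you replace Theorem~\ref{Hilbert:freefield} by Theorem~\ref{Hilbert:genfreefield} and correct the description of the limit, the remainder of your argument goes through as written.
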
 
This is a vast generalization of Theorem \ref{Hilbert:affinesuper}, which is the case when $f$ is the zero nilpotent. The key ingredient in the proof is to show that all such $\cW$-algebras admit a large level limit which is a tensor product of free field algebras. There are four infinite families of such free field algebras; they are either of orthogonal or sympletic type, and the generators are either even or odd. They are natural generalizations of the standard free field algebras introduced in Section \ref{sec:vertex}. Once we show that all $\cW$-algebras admit such limits, as in the previous section we show that Theorem \ref{thm:hilbertW} reduces to proving that the Hilbert theorem holds for tensor products of these free field algebras. 

\subsection{General free field algebras} 
We first recall the general notion of free field algebra introduced in \cite{CLIV}. This is a vertex superalgebra $\cV$ with weight grading
$$\cV = \bigoplus_{d \in \frac{1}{2} \mathbb{Z}_{\geq 0} }\cV[d],\qquad \cV[0] \cong \mathbb{C},$$ with strong generators $\{X^i|\ i \in I\}$ satisfying OPE relations
\begin{equation} \label{gffa:def} X^i(z) X^j(w) \sim a_{i,j} (z-w)^{-\text{wt}(X^i) - \text{wt}(X^j)},\quad a_{i,j} \in \mathbb{C},\ \quad a_{i,j} = 0\ \ \text{if} \ \text{wt}(X^i) +\text{wt}(X^j)\notin \mathbb{Z}.\end{equation} In other words, the only nontrivial terms appear in the OPEs are the constant terms. Note that we do not assume that $\cV$ has a conformal structure. There are four infinite families of standard free field algebras that are constructed starting with a vector space $V$ and a form which is either symmetric or skew-symmetric, such that the fields are even or odd.
\smallskip

\noindent {\it Even algebras of orthogonal type}. These are generalizations of the Heisenberg algebra that are constructed from an $n$-dimensional  vector space $V$ equipped with a nondegenerate, symmetric bilinear form $\langle, \rangle$, and an even integer $k\geq 2$. Let $\cO_{\text{ev}}(V,k)$ be the vertex algebra with even generators $\{a^u|\ u \in V\}$ of weight $\frac{k}{2}$, and OPEs
$$a^u(z) a^v(w) \sim \langle u, v \rangle (z-w)^{-k}.$$
The case $k=2$ corresponds to the usual Heisenberg algebra $\cH(V)$, and $\cO_{\text{ev}}(V,k)$ has automorphism group $O(n)$, i.e., the group of linear automorphisms of $V$ which preserve the pairing  $\langle, \rangle$. Here $\cH(V)$ is the vertex algebra with even generators $\{\partial^k \alpha^u|\ u \in V\}$ of weight $1$, and OPE relations $\alpha^u(z) \alpha^v(w) \sim \langle u, v \rangle (z-w)^{-2}.$

There is a concrete realization of $\cO_{\text{ev}}(V,k)$ inside $\cH(V)$ as the subalgebra generated by $\{\partial^k \alpha^u|\ u \in V\}$. If we choose an orthonormal basis $v_1,\dots, v_n$ for $V$ relative to $\langle, \rangle$, and denote the corresponding fields by $\alpha^{1},\dots, \alpha^n$, we set
$$ a^i = \frac{\epsilon}{\sqrt{(k-1)!}} \partial^{k/2-1}\alpha^i,\qquad i=1,\dots, n.$$ Here $\epsilon = \sqrt{-1}$ if $4|k$, and otherwise $\epsilon = 1$. They freely generate $\cO_{\text{ev}}(V,k)$ and satisfy
$$a^i(z) a^j(w) \sim \delta_{i,j} (z-w)^{-k}.$$ 
We use the notation $\cO_{\text{ev}}(n,k)$ from now on when we have chosen this generating set. Note that $\cO_{\text{ev}}(n,k)$ has no Virasoro element for $k\geq 4$, but it is simple and its conformal weight grading is inherited from the grading on $\cH(n)$. For $n,m\geq 1$ and $k$ fixed, we have 
\begin{equation} \label{type:oev} \cO_{\text{ev}}(n,k) \otimes \cO_{\text{ev}}(m,k) \cong \cO_{\text{ev}}(n+m,k).\end{equation}

\smallskip

\noindent {\it Even algebras of symplectic type}.  These are generalizations of the $\beta\gamma$-system that are constructed from an $2n$-dimensional symplectic vector space $V$ with a nondegenerate, skew-symmetric bilinear form $\langle, \rangle$, and an odd integer $k\geq 1$. Let $\cS_{\text{ev}}(V,k)$ be the vertex algebra with even generators $\{\psi^u|\ u \in V\}$ of weight $\frac{k}{2}$, which are linear in $u \in V$ and satisfy
$$\psi^u(z) \psi^v(w) \sim \langle u, v \rangle (z-w)^{-k}.$$
The case $k=1$ corresponds to the usual $\beta\gamma$-system $\cS(V)$, which has even generators $\{\psi^u|\ u \in V\}$ of weight $\frac{1}{2}$ and OPE relations
$\psi^u(z) \psi^v(w) \sim \langle u, v \rangle (z-w)^{-1}$. For all $k\geq 1$, $\cS_{\text{ev}}(V,k)$ has automorphism group $Sp(2n)$, i.e., the group of linear automorphisms which preserve the pairing $\langle, \rangle$.

For all odd $k \geq 1$, $\cS_{\text{ev}}(V,k)$ can be realized inside $\cS(V)$ as the subalgebra with generators $\{\partial^k \psi^u|\ u \in V\}$. Fixing a symplectic basis $u_1,\dots, u_n, v_1,\dots, v_n$ for $V$ as above, we denote the corresponding generators $\psi^{u_i}$ and $\psi^{v_i}$ of $\cS(V) = \cS(n)$ by $\beta^i$ and $\beta^i$, respectively. We then define 
$$a^i =  \frac{\epsilon}{\sqrt{(k-1)!}} \partial^{(k-1)/2} \beta^i, \qquad b^i =  \frac{\epsilon}{\sqrt{(k-1)!}} \partial^{(k-1)/2}\gamma^i, \qquad i=1,\dots, n,$$ where $\epsilon$ is as above. Then $\cS_{\text{ev}}(V,k)$ is freely generated by $a^i, b^i$, and these satisfy
\begin{equation} \begin{split} a^i(z) b^{j}(w) &\sim \delta_{i,j} (z-w)^{-k},\qquad b^{i}(z)a^j(w)\sim -\delta_{i,j} (z-w)^{-k},\\ a^i(z)a^j(w) &\sim 0,\qquad\qquad\qquad \ \ \ \ b^i(z)b^j (w)\sim 0.\end{split} \end{equation} 
We use the notation $\cS_{\text{ev}}(n,k)$ when we have chosen this generating set. As above, it has no Virasoro element for $k\geq 3$, but it is simple and its conformal weight grading is inherited from the grading on $\cS(n)$. For  $n,m\geq 1$ and $k$ fixed, we have 
\begin{equation} \label{type:sev} \cS_{\text{ev}}(n,k) \otimes \cS_{\text{ev}}(m,k) \cong \cS_{\text{ev}}(n+m,k).\end{equation}

\smallskip

\noindent {\it Odd algebras of symplectic type}. These are generalizations of the symplectic fermion algebra that are constructed from an $2n$-dimensional symplectic vector space $V$ with a nondegenerate, skew-symmetric bilinear form $\langle, \rangle$, and an even integer $k\geq 2$. 

Let $\cS_{\text{odd}}(V,k)$ be the vertex algebra with odd generators $\{\xi^u|\ u \in V\}$ of weight $\frac{k}{2}$, which are linear in $u \in V$ and satisfy
$$\xi^u(z) \xi^v(w) \sim \langle u, v \rangle (z-w)^{-k}.$$
The case $k=2$ corresponds to the usual symplectic fermion algebra $\cA(V)$ which has odd generators $\{\xi^u|\ u \in V\}$ of weight $1$ satisfying $\xi^u(z) \xi^v(w) \sim \langle u, v \rangle (z-w)^{-2}$. Then $\cS_{\text{odd}}(V,k)$ has automorphism group $Sp(2n)$, i.e., the group of linear automorphisms which preserve the pairing $\langle, \rangle$.

For all even $k \geq 2$, $\cS_{\text{odd}}(V,k)$ can be realized inside the symplectic fermion algebra  $\cA(V)$ as the subalgebra with generators $\{\partial^k \xi^u|\ u \in V\}$. Fixing a symplectic basis $u_1,\dots, u_n, v_1,\dots, v_n$ for $V$ as above, we denote the corresponding generators $\xi^{u_i}$ and $\xi^{v_i}$ of $\cA(V) = \cA(n)$ by $e^i$ and $f^i$, respectively. We then define 
$$a^i =  \frac{\epsilon}{\sqrt{(k-1)!}} \partial^{k/2-1} e^i, \qquad b^i =  \frac{\epsilon}{\sqrt{(k-1)!}} \partial^{k/2-1} f^i,\qquad i = 1,\dots, n.$$ They freely generate $\cS_{\text{odd}}(n,k)$ and satisfy
\begin{equation} \begin{split} a^{i} (z) b^{j}(w)&\sim \delta_{i,j} (z-w)^{-k},\qquad b^{j}(z) a^{i}(w)\sim - \delta_{i,j} (z-w)^{-k},\\ a^{i} (z) a^{j} (w)&\sim 0,\qquad\qquad\qquad\ \ \ \  b^{i} (z) b^{j} (w)\sim 0. \end{split} \end{equation}
We use the notation $\cS_{\text{odd}}(n,k)$ when we have chosen this generating set. It has no Virasoro element for $k\geq 4$, but it is simple and its conformal weight grading is inherited from the grading on $\cA(n)$. For $n,m\geq 1$ and $k$ fixed, we have 
\begin{equation} \label{type:sodd} \cS_{\text{odd}}(n,k) \otimes \cS_{\text{odd}}(m,k) \cong \cS_{\text{odd}}(n+m,k).\end{equation}

\smallskip

\noindent {\it Odd algebras of orthogonal type}. 
These are generalizations of the free fermion algebra that are constructed from an $n$-dimensional  vector space $V$ equipped with a nondegenerate, symmetric bilinear form $\langle, \rangle$, and an odd integer $k\geq 1$. 
We let $\cO_{\text{odd}}(V,k)$ be the vertex algebra with odd generators $\{\phi^u|\ u \in V\}$ of weight $\frac{k}{2}$, and OPEs
$$\phi^u(z) \phi^v(w) \sim \langle u, v \rangle (z-w)^{-k}.$$
The case $k=1$ corresponds to the usual free fermion algebra $\cF(V)$, which has odd generators $\{\phi^u|\ u \in V\}$ of weight $\frac{1}{2}$ satisfying $\phi^u(z) \phi^v(w) \sim \langle u, v \rangle (z-w)^{-1}$. Then $\cO_{\text{odd}}(V,k)$ has automorphism group $O(n)$, i.e., the group of linear automorphisms of $V$ which preserve the pairing  $\langle, \rangle$.

For all odd $k\geq 1$, $\cO_{\text{odd}}(V,k)$ can be realized inside $\cF(V)$ as the subalgebra with generators $\{\partial^k \alpha^u|\ u \in V\}$. Fix an orthonormal basis $v_1,\dots, v_n$ for $V$ relative to $\langle, \rangle$, and denote the corresponding generators of $\cF(V) = \cF(n)$ by $\phi^{1},\dots, \phi^n$. We then define 
$$a^i = \frac{\epsilon}{\sqrt{(k-1)!}} \partial^{(k-1)/2} \phi^i,\qquad i = 1,\dots, n.$$ They freely generate $\cO_{\text{odd}}(n,k)$ and satisfy
$$a^i(z) a^j(w) \sim \delta_{i,j} (z-w)^{-k}.$$ 
As above, we use the notation $\cO_{\text{odd}}(n,k)$ when we have chosen this generating set. It has no Virasoro element for $k\geq 3$, but it is simple and its conformal weight grading is inherited from the grading on $\cF(n)$. For $n,m\geq 1$ and $k$ fixed, we have 
\begin{equation} \label{type:oodd} \cO_{\text{odd}}(n,k) \otimes \cO_{\text{odd}}(m,k) \cong \cO_{\text{odd}}(n+m,k).\end{equation}

\subsection{Free field limits of $\cW$-algebras} \label{sec:freewalg} 
In the previous section, we showed that for any simple Lie superalgebra $\gg$, $V^k(\gg)$ has large level limit $\cH(n) \otimes \cA(m)$, where $n = \text{dim} \ \gg_{\bar{0}}$ and $2m = \text{dim} \ \gg_{\bar{1}}$. A generalization of this result was given by Theorem 3.5 and Corollary 3.4 of \cite{CLIV}.

\begin{thm} \label{thm:wfreelimit} Let $\gg$ be a Lie superalgebra  with invariant, nondegenerate supersymmetric bilinear form $(,)$, and let $f$ be a nilpotent in the even part of $\mathfrak{g}$. Then $\cW^{k}(\gg,f)$ admits a limit $\cW^{\text{free}}(\gg,f)$ which decomposes as a tensor product of standard free field algebras of the form $\cO_{\text{ev}}(n,k)$, $\cO_{\text{odd}}(n,k)$,  $\cS_{\text{ev}}(n,k)$, and $\cS_{\text{odd}}(n,k)$.
\end{thm}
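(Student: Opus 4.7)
The plan is to construct a deformable family $\cW^{\kappa}(\gg, f)$ over the ring $F$ of complex rational functions of a formal variable $\kappa$ of degree at most zero, with $\kappa^2 = k + h^{\vee}$, paralleling the affine construction of Section~\ref{sec:hilbertaffine}. First I would work at the level of the BRST complex $C(\gg,f,k) = V^k(\gg) \otimes F(\gg_+) \otimes F(\gg_{\frac{1}{2}})$. The affine generators are rescaled by $a^\alpha := \kappa^{-1} X^\alpha$, so that the $V^k(\gg)$-factor is the affine deformable family of Section~\ref{sec:hilbertaffine} with OPE coefficients in $F$; the $k$-independent ghost and neutral fields need no rescaling. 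The BRST current $d(z)$ then contains a term $\kappa \sum_\alpha (-1)^{|\alpha|} a^\alpha \varphi^\alpha$ that is singular as $\kappa \to \infty$, so one works with $\hat d := \kappa^{-1} d$ instead; this rescaling of the differential does not change cohomology for fixed $\kappa$, but renders the complex well-defined over $F$ with a limit at infinity.

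Next I would invoke the Kac--Wakimoto structure theorem (Theorem~\ref{thm:Walgmain}) to identify a set of strong generators $\hat K^\alpha := \kappa^{-1} K^\alpha$ of the resulting family, indexed by a basis $\{q^\alpha\}_{\alpha \in S^f}$ of $\gg^f$ compatible with the $\frac{1}{2}\mathbb{Z}$-grading, with $\hat K^\alpha$ of conformal weight $h_\alpha = 1 + j_\alpha$ when $q^\alpha \in \gg^f_{-j_\alpha}$. The crucial claim is that each OPE
$$\hat K^\alpha(z) \hat K^\beta(w) \sim \sum_{n \geq 1} \frac{c^{\alpha\beta}_n(w)}{(z-w)^n}$$
has coefficients $c^{\alpha\beta}_n$ of degree at most zero in $\kappa$, with the top-order pole at $n = h_\alpha + h_\beta$ having coefficient of degree exactly zero and all subleading poles of strictly negative degree. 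This is transparent in the Virasoro case $(\gg = \gs\gl_2, f = f_{\text{prin}})$ from the central charge behavior $c \sim \kappa^2$, and in general follows by induction on $j_\alpha$ using the tic-tac-toe construction, the explicit form \eqref{eq:Ialpha}, and the Jacobi identities \eqref{VOA:identities}: the central term of the affine OPE carries $k \sim \kappa^2$ which exactly balances the $\kappa^{-2}$ from the uniform rescaling of $\hat J^\alpha \hat J^\beta$, while Lie-bracket contributions $f^{\alpha\beta}_\gamma J^\gamma$ and normally ordered corrections appearing in $K^\alpha - J^\alpha$ come with coefficients of non-positive $\kappa$-degree and hence produce subleading pole contributions of strictly negative degree.

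Taking $\kappa \to \infty$, only the top-order pole of each OPE survives with a constant coefficient, so the limit $\cW^{\text{free}}(\gg, f) := \lim_{\kappa \to \infty} \cW^\kappa(\gg, f)$ is a generalized free field algebra in the sense of~\eqref{gffa:def}, freely generated by the images of $\hat K^\alpha$ of weights $h_\alpha$. To decompose it as a tensor product of standard families, I would group the generators by conformal weight $h$ and by the $\mathbb{Z}/2\mathbb{Z}$-parity inherited from $\gg$, and identify the surviving top-pole bilinear pairing on each block with the pairing induced by $(\cdot | \cdot)$ between $\gg^f_{-j}$ and a transverse complement (via the Slodowy-slice duality). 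An even-parity block carrying a symmetric pairing contributes a factor of $\cO_{\text{ev}}(n, 2h)$, an even block with skew pairing contributes $\cS_{\text{ev}}(n, 2h)$, an odd block with symmetric pairing contributes $\cO_{\text{odd}}(n, 2h)$, and an odd block with skew pairing contributes $\cS_{\text{odd}}(n, 2h)$; consolidating using~\eqref{type:oev}, \eqref{type:sev}, \eqref{type:sodd}, \eqref{type:oodd} yields the claimed tensor-product decomposition.

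The main obstacle will be the $\kappa$-degree control in the second paragraph. The heuristic is clean --- the only source of positive $\kappa$-powers is the central term $k(q^\alpha | q^\beta)$ in the affine OPE, which is consistently paired against a matching $\kappa^{-2}$ from the rescaling --- but verifying it inductively through the Kac--Wakimoto tic-tac-toe construction requires careful bookkeeping of how normal-ordering corrections propagate under the Jacobi identities \eqref{VOA:identities}, and of the fact that the scalar coefficients appearing in the tic-tac-toe recursion (which arise from inverting a Cartan-type matrix built from $(\cdot|\cdot)$) are $\kappa$-independent at leading order. Once this verification is in place, the existence of the limit and the identification with the four standard free field families follow automatically from the Lie-theoretic data $(\gg, f)$.
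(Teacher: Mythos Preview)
Your proposal is correct and follows essentially the same approach the paper outlines (with the detailed argument deferred to \cite{CLIV}): build a deformable family from rescaled Kac--Wakimoto generators $K^\alpha$, verify the $\kappa$-degree bounds so that only the top-order poles survive as $\kappa \to \infty$, and read off the free field type from the induced pairing on each graded piece $\gg^f_{-j}$. The one datum the paper makes explicit that you leave to be ``identified'' is the rule for the symmetry of the limiting pairing: it is governed by whether $j$ is an integer or a half-integer (integer $j$ gives a symmetric pairing on even generators and skew on odd, half-integer $j$ swaps these), and this together with parity is what pins down which of $\cO_{\text{ev}}$, $\cS_{\text{ev}}$, $\cO_{\text{odd}}$, $\cS_{\text{odd}}$ appears.
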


In the language of the previous section, there exists a deformable family $\cW(\gg,f)$ such that $$\cW^k(\gg,f) \cong \cW(\gg,f) / (\kappa - \sqrt{k}) \cW(\gg,f),\ \text{for all} \ k \neq 0.$$ In this notation, 
$$\cW^{\text{free}}(\gg,f) \cong \cW^{\infty}(\gg,f) = \lim_{\kappa \rightarrow \infty} \cW(\gg,f).$$

In the notation of Theorem \ref{thm:Walgmain}, recall that we have a decomposition $\gg^f = \bigoplus_{j\geq 0} \gg^f_{-j}$, where $j \in  \frac{1}{2} \mathbb{N}$. Fix a basis $J^f_{-j}$ for $\gg^f_{-j}$; then for each $q^{\alpha} \in J^f_{-j}$, we have a field $K^{\alpha} \in \cW^k(\gg,f)$ of weight $1+j$, and these fields strongly and freely generate $\cW^k(\gg,f)$. We denote the corresponding fields in the limit $\cW^{\infty}(\gg,f)$ by $X^{\alpha}$. Finally, we partition $J^f_{-j}$ into subsets $J^f_{-j, \text{ev}}$ and $J^f_{-j, \text{odd}}$ consisting of even and odd elements. Then

\begin{enumerate}
\item If $j$ is a half-integer, the span of $\{X^{\alpha}|\ \alpha \in J^f_{-j,\text{ev}}\}$ has a skew-symmetric pairing, and gives rise to even algebra of symplectic type.

\item If $j$ is an integer, $\{X^{\alpha}|\ \alpha \in J^f_{-j,\text{ev}}\}$ has a symmetric pairing, and gives rise to even algebra of orthogonal type.

\item If $j$ is a half-integer, $\{X^{\alpha}|\ \alpha \in J^f_{-j,\text{odd}}\}$ has a symmetric pairing, and gives rise to odd algebra of orthogonal type.

\item If $j$ is an integer, $\{X^{\alpha}|\ \alpha \in J^f_{-j,\text{odd}}\}$ has a skew-symmetric pairing, and gives rise to odd algebra of symplectic type.
\end{enumerate}

We remark that Theorem \ref{thm:wfreelimit} has many other applications in addition to Theorem \ref{thm:hilbertW}. For example, since $\cW^{\infty}(\gg,f)$ is simple, it implies the simplicity of $\cW^{k}(\gg,f)$ for generic values of $k$ whenever $\gg$ admits an invariant, nondegenerate supersymmetric bilinear form $(,)$ as above. This appears as Theorem 3.6 of \cite{CLIV}, and was previously known only in some cases such as principal and minimal $\cW$-algebras.

As in the case of affine superalgebras, $\cW^k(\gg,f)$ possesses a good increasing filtration in the sense of Li \cite{LiII}; see Lemma 4.1 of \cite{CLIV}. Here the generating fields of $\cW^k(\gg,f)$ of weight $d$ need to be assigned filtration degree $d$ in order for the properties of a good increasing filtration to hold. As before, this filtration can be used to show that the orbifold $\cW(\gg,f)^G$ of the deformable family $\cW(\gg,f)$ is again a deformable family, and 
$$\lim_{\kappa \rightarrow \infty} \cW(\gg,f)^G \cong \big(\cW^{\infty}(\gg,f)\big)^G.$$ Therefore a strong generating set for $\big(\cW^{\infty}(\gg,f)\big)^G$ gives rise to a strong generating set for $\cW^k(\gg,f)^G$ for generic values of $k$. Since $\cW^{\infty}(\gg,f)$ is a tensor product of free field algebras, the proof of Theorem \ref{thm:hilbertW} now boils down to proving the strong finite generation of orbifolds of such free field algebras.

\subsection{Orbifolds of general free field algebras}
Orbifolds of the above free field algebras under reductive automorphism groups can be studied in an analogous way to the case of the usual free field algebras, which we discussed in Section \ref{sec:hilbertfreefield}. In particular, we have the following result \cite{CLIV}.

\begin{thm}\label{Hilbert:genfreefield}  Let $\cV = \bigotimes_{i=1}^r \cV_i$, where each $\cV_i$ is one of the free field algebras $\cS_{\text{ev}}(n,k)$, $\cS_{\text{odd}}(n,k)$, $\cO_{\text{ev}}(n,k)$, or $\cO_{\text{odd}}(n,k)$. For any reductive group $G$ of automorphisms of $\cV$, $\cV^G$ is strongly finitely generated.
\end{thm}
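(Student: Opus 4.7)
The strategy is to follow the four-step blueprint that was used in Section \ref{sec:hilbertfreefield} to prove Theorem \ref{Hilbert:freefield}, adapted to the fact that the standard generators of $\cS_{\text{ev}}(n,k)$, $\cS_{\text{odd}}(n,k)$, $\cO_{\text{ev}}(n,k)$, $\cO_{\text{odd}}(n,k)$ have weight $k/2$ rather than $1/2$ or $1$. First I would reduce to the case where $G$ is a subgroup of the product of the full automorphism groups of each tensor factor. Using that the weight-$k/2$ subspace of $\cV$ is the direct sum of the spaces of generators of each $\cV_i$ (separated further by parity), one sees as in Section \ref{sec:hilbertfreefield} that any automorphism of $\cV$ preserving the grading must preserve the individual factors and acts through $\prod_i \text{Aut}(\cV_i)$, which is a product of orthogonal and symplectic groups.

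Next, for each standard factor $\cV_i$ I would describe $\cV_i^{\text{Aut}(\cV_i)}$ explicitly. Each $\cV_i$ carries a good increasing filtration in which filtration degree records the length of iterated Wick monomials, and $\text{gr}(\cV_i)$ is a (super)polynomial algebra $\mathbb{C}[\bigoplus_{j \geq 0} U_j]$ where each $U_j$ is a copy of the standard module for $O(n)$ or $Sp(2n)$, exactly as in the classical cases $k=1,2$. Weyl's FFT then gives an infinite strong generating set for $\cV_i^{\text{Aut}(\cV_i)}$ consisting of quadratics of the form $\omega_{a,b}^{(i)} = \frac{1}{2} \sum_\ell (\pm) : \partial^a X^\ell \partial^b X^\ell :$ (with the appropriate sign conventions dictated by the parity and the form). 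Using $\partial(\omega_{a,b}^{(i)}) = \omega_{a+1,b}^{(i)} + \omega_{a,b+1}^{(i)}$ one cuts this down to a one-indexed strong generating set $\{w^m_i\}_{m \geq m_0}$. Weyl's SFT then produces a family of relations among these fields whose classical leading terms are the appropriate determinants or Pfaffians, and the noncommutativity/nonassociativity of the Wick product produces quantum corrections. The crux is to verify that the lowest-weight SFT relation yields a genuine decoupling relation for $w^m_i$ at a computable weight, from which a bootstrap argument analogous to Lemma \ref{decoup:bootstrap} (applying $(w^{m_0}_i)_{(1)}$ repeatedly and using the known decoupling relations inductively) produces decoupling relations at all higher weights. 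This establishes that each $\cV_i^{\text{Aut}(\cV_i)}$ is strongly finitely generated.

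Step three is to promote this to an arbitrary reductive $G \subseteq \prod_i \text{Aut}(\cV_i)$. By Weyl's polarization theorem, the invariant ring $\text{gr}(\cV)^G$ is generated by finitely many polarization families, each coming from a single $G$-invariant element of the ring generated by finitely many copies of the standard module. Lifting to $\cV^G$, this means $\cV^G$ has a strong generating set contained in the sum of finitely many irreducible modules over the strongly finitely generated vertex subalgebra $\bigotimes_i \cV_i^{\text{Aut}(\cV_i)}$. As in Section \ref{sec:hilbertfreefield}, one then establishes $C_1$-cofiniteness of each such module: the modes of the strong generators $w^m_i$ close under OPE into a Lie algebra $\cP^+$ acting on $\cV$ preserving the filtration, each relevant module is a cyclic $\cP^+_{<0}$-module on a single highest weight vector of some fixed filtration degree $d$, and Corollary \ref{firstcor} together with the argument of Lemma 8 of \cite{LII} bounds the weight of elements outside $C_1$ by $d$ times the maximum weight of a generator. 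Combining strong finite generation of $\bigotimes_i \cV_i^{\text{Aut}(\cV_i)}$ with $C_1$-cofiniteness of the finitely many modules that hold the strong generators yields the result.

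The main obstacle is verifying nonvanishing of the key coefficient in the lowest-weight SFT relation that makes it an actual decoupling relation, analogous to the difficulty encountered in part (4) of Theorem \ref{Hilbert:fullauto}. For the symplectic-type and odd-orthogonal-type algebras, as in \cite{LV,CLII}, the recursive formula for this coefficient is a sum of terms of like sign and nonvanishing is manifest, so the minimal strong generating type can be pinned down exactly. For $\cO_{\text{ev}}(n,k)$ with $k \geq 4$ and $n$ large, by analogy with $\cH(n)^{O(n)}$, the recursion for this coefficient does not have uniform sign and closed formulas are out of reach; however, the asymptotic argument of \cite{LIV} still shows that \emph{some} decoupling relation exists at sufficiently high weight, which is all that is needed for strong finite generation (though not for sharp bounds on the generating type). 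With this in hand the proof is completed by combining steps one through three.
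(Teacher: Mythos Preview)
Your proposal is correct and follows essentially the same four-step approach as the paper: reduction to $G\subseteq\prod_i\text{Aut}(\cV_i)$ via weight and parity considerations, explicit description of each $\cV_i^{\text{Aut}(\cV_i)}$ using Weyl's FFT/SFT and decoupling relations (Theorem~\ref{Hilbert:fullautogen}), Weyl's polarization theorem to confine strong generators to finitely many irreducible modules, and $C_1$-cofiniteness of those modules. One small refinement the paper makes that you do not mention: before arguing that $G$ preserves the factors, it first combines any factors of the same type and same $k$ into a single factor via \eqref{type:oev}--\eqref{type:oodd}, so that distinct factors are genuinely distinguished by weight and parity; and for the decoupling computations in Theorem~\ref{Hilbert:fullautogen} it exploits the realizations of $\cO_{\text{ev}}(n,k)$, $\cS_{\text{ev}}(n,k)$, etc.\ as subalgebras of the ordinary free field algebras $\cH(n)$, $\cS(n)$, $\cA(n)$, $\cF(n)$, which lets one reuse the recursive formulas already established in \cite{LIV,LV,CLII} rather than redoing them from scratch.
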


First, without loss of generality we may assume that in this decomposition, no two factors are of the same type for fixed $k$. This is due to \eqref{type:sev}, \eqref{type:oev}, \eqref{type:sev}, and \eqref{type:oodd}, which say that tensor products of algebras of the same time can be combined into a single one of this type.

Next, if $G$ is an automorphism group of $\cV  = \bigotimes_{i=1}^r \cV_i$, $G$ must preserve the conformal weights and parity of the generators, so it actually preserves each of the factors $\cV_i$. Therefore $G \subseteq G_1 \otimes \cdots \otimes G_r$, where each $G_i = \text{Aut} \ \cV_i$, which is either an orthogonal or a symplectic group.
Therefore the first step is to work out the structure of $\cV^{\text{Aut} \ \cV}$ when $\cV$ is one of the above standard free field algebras $\cS_{\text{ev}}(n,k)$, $\cS_{\text{odd}}(n,k)$, $\cO_{\text{ev}}(n,k)$, or $\cO_{\text{odd}}(n,k)$.

The following result of \cite{CLIV} is a generalization of Theorem \ref{Hilbert:fullauto}, which is the special case obtained by obtained by taking $k=1$ for $\cS_{\text{ev}}(n,k)$ and $\cO_{\text{odd}}(n,k)$, and taking $k=2$ for $\cS_{\text{odd}}(n,k)$ and $\cO_{\text{ev}}(n,k)$.

\begin{thm} \label{Hilbert:fullautogen} Let $n\geq 1$ be a positive integer.
 \begin{enumerate}
\item For all odd $k\geq 1$, $\cS_{\text{ev}}(n,k)^{Sp(2n)}$ has a minimal strong generating set 
$$\omega^{j} = \frac{1}{2} \sum_{i=1}^n \big(:a^i \partial^{j} b^i: \  - \  :(\partial^{j} a^i )b^i:\big),\qquad j = 1,3,\dots, 2n^2+3n-1 + nk. $$ Since $\omega^{j}$ has weight $k+j$, $\cS_{\text{ev}}(n,k)^{Sp(2n)}$ is of type $$\cW \big(k+1,k+3,\dots, 2n^2+3n -1 +(n+1)k\big).$$ 

\item For all even $k\geq 2$, $\cS_{\text{odd}}(n,k)^{Sp(2n)}$ has a minimal strong generating set 
$$\omega^{j} = \frac{1}{2} \sum_{i=1}^n \big( :a^i \partial^{j} b^i: + :(\partial^j a^i) b^i:\big),\qquad j=0,2,\dots, k(n+1)-k -2.$$ Since $\omega^j$ has weight $k+j$, $\cS_{\text{odd}}(n,k)^{Sp(2n)}$ is of type $$\cW(k, k+2\dots,k(n+1) -2).$$

\item For all odd $k\geq 1$, $\cO_{\text{odd}}(n,k)^{O(n)}$ has a minimal strong generating set 
$$\omega^{j} = \frac{1}{2} \sum_{i=1}^n :\phi^i \partial^{j} \phi^i:,\qquad j=1,3,\dots, (n+1)(k+1) -2 -k.$$ Since $\omega^j$ has weight $k+j$, $\cO_{\text{odd}}(n,k)^{O(n)}$ is of type $$\cW(k+1, k+3,\dots,(n+1)(k+1) -2).$$

\item For all even $k\geq 2$, $\cO_{\text{ev}}(n,k)^{O(n)}$ has a minimal strong generating set 
$$\omega^{j} = \sum_{i=1}^n :a^i \partial^{j} a^i:,\qquad j=0,2,\dots, N,$$ for some $N \geq (n+1) k + n(n+1) -k-2$. Since $\omega^j$ has weight $k+j$, $\cO_{\text{ev}}(n,k)^{O(n)}$ is of type $$\cW(k, k+2,\dots, k+N).$$ We expect that $N$ can be taken to be $ (n+1) k + n(n+1) -k-2$ but we do not prove this.
\end{enumerate}
\end{thm}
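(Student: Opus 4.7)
The plan is to generalize the approach used for Theorem \ref{Hilbert:fullauto}, passing from the quantum algebra to its associated graded ring, invoking Weyl's invariant theory, and then lifting classical relations to vertex algebra decoupling relations. In each of the four cases the algebra $\cV \in \{\cS_{\text{ev}}(n,k), \cS_{\text{odd}}(n,k), \cO_{\text{odd}}(n,k), \cO_{\text{ev}}(n,k)\}$ carries a good increasing filtration by total length of normally ordered monomials in the generators, and the associated graded $\text{gr}(\cV)$ is either $\mathbb{C}[\bigoplus_{j \geq 0} V_j]$ or $\Lambda(\bigoplus_{j\geq 0} V_j)$, where $V_j$ is a copy of the defining module (with the appropriate parity) of $G = Sp(2n)$ or $O(n)$. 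Since the action preserves the filtration, $\text{gr}(\cV^G) \cong \text{gr}(\cV)^G$, and the invariants are generated (by Weyl's FFT for $Sp(2n)$ or $O(n)$, in even or odd variables as appropriate) by bilinear invariants $q_{a,b}$ indexed by pairs of nonnegative integers. The symmetry $q_{a,b} = \pm q_{b,a}$ together with the relation $\partial q_{a,b} = q_{a+1,b}+q_{a,b+1}$ allows us to reduce to the one-index family $\{w^{j}\}$ listed in the theorem, which is manifestly a strong generating set of $\cV^G$.

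Next I would show that this infinite strong generating set can be truncated to the finite one in the statement. As in the proof for $k=1$ or $k=2$, the ideal of classical relations among the $q_{a,b}$ is generated, by Weyl's SFT, by Pfaffians of size $2n+2$ (symplectic case, classical generators $q_{a,b}$ antisymmetric) or by determinants/Pfaffians of appropriate size in the orthogonal and fermionic cases. Replacing ordinary products by normally ordered products in such a classical relation yields a non-zero element of $\cV^G$ in a strictly lower filtered piece, i.e.\ a genuine vertex algebra relation $P_I$, and the central technical issue is to identify the scalar $R_n^{(k)}(I)$ --- the coefficient of the top weight generator $w^b$ in $P_I$. This coefficient is independent of the normal ordering chosen, and its non-vanishing for the lexicographically smallest list $I$ is exactly what upgrades $P_I$ to a decoupling relation expressing $w^b$ as a normally ordered polynomial in the lower-weight $w^j$'s and their derivatives. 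Once such a decoupling relation exists for one weight $b = b_0$, a bootstrap argument applying $(w^{j_0})_{(1)}$ iteratively (as in Lemma \ref{decoup:bootstrap}) produces decoupling relations $w^b = P_b(w^{j_1},\dots,w^{j_r})$ for every $b > b_0$, yielding the minimal strong generating set claimed. The minimality follows from Weyl's SFT: below the weight of the first nontrivial classical relation, no normal-ordered polynomial identity among $w^{j}, \partial w^{j},\dots$ with a linear term can exist, so none of the listed generators can be removed.

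What remains is the computation of $R_n^{(k)}(I)$. Generalizing the recursive strategy of Theorem \ref{closedform}, one expands the relevant Pfaffian (or determinant) along its first row, converts each step to normally ordered products using the commutator and associator identities \eqref{VOA:identities}, and tracks the contribution of the highest weight monomial. The dependence on $k$ enters through the $k$-fold pole in the OPE $X^i(z)X^j(w) \sim \delta_{i,j}(z-w)^{-k}$, so the combinatorial coefficients that appeared as $\frac{1}{i_a+i_b+1}$ for $k=1$ will be replaced by quantities of the form $\frac{1}{(i_a+i_b+1)(i_a+i_b+2)\cdots(i_a+i_b+k)}$ (or a simple sign-twisted variant). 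For parts (2) and (3), the fermionic or odd nature of the generators makes all terms in the recursion carry the same sign, so $R_n^{(k)}(I)>0$ is automatic without needing a closed formula; this mirrors the argument used for statements (2) and (3) of Theorem \ref{Hilbert:fullauto}. For part (1) the analogue of the closed formula \eqref{closedformula} must be established, and this is the main obstacle: the signs alternate, cancellations are delicate, and one must verify that the generalization of the product formula still has no hidden zero at the list $I=(0,1,\dots,2n+1)$ relevant for the bound $b = 2n^2+3n-1+(n+1)k$. The $k$-dependence is however uniform in a way that is compatible with the symmetry argument used in the $k=1$ case, so a closed formula of shape
\[
R_n^{(k)}(I) \;=\; C_{n,k}\,\Bigl(\,\text{linear factor in the $i_p+j_q$}\,\Bigr)\,\frac{\prod_{p<q}(i_p-i_q)(j_p-j_q)}{\prod_{p,q}\bigl[(i_p+j_q+1)\cdots(i_p+j_q+k)\bigr]}
\]
should hold, and its non-vanishing at the critical list is immediate.

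Finally, part (4) is treated as in the corresponding case of Theorem \ref{Hilbert:fullauto}\,(4): the recursion produces a sum with cancellations and no closed product formula is within reach, so one resorts to asymptotic bounds as in \cite{LIV} to conclude that for some (explicit but possibly non-optimal) $N \geq (n+1)k+n(n+1)-k-2$ the relevant coefficient is nonzero, supplying a first decoupling relation and hence, via the same bootstrap, all higher ones. This is precisely why no sharp value of $N$ is claimed in (4). In short, the proof outlined above reduces the theorem for general $k$ to (i) a routine extension of Weyl's FFT/SFT bookkeeping to the higher-pole OPE setting, (ii) the $k$-generalization of the closed formula of Theorem \ref{closedform}, and (iii) the asymptotic non-vanishing argument of \cite{LIV}, with (ii) being the principal new technical ingredient.
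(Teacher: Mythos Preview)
Your overall strategy --- pass to the associated graded, apply Weyl's FFT/SFT, lift classical relations to normally ordered ones, extract the linear coefficient, and bootstrap --- matches the paper. But you miss the key simplification that makes the paper's argument go through, and this leaves a genuine gap in your proposal.

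The paper does \emph{not} derive new $k$-dependent recursive formulas or a $k$-generalized closed formula of the shape you conjecture. Instead it uses the concrete realizations of $\cS_{\text{ev}}(n,k)$, $\cS_{\text{odd}}(n,k)$, $\cO_{\text{odd}}(n,k)$, $\cO_{\text{ev}}(n,k)$ as subalgebras of the ordinary free field algebras $\cS(n)$, $\cA(n)$, $\cF(n)$, $\cH(n)$ via $a^i \mapsto \frac{\epsilon}{\sqrt{(k-1)!}}\,\partial^{(k-1)/2}\beta^i$, etc. Under these embeddings the generators $\omega^j$ become (rescalings of) the quadratics $\omega_{a,b}$ of the ambient algebra with \emph{shifted} indices, and the relevant Pfaffian/determinant relations are those for shifted lists $I$. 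The recursive formulas and the closed formula of Theorem~\ref{closedform} were proved for \emph{arbitrary} lists $I$, not just $I=(0,1,\dots,2n+1)$, so they apply directly; no new $k$-dependent combinatorics is needed. This is exactly what the paper means by ``the proof can be carried out using the recursive formulas already obtained in \cite{LIV,LV,CLII}.''

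Your proposal, by contrast, treats the higher-$k$ OPE from scratch and asserts that a product formula with denominators $\prod_{p,q}(i_p+j_q+1)\cdots(i_p+j_q+k)$ ``should hold.'' You do not prove this, and you flag it yourself as ``the main obstacle.'' That is the gap: for part~(1) you have replaced a known result (the closed formula for general $I$) with an unproven conjecture. The embedding trick removes the obstacle entirely and is what you should use.
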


The proof of Theorem \ref{Hilbert:fullautogen} is exactly the same as the proof of the corresponding statements for ordinary free field algebras in Section \ref{sec:hilbertfreefield}. In fact, using the realizations of these free field algebras as subalgebras of ordinary free field algebras, the proof can be carried out using the recursive formulas already obtained in \cite{LIV,LV,CLII}. Using Theorem \ref{Hilbert:fullautogen}, the following results are also proven in a similar way to the case of ordinary free field algebras.

\begin{enumerate}
\item If $\cV$ is any of the free field algebras $\cO_{\text{ev}}(n,k)$, $\cS_{\text{ev}}(n,k)$, $\cS_{\text{odd}}(n,k)$, or $\cS_{\text{odd}}(n,k)$, each irreducible $\cV^{\text{Aut}\ \cV}$-submodule of $\cV$ has one-dimensional top space, and is $C_1$-cofinite.

\smallskip

\item If $\cV = \bigotimes_{i=1}^r\cV_i$ is a tensor product of the above free field algebras, $\cV^G$ has a strong generating set that lies in the direct sum of finitely many irreducible modules over the subalgebra $\bigotimes_{i=1}^r \cV_i^{\text{Aut} \ \cV_i}$. This is achieved using Weyl's theorem on polarizations.

\smallskip

\item The above finiteness results combine to yield the strong finite generation of $\cV^G$
\end{enumerate}

\section{Structure theory of affine cosets} \label{sec:cosets} 
The {\it coset construction} is a standard way to construct new vertex algebras from old ones. Given a vertex algebra $\cA$ and a subalgebra $\cV \subseteq \cA$, the {\it coset} or {\it commutant} of $\cV$ in $\cA$, denoted by 
$\text{Com}(\cV,\cA)$, is the subalgebra of elements $a\in\cA$ such that $$[v(z),a(w)] = 0,\ \text{for all} \  v\in\cV.$$ This was introduced by Frenkel and Zhu in \cite{FZ}, generalizing earlier constructions in representation theory \cite{KP} and physics \cite{GKO}, where it was used to construct the unitary discrete series representations of the Virasoro algebra. Equivalently, 
$$a\in \text{Com}(\cV,\cA) \Leftrightarrow\ v_{(n)} a= 0,\  \text{for all}\ v\in\cV,\  \text{and}\ n\geq 0.$$ The {\it double commutant} $\text{Com}(\text{Com}(\cV,\cA),\cA)$ always contains $\cV$ as a subalgebra, If $\cV = \text{Com}( \text{Com}(\cV,\cA),\cA)$, we say that $\cV$ and $\text{Com}(\cV,\cA)$ form a {\it Howe pair} inside $\cA$. Note that if $\cA$ and $\cV$ have Virasoro elements $L^{\cA}$ and $L^{\cV}$, then $\cC = \text{Com}(\cV,\cA)$ has Virasoro element $L^{\cC} = L^{\cA} - L^{\cV}$ as long as $ L^{\cA} \neq L^{\cV}$. Therefore the map $\cV \otimes \cC \hookrightarrow \cA$ is a conformal embedding.

If $\cV$ is an affine vertex algebra $V^k(\gg)$ for some Lie algebra $\gg$, then $\cC$ can be identified with the invariant subalgebra $\cA^{\gg[t]}$ which is annihilated by the Lie subalgebra $\gg[t]$ spanned by the non-negative modes of the generating fields $X^{\xi} \in V^k(\gg)$. In this case, we call $\cC$ an {\it affine coset}. It is natural to ask whether there is a vertex algebra Hilbert problem for affine cosets: if $\cA$ is strongly finitely generated and $\gg$ is reductive, is $\cC$ also strongly finitely generated? Although this problem is difficult to address in a general setting, we will give an affirmative answer for a large class of VOAs that include $\cW^k(\gg,f)$ for any simple finite-dimensional Lie superalgebra $\gg$ and any even nilpotent $f\in\gg$, for generic values of $k$.

As in \cite{CLIII,CLIV}, we shall axiomatize a class of vertex algebras $\cA^k$ that depend algebraically on a parameter $k$, which admit a homomorphism $V^k(\gg)\rightarrow \cA^k$, such that the coset $\cC^k = \text{Com}(V^k(\gg), \cA^k)$ can be studied using the machinery of deformable families. For the moment, we assume $\gg$ is a simple Lie algebra and $\text{dim} \ \gg = r$. We say that such a vertex algebra $\cA^k$ is {\it good} if the following hypotheses are satisfied.

\begin{enumerate}

\item $\cA^k$ comes from a deformable family, that is, a vertex algebra $\cA$ over $F_K$ for some at most countable subset $K\subseteq \mathbb{C}$, such that $$\cA/ (\kappa - \sqrt{k})\cA \cong \cA^k,\  \text{for all}\ \sqrt{k} \notin K.$$

\smallskip

\item The map $V^k(\mathfrak{g}) \rightarrow \cA^k$ is induced by a homomorphism of deformable families $\cV \rightarrow \cA$, where $\cV$ is the algebra introduced in Section \ref{sec:hilbertaffine} satisfying $V^k(\mathfrak{g}) \cong \cV/ (\kappa - \sqrt{k})$.

\smallskip

\item The action of $\gg$ on $\cA$ coming from the zero modes of the generators of $V^k(\gg)$ integrates to an action of a connected Lie group $G$, and $\cA$ decomposes into finite-dimensional $G$-modules.

\smallskip
 
\item The limit $\cA^{\infty} = \lim_{\kappa \rightarrow \infty} \cA$ decomposes as 
$$\cA^{\infty} \cong \cV^{\infty} \otimes \tilde{\cA} \cong \cH(r) \otimes \tilde{\cA},$$ for some vertex subalgebra $\tilde{\cA}\subseteq \cA^{\infty}$.
\end{enumerate}

Under these hypotheses, $G$ acts on $\tilde{\cA}$, and the following result appeared as Theorem 6.10 of \cite{CLIII}: 

\begin{thm} \label{thm:deformcoset} Let $\cA^k$ be a one-parameter vertex algebra satisfying the above hypotheses. Then
\begin{enumerate}
\item The coset $\cC = \text{Com}(\cV, \cA)$ is a deformable family, and 
$$\cC/ (\kappa - \sqrt{k})\cC \cong \cC^k = \text{Com}(V^k(\mathfrak{g}), \cA^k),$$
for generic $k$. In particular, this holds for all $k > - h^{\vee}$.

\smallskip

\item We have an isomorphism \begin{equation} \begin{split} \cC^{\infty} & \cong \text{Com}(\cV^{\infty}, \cV^{\infty} \otimes \tilde{\cA})^G
\\ & \cong \text{Com}(\cH(r), \cH(r) \otimes \tilde{\cA})^G
\\ & \cong \tilde{\cA}^G.
\end{split} \end{equation}
\end{enumerate}
\end{thm}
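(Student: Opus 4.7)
\textbf{The plan is to prove} the two parts separately, with part (1) being the technical heart. For part (1), I would realize $\cC = \text{Com}(\cV, \cA)$ as a well-behaved $F_K$-submodule of $\cA$, compatible with specialization at $\kappa = \sqrt{k}$. By reductivity of $G$ (hypothesis (3)), each weight component $\cA[d]$, which is a free $F_K$-module of finite rank, decomposes into $G$-isotypic pieces
$$\cA[d] \;\cong\; \bigoplus_\lambda V_\lambda \otimes_{F_K} M_\lambda[d].$$
Since the zero modes of the generators of $\cV$ integrate to the $G$-action, membership in $\cC$ is equivalent to lying in the trivial isotype $M_0$ and being annihilated by the strictly positive modes $a^\xi_{(n)}$, $n\geq 1$. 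In each weight this is a finite system of $F_K$-linear equations on a free module of finite rank; after enlarging $K$ by the zero locus of the relevant minors, each $\cC[d]$ becomes a free $F_K$-module, so $\cC$ is itself a deformable family. The map $\cC/(\kappa - \sqrt{k})\cC \to \cC^k$ is then injective (graded submodule of a free module) and surjective on this generic locus; for $k > -h^\vee$, surjectivity extends using non-degeneracy of the Shapovalov form \eqref{bilinearform} at non-critical level to rule out drops in the annihilator rank.

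For part (2), the $G$-action on $\cA$ extends across the family, and the annihilator equations defining $\cC$ specialize well as $\kappa \to \infty$, giving $\cC^\infty = \text{Com}(\cV^\infty, \cA^\infty)^G$. Substituting $\cA^\infty \cong \cV^\infty \otimes \tilde{\cA} \cong \cH(r) \otimes \tilde{\cA}$ from hypothesis (4) yields the first two claimed isomorphisms. The third reduces to the computation $\text{Com}(\cH(r), \cH(r)\otimes \tilde{\cA}) \cong \tilde{\cA}$: the OPE $\alpha^i(z)\alpha^j(w)\sim\delta_{i,j}(z-w)^{-2}$ has no first-order pole but a nonzero second-order pole, so $\alpha^i_{(1)}$ acts non-trivially on any normally ordered monomial of positive weight in the $\alpha^j$. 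An induction on weight then shows that the only elements of $\cH(r)$ annihilated by every $\alpha^i_{(n)}$ with $n\geq 1$ are the scalars. Since $\cH(r)$ tensor-commutes with $\tilde{\cA}$, the commutant equals $\mathbb{C}\cdot 1\otimes \tilde{\cA}\cong \tilde{\cA}$ as a $G$-module, and taking $G$-invariants gives the required $\tilde{\cA}^G$.

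The main obstacle is the compatibility of the commutant with specialization in part (1). The formal argument yields surjectivity only on a generic locus, and extending this to every $k > -h^\vee$ (as the theorem asserts) requires controlling degenerations of the system of positive-mode equations on the multiplicity spaces $M_\lambda[d]$. This is where the deformable-family framework is essential, combined with the non-degeneracy inputs available from hypothesis (3) and the affine structure at non-critical level. Once this is secured, part (2) is essentially formal, in direct analogy with the computation already established in Lemma \ref{deffamGcommute} for the case $\cA = \cV$.
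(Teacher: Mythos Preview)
The paper does not prove Theorem \ref{thm:deformcoset} itself; it cites the result as Theorem 6.10 of \cite{CLIII}. So there is no in-paper proof to compare against, and what follows is an assessment of your proposal on its own terms and against the overall strategy one can infer from the surrounding discussion and the cited source.

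Your outline is broadly on the right track. The decomposition of the commutant condition into ``$G$-invariance (from zero modes) plus annihilation by strictly positive modes'' is exactly the right way to organize part (1), and your reduction of part (2) to the elementary fact that $\text{Com}(\cH(r),\cH(r)) = \mathbb{C}$ is correct and matches what the cited argument does. Your invocation of Lemma \ref{deffamGcommute} as the model for why limits commute with invariants is also apt.

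There are two places where your sketch is thinner than a proof would need to be. First, in part (1), the surjectivity of $\cC/(\kappa-\sqrt{k})\cC \to \cC^k$ at \emph{every} $k > -h^\vee$ is more delicate than an appeal to ``non-degeneracy of the Shapovalov form.'' The issue is that the honest coset $\text{Com}(V^k(\gg),\cA^k)$ can jump in size at special values of $k$ (the paper notes this explicitly later, citing Corollary 6.7 of \cite{CLIII}), and ruling this out for $k > -h^\vee$ requires a Kac--Kazhdan-type input controlling when singular vectors appear in weight-one-generated submodules of $\cA^k$, not just non-degeneracy of the form on $\cA$ itself. Second, in part (2), the sentence ``the annihilator equations defining $\cC$ specialize well as $\kappa\to\infty$'' hides the crux: the zero-mode equations $a^{\xi_i}_{(0)}\cdot = 0$ become \emph{vacuous} in the limit (the Heisenberg zero modes act trivially on $\cH(r)\otimes\tilde{\cA}$), so the $G$-invariance does not come from specializing those equations but must be carried separately across the limit via the good increasing filtration, exactly as in the proof of Lemma \ref{deffamGcommute}. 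You gesture at this, but the mechanism deserves to be made explicit, since it is precisely why the superscript $G$ appears in the answer rather than being absorbed into the commutant.
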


In particular, a strong generating set for $\tilde{\cA}^G$ gives rise to a strong generating set for $\cC^k$ for generic values of $k$. This theorem continues to hold if the simple Lie algebra $\mathfrak{g}$ is replaced by a reductive Lie algebra, or the sum of a reductive Lie algebra and finitely many copies of $\mathfrak{osp}_{1|2n}$ for integers $n\geq 1$.

\subsection{The case of $\cW$-algebras}

Let $\gg$ be a simple finite-dimensional Lie superalgebra and $f\in \gg$ an even nilpotent element. Suppose that $\cW^k(\gg,f)$ has affine subalgebra $V^{k'}(\ga)$ where the even part of $\ga$ has dimension $d$ and the odd part has dimension $2e$. Note that 
$$\lim_{k'\ra \infty} V^{k'}(\ga) \cong \cO_{\text{ev}}(d,2) \otimes \cS_{\text{odd}}(e,2) = \cH(d) \otimes \cA(e).$$
Then in the decomposition $\cW^{\infty}(\gg,f) \cong \bigotimes_{i=1}^r \cV_i$, we may assume that $\cV_1 \cong  \cO_{\text{ev}}(d,2)$ and $\cV_2 \cong \cS_{\text{odd}}(e,2)$.

Let $\gb \subseteq \ga$ be a reductive Lie subalgebra of dimension $r$, and let 
$$V^{\ell}(\gb) \subseteq V^{k'}(\ga) \subseteq \cW^k(\gg,f),$$ be the corresponding affine subalgebra. 
Write $\cV_1 \cong \ \cO_{\text{ev}}(r,2)  \otimes \cO_{\text{ev}}(d-r,2)$, so that 
\begin{equation} \cW^{\infty}(\gg,f) \cong \cO_{\text{ev}}(r,2)  \otimes \cO_{\text{ev}}(d-r,2) \otimes \bigg(\bigotimes_{i=2}^r \cV_i\bigg).\end{equation}

Then the action of $\gb$ coming from $V^{\ell}(\gb)$ lifts to an action of a connected Lie group $B$ on $\cW^k(\gg,f)$, and $B$ preserves each of the factors $\cO_{\text{ev}}(d-r,2)$ and $\cV_i$ for $i = 2,\dots, r$. 

By specializing Theorem \ref{thm:deformcoset} to this setting, we obtain 
\begin{thm} \label{thm:cosetofw} The coset 
$$\cC^k = \text{Com}(V^{\ell}(\gb), \cW^k(\gg,f)),$$ is a deformable family with limit
$$\cC^{\infty} \cong  \bigg(\cO_{\text{ev}}(d-r,2) \otimes \big(\bigotimes_{i=2}^r \cV_i\big)\bigg)^B.$$ 
\end{thm}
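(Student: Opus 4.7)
The plan is to recognize Theorem \ref{thm:cosetofw} as a direct application of Theorem \ref{thm:deformcoset} to the setting $\cA^k = \cW^k(\gg,f)$ with $V^k(\gg)$ in the hypotheses replaced by the reductive affine subalgebra $V^{\ell}(\gb)$. So the main task is to verify that $\cW^k(\gg,f)$ (together with $V^{\ell}(\gb)$) satisfies the four ``goodness'' hypotheses of Section \ref{sec:cosets}, in the form that allows $V^k(\gg)$ to be replaced by a reductive (or reductive plus orthosymplectic) subalgebra.

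First I would invoke Theorem \ref{thm:wfreelimit}: there is a deformable family $\cW(\gg,f)$ over a suitable ring $F_K$ with $\cW(\gg,f)/(\kappa-\sqrt k)\cW(\gg,f)\cong \cW^k(\gg,f)$, and with limit $\cW^{\infty}(\gg,f)\cong\bigotimes_{i=1}^r\cV_i$ a tensor product of standard free field algebras of the four orthogonal/symplectic types. By the discussion following Theorem \ref{thm:wfreelimit}, the affine subalgebra $V^{k'}(\ga)\subseteq\cW^k(\gg,f)$ lifts to a subfamily whose limit is exactly the weight-$1$ free field piece built from $\ga^f_0 = \ga$, so we may indeed write $\cV_1\cong\cO_{\text{ev}}(d,2)=\cH(d)$ (coming from the even part of $\ga$) and $\cV_2\cong\cS_{\text{odd}}(e,2)=\cA(e)$ (coming from the odd part). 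This verifies hypotheses (1), (2), and (4) for the larger affine subalgebra of $\ga$, and, by restricting to $\gb\subseteq\ga$, also for $V^{\ell}(\gb)$: splitting $\ga=\gb\oplus\gb^{\perp}$ as $\gb$-modules via the invariant form, the decomposition $\cV_1=\cO_{\text{ev}}(r,2)\otimes\cO_{\text{ev}}(d-r,2)$ identifies $\cO_{\text{ev}}(r,2)$ with the $\kappa\to\infty$ limit of $V^{\ell}(\gb)$ and leaves $\cO_{\text{ev}}(d-r,2)\otimes\bigotimes_{i=2}^r\cV_i$ as the ``complementary'' free field factor $\tilde{\cA}$.

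For hypothesis (3), I would observe that the zero modes of the generators of $V^{\ell}(\gb)$ span a Lie algebra isomorphic to $\gb$ acting on $\cW(\gg,f)$ by derivations preserving the weight grading. Because the weight-graded pieces of $\cW(\gg,f)$ are finitely generated $F_K$-modules and $\gb$ is reductive, this action integrates to an action of a connected Lie group $B$ with Lie algebra $\gb$, under which $\cW(\gg,f)$ decomposes into finite-dimensional $B$-modules. The action preserves the splitting $\cV_1=\cO_{\text{ev}}(r,2)\otimes\cO_{\text{ev}}(d-r,2)$ in the limit, acts by the standard representation on the first factor, and acts on each $\cV_i$ ($i\ge 2$) through the adjoint representation of $\gb$ on the corresponding component of $\gg^f$.

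With these hypotheses verified, I would apply Theorem \ref{thm:deformcoset} directly (in its version for reductive $\gb$, noting that the cases where $\gb$ contains $\gs\gp_{1|2n}$ summands are handled by the same theorem extended as indicated). This gives that $\cC=\text{Com}(\cV_{\gb},\cW(\gg,f))$ is a deformable family specializing to $\cC^k$ for generic $k$, with limit
\[
\cC^{\infty}\cong\text{Com}\!\bigl(\cH(r),\,\cH(r)\otimes\tilde{\cA}\bigr)^B\cong\tilde{\cA}^B=\Bigl(\cO_{\text{ev}}(d-r,2)\otimes\bigotimes_{i=2}^r\cV_i\Bigr)^{B},
\]
as required. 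The one step requiring genuine care is the verification of hypothesis (4), i.e.\ that the chosen splitting $\cV_1=\cO_{\text{ev}}(r,2)\otimes\cO_{\text{ev}}(d-r,2)$ is compatible with the identification of $\cO_{\text{ev}}(r,2)$ as the limit of $V^{\ell}(\gb)$; this is where I expect the main technical obstacle to lie, since it requires tracking how the Kac--Roan--Wakimoto generators $K^{\alpha}$ for $\alpha\in\ga$ degenerate as $\kappa\to\infty$ and checking that the image of $V^{\ell}(\gb)$ lines up with the free bosons built from a dual basis of $\gb\subseteq\ga$. Once this compatibility is in hand, the rest of the argument is a formal deduction from Theorem \ref{thm:deformcoset}.
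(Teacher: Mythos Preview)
Your proposal is correct and takes essentially the same approach as the paper: the paper sets up the decomposition $\cW^{\infty}(\gg,f)\cong\cO_{\text{ev}}(r,2)\otimes\cO_{\text{ev}}(d-r,2)\otimes\bigl(\bigotimes_{i=2}^r\cV_i\bigr)$ and the $B$-action in the paragraphs preceding the statement, and then simply says that Theorem~\ref{thm:cosetofw} follows by specializing Theorem~\ref{thm:deformcoset} to this setting. Your more detailed verification of hypotheses (1)--(4) just makes explicit what the paper leaves implicit.
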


By Theorem \ref{thm:hilbertW}, $\bigg(\cO_{\text{ev}}(d-r,2) \otimes \big(\bigotimes_{i=2}^r \cV_i\big)\bigg)^B$ is strongly finitely generated, so by our general result Corollary \ref{cor:passage}, we obtain

\begin{thm} \label{thm:Wcoset} Let $\cW^k(\gg, f)$ be the $\cW$-algebra associated to a simple Lie (super)algebra $\gg$ and an even nilpotent element $f\in\gg$. For any affine subalgebra $V^{\ell}(\gb) \subseteq \cW^k(\gg, f)$ where $\gb$ is a reductive Lie algebra, the coset
$$\cC^k = \text{Com}(V^{\ell}(\gb), \cW^k(\gg,f)),$$ is strongly finitely generated for generic values of $k$.
\end{thm}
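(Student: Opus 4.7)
The plan is to deduce this theorem by assembling three ingredients that have already been developed in the excerpt: the free field limit of $\cW$-algebras, the deformable family machinery for cosets, and the Hilbert theorem for orbifolds of general free field algebras.

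First, I would verify that $\cA^k = \cW^k(\gg,f)$ together with the affine subalgebra $V^{\ell}(\gb) \hookrightarrow V^{k'}(\ga) \hookrightarrow \cW^k(\gg,f)$ fits into the axiomatic framework of a ``good'' one-parameter vertex algebra outlined before Theorem \ref{thm:deformcoset}. The deformable family structure is guaranteed by Theorem \ref{thm:wfreelimit}: there is a family $\cW(\gg,f)$ over a ring $F_K$ with $\cW(\gg,f)/(\kappa - \sqrt{k})\cW(\gg,f) \cong \cW^k(\gg,f)$, and the homomorphism $V^{\ell}(\gb)\hookrightarrow \cW^k(\gg,f)$ is induced by a homomorphism of families $\cV_{\gb}\rightarrow \cW(\gg,f)$, where $\cV_{\gb}$ is built as in Section \ref{sec:hilbertaffine}. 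The action of $\gb$ via zero modes integrates to the action of a connected reductive group $B$ (since $\gb$ is reductive), and $\cW^k(\gg,f)$ decomposes into finite-dimensional $B$-modules because $\cW^k(\gg,f)$ is graded by conformal weight with finite-dimensional graded pieces. The crucial decomposition of the limit follows from Theorem \ref{thm:wfreelimit}: in $\cW^{\infty}(\gg,f) \cong \bigotimes_{i=1}^r \cV_i$, the affine piece $V^{\ell}(\gb)$ contributes factors of type $\cO_{\text{ev}}(r,2)$ (and possibly $\cS_{\text{odd}}(e',2)$ if $\gb$ has odd part), which split off in the $\kappa\rightarrow\infty$ limit so that $\cW^{\infty}(\gg,f) \cong \cH(r)\otimes \tilde{\cA}$ for an explicit complementary subalgebra $\tilde{\cA}$.

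Next I would invoke Theorem \ref{thm:cosetofw} to conclude that $\cC = \text{Com}(\cV_{\gb},\cW(\gg,f))$ is itself a deformable family, with
\begin{equation*}
\cC/(\kappa-\sqrt{k})\cC \cong \cC^k = \text{Com}(V^{\ell}(\gb),\cW^k(\gg,f))
\end{equation*}
for generic $k$, and with large-level limit
\begin{equation*}
\cC^{\infty} \cong \Bigl( \cO_{\text{ev}}(d-r,2) \otimes \bigotimes_{i=2}^r \cV_i \Bigr)^{B}.
\end{equation*}
By construction, the algebra in the parentheses is a tensor product of finitely many of the standard free field algebras $\cO_{\text{ev}}(n,k)$, $\cO_{\text{odd}}(n,k)$, $\cS_{\text{ev}}(n,k)$, $\cS_{\text{odd}}(n,k)$, and $B$ is a reductive group acting by automorphisms preserving each factor.

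Now I would apply Theorem \ref{Hilbert:genfreefield}, the Hilbert theorem for orbifolds of tensor products of general free field algebras under reductive group actions, to conclude that $\cC^{\infty}$ is strongly finitely generated. Finally, Corollary \ref{cor:passage} transfers this strong finite generation from the limit back to the family: a finite strong generating set of $\cC^{\infty}$ lifts to a finite set of elements of $\cC$ which strongly generate $F_S \otimes_{F_K}\cC$ for some at most countable $S\supseteq K$, and hence strongly generate the specialization $\cC^k$ for every $k \in \mathbb{C}\setminus S$. This yields strong finite generation of $\cC^k$ for generic $k$.

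The genuinely substantive step, and the one where the argument could fail if it had not already been handled in the earlier sections, is the compatibility of the $G$-invariant functor with the $\kappa \to \infty$ limit used in Theorem \ref{thm:cosetofw}: namely the identification of $\lim_{\kappa\to\infty} \cC$ with the orbifold of the limit. This relies on the existence of a good increasing filtration on $\cW^k(\gg,f)$ (established in \cite{CLIV}) which is preserved both by the group action and by the deformable family structure, so that $\gr$ commutes with taking invariants and with passing to the limit. Granted that compatibility — together with the structural Theorems \ref{thm:wfreelimit} and \ref{Hilbert:genfreefield} — the proof is essentially a bookkeeping exercise reducing the coset problem to the free field orbifold problem.
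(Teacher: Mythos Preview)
Your proposal is correct and follows essentially the same route as the paper: invoke Theorem \ref{thm:cosetofw} to identify the large-level limit of the coset as the $B$-orbifold of a tensor product of standard free field algebras, apply the Hilbert theorem for such orbifolds (the paper cites Theorem \ref{thm:hilbertW}, whose content at this point is exactly Theorem \ref{Hilbert:genfreefield}), and then use Corollary \ref{cor:passage} to transfer strong finite generation back to generic $k$. Your extra care in checking the ``good'' hypotheses and flagging the compatibility of invariants with the limit is accurate and matches what the paper does implicitly; the only superfluous remark is the aside about $\cS_{\text{odd}}(e',2)$, since here $\gb$ is assumed to be a reductive Lie algebra with no odd part.
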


\section{The Gaiotto-Rap\v{c}\'ak conjectures} \label{sec:gaiotto} 

Motivated from physics, Gaiotto and Rap\v{c}\'ak introduced a family of VOAs $Y_{L,M,N}[\psi]$ called  $Y$-algebras \cite{GR}. They considered interfaces of GL-twisted $\mathcal N=4$ supersymmetric 
gauge theories with gauge groups $U(L), U(M), U(N)$. The name \lq\lq $Y$-algebras" comes from the fact that shape of these interfaces is a $Y$, and local operators at the corner of these interfaces are supposed to form a VOA. Also, note that GL here stands for geometric Langlands. These interfaces should satisfy a permutation symmetry which then induces a corresponding symmetry on the associated VOAs. This led \cite{GR} to conjecture a triality of isomorphisms of $Y$-algebras. Let $\psi$ be defined by 
$$\psi = -\frac{\epsilon_2}{\epsilon_1}, \qquad \epsilon_1 + \epsilon_2 +\epsilon_3 =0,$$
and set $$Y^{\epsilon_1, \epsilon_2, \epsilon_3}_{N_1, N_2, N_3} := Y_{N_1, N_2, N_3}[\psi].$$
In this notation, the expected triality symmetry is then
$$Y^{\epsilon_{\sigma(1)}, \epsilon_{\sigma(2)}, \epsilon_{\sigma(3)}}_{N_{\sigma(1)}, N_{\sigma(2)}, N_{\sigma(3)}} \cong Y^{\epsilon_1, \epsilon_2, \epsilon_3}_{N_1, N_2, N_3}, \qquad \text{for} \ \sigma \in S_3.$$
The $Y$-algebras with one label being zero are up to a Heisenberg algebra, certain affine cosets of $\cW$-(super)algebras of type $A$, which we now describe. Recall first that conjugacy classes of nilpotents $f\in \mathfrak{sl}_N$ correspond to partitions of $N$. Set $N = n+m$ for $n\geq 1$, and write 
$$\mathfrak{sl}_{n+m} = \mathfrak{sl}_n \oplus \mathfrak{gl}_m \oplus \bigg(\mathbb{C}^n \otimes (\mathbb{C}^m)^* \bigg)\ \oplus \bigg(( \mathbb{C}^n)^* \otimes \mathbb{C}^m\bigg).$$
Let $f_{n,m} \in \mathfrak{sl}_{n+m}$ be the nilpotent which is principal in $\mathfrak{sl}_n$ and trivial in $\mathfrak{gl}_m$. It corresponds to the hook-type partition $N = n + 1 + \dots + 1$, and the corresponding $\cW$-algebra $\cW^k(\gs\gl_{n+m}, f_{n,m})$ was called a {\it hook-type} $\cW$-algebra in \cite{CLIV}. It is a common generalization of the principal $\cW$-algebra $\cW^k(\gs\gl_n)$ (the case $m=0$), the affine VOA $V^k(\gs\gl_{n+1})$ (the case $n=1$), the subregular $\cW$-algebra $\cW^k(\gs\gl_{n+1}, f_{\text{subreg}})$ (the case $m=1$), and the minimal $\cW$-algebra $\cW^k(\gs\gl_{m+2}, f_{\text{min}})$ (the case $n=2$). 

For convenience, we replace the level $k$ with the {\it critically shifted level} $\psi = k+n+m$, and we set
$$\cW^{\psi}(n,m) := \cW^k(\mathfrak{sl}_{n+m}, f_{n+m}) =  \cW^{\psi - n-m}(\mathfrak{sl}_{n+m}, f_{n+m}).$$

For $m\geq 2$, $\cW^{\psi}(n,m)$ has affine subalgebra $V^{\psi - m-1}(\mathfrak{gl}_m) = \cH \otimes V^{\psi - m-1}(\mathfrak{sl}_m)$, additional 
even generators in weights $2,3,\dots, n$ which commute with the affine subalgebra, together with $2m$ fields in weight $\frac{n+1}{2}$ which transform under $\mathfrak{gl}_m$ as $\mathbb{C}^m \oplus (\mathbb{C}^m)^*$. We also define the case $\cW^{\psi}(0,m)$ separately as follows. 
\begin{enumerate}
\item For $m\geq 2$, $$\cW^{\psi}(0,m) = V^{\psi-m}(\mathfrak{sl}_m) \otimes \cS(m),$$ where $\cS(m)$ is the rank $m$ $\beta\gamma$-system.
\item $\cW^{\psi}(0,1) = \cS(1)$.
\item $\cW^{\psi}(0,0) \cong \mathbb{C}$.
\end{enumerate}

Next, we define a similar class of $\cW$-superalgebras. For $n+m\geq 2$, $n\geq 1$, and $n\neq m$, write
$$\mathfrak{sl}_{n|m} = \mathfrak{sl}_n \oplus \mathfrak{gl}_m \oplus \bigg(\mathbb{C}^n \otimes (\mathbb{C}^m)^* \bigg)\ \oplus \bigg(( \mathbb{C}^n)^* \otimes \mathbb{C}^m\bigg).$$ Let $f_{n|m} \in \mathfrak{sl}_{n|m}$ be the even nilpotent element which is principal in $\mathfrak{sl}_n$ and trivial in $\mathfrak{gl}_m$. Let $\psi = k + n-m$, and let 
$$\cV^{\psi}(n,m) = \cW^k(\mathfrak{sl}_{n|m}, f_{n|m}) =  \cW^{\psi - n+m}(\mathfrak{sl}_{n|m}, f_{n|m}).$$
In the case $n = m \geq 2$, we need a slightly different definition to get a simple algebra: we define $\cV^{\psi}(n,n) = \cW^{\psi}(\mathfrak{psl}_{n|n}, f_{n|n})$.

For $m\geq 2$, $\cV^{\psi}(n,m)$ has affine subalgebra 
\begin{equation} \begin{split} & V^{-\psi - m+1}(\mathfrak{gl}_m),\qquad m \neq n,
\\ & V^{-\psi - n+1}(\mathfrak{sl}_n), \qquad m = n. \end{split} \end{equation}

$\cV^{\psi}(n,m)$ has additional even generators in weights $2,3,\dots, n$, together with $2m$ odd fields in weight $\frac{n+1}{2}$ transforming under $\mathfrak{gl}_m$ as $\mathbb{C}^m \oplus (\mathbb{C}^m)^*$.

We define the case $\cV^{\psi}(0,m)$ separately as follows. 
\begin{enumerate}
\item For $m\geq 2$, $$\cV^{\psi}(0,m) = V^{-\psi-m}(\mathfrak{sl}_m) \otimes \cE(m),$$ where $\cE(m)$ is the rank $m$ $bc$-system.
\item $\cV^{\psi}(1,1) = \cA(1)$, rank one symplectic fermion algebra.
\item $\cV^{\psi}(0,1) = \cE(1)$.
\item $\cV^{\psi}(0,0) \cong \cV^{\psi}(1,0) \cong \mathbb{C}$.
\end{enumerate}
Consider the affine cosets
\begin{equation*} \begin{split} & \cC^{\psi}(n, m) = \text{Com}(V^{\psi - m-1}(\mathfrak{gl}_m), \cW^{\psi}(n, m)),
\\ & \cD^{\psi}(n, m) = \text{Com}(V^{-\psi -m+1}(\mathfrak{gl}_m), \cV^{\psi}(n, m)), \qquad n\neq m,
\\ & \cD^{\psi}(n, n) = \text{Com}(V^{-\psi -n+1}(\mathfrak{sl}_n), \cV^{\psi}(n, n))^{U(1)}. \end{split}\end{equation*} Note that $\cV^{\psi}(n, n)$ has an action of $U(1)$ by outer automorphisms, which then acts on the coset $\text{Com}(V^{-\psi -n+1}(\mathfrak{sl}_n), \cV^{\psi}(n, n))$ as well, and it is necessary to take the $U(1)$ orbifold. It is not difficult to check that $\cC^{\psi}(n,m)$ has central charge $$c  =  -\frac{(n \psi  - m - n -1) (n \psi - \psi - m - n +1 ) (n \psi +  \psi  -m - n)}{(\psi -1) \psi},$$ and that in both cases $n\neq m$ and $n=m$, $\cD^{\psi}(n,m)$ has central charge 
$$c  = -\frac{(n \psi + m - n -1) (n \psi  - \psi + m - n +1) (n \psi + \psi +m - n )}{(\psi -1) \psi}.$$
The main result of \cite{CLIV} is the following
\begin{thm} (Theorem 1.1, \cite{CLIV}) \label{thm:triality} Let $n \geq m \geq 0$ be integers. We have isomorphisms of one-parameter VOAs 
\begin{equation*} \cD^{\psi}(n, m) \cong \cC^{\psi^{-1}}(n-m, m) \cong \cD^{\psi'}(m,n),\end{equation*}
where $\psi'$ is defined by $\displaystyle \frac{1}{\psi} + \frac{1}{\psi'} = 1$.
\end{thm}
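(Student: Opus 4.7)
My proposal is to prove the triality theorem by reducing each of the three VOAs $\cD^{\psi}(n,m)$, $\cC^{\psi^{-1}}(n-m,m)$, $\cD^{\psi'}(m,n)$ to one-parameter quotients of a single universal two-parameter structure, and then showing that their defining data (the ``truncation curves'' in the $(c,\lambda)$-plane) coincide. The plan divides into three parts: establishing the strong generating type, identifying the quotient, and matching the parameters.

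First I would apply Theorem \ref{thm:Wcoset} to each of $\cC^{\psi}(n,m)$ and $\cD^{\psi}(n,m)$ to conclude that they are strongly finitely generated deformable families in $\psi$. The crucial strengthening is to pin down the exact strong generating type. By Theorem \ref{thm:wfreelimit} and Theorem \ref{thm:deformcoset}, it suffices to analyze the $\psi\to\infty$ limit, where $\cW^{\psi}(n,m)$ degenerates to a tensor product of free field algebras on which $GL_m$ acts. Concretely, in the hook-type case the relevant $GL_m$-module is a sum of copies of $\mathbb{C}^m \oplus (\mathbb{C}^m)^*$ (the odd fields of weight $(n+1)/2$ and their derivatives), together with the subalgebra of fields commuting with the affine $\mathfrak{gl}_m$. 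Applying Weyl's first fundamental theorem for $GL_m$, the invariants are generated by the ``contractions'' between a $\mathbb{C}^m$ factor and a $(\mathbb{C}^m)^*$ factor, which produce one field in each conformal weight $2,3,4,\dots$. Then Weyl's second fundamental theorem for $GL_m$ provides the classical relations (determinantal ideals), and a quantum correction/decoupling argument analogous to Lemma \ref{decoup:bootstrap} shows that for weight $>N(n,m,\psi)$ each generator can be eliminated. This identifies all three cosets as VOAs of type $\cW(2,3,\dots,N)$ for an explicit $N$ depending on the triple $(n,m,\psi)$.

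Second, I would invoke the universal two-parameter VOA $\cW(c,\lambda)$ of type $\cW(2,3,\dots)$ constructed in \cite{LVI}, whose one-parameter quotients are classified by truncation curves in $\text{Spec}\ \mathbb{C}[c,\lambda]$. Since each of our three cosets is strongly generated by fields in weights $2$ and $3$ (after the Virasoro and weight-$3$ generators have been extracted) and its OPEs depend rationally on $\psi$, the universality/uniqueness statement of \cite{LVI} forces a classifying homomorphism $\cW(c,\lambda)\twoheadrightarrow \cD^{\psi}(n,m)$, and similarly for the other two sides. The images $\lambda = \lambda_{\cD(n,m)}(\psi)$, $\lambda = \lambda_{\cC(n-m,m)}(\psi^{-1})$, $\lambda = \lambda_{\cD(m,n)}(\psi')$ parametrize truncation curves once the central charges are substituted using the formulas for $c$ given just before the theorem.

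Third, and this is the main obstacle, I must compute these truncation curves explicitly and verify that they coincide. The computation of $\lambda$ in terms of $\psi$ requires evaluating a specific structure constant (the coefficient of a normally ordered quadratic in the weight-$3$ OPE) in the simple quotient. I would carry this out by working in the free field limit, where the generators of weights $2,3,4,5$ can be written down by hand as $GL_m$-invariants built from the odd weight-$(n+1)/2$ fields and their derivatives, computing the leading OPE coefficient, and then tracking how it deforms away from $\kappa=\infty$ using the deformable family structure. Because $\cW(c,\lambda)$ depends on only two parameters, a \emph{finite} list of OPE coefficients suffices to determine $\lambda(\psi)$ exactly; the quantities already collected in the central-charge formulas constrain $c(\psi)$, and one further coefficient (cubic in the generators) determines $\lambda(\psi)$. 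The symmetry $\psi\leftrightarrow \psi^{-1}$ exchanges $\mathfrak{sl}_n$ and $\mathfrak{gl}_m$ roles in a way dual to Feigin-Frenkel duality, and the relation $1/\psi+1/\psi'=1$ corresponds to the standard Langlands involution; after substitution the three rational functions $\lambda(\psi)$, $\lambda(\psi^{-1})$, $\lambda(\psi')$ should become equal as elements of the function field. Once this is verified, the triality follows from the uniqueness of simple one-parameter quotients of $\cW(c,\lambda)$ along a given truncation curve. I expect the explicit determination of $\lambda(\psi)$ in the hook-type cases with both $n,m>0$ to be the most technically demanding step, requiring delicate bookkeeping of normal-ordering corrections that the analogous invariant-theoretic computation in the classical (commutative) limit does not see.
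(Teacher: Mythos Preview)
Your three-step architecture --- determine the strong generating type via free field limits and $GL_m$ invariant theory, realize each coset as a one-parameter quotient of $\cW(c,\lambda)$, then compute and match the truncation curves --- is exactly the paper's strategy. Two points, however, separate your proposal from the actual argument.

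First, your Step 3 takes a different route from the paper. You propose to compute $\lambda(\psi)$ by writing the generators $W^3,W^4,W^5$ in the free field limit and ``tracking how it deforms away from $\kappa=\infty$''. The paper does not do this; instead it exploits that $\cW^{\psi}(n,m)$ is an \emph{extension} of $V^{\psi-m-1}(\mathfrak{gl}_m)\otimes\cC^{\psi}(n,m)$ by the $2m$ primary fields $G^{\pm,i}$ of weight $\tfrac{n+1}{2}$. One writes the OPE $W^3(z)G(w)$ with undetermined coefficients $a_0,\dots,a_3$, then imposes Jacobi identities of type $(L,W^3,G)$, $(W^3,W^3,G)$, and $(W^3,W^4,G)$. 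These yield a small algebraic system whose unique solution gives $\lambda(\psi)$ as an explicit rational function, valid at all $\psi$ simultaneously --- not just in a limit. This sidesteps precisely the ``delicate bookkeeping of normal-ordering corrections'' you anticipate as the main obstacle. Your free-field-plus-deformation approach would instead require constructing the $W^i$ explicitly inside $\cW^{\psi}(n,m)$ at generic $\psi$ and computing their mutual OPEs, which is considerably heavier.

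Second, there is a genuine gap in your Step 2. You claim each coset ``is strongly generated by fields in weights $2$ and $3$'' and hence receives a surjection from $\cW(c,\lambda)$. But Step 1 only shows strong generation in weights $2,3,\dots,N$; it does \emph{not} show that $W^4,\dots,W^N$ lie in the vertex subalgebra generated by $L$ and $W^3$, which is what is needed to obtain a map from $\cW(c,\lambda)$. The paper handles this carefully: it works first with the subalgebra $\tilde{\cC}^{\psi}(n,m)\subseteq\cC^{\psi}(n,m)$ generated by $L,W^3$, computes \emph{its} truncation curve via the extension argument above, and then uses coincidences of the simple quotient $\tilde{\cC}_{\psi}(n,m)$ with principal $\cW$-algebras $\cW_r(\mathfrak{sl}_s)$ at intersection points of truncation curves (Lemma \ref{coincindences:typeA}) to conclude that $\tilde{\cC}^{\psi}(n,m)$ already has the full type $\cW(2,3,\dots,(m+1)(m+n+1)-1)$ and hence equals $\cC^{\psi}(n,m)$. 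Without this closing argument you cannot identify the coset with a $\cW(c,\lambda)$-quotient, and the triality does not follow.
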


In our previous notation, we have
\begin{equation}
\begin{split} 
Y_{0, M, N}[\psi] &= \cC^\psi(N-M, M) \otimes \cH , \qquad M \leq N, \\
Y_{0, M, N}[\psi] &= \cC^{-\psi+1}(M-N, N)  \otimes \cH, \qquad M > N, \\
Y_{L, 0, N}[\psi] &= \cD^\psi(N, L) \otimes \cH, \\
Y_{L, M, 0}[\psi] &= \cD^{-\psi+1}(M, L) \otimes \cH.
\end{split}
\end{equation}

By definition one has $Y_{0, N, M}[\psi] \cong Y_{0, M, N}[1-\psi]$ for $N\neq M$. We also have $\cC^\psi(0, M) \cong  \cC^{1-\psi}(0, M)$ and hence this statement also holds for $N=M$. Clearly also $Y_{L, 0, N}[\psi] \cong Y_{L, N, 0}[1-\psi]$. Combining this with the isomorphisms in Theorem \ref{thm:triality}, we obtain
\begin{equation}
\begin{split}
Y_{0, M, N}[\psi] &\cong Y_{0, N, M}[1-\psi] \cong Y_{M, 0, N}[\psi^{-1}] \cong Y_{M, N, 0}[1-\psi^{-1}] \\
&\cong Y_{N, 0, M}[(1-\psi)^{-1}] \cong Y_{N, M, 0}[(1-\psi^{-1})^{-1}]. 
\end{split}
\end{equation}
This is the Gaoitto-Rap\v{c}\'ak triality conjecture in type $A$ with one of the labels zero. This result unifies and vastly generalize several well-known results in the theory of $\cW$-algebras. First, Feigin-Frenkel duality says that the principal $\cW$-algebra of a simple Lie algebra $\gg$ at level $\psi - h^\vee$ is isomorphic to the principal $\cW$-algebra of the dual Lie algebra ${}^L\gg$ at level $\psi' - {}^Lh^\vee$ where $\ell \psi \psi'=1$ and $\ell$ is the lacety of $\gg$ \cite{FFII}. The special case 
$\cD^\psi(n, 0) \cong \cC^{\psi^{-1}}(n, 0)$ of Theorem \ref{thm:triality} result reproduces Feigin-Frenkel duality in type $A$. We therefore regard the family of isomorphisms $\cD^{\psi}(n, m) \cong \cC^{\psi^{-1}}(n-m, m)$ as of {\it Feigin-Frenkel type}. Also, Theorem \ref{ACLmain} in type $A$ is the case $\cD^{\psi}(n, 0) \cong \cD^{\psi'}(0, n)$ with $\psi'$ defined by $\frac{1}{\psi} +\frac{1}{\psi'} =1$. We therefore regard the family of isomorphisms $\cD^{\psi}(n, m) \cong \cD^{\psi'}(m, n)$ as of {\it coset realization type}.

\subsection{The case of types $B$, $C$, and $D$} 
Gaiotto and Rap\v{c}\'ak also define a notion of orthosymplectic $Y$-algebras in \cite{GR}. These are analogous to the above algebras can be identified with affine cosets of certain families of $\cW$-(super)algebras of types $B$, $C$, and $D$. First, let $\gg$ be either $\mathfrak{so}_{2n+1}$, $\mathfrak{sp}_{2n}$, $\mathfrak{so}_{2n}$, or $\mathfrak{osp}_{n|2r}$. We have a decomposition $$\mathfrak{g} = \mathfrak{a} \oplus \mathfrak{b} \oplus \rho_{\mathfrak{a}} \otimes \rho_{\mathfrak{b}}.$$
Here $\mathfrak{a}$ and $\mathfrak{b}$ are Lie sub(super)algebras of $\mathfrak{g}$, and $\rho_{\mathfrak{a}}$, $\rho_{\mathfrak{b}}$ transform as the standard representations of $\mathfrak{a}$, $\mathfrak{b}$. In the case $\gg = \mathfrak{osp}_{n|2r}$, $\rho_{\ga}$ and $\rho_{\gb}$ have the same parity, which can be even or odd.

Next, let $f_{\mathfrak{b}} \in \mathfrak{g}$ be the nilpotent element which is principal in $\mathfrak{b}$ and trivial in $\mathfrak{a}$, and let $\cW^k(\gg,f_{\gb})$ be the corresponding $\cW$-(super)algebra. In all cases, $\cW^k(\gg,f_{\gb})$ is of type
$$\cW \bigg(1^{\text{dim}\ \mathfrak{a}}, 2,4,\dots, 2m, \bigg(\frac{{d_{\mathfrak{b}}} + 1}{2} \bigg)^{d_{\mathfrak{a}}}\bigg).$$ In particular, there are $\text{dim}\ \mathfrak{a}$ fields in weight $1$ which generate an affine vertex (super)algebra of $\ga$. The fields in weights $2,4,\dots, 2m$ are even and are invariant under $\mathfrak{a}$. The $d_{\mathfrak{a}}$ fields in weight $\frac{d_{\mathfrak{b}} + 1}{2}$ can be even or odd, and transform as the standard $\mathfrak{a}$-module.

For $n,m\geq 0$ we have the following cases where $\mathfrak{b} = \mathfrak{so}_{2m+1}$.
\begin{enumerate}
\item {\bf Case 1B}: $\ \ \displaystyle \mathfrak{g} =\mathfrak{so}_{2n+2m+2},\quad \mathfrak{a} = \mathfrak{so}_{2n+1}$. 
\item {\bf Case 1C}: $\ \ \displaystyle\mathfrak{g} =\mathfrak{osp}_{2m+1|2n}, \quad \mathfrak{a} = \mathfrak{sp}_{2n}$.
\item{\bf Case 1D}: $\ \ \displaystyle\mathfrak{g}  =\mathfrak{so}_{2n+2m+1},  \quad \mathfrak{a} = \mathfrak{so}_{2n}$.
\item{\bf Case 1O}:  $\ \ \displaystyle\mathfrak{g} =\mathfrak{osp}_{2m+2|2n},  \quad \mathfrak{a} = \mathfrak{osp}_{1|2n}$.
\end{enumerate}
For $n\geq 0$ and $m\geq 1$ we have the following cases where $\mathfrak{b} = \mathfrak{so}_{2m}$.
\begin{enumerate}
\item {\bf Case 2B}:  $\ \ \displaystyle\mathfrak{g} =\mathfrak{osp}_{2n+1|2m},\quad \mathfrak{a} = \mathfrak{so}_{2n+1}$.
 \item{\bf Case 2C}: $\ \ \displaystyle\mathfrak{g} =\mathfrak{sp}_{2n+2m}, \quad \mathfrak{a} = \mathfrak{sp}_{2n}$.
\item{\bf Case 2D}:  $\ \ \displaystyle\mathfrak{g} =\mathfrak{osp}_{2n|2m}, \quad \mathfrak{a} = \mathfrak{so}_{2n}$.
\item{\bf Case 2O}:  $\ \ \displaystyle\mathfrak{g} =\mathfrak{osp}_{1|2n+2m},  \quad \mathfrak{a} = \mathfrak{osp}_{1 | 2 n}$. 
\end{enumerate}
 As above, we always replace $k$ with the critically shifted level $\psi = k+h^{\vee}$. For $i = 1,2$ and $X = B,C,D,O$, we denote the corresponding $\cW$-algebras by $\cW^{\psi}_{iX}(n,m)$. Note that for $i=1$ and $m=0$, the nilpotent $f_{\gb} \in \gg$ is trivial, so we have
\begin{enumerate}
\item $\cW^{\psi}_{1B}(n,0) = V^{\psi-2n}(\gs\go_{2n+2})$,
\item $\cW_{1C}^{\psi}(n,0) = V^{\psi +2n+1}(\mathfrak{osp}_{1|2n})$,
\item $\cW_{1D}^{\psi}(n,0) = V^{\psi -2n +1}(\gs\go_{2n+1})$,
\item $\cW_{1O}^{\psi}(n,0) = V^{\psi +2n}(\mathfrak{osp}_{2|2n})$.
\end{enumerate}

For $i=2$ and $m=0$, we define the algebras $\cW^{\psi}_{2X}(n,0)$ in a different way so that our results hold uniformly for all $n,m\geq 0$. Recall that $\cF(m)$ and $\cS(m)$ denote the rank $m$ free fermion algebra and $\beta\gamma$-system, respectively. First, we define
\begin{equation}  \cW^{\psi}_{2B}(n,0) = \left\{
\begin{array}{ll}
V^{-2 \psi - 2 n + 1}(\gs\go_{2n+1}) \otimes \cF(2n+1) & n \geq 1,
\smallskip
\\  \cF(1) & n =0.
\\ \end{array} 
\right.
\end{equation} Since $\cF(2n+1)$ has an action of $L_1(\gs\go_{2n+1})$, for $n\geq 1$ $\cW_{2B}^{\psi}(n,0)$ has a diagonal action of $V^{-2 \psi - 2 n + 2}(\gs\go_{2n+1})$.
Next, we define
\begin{equation}  \cW^{\psi}_{2C}(n,0) = \left\{
\begin{array}{ll}
 V^{\psi-n-1} (\gs\gp_{2n}) \otimes \cS(n) & n \geq 1,
\smallskip
\\  \mathbb{C} & n =0.
\\ \end{array} 
\right.
\end{equation}
Since $\cS(n)$ has an action of $L_{-1/2}(\gs\gp_{2n})$, $\cW_{2C}^{\psi}(n,0)$ has a diagonal action of $V^{\psi-n-3/2}(\gs\gp_{2n})$ for $n\geq 1$. 
Next, we define
\begin{equation}  \cW^{\psi}_{2D}(n,0) = \left\{
\begin{array}{ll}
 V^{- 2 \psi - 2 n +2}(\gs\go_{2n}) \otimes \cF(2n)  & n \geq 1,
\smallskip
\\  \mathbb{C} & n =0.
\\ \end{array} 
\right.
\end{equation}
 Since $\cF(2n)$ has an action of $L_1(\gs\go_{2n})$, $\cW_{2D}^{\psi}(n,0)$ has a diagonal action of $V^{- 2 \psi - 2 n +3}(\gs\go_{2n})$ for $n\geq 1$.
 Finally, we define
\begin{equation}  \cW^{\psi}_{2O}(n,0) = \left\{
\begin{array}{ll}
  V^{\psi-n-1/2}(\mathfrak{osp}_{1|2n}) \otimes \cS(n) \otimes \cF(1)  & n \geq 1,
\smallskip
\\   \cF(1) & n =0.
\\ \end{array} 
\right.
\end{equation}
It is well known that $\cS(n) \otimes \cF(1)$ has an action of $L_{-1/2}(\go\gs\gp_{1|2n})$, so $\cW_{2O}^{\psi}(n,0)$ has a diagonal action of $V^{\psi -n-1}(\go\gs\gp_{1|2n})$ for $n\geq 1$. 

In the cases $i = 1,2$ and $X = C$, we denote the corresponding affine cosets by $\cC^{\psi}_{iC}(n,m)$. More precisely, we have
\begin{equation}   \cC^{\psi}_{1C}(n,m) = \left\{
\begin{array}{ll}
 \text{Com}(V^{-\psi/2 - n - 1/2}(\gs\gp_{2n}), \cW_{1C}^{\psi}(n,m))
& m \geq 1,\  n \geq 1,
\smallskip
\\  \text{Com}(V^{-\psi/2 - n - 1/2}(\gs\gp_{2n}), V^{\psi +2n+1}(\mathfrak{osp}_{1|2n}))& m =0, \ n\geq 1,
 \smallskip
\\  \cW^{\psi-2m+1}(\gs\go_{2m+1})& m \geq 1, \ n = 0,
\smallskip
\\ \mathbb{C} & m = n = 0.
\\ \end{array} 
\right.
\end{equation}
\begin{equation}   \cC^{\psi}_{2C}(n,m) = \left\{
\begin{array}{ll}
\text{Com}(V^{\psi-n-3/2}(\gs\gp_{2n}), \cW_{2C}^{\psi}(n,m))
 & m \geq 1,\  n \geq 1,
 \smallskip
\\ \text{Com}(V^{\psi-n-3/2}(\gs\gp_{2n}), V^{\psi-n-1} (\gs\gp_{2n}) \otimes \cS(n)) & m =0, \ n\geq 1,
 \smallskip
\\ \cW^{\psi -m-1}(\gs\gp_{2m})  & m \geq 1, \ n = 0,
\smallskip
\\ \mathbb{C} & m = n = 0.
\\ \end{array} 
\right.
\end{equation}

In the cases $X = B$, for all $n\geq 0$, the zero mode action of $\gs\go_{2n+1}$ integrates to an action of $SO(2n+1)$ which in fact lifts to $O(2n+1)$, so there is an additional action of $ \mathbb{Z}/2\mathbb{Z}$, and $\cC^{\psi}_{iB}(n,m)$ denotes the $ \mathbb{Z}/2\mathbb{Z}$-orbifold of the affine coset.
\begin{equation}  \cC^{\psi}_{1B}(n,m) = \left\{
\begin{array}{ll}
\text{Com}(V^{\psi-2n}(\mathfrak{so}_{2n+1}), \cW^{\psi}_{1B}(n,m))^{ \mathbb{Z}/2\mathbb{Z}} & m \geq 1,\  n \geq 1,
\smallskip
\\  \text{Com}(V^{\psi-2n}(\mathfrak{so}_{2n+1}), V^{\psi-2n}(\gs\go_{2n+2}))^{ \mathbb{Z}/2\mathbb{Z}} & m =0, \ n\geq 1,
\smallskip
\\ \cW^{\psi -2m}(\gs\go_{2m+2})^{ \mathbb{Z}/2\mathbb{Z}} & m \geq 1, \ n = 0,
\smallskip
\\ \cH(1)^{ \mathbb{Z}/2\mathbb{Z}} & m = n = 0.
\\ \end{array} \right.\end{equation}
\begin{equation}   \cC^{\psi}_{2B}(n,m) = \left\{
\begin{array}{ll}
 \text{Com}(V^{-2 \psi - 2 n + 2}(\gs\go_{2n+1}), \cW_{2B}^{\psi}(n,m) )^{ \mathbb{Z}/2\mathbb{Z}}
 & m \geq 1,\  n \geq 1,
 \smallskip
\\ \text{Com}(V^{-2 \psi - 2 n + 2}(\gs\go_{2n+1}), V^{-2 \psi - 2 n + 1}(\gs\go_{2n+1}) \otimes \cF(2n+1))^{ \mathbb{Z}/2\mathbb{Z}} & m =0, \ n\geq 1,
 \smallskip
\\ \cW^{\psi-m-1/2}(\go\gs\gp_{1|2m})^{ \mathbb{Z}/2\mathbb{Z}} & m \geq 1, \ n = 0,
\smallskip
\\ \cF(1)^{ \mathbb{Z}/2\mathbb{Z}} & m = n = 0.
\\ \end{array} \right.\end{equation}
Similarly, for $X = O$, for all $n\geq 0$, there is an additional action of $ \mathbb{Z}/2\mathbb{Z}$, and $\cC^{\psi}_{iO}(n,m)$ denotes the $ \mathbb{Z}/2\mathbb{Z}$-orbifold of the affine coset.
\begin{equation}   \cC^{\psi}_{1O}(n,m) = \left\{
\begin{array}{ll}
\text{Com}(V^{- \psi/2 -n}(\go\gs\gp_{1|2n}), \cW_{1O}^{\psi}(n,m))^{ \mathbb{Z}/2\mathbb{Z}}
 & m \geq 1,\  n \geq 1,
 \smallskip
\\  \text{Com}(V^{- \psi/2 -n}(\go\gs\gp_{1|2n}), V^{\psi+2n}(\mathfrak{osp}_{2|2n})) & m =0, \ n\geq 1,
 \smallskip
\\ \cW^{\psi-2m}(\gs\go_{2m+2})^{ \mathbb{Z}/2\mathbb{Z}} & m \geq 0, \ n = 0,
\smallskip
\\ \cH(1)^{ \mathbb{Z}/2\mathbb{Z}} & m = n = 0.
\\ \end{array} \right.\end{equation}
\begin{equation}  \cC^{\psi}_{2O}(n,m) = \left\{
\begin{array}{ll}
\text{Com}(V^{\psi -n-1}(\go\gs\gp_{1|2n}), \cW_{2O}^{\psi}(n,m) )^{ \mathbb{Z}/2\mathbb{Z}} & m \geq 1,\  n \geq 1,
 \smallskip
\\ \text{Com}(V^{\psi -n-1}(\go\gs\gp_{1|2n}), V^{\psi-n-1/2}(\mathfrak{osp}_{1|2n}) \otimes \cS(n) \otimes \cF(1))^{ \mathbb{Z}/2\mathbb{Z}} & m =0, \ n\geq 1,
\smallskip
\\ \cW^{\psi -m -1/2}(\go\gs\gp_{1|2m})^{ \mathbb{Z}/2\mathbb{Z}}  & m \geq 1, \ n = 0,
\smallskip
\\ \cF(1)^{ \mathbb{Z}/2\mathbb{Z}} & m = n = 0.
\\ \end{array} \right. \end{equation}

Finally, in the cases $X = D$, for all $n\geq 1$, there is an additional action of $ \mathbb{Z}/2\mathbb{Z}$, and $\cC^{\psi}_{iD}(n,m)$ denotes the $ \mathbb{Z}/2\mathbb{Z}$-orbifold of the affine coset. For $n=0$ there is no additional  $ \mathbb{Z}/2\mathbb{Z}$-action.
\begin{equation}  \cC^{\psi}_{1D}(n,m) = \left\{
\begin{array}{ll}
\text{Com}(V^{\psi-2n+1}(\mathfrak{so}_{2n}), \cW_{1D}^{\psi}(n,m) )^{ \mathbb{Z}/2\mathbb{Z}}
 & m \geq 1,\  n > 1,
 \smallskip
 \\ \text{Com}(\cH(1),  \cW^{\psi -2m-1}(\gs\go_{2m+3}, f_{\text{subreg}}))^{ \mathbb{Z}/2\mathbb{Z}}
 & m \geq 1,\  n = 1,
 \smallskip
 \\ \text{Com}(V^{\psi-2n+1}(\mathfrak{so}_{2n}), V^{\psi-2n+1}(\gs\go_{2n+1})^{ \mathbb{Z}/2\mathbb{Z}} & m =0, \ n\geq 1,
 \smallskip
\\ \cW^{\psi-2m+1}(\gs\go_{2m+1}) & m \geq 1, \ n = 0,
\smallskip
\\ \mathbb{C} & m = n = 0.
\\ \end{array} 
\right.
\end{equation}
\begin{equation}   \cC^{\psi}_{2D}(n,m) = \left\{
\begin{array}{ll}
 \text{Com}(V^{- 2 \psi - 2 n +3}(\gs\go_{2n}), \cW_{2D}^{\psi}(n,m) )^{ \mathbb{Z}/2\mathbb{Z}}
 & m \geq 1,\  n > 1,
 \smallskip
\\  \text{Com}(\cH(1), \cW^{\psi - m}(\mathfrak{osp}_{2|2m}) )^{ \mathbb{Z}/2\mathbb{Z}}
 & m \geq 1,\  n = 1,
 \smallskip
\\ \text{Com}(V^{- 2 \psi - 2 n +3}(\gs\go_{2n}), V^{- 2 \psi - 2 n +2}(\gs\go_{2n}) \otimes \cF(2n))^{ \mathbb{Z}/2\mathbb{Z}} & m =0, \ n\geq 1,
 \smallskip
\\ \cW^{\psi -m-1}(\gs\gp_{2m})  & m \geq 1, \ n = 0,
\smallskip
\\ \mathbb{C} & m = n = 0.
\\ \end{array} 
\right.
\end{equation}

The main result of \cite{CLV} is that there are four families of trialities among the eight families of algebras $\cC^{\psi}_{iX}(n,m)$.
\begin{thm} (Theorem 4.1, \cite{CLV}) \label{thm:trialityBCD} For all $m, n \in \mathbb{N}$ with $m\geq n$, we have the following isomorphisms of one-parameter VOAs.
\begin{equation} \label{2b2b2o}  \cC^{\psi}_{2B}(n,m) \cong  \cC^{\psi'}_{2O}(n,m-n) \cong \cC^{\psi''}_{2B}(m,n), \qquad \psi' = \frac{1}{4\psi},  \qquad \frac{1}{\psi}  + \frac{1}{\psi''} = 2,
 \end{equation}
\begin{equation} \label{1c1c2c} \cC^{\psi}_{1C}(n,m) \cong \cC^{\psi'}_{2C}(n,m-n) \cong  \cC^{\psi''}_{1C}(m,n), \qquad \psi'= \frac{1}{2\psi}, \qquad \frac{1}{\psi} + \frac{1}{\psi''} = 1,
\end{equation}
\begin{equation}  \label{2d1d1o} \cC^{\psi}_{2D}(n,m) \cong  \cC^{\psi'}_{1D}(n,m-n ) \cong \cC^{\psi''}_{1O}(m,n-1),\qquad \psi' = \frac{1}{2\psi}, \qquad \frac{1}{2\psi} + \frac{1}{\psi''} = 1,
\end{equation}
\begin{equation} \label{1o1b2d} \cC^{\psi}_{1O}(n,m) \cong \cC^{\psi'}_{1B}(n,m-n) \cong  \cC^{\psi''}_{2D}(m+1,n) ,\qquad \psi' = \frac{1}{\psi},  \qquad  \frac{1}{\psi}  + \frac{1}{2\psi''} = 1.
\end{equation}
\end{thm}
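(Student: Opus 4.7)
The approach is to realize each of the eight families $\cC^\psi_{iX}(n,m)$ as one-parameter quotients of the universal two-parameter even spin $\cW$-algebra $\cW^{\text{ev}}(c,\lambda)$ of type $\cW(2,4,6,\dots)$ constructed in \cite{KL}, and then to establish each triality by showing that the three members of a given family cut out the \emph{same} truncation curve in the $(c,\lambda)$-plane after the prescribed substitutions $\psi\mapsto\psi',\psi''$. Since a one-parameter quotient of $\cW^{\text{ev}}(c,\lambda)$ is determined up to isomorphism by its truncation curve, matching the curves forces the corresponding one-parameter VOAs to be isomorphic.

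\textbf{Step 1: Minimal strong generating type $\cW(2,4,\dots,2N)$.} By Theorem \ref{thm:Wcoset} each $\cC^\psi_{iX}(n,m)$ is strongly finitely generated for generic $\psi$, and the proof of Theorem \ref{thm:cosetofw} realizes it as a quotient $\cC_{iX}(n,m)/(\kappa-\sqrt{\psi})\cC_{iX}(n,m)$ of a deformable family whose $\psi\to\infty$ limit is the orbifold of a tensor product of standard free field algebras by the connected classical (super)group $G$ attached to the affine subalgebra. Using Theorem \ref{thm:wfreelimit} to describe this free field limit explicitly, together with Weyl's first and second fundamental theorems for $O(N)$ and $Sp(2N)$ and Sergeev's theorems \cite{SI,SII} for $Osp(1|2n)$, I would compute the minimal strong generating type of each limit orbifold. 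In each of the eight cases, the additional $\mathbb{Z}/2\mathbb{Z}$ present in types $B$, $D$, $O$ removes the odd-weight generators, and the invariant theory together with decoupling relations of the kind constructed in Theorem \ref{Hilbert:fullautogen} yields precisely one even strong generator in each even weight $2,4,\dots,2N$ for some $N=N(n,m)$. By the classification of type $\cW(2,4,\dots)$ VOAs in \cite{KL}, each $\cC^\psi_{iX}(n,m)$ is then a one-parameter quotient of $\cW^{\text{ev}}(c,\lambda)$.

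\textbf{Step 2: Computing and matching truncation curves.} The inclusion $\cC^\psi_{iX}(n,m)\hookrightarrow \cW^{\text{ev}}(c,\lambda)/\mathcal{I}_{\psi}$ determines a rational map $\psi\mapsto (c(\psi),\lambda(\psi))$. I would compute $c(\psi)$ from \eqref{eq:c} together with the standard subtractions for the affine subalgebra, the extra free field factors, and the $\mathbb{Z}/2\mathbb{Z}$ orbifold, and compute $\lambda(\psi)$ by matching one distinguished quartic OPE coefficient (for instance the coefficient of $L$ in the $(2n)$-th pole of $W_4(z)W_4(w)$) in $\cW^{\text{ev}}(c,\lambda)$ against the same structure constant computed directly in each coset via its free field limit. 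Eliminating $\psi$ then yields the truncation curve. For each of the four triality families \eqref{2b2b2o}--\eqref{1o1b2d}, a direct algebraic verification should show that the three truncation curves, pulled back under the prescribed M\"obius substitutions $\psi\mapsto\psi',\psi''$, coincide as plane curves; a numerical check at one value of $\psi$ then rules out a spurious exchange of branches and yields the claimed isomorphisms.

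\textbf{Main obstacle.} The technically hardest step is Step 1 in the orthogonal cases $1B$, $2B$, $1O$, $2O$, $1D$, $2D$. Showing that no further strong generators are needed beyond weight $2N$ requires constructing explicit higher decoupling relations whose leading coefficients are governed by Pfaffian-type recursions analogous to Theorem \ref{closedform}; in the $\cO_{\text{ev}}$ pieces no product formula is known and one must rely on the asymptotic nonvanishing argument of \cite{LIV}, which in turn controls only the existence (not the location) of the cutoff weight $2N$. A secondary but significant obstacle is the computation of $\lambda(\psi)$ for each of the eight families: the required second-order OPE coefficients in the free field limit involve lengthy Wick combinatorics, and one must further check that the normalization of $W_4$ implicit in \cite{KL} agrees with the normalization coming from the free field realization, so that the truncation curves from the three different sides of a triality can be compared on the nose rather than merely up to an ambiguity in parameter choice.
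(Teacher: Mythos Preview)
Your overall strategy matches the paper's: show each $\cC^\psi_{iX}(n,m)$ is of type $\cW(2,4,\dots,2N)$, realize it as a one-parameter quotient of $\cW^{\text{ev}}(c,\lambda)$, then identify the truncation curves and check they coincide under the prescribed substitutions. Step~1 is essentially right, and your identification of the relevant invariant theory (Weyl for $O$ and $Sp$, Sergeev for $Osp(1|2n)$) agrees with what is needed.

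The genuine gap is in Step~2, in your proposed method for computing $\lambda(\psi)$. You say you will match a distinguished OPE coefficient ``computed directly in each coset via its free field limit.'' But the free field limit is the specialization $\psi\to\infty$; it only yields the \emph{limiting value} of that structure constant, not its dependence on $\psi$. Since $\lambda(\psi)$ is a nonconstant rational function of $\psi$, a single limiting value cannot determine the truncation curve. Computing OPE coefficients of the coset generators \emph{as functions of $\psi$} directly inside the coset is exactly the hard part you are trying to avoid; the free field limit does not give you access to it.

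The paper's mechanism for determining $\lambda(\psi)$ is different and is the crucial missing idea. One does not compute inside the coset at all. Instead one uses that the full $\cW$-algebra $\cW^\psi_{iX}(n,m)$ is an extension of $V^{k'}(\ga)\otimes\cC^\psi_{iX}(n,m)$ by the primary fields $G$ of weight $\frac{d_{\gb}+1}{2}$ transforming in the standard representation of $\ga$. One writes the most general OPEs $W^4(z)G(w)$, $W^6(z)G(w)$, etc., with undetermined coefficients, and then imposes Jacobi identities of type $(L,W^4,G)$ and $(W^4,W^4,G)$ (the even-spin analogues of \eqref{Ba0a1a2first}--\eqref{Bgenjac:4}). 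These identities, together with the known OPEs between the affine generators and $G$, force $\lambda$ to take a specific value as a rational function of $\psi$. The input here is the \emph{extension data}, not any internal computation in the coset. Without this step you have no way to produce the explicit parametrizations $\psi\mapsto(c(\psi),\lambda(\psi))$ whose coincidence is the content of the theorem.

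A secondary point: your ``main obstacle'' is somewhat misplaced. The precise cutoff weight $2N$ is not needed (and indeed is not known in the $\cO_{\text{ev}}$ cases); one only needs that the coset is of type $\cW(2,4,\dots,2N)$ for \emph{some} $N$ and is generated by the weight $2$ and $4$ fields. The latter is itself nontrivial and is handled in the paper by finding coincidences with simple principal $\cW$-algebras at special values of $\psi$, which forces the subalgebra generated in weights $2$ and $4$ to be the full coset; you do not address this point either.
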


In the case $n=0$, the first isomorphism $\cC^{\psi}_{1C}(0,m) \cong \cC^{\psi'}_{2C}(0,m)$ of \eqref{1c1c2c} is just Feigin-Frenkel duality in types $B$ and $C$, since $$\cC^{\psi}_{1C}(0,m) = \cW^{\psi-2m+1}(\mathfrak{so}_{2m+1}),\qquad \cC^{\psi'}_{2C}(0,m) \cong \cW^{\psi' - m - 1}(\gs\gp_{2m}).$$
Similarly, the isomorphism $\cC^{\psi}_{2D}(0,m) \cong  \cC^{\psi'}_{1D}(0,m)$ in \eqref{2d1d1o} again reproduces Feigin-Frenkel duality in types $B$ and $C$, since
$$ \cC^{\psi}_{2D}(0,m) = \cW^{\psi-m-1}(\mathfrak{sp}_{2m}),\qquad \cC^{\psi'}_{1D}(0,m) = \cW^{\psi'-2m+1}( \mathfrak{so}_{2m+1}).$$
Likewise, the isomorphism $\cC^{\psi}_{1O}(0,m) \cong \cC^{\psi'}_{1B}(0,m)$ in \eqref{1o1b2d} is just the $\mathbb{Z}_2$-invariant part of Feigin-Frenkel duality in type $D$, since 
$$ \cC^{\psi}_{1O}(0,m) = \cW^{\psi-2m}(\mathfrak{so}_{2m+2})^{\mathbb{Z}_2}, \qquad \cC^{\psi'}_{1B}(0,m) =  \cW^{\psi'-2m}(\mathfrak{so}_{2m+2})^{\mathbb{Z}_2}.$$ As explained in \cite{CLV}, it is straightforward to extend this to an isomorphism $\cW^{\psi-2m}(\mathfrak{so}_{2m+2}) \cong \cW^{\psi'-2m}(\mathfrak{so}_{2m+2})$, which is the full duality.
Finally, the isomorphism $\cC^{\psi}_{2B}(0,m) \cong \cC^{\psi'}_{2O}(0,m)$ in \eqref{2b2b2o} is the $\mathbb{Z}_2$-invariant part of Feigin-Frenkel duality for principal $\cW$-algebras of $\go\gs\gp_{1|2m}$, since
$$ \cC^{\psi}_{2B}(0,m) = \cW^{\psi-m-1/2}(\go\gs\gp_{1|2m})^{\mathbb{Z}_2} , \qquad \cC^{\psi'}_{2O}(0,m) = \cW^{\psi' -m -1/2}(\go\gs\gp_{1|2m})^{\mathbb{Z}_2}.$$

Next, the special case 
$$\cC^{\psi}_{2D}(n,0) \cong \cC^{\psi''}_{1O}(0,n-1), \qquad \frac{1}{2\psi} + \frac{1}{\psi''} =1, \qquad n\geq 2$$ of \eqref{2d1d1o} provides a new proof of the $\mathbb{Z}/2\mathbb{Z}$-invariant part of Theorem \ref{ACLmain} in type $D$.
The special case 
$$\cC^{\psi}_{2B}(n,0)  \cong \cC^{\psi''}_{2B}(0,n),\qquad \frac{1}{\psi}  + \frac{1}{\psi''} = 2$$ of \eqref{2b2b2o} recovers the $\mathbb{Z}/2\mathbb{Z}$-invariant part of the coset realization of the principal $\cW$-superalgebra of $\go\gs\gp_{1|2n}$, which was also proven in \cite{CGe} using the methods of \cite{ACL}. As above, both isomorphisms are easily extended to give new proofs of the full coset realizations.

More importantly, the special case 
 $$\cC^{\psi}_{1C}(n,0) \cong  \cC^{\psi''}_{1C}(0,n), \qquad \frac{1}{\psi} + \frac{1}{\psi''} =1$$ of \eqref{1c1c2c} provides a new coset realization of principal $\cW$-algebras of type $B$ (and type $C$ via Feigin-Frenkel duality), since
 \begin{equation*} \begin{split}  & \cC^{\psi}_{1C}(n,0) = \text{Com}(V^k(\gs\gp_{2n}), V^k(\go\gs\gp_{1|2n})),\qquad k = - \frac{1}{2} (\psi +2n +1).\\ & \cC^{\psi''}_{1C}(0,n) = \cW^{\psi''-2n+1}(\mathfrak{so}_{2n+1}).
\end{split}\end{equation*} This is quite different from the coset realizations of $\cW^k(\gg)$ for $\gg$ simply-laced given by Theorem \ref{ACLmain} since it involves affine vertex superalgebras.

\subsection{Outline of proof}

The proof of Theorems  \ref{thm:triality} and \ref{thm:trialityBCD} require all of the general machinery we have developed so far, and can be divided into the following steps.

\begin{enumerate}
\item Using Theorem \ref{thm:Wcoset}, which can be made constructive in these examples, we can give explicit minimal strong generating sets for the type $A$ cosets $\cC^{\psi}(n,m)$ and $\cD^{\psi}(n,m)$ as one-parameter VOAs. Aside from the extreme cases $\cC^{\psi}(0,0)$, $\cC^{\psi}(1,0)$, $\cC^{\psi}(2,0)$, and $\cD^{\psi}(0,0)$, $\cD^{\psi}(0,1)$, $\cD^{\psi}(1,0)$, $\cD^{\psi}(2,0)$, for all other values of $n,m$, $\cC^{\psi}(n,m)$ and $\cD^{\psi}(n,m)$ are all of type $\cW(2,3,\dots, N)$ for some $N$. Similarly, in types $B$, $C$, and $D$, Theorem \ref{thm:Wcoset} can be used to show that aside from some extreme cases, all the algebras $\cC^{\psi}_{iX}(n,m)$ are of type $\cW(2,4,\dots, 2N)$ for some $N$.

\smallskip

\item VOAs of type $\cW(2,3,\dots, N)$ for some $N$ satisfying some mild hypotheses, are classified as one-parameter quotients of the universal $2$-parameter VOA $\cW(c,\lambda)$ of type $\cW(2,3,\dots)$ which was constructed by the second author in \cite{LVI}. This is a VOA defined over the polynomial ring $\mathbb{C}[c,\lambda]$, and its one-parameter quotients are in bijection with a certain family of curves in the parameter space $\mathbb{C}^2$ called truncation curves. Similarly, VOAs of type $\cW(2,4,\dots, 2N)$ satisfying mild hypotheses are classified as one-parameter quotients of the universal two-parameter even spin VOA $\cW^{\text{ev}}(c,\lambda)$ of type $\cW(2,4,\dots)$, which was constructed by Kanade and the second author in \cite{KL}. Again, there is a bjiection between such one-parameter quotients, and a family of truncation curves in $\mathbb{C}^2$.

\smallskip

\item We know that $\cW^{\psi}(n,m)$ is an extension of $V^{\psi-m-1}(\gg\gl_m) \otimes \cC^{\psi}(n,m)$ by $2m$ fields in weight $\frac{n+1}{2}$ which transform under $\gg\gl_m$ as $\mathbb{C}^m \oplus (\mathbb{C}^m)^*$. The existence of such a VOA extension allows us to compute the defining ideal for the truncation curve realizing $\cC^{\psi}(n,m)$ as a one-parameter quotient of $\cW(c,\lambda)$. The same procedure works for $\cD^{\psi}(n,m)$. Theorem \ref{thm:triality} follows from our explicit formulas for these ideals, except in the above extreme cases, which are easy to verify separately. Similarly, we can explicitly compute the truncation curves realizing all the algebras $\cC^{\psi}_{iX}(n,m)$ as one-parameter quotients of $\cW^{\text{ev}}(c,\lambda)$, and Theorem \ref{thm:trialityBCD} is a consequence of our formulas.

\end{enumerate}

Since the approach in type $A$ is similar to the others but somewhat less involved, we will only discuss the proof of Theorem \ref{thm:triality} for the rest of this section.

\subsection{Step 1: Strong generating types of $\cC^{\psi}(n,m)$ and  $\cD^{\psi}(n,m)$}

First, $\cW^{\psi}(n,m) = \cW(\gs\gl_{n+m}, f_{n+m})$ has free field limit
\begin{equation} \cW^{\text{free}}(n,m) = \lim_{\psi \ra \infty} \cW^{\psi}(n,m) = \cO_{\text{ev}}(m^2,2) \otimes \tilde{\cW},\end{equation}
where $\cO_{\text{ev}}(m^2,2) = \cH(m^2)$ is the just the rank $m^2$ Heisenberg algebra coming from the affine subalgebra. Moreover,  if $n$ is even,
$$\tilde{\cW} \cong \big(\bigotimes_{i=2}^n \cO_{\text{ev}}(1,2i)\big)  \otimes \cS_{\text{ev}}(m, n+1),$$
and if $n$ is odd,
$$\tilde{\cW} \cong \big(\bigotimes_{i=2}^n \cO_{\text{ev}}(1,2i)\big) \otimes \cO_{\text{ev}}(2m, n+1).$$ Here $\cO_{\text{ev}}(1,2i)$ is the algebra generated by $\omega^i$ for $i = 2,\dots, n$, and the fields $\{G^{\pm, i}|\ i = 1,\dots, m\}$ generate $\cS_{\text{ev}}(m, n+1)$ or $\cO_{\text{ev}}(2m, n+1)$ when $n$ is even or odd, respectively. 

It then follows from Theorem \ref{thm:cosetofw} that $$\lim_{\psi \ra \infty} \cC^{\psi}(n,m) \cong \tilde{\cW}^{GL(m)},$$ and that the strong generating type of $\cC^{\psi}(n,m)$ is the same as that of $\tilde{\cW}^{GL(m)}$ for generic $\psi$. Since $GL(m)$ acts trivially on each factor $\cO_{\text{ev}}(1,2i)$, we have
\begin{equation} \begin{split} \lim_{\psi \ra \infty} \cC^{\psi}(n,m) & \cong \bigg( \big(\bigotimes_{i=2}^n \cO_{\text{ev}}(1,2i)\big)\otimes \cS_{\text{ev}}(m, n+1)\bigg)^{GL(m)} \\ & \cong \big(\bigotimes_{i=2}^n \cO_{\text{ev}}(1,2i)\big) \otimes \big(\cS_{\text{ev}}(m, n+1)\big)^{GL(m)},\end{split} \end{equation} for $n$ even, and
\begin{equation} \begin{split} \lim_{{\psi} \ra \infty} \cC^{\psi}(n,m) & \cong \bigg(  \big(\bigotimes_{i=2}^n \cO_{\text{ev}}(1,2i)\big)\otimes \cO_{\text{ev}}(2m, n+1)\bigg)^{GL(m)} 
\\ & \cong  \big(\bigotimes_{i=2}^n \cO_{\text{ev}}(1,2i)\big) \otimes \big(\cO_{\text{ev}}(2m, n+1)\big)^{GL(m)}.\end{split} \end{equation} for $n$ odd.
Since the generator of $\cO_{\text{ev}}(1,2i)$ has weight $i$, $\big(\bigotimes_{i=2}^n \cO_{\text{ev}}(1,2i)\big)$ is of type $\cW(2,3,\dots,n)$. Therefore to determine the strong generating type of $\cC^{\psi}(n,m)$, it suffices to find the strong generating type of $\big(\cS_{\text{ev}}(m, n+1)\big)^{GL(m)}$ and $\big(\cO_{\text{ev}}(2m, n+1)\big)^{GL(m)}$. This can be carried out using the first and second fundamental theorem of invariant theory of $GL(m)$ in a similar way to the proof of Theorem \ref{Hilbert:fullautogen}.

\begin{thm}  Let $n$ be a positive integer. 
\begin{enumerate}
\item  For all odd $k\geq 1$, $\cS_{\text{ev}}(n,k)^{GL(n)}$ has a minimal strong generating set $$\omega^{j} = \sum_{i=1}^n :a^i \partial^{j} b^i: ,\qquad j = 0,1,\dots,  n(n+1) + n k -1. $$ Since $\omega^{j}$ has weight $k+j$, $\cS_{\text{ev}}(n,k)^{Sp(2n)}$ is of type $$\cW \big(k,k+1,k+2,\dots, n(n+1) + k(n+1) -1 \big).$$ 
\item For all even $k\geq 2$,  $\cO_{\text{ev}}(2n,k)^{GL(n)}$ has a minimal strong generating set $$\omega^j = \sum_{i=1}^n :e^i \partial^j f^i:,\qquad j =0,1,\dots, n(n+1) + n k -1.$$ Since $\omega^{j}$ has weight $k+j$, $\cO_{\text{ev}}(2n,k)^{GL(n)}$ is of type $$\cW(k,k+1,\dots, n(n+1) + k(n+1) -1).$$ 
\end{enumerate} \end{thm}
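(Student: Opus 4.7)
The plan is to follow the same strategy as the proofs of Theorem \ref{Hilbert:fullauto} and Theorem \ref{Hilbert:fullautogen}, adapted to the $GL(n)$-action and Weyl's invariant theory for the general linear group. I will sketch the argument for $\cS_{\text{ev}}(n,k)^{GL(n)}$ with $k$ odd; the case of $\cO_{\text{ev}}(2n,k)^{GL(n)}$ with $k$ even is entirely parallel, with sign changes reflecting the fermionic nature of the generators.

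First, equip $\cS_{\text{ev}}(n,k)$ with its standard good increasing filtration, under which the associated graded algebra is isomorphic to $\mathbb{C}[\bigoplus_{m\geq 0}(U_m\oplus U_m^*)]$, where $U_m\cong\mathbb{C}^n$ is the standard $GL(n)$-module spanned by the images of $\partial^m a^i$, and $U_m^*$ is its dual spanned by the images of $\partial^m b^i$. Since $GL(n)$ preserves the filtration, passing to invariants commutes with $\text{gr}$, and Weyl's first fundamental theorem for $GL(n)$ (Theorem 2.6.A of \cite{W}) gives that this classical invariant ring is generated by the bilinear contractions $q_{j,l}=\sum_i x_{i,j}y_{i,l}$ for $j,l\geq 0$. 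Under the isomorphism $\text{gr}(\cS_{\text{ev}}(n,k)^{GL(n)})\cong\text{gr}(\cS_{\text{ev}}(n,k))^{GL(n)}$, these lift to the normally ordered strong generators $\omega_{j,l}=\sum_{i=1}^n :(\partial^j a^i)(\partial^l b^i):$. Using $\partial\omega_{j,l}=\omega_{j+1,l}+\omega_{j,l+1}$, one checks that the single-parameter family $\{\omega^j=\omega_{0,j}\,|\,j\geq 0\}$ already spans the same subspace, so it is an infinite strong generating set.

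The redundancies among these generators come from Weyl's second fundamental theorem for $GL(n)$: the kernel of the map from $\mathbb{C}[Q_{j,l}]$ onto the classical invariant ring is generated by the $(n+1)\times(n+1)$ determinantal relations
\[
d_I = \det\bigl(Q_{j_s,l_t}\bigr)_{0\leq s,t\leq n}, \qquad I=(j_0,\dots,j_n;l_0,\dots,l_n).
\]
Replacing ordinary products by Wick products in any order produces a vertex algebra relation $D_I$ whose classical limit is $d_I$; the subleading terms are determined only up to lower-filtration normally ordered polynomials in the $\omega^j$'s and their derivatives. The first such relation of the smallest possible weight corresponds to the list $I=(0,1,\dots,n;0,1,\dots,n)$, and has total weight $n(n+1)+(n+1)k$; the coefficient $R_n$ of the unique top-weight generator $\omega^{n(n+1)+nk-1}$ in this relation is independent of normal-ordering choices and, if nonzero, provides a decoupling relation expressing this generator as a normally ordered polynomial in $\omega^0,\dots,\omega^{n(n+1)+nk-2}$ and their derivatives. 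Once one such decoupling relation exists, the bootstrap argument of Lemma \ref{decoup:bootstrap} applies verbatim: repeated application of the modes $(\omega^1)_{(1)}$ or $(\omega^2)_{(1)}$ to this relation and to its consequences, combined with the previously obtained decoupling relations, yields decoupling relations $\omega^m=P_m(\omega^0,\dots,\omega^{n(n+1)+nk-2})$ for every $m\geq n(n+1)+nk-1$. This produces the required finite strong generating set, and minimality is clear because the SFT guarantees no nontrivial relation of lower weight with a linear term.

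The main obstacle is nonvanishing of the leading coefficient $R_n$, which plays the role of Theorem \ref{closedform} in the $Sp(2n)$ case. As in \cite{LV,LIV,CLII,CLIV}, one derives a recursion for $R_n$ by expanding the determinant along its first row and applying the standard identities \eqref{VOA:identities} measuring the failure of the Wick product to be associative and commutative. Unlike the symmetric/skew-symmetric settings where terms come with matching signs and nonvanishing is automatic (as in cases (2) and (3) of Theorem \ref{Hilbert:fullauto}), the determinantal recursion here is of mixed sign, analogous to the $\cO_{\text{ev}}$-case (4). I expect that one can still extract a closed formula reminiscent of \eqref{closedformula}, by exploiting the $S_{n+1}\times S_{n+1}$ symmetry of the determinant under permutations of the row and column index lists; alternatively, the asymptotic method used in \cite{LIV} to handle the orthogonal Heisenberg case will produce, for weight sufficiently large, a determinantal relation in which the leading coefficient is nonzero, which is all that is needed to initiate the bootstrap. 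Either route suffices to complete the proof and match the stated strong generating type $\cW(k,k+1,\dots,n(n+1)+k(n+1)-1)$.
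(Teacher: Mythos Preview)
Your approach is essentially the same as the paper's: both use Weyl's first and second fundamental theorems for $GL(n)$ to obtain the strong generating set $\{\omega^j\}$ and determinantal relations, then argue that the quantum correction to the minimal determinant yields a decoupling relation from which all higher ones follow by a bootstrap. The paper does not spell out details either, remarking only that the proof is carried out ``in a similar way to the proof of Theorem \ref{Hilbert:fullautogen}'' using recursive formulas from \cite{LIV,LV,CLII}.

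Two small points. First, there is an off-by-one slip: the first determinantal relation has weight $(n+1)(k+n)$, so its linear term is a multiple of $\omega^{n(n+1)+nk}$, not $\omega^{n(n+1)+nk-1}$; it is $\omega^{n(n+1)+nk}$ that gets decoupled, leaving $\omega^0,\dots,\omega^{n(n+1)+nk-1}$ as the minimal set. Second, your asymptotic fallback would only prove strong finite generation, not the \emph{specific} type $\cW(k,\dots,n(n+1)+k(n+1)-1)$ stated in the theorem; to get the sharp bound one must verify nonvanishing of the coefficient $R_n$ for the minimal list $I=(0,\dots,n;0,\dots,n)$ itself, so the closed-formula route (or a direct computation exploiting the $S_{n+1}\times S_{n+1}$ symmetry) is not optional here.
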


An immediate consequence is 
\begin{cor} \label{cor:stronggencnm} For $m\geq 1$ and $n\geq 0$, $\cC^{\psi}(n,m)$ is of type $$\cW(2,3,\dots, (m+1)(m+n+1)-1)$$ as a one-parameter VOA; equivalently, this holds for generic values of $\psi$. \end{cor}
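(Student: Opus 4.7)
The plan is to deduce the strong generating type of $\cC^{\psi}(n,m)$ from that of its $\psi\to\infty$ free-field limit by applying the preceding theorem on $\cS_{\text{ev}}(m,k)^{GL(m)}$ and $\cO_{\text{ev}}(2m,k)^{GL(m)}$, and then transporting the answer back to generic $\psi$ via the deformable family machinery of Section~\ref{sec:hilbertaffine}.

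First I would invoke Theorem~\ref{thm:cosetofw}, applied with $\gg = \gs\gl_{n+m}$, $f = f_{n+m}$, and $\gb = \gg\gl_m$, to realize $\cC^{\psi}(n,m)$ as a deformable family whose limit is $\tilde{\cW}^{GL(m)}$. Using the explicit decomposition of $\tilde{\cW}$ recorded just above the statement, and the fact that $GL(m)$ acts trivially on each factor $\cO_{\text{ev}}(1,2i)$, this limit takes the form
\[
\bigg(\bigotimes_{i=2}^n \cO_{\text{ev}}(1,2i)\bigg) \otimes \cM^{GL(m)},
\]
where $\cM = \cS_{\text{ev}}(m,n+1)$ if $n$ is even and $\cM = \cO_{\text{ev}}(2m,n+1)$ if $n$ is odd.

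Next I would apply the preceding theorem with the parameters reindexed as $k = n+1$ and with $m$ playing the role of the rank. In either parity case this yields a minimal strong generating set for $\cM^{GL(m)}$ consisting of exactly one field in each integer conformal weight from $n+1$ up to $m(m+1) + (n+1)(m+1) - 1 = (m+1)(m+n+1)-1$. Since $\cO_{\text{ev}}(1,2i)$ contributes a single generator of weight $i$, the tensor factor $\bigotimes_{i=2}^n \cO_{\text{ev}}(1,2i)$ supplies one generator in each weight $2,3,\dots,n$. The two weight ranges $\{2,\dots,n\}$ and $\{n+1,\dots,(m+1)(m+n+1)-1\}$ are disjoint and glue into $\{2,3,\dots,(m+1)(m+n+1)-1\}$, so the limit is of type $\cW(2,3,\dots,(m+1)(m+n+1)-1)$.

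The final step is to apply Corollary~\ref{cor:passage} to the deformable family $\cC(n,m)$: this produces, for all $\psi$ outside a countable subset of $\mathbb{C}$, a strong generating set of $\cC^{\psi}(n,m)$ with the same conformal weights as the one constructed in the limit. Minimality persists by a standard deformation argument: a nontrivial decoupling relation at generic $\psi$ would, after clearing denominators in $\psi$, specialize in the $\psi\to\infty$ limit to a decoupling relation contradicting the minimality already established for $\cM^{GL(m)}$. I expect no serious conceptual obstacle, since the recursive decoupling computations have been discharged inside the preceding theorem. The only points requiring care are the small values of $n$ where $\bigotimes_{i=2}^n \cO_{\text{ev}}(1,2i)$ is empty or the free-field description of $\cW^{\psi}(n,m)$ is defined separately; these are handled case by case using the same strategy applied to the appropriate free-field limit.
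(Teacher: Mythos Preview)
Your proposal is correct and follows essentially the same approach as the paper: the paper presents this corollary as an ``immediate consequence'' of the preceding theorem, relying on exactly the factorization of the $\psi\to\infty$ limit as $\big(\bigotimes_{i=2}^n \cO_{\text{ev}}(1,2i)\big)\otimes \cM^{GL(m)}$ and the observation that the first factor is of type $\cW(2,3,\dots,n)$, so that the weight ranges glue. Your invocation of Theorem~\ref{thm:cosetofw} and Corollary~\ref{cor:passage} to pass between the limit and generic $\psi$ is precisely the mechanism the paper has set up, and your remark that small $n$ (and the separately-defined $n=0$ case) require individual checks is appropriate caution that the paper leaves implicit.
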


In a completely parallel way, one can determine the strong generating type of $\cD^{\psi}(n,m)$. By passing to the free field limit, this boils down to determining the strong generating type of the $GL(n)$-orbifolds of $\cS_{\text{odd}}(n,k)$ and $\cO_{\text{odd}}(2n,k)$. Here $GL(n)$ is regarded as the subgroup of the full automorphism groups $Sp(2n)$ and $O(2n)$ of $\cS_{\text{odd}}(n,k)$ and $\cO_{\text{odd}}(2n,k)$, respectively, such that the standard module $\mathbb{C}^{2n}$ of $Sp(2n)$ and $O(2n)$ decomposes under $GL(n)$ as $\mathbb{C}^n \oplus (\mathbb{C}^n)^*$.

\begin{thm} Let $n$ be a positive integer.
\begin{enumerate}
\item For all even $k\geq 2$, $\cS_{\text{odd}}(n,k)^{GL(n)}$ has a minimal strong generating set $$\omega^{j} = \sum_{i=1}^n :a^i \partial^{j} b^i: ,\qquad j = 0,1,\dots, n k -1. $$ Since $\omega^{j}$ has weight $k+j$, $\cS_{\text{odd}}(n,k)^{GL(n)}$ is of type $$\cW \big(k,k+1,k+2,\dots, k(n+1) -1 \big).$$
\item For all odd $k\geq 1$,  $\cO_{\text{odd}}(2n,k)^{GL(n)}$ has a minimal strong generating set $$\omega^j = \sum_{i=1}^n :e^i \partial^j f^i:,\qquad j =0,1,\dots, n k -1.$$ Since $\omega^{j}$ has weight $k+j$, $\cO_{\text{odd}}(2n,k)^{GL(n)}$ is of type $\cW(k,k+1,\dots, k(n+1) -1)$.
\end{enumerate}  \end{thm}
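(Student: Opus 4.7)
The proof strategy closely parallels that of Theorem \ref{Hilbert:fullauto} (Step 1) and Theorem \ref{Hilbert:fullautogen}, so I will sketch it and flag the main technical obstruction. First I would equip both $\cS_{\text{odd}}(n,k)$ and $\cO_{\text{odd}}(2n,k)$ with good increasing filtrations by giving each generator filtration degree $1$. The associated graded algebras become supercommutative Grassmann algebras on odd generators $a^i_j, b^i_j$ (resp.\ $e^i_j, f^i_j$) for $j \geq 0$ and $i=1,\dots,n$, on which $GL(n)$ acts linearly with the $a^i$'s (resp.\ $e^i$'s) spanning the standard module $\mathbb{C}^n$ and the $b^i$'s (resp.\ $f^i$'s) spanning its dual. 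Reductivity of $GL(n)$ together with $GL(n)$-invariance of the filtration ensure that taking invariants commutes with the associated graded functor, so the problem reduces to classical invariant theory on $\bigoplus_{j\geq 0}(V_j \oplus V_j^*)$ in the supercommutative setting.

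By Weyl's first fundamental theorem of invariant theory for $GL(n)$ acting on $V \oplus V^*$, the invariant ring is generated by the even quadratic contractions $q_{j,k} = \sum_{i=1}^n a^i_j b^i_k$. Lifting to the vertex algebra produces strong generators $\omega_{j,k} = \sum_i :\partial^j a^i\, \partial^k b^i:$ for $\cS_{\text{odd}}(n,k)^{GL(n)}$, and the analogous expressions for $\cO_{\text{odd}}(2n,k)^{GL(n)}$. Using the identity $\partial \omega_{j,k} = \omega_{j+1,k} + \omega_{j,k+1}$ one checks that $\{\omega^j = \omega_{0,j}\mid j\geq 0\}$ spans the same differential subspace as $\{\omega_{j,k}\mid j,k\geq 0\}$, so it is a (generally non-minimal) strong generating set.

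The finite generation claim now reduces to producing a single decoupling relation for $\omega^{nk}$. Weyl's second fundamental theorem for $GL(n)$ in the super setting yields classical relations among the $q_{j,k}$'s of $(n+1)\times (n+1)$ minor type; replacing commutative products with normally ordered products yields a vertex algebra relation $P$ whose leading term is a normal ordering of the classical relation plus quantum corrections lying in strictly lower filtration degree. The \textbf{main obstacle} is to show that the coefficient of the highest-weight field $\omega^{nk}$ in $P$ is nonzero. As in the Pfaffian analysis underlying Theorem \ref{Hilbert:fullauto}, one expects this coefficient to obey a recursion in $n$ arising from the recursive expansion of the classical relation together with the identities \eqref{VOA:identities} measuring the failure of commutativity and associativity of the Wick product. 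The fermionic signs make term-by-term positivity fail, so one anticipates needing to derive a closed formula in the spirit of Theorem \ref{closedform} in order to witness nonvanishing; this is where the bulk of the work lies.

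Once a decoupling relation $\omega^{nk} = P_{nk}(\omega^0,\dots,\omega^{nk-1})$ is in hand, applying $(\omega^a)_{(1)}$ for a suitable low-weight generator $\omega^a$ repeatedly produces decoupling relations $\omega^m = P_m(\omega^0,\dots,\omega^{nk-1})$ for every $m \geq nk$, by the bootstrap argument of Lemma \ref{decoup:bootstrap}. Minimality of $\{\omega^0,\dots,\omega^{nk-1}\}$ follows from the classical SFT: no classical relation exists below the critical weight, hence no normally ordered polynomial relation with a linear term in those $\omega^j$ can exist either. The proof for $\cO_{\text{odd}}(2n,k)^{GL(n)}$ is entirely parallel to that for $\cS_{\text{odd}}(n,k)^{GL(n)}$, the only substantive change being the sign conventions arising from the orthogonal versus symplectic structure of the ambient free field algebra; in both cases the $GL(n)$-action decomposes the standard module of the ambient automorphism group as $\mathbb{C}^n \oplus (\mathbb{C}^n)^*$, so the same classical invariant theory applies.
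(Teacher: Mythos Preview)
Your overall strategy is correct and matches the paper's approach: pass to the associated graded via a good increasing filtration, invoke Weyl's first and second fundamental theorems for $GL(n)$ to obtain an infinite strong generating set $\{\omega^j\}_{j\geq 0}$, produce a decoupling relation for $\omega^{nk}$ from the quantum correction to the first classical SFT relation, and then bootstrap via $(\omega^a)_{(1)}$ as in Lemma~\ref{decoup:bootstrap}. Minimality via the absence of lower-weight classical relations is also correct.

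However, you have the main technical point exactly backwards. You write that ``the fermionic signs make term-by-term positivity fail, so one anticipates needing to derive a closed formula in the spirit of Theorem~\ref{closedform}.'' This is the opposite of what happens. As the paper explains in the discussion following Theorem~\ref{Hilbert:fullauto}, it is precisely in the \emph{odd} (fermionic) cases that the recursive formula for the coefficient of the top generator has all contributions of the same sign, so the nonvanishing is \emph{manifest} and no closed formula is required. The cases demanding a closed formula (Theorem~\ref{closedform}) or admitting only an asymptotic bound are the \emph{even} ones, where cancellation is possible. The fact that both parts of the present theorem yield a sharp strong generating type, rather than merely a bound as in Theorem~\ref{Hilbert:fullauto}(4) or Theorem~\ref{Hilbert:fullautogen}(4), already signals that you are in the easy regime.

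You also omit a simplification the paper uses: rather than setting up a new recursion from scratch, one realizes $\cS_{\text{odd}}(n,k)$ inside $\cA(n)$ and $\cO_{\text{odd}}(2n,k)$ inside $\cF(2n)$ as the subalgebras generated by suitable derivatives of the standard generators. This lets one carry out the computation using the recursive formulas already established for the ordinary free field algebras in \cite{LI,LIV,LV,CLII}, exactly as in the proof of Theorem~\ref{Hilbert:fullautogen}.
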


We obtain 
\begin{cor} For $m\geq 1$ and $n\geq 1$, $\cD^{\psi}(n,m)$ is of type $$\cW(2,3,\dots, (m+1)(n+1)-1)$$ as a one-parameter VOA; equivalently, this holds for generic values of $\psi$.
\end{cor}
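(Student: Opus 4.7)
The argument parallels the proof of Corollary \ref{cor:stronggencnm}. First, I would apply Theorem \ref{thm:wfreelimit} to compute the free field limit $\cV^{\text{free}}(n,m) = \lim_{\psi \to \infty} \cV^{\psi}(n,m)$. The $\mathfrak{sl}_2$-triple determined by $f_{n|m}$ decomposes $\mathfrak{sl}_{n|m}$ into pieces whose free field images in the limit are as follows: the principal $\mathfrak{sl}_2$ inside $\mathfrak{sl}_n$ contributes one even scalar generator in each integer weight $2,3,\dots,n$, giving a factor $\bigotimes_{i=2}^{n}\cO_{\text{ev}}(1,2i)$; the trivial $\mathfrak{gl}_m$ block contributes the even Heisenberg factor $\cO_{\text{ev}}(m^2,2)\cong \cH(m^2)$ of the affine subalgebra; and the odd off-diagonal blocks $\mathbb{C}^n\otimes(\mathbb{C}^m)^*\oplus(\mathbb{C}^n)^*\otimes\mathbb{C}^m$ produce $2m$ odd matter fields of weight $(n+1)/2$. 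For the case $n=m$, passing from $\mathfrak{sl}_{n|n}$ to $\mathfrak{psl}_{n|n}$ and then taking $U(1)$-invariants simply removes the diagonal Heisenberg generator already contained in the $\mathfrak{gl}_n$ factor.

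Next, I would identify the free field algebra generated by the matter fields using the classification in Section \ref{sec:freewalg}. The matter fields have weight $1+j$ with $j=(n-1)/2$. Since they are odd, Theorem \ref{thm:wfreelimit} tells us their limit pairing is symmetric when $j$ is a half-integer (i.e.\ $n$ even), producing an odd orthogonal free field algebra $\cO_{\text{odd}}(2m,n+1)$, and skew-symmetric when $j$ is an integer (i.e.\ $n$ odd), producing an odd symplectic free field algebra $\cS_{\text{odd}}(m,n+1)$. In either case, the $\mathfrak{gl}_m$-action coming from the affine subalgebra lifts to the subgroup $GL(m)$ of $O(2m)$ or $Sp(2m)$ whose standard $2m$-dimensional representation decomposes as $\mathbb{C}^m\oplus(\mathbb{C}^m)^*$.

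Now I would apply Theorem \ref{thm:cosetofw} to conclude that $\cD^{\psi}(n,m)$ is a deformable family whose $\psi\to\infty$ limit equals the $GL(m)$-invariants of the non-affine factors. Since $GL(m)$ acts trivially on each $\cO_{\text{ev}}(1,2i)$, the limit is
\[
\bigg(\bigotimes_{i=2}^{n}\cO_{\text{ev}}(1,2i)\bigg)\otimes \cM^{GL(m)},
\]
where $\cM$ is $\cO_{\text{odd}}(2m,n+1)$ or $\cS_{\text{odd}}(m,n+1)$. The preceding theorem then gives, uniformly in both cases, that $\cM^{GL(m)}$ is of type $\cW(n+1,n+2,\dots,(n+1)(m+1)-1)$, while $\bigotimes_{i=2}^{n}\cO_{\text{ev}}(1,2i)$ is of type $\cW(2,3,\dots,n)$. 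Concatenating these two ranges of weights gives a strong generating set of type $\cW(2,3,\dots,(n+1)(m+1)-1)$ for $\cV^{\text{free}}(n,m)^{GL(m)}$. Finally, Corollary \ref{cor:passage} transfers this strong generating type from the limit to $\cD^{\psi}(n,m)$ for generic $\psi$.

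The main technical point is the careful parity accounting that determines which of the four standard free field algebras hosts the matter fields, together with verifying that the $GL(m)$ coming from the affine subalgebra corresponds precisely to the embedding used in the preceding theorem. The $n=m$ subcase further requires checking that the $U(1)$-orbifold step in the definition of $\cD^{\psi}(n,n)$ exactly matches the removal of the central Heisenberg generator when one replaces $V^{-\psi-n+1}(\mathfrak{gl}_n)$ by $V^{-\psi-n+1}(\mathfrak{sl}_n)$ in the coset, so that the same limit computation applies; everything else is a direct substitution into the already-established machinery.
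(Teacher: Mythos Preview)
Your proposal is correct and follows exactly the same route as the paper: pass to the free field limit via Theorem \ref{thm:wfreelimit}, identify the matter fields as generating $\cS_{\text{odd}}(m,n+1)$ or $\cO_{\text{odd}}(2m,n+1)$ according to the parity of $n$, apply Theorem \ref{thm:cosetofw} so that the coset limit is the $GL(m)$-orbifold, and read off the strong generating type from the preceding theorem on $\cS_{\text{odd}}(m,k)^{GL(m)}$ and $\cO_{\text{odd}}(2m,k)^{GL(m)}$. The paper in fact gives fewer details than you do, merely remarking that the argument is ``completely parallel'' to that for $\cC^{\psi}(n,m)$.
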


\subsection{Step 2: Classification of VOAs of type $\cW(2,3,\dots,N)$.}
The existence and uniqueness of a two-parameter VOA $\cW_{\infty}[\mu]$ which interpolates between the principal $\cW$-algebras $\cW^k(\gs\gl_n)$ for all $n\geq 2$ was conjectured for many years in the physics literature \cite{YW,BK,B-H,BS,GGI,GGII,ProI,ProII,PRI,PRII}, and was recently proven by the second author in \cite{LVI}. Recently, $\cW_{\infty}[\mu]$ has become important in the duality between of two-dimensional conformal field theories and higher spin gravity on three-dimensional Anti-de-Sitter space \cite{GGI,GGII}. The structure constants in $\cW_{\infty}[\mu]$ are algebraic functions of the central charge $c$ and the parameter $\mu$. If we set $\mu = n$, there is a truncation at weight $n+1$ such that the simple quotient is isomorphic to $\cW^k(\gs\gl_n)$ as a one-parameter VOA. In the quasi-classical limit, the existence of a Poisson VOA of type $\cW(2,3,\dots)$ which interpolates between the classical $\cW$-algebras of $\gs\gl_n$ for all $n$, has been known for many years \cite{KZ,KM}. 

The algebra $\cW_{\infty}[\mu]$ acquires better properties if it is tensored with a rank one Heisenberg algebra $\cH$ to obtain the universal two-parameter $\cW_{1+\infty}$-algebra. This VOA is closely related to a number of other algebraic structures that arise in very different contexts. For example, up to a suitable completion, its associative algebra of modes is isomorphic to the Yangian of $\widehat{\gg\gl_1}$ \cite{AS,MO,Ts}, as well as the algebra ${\bf SH}^c$ defined in \cite{SV} as a certain limit of degenerate double affine Hecke algebras of $\mathfrak{gl}_n$. This identification was used by Schiffmann and Vasserot \cite{SV} to define the action of the principal $\cW$-algebra of $\gg\gl_r$ on the equivariant cohomology of the moduli space of $U_r$-instantons.

In \cite{LVI}, a different parameter $\lambda$ was used, which is related to $\mu$ by 
$$\lambda =  \frac{(\mu-1) (\mu+1)}{(\mu-2) (3 \mu^2  - \mu -2+ c (\mu + 2))},$$ and we denoted the universal algebra by $\cW(c,\lambda)$. This choice is not canonical but is natural because $\cW(c,\lambda)$ is then defined over the polynomial ring $\mathbb{C}[c,\lambda]$. Instead of using either the primary strong generating fields, or the quadratic basis of \cite{ProI}, our strong generators are defined as follows. We begin with the primary weight $3$ field $W^3$, normalized so that $W^3_{(5)} W^3 = \frac{c}{3} 1$, and we define the remaining fields recursively by $$W^i= W^3_{(1)} W^{i-1} \qquad i\geq 4.$$ With this choice, the rich connections between the representation theory of $\cW_{\infty}[\mu]$ and the combinatorics of box partitions are no longer apparent. Our choice has the advantage, however, that the recursive behavior of the OPE algebra is more transparent. This is essential in the proof of existence and uniqueness of this algebra given in \cite{LVI}.

In addition to $\cW^k(\gs\gl_n)$, $\cW(c,\lambda)$ admits many other one-parameter quotients as well. In fact, any one-parameter VOA of type $\cW(2,3,\dots, N)$ for some $N$ satisfying mild hypotheses, arises as such as quotient, so $\cW(c,\lambda)$ can be viewed as a classifying object for such VOAs. The simple one-parameter quotients are in bijection with a family of plane curves which are known as  truncation curves.

The main idea is to show that by imposing all Jacobi identities among the generators, the structure constants in the OPEs of $L(z) W^i(w)$ and $W^i(z) W^j(w)$ are uniquely and consistently determined as polynomials in $c$ and $\lambda$, for all $i$ and $j$. This \lq\lq bootstrap" method of determining the OPEs among the generators of a VOA by imposing Jacobi identities has appeared in a number of papers in the physics literature including \cite{KauWa,Bow,B-V,Horn}. 
For example, imposing all Jacobi identities of type $(W^i, W^j, W^k)$ for $i+j+k \leq 9$, the following OPEs are forced to hold:
\begin{equation} \label{Winf:standard1} \begin{split} W^3(z) W^3(w) & \sim  \frac{c}{3}(z-w)^{-6} + 2 L(w)(z-w)^{-4} + \partial L(w)(z-w)^{-3} \\ & + W^4(w)(z-w)^{-2}
 + \bigg(\frac{1}{2} \partial W^4 -\frac{1}{12} \partial^3 L\bigg)(w)(z-w)^{-1},\end{split} \end{equation}
\begin{equation} \label{Winf:standard2} \begin{split}  L(z) W^4(w) & \sim 3 c (z-w)^{-6} + 10 L(w)(z-w)^{-4} + 3 \partial L(w)(z-w)^{-3} \\ & + 4 W^4(w)(z-w)^{-2}
 + \partial W^4(w)(z-w)^{-1}, \end{split} \end{equation}
 \begin{equation} \label{Winf:standard3} \begin{split}   L(z) W^5(w) & \sim \bigg(185-80 \lambda (2+c)\bigg) W^3(z)(z-w)^{-4}
\\ & +  \bigg(55-16 \lambda (2+c)\bigg) \partial W^3(z)(z-w)^{-3}
\\ & + 5 W^5(w)(z-w)^{-2} + \partial W^5(w)(z-w)^{-1},\end{split} \end{equation}
\begin{equation} \label{Winf:standard4} \begin{split} W^3(z) W^4(w) & \sim   \bigg(31-16 \lambda (2+c)\bigg) W^3(w)(z-w)^{-4}  +\frac{8}{3} \bigg(5-2\lambda (2+c)\bigg) \partial W^3(w)(z-w)^{-3}
\\ & + W^5(w)(z-w)^{-2}  +\bigg(\frac{2}{5}
\partial W^5 + \frac{32}{5} \lambda :L\partial W^3: -\frac{48}{5} \lambda :(\partial L) W^3: 
\\ & + \frac{2}{15}\big(-5+2 \lambda (1-c)\big) \partial^3 W^3\bigg) (w)(z-w)^{-1}.
\end{split} \end{equation}

The key difficulty is to show that for all $n\geq 9$, imposing Jacobi relations of type $(W^i, W^j, W^k)$ for $i+j+k \leq n+2$ uniquely and consistently determines all OPEs 
$$W^a(z) W^b(w), \ \text{for}\ a+b \leq n.$$
We obtain a {\it nonlinear Lie conformal algebra} over the ring $\mathbb{C}[c,\lambda]$ with generators $\{L, W^i|\ i \geq 3\}$ in the language of De Sole and Kac \cite{DSKI}, and $\mathcal{W}(c,\lambda)$ is the {\it universal enveloping VOA}.

The VOA $\cW(c,\lambda)$ has a conformal weight grading $$\cW(c,\lambda) = \bigoplus_{n\geq 0} \cW(c,\lambda)[n],$$ where each $\cW(c,\lambda)[n]$ is a free $\mathbb{C}[c,\lambda]$-module and $\cW(c,\lambda)[0] \cong \mathbb{C}[c,\lambda]$. There is a symmetric bilinear form on $\cW(c,\lambda)[n]$ given by
$$\langle ,  \rangle_n : \cW(c,\lambda)[n] \otimes_{\mathbb{C}[c,\lambda]} \cW(c,\lambda)[n] \ra \mathbb{C}[c,\lambda],\qquad \langle \omega, \nu \rangle_n = \omega_{(2n-1)} \nu.$$ The level $n$ Shapovalov determinant $\text{det}_n \in \mathbb{C}[c,\lambda]$ is just the determinant of this form. It is nonzero for all $n$, which implies that $\cW(c,\lambda)$ is simple as a VOA over $\mathbb{C}[c,\lambda]$.

Let $p$ be an irreducible factor of $\text{det}_{N+1}$ and let $I = (p) \subseteq \mathbb{C}[c,\lambda] \cong \cW(c,\lambda)[0]$ be the corresponding ideal. Consider the quotient
\begin{equation} \label{winfoneparam} \cW^I(c,\lambda) = \cW(c,\lambda) / I \cdot \cW(c,\lambda),\end{equation} where $I$ is regarded as a subset of the weight zero space $\cW(c,\lambda)[0] \cong \mathbb{C}[c,\lambda]$, and $I \cdot \cW(c,\lambda)$ denotes the VOA ideal generated by $I$.  This is a VOA over the ring $\mathbb{C}[c,\lambda]/I$, which is no longer simple. It contains a singular vector $\omega$ in weight $N+1$, which lies in the maximal proper ideal $\cI\subseteq \cW^I(c,\lambda)$ graded by conformal weight. If $p$ does not divide $\text{det}_{m}$ for any $m<N+1$, $\omega$ will have minimal weight among elements of $\cI$. Often, $\omega$ has the form \begin{equation} \label{sing:intro} W^{N+1} - P(L, W^3,\dots, W^{N-1}),\end{equation} possibly after passing to a localization of the ring $\mathbb{C}[c,\lambda] / I$, where $P$ is a normally ordered polynomial in the fields $L,W^3,\dots,$ $W^{N-1}$, and their derivatives. If this is the case, there will exist relations in the simple graded quotient $\cW_I(c,\lambda) :=\cW^I(c,\lambda) / \cI$ of the form
$$W^m = P_m(L, W^3, \dots, W^N),$$ for all $m \geq N+1$ expressing $W^m$ in terms of $L, W^3,\dots, W^N$ and their derivatives. Then $\cW_I(c,\lambda)$ will be of type $\cW(2,3,\dots, N)$. Conversely, any one-parameter VOA $\cW$ of type $\cW(2,3,\dots, N)$ for some $N$ satisfying mild hypotheses, is isomorphic to $\cW_I(c,\lambda)$ for some $I = (p)$ as above, possibly after localizing. The corresponding variety $V(I) \subseteq\mathbb{C}^2$ is called the {\it truncation curve} for $\cW$.

Note that if $I=(p)$ for some irreducible $p$, then $\cW^I(c,\lambda)$ and $\cW_I(c,\lambda)$ are one-parameter VOAs since $\mathbb{C}[c,\lambda] /(p)$ has Krull dimension $1$. We also consider $\cW^I(c,\lambda)$ when $I\subseteq \mathbb{C}[c,\lambda]$ is a maximal ideal, which has the form $I = (c- c_0, \lambda- \lambda_0)$ for some $c_0, \lambda_0\in \mathbb{C}$. Then $\cW^I(c,\lambda)$ and its quotients are ordinary VOAs over $\mathbb{C}$. Given maximal ideals $I_0 = (c- c_0, \lambda- \lambda_0)$ and $I_1 = (c - c_1, \lambda - \lambda_1)$, let $\cW_0$ and $\cW_1$ be the simple quotients of $\cW^{I_0}(c,\lambda)$ and $\cW^{I_1}(c,\lambda)$, respectively. There is an easy criterion for $\cW_0$ and $\cW_1$ to be isomorphic. We must have $c_0 = c_1$, and if $c_0 \neq 0$ or $-2$, there is no restriction on $\lambda_0$ and $\lambda_1$. For all other values of the central charge, we must have $\lambda_0 = \lambda_1$. This criterion implies that aside from the coincidences at $c=0$ and $-2$, all other pointwise coincidences among simple one-parameter quotients of $\cW(c,\lambda)$ must correspond to intersection points on their truncation curves; see Corollary 10.3 of \cite{LVI}.

Often, a VOA $\cC^k$ arising as a coset of the form $\text{Com}(V^k(\gg), \cA^k)$ for some VOA $\cA^k$, can be identified with a one-parameter quotient $\cW_I(c,\lambda)$ for some $I$. Here $k$ is regarded as a formal variable, and we have a homomorphism
\begin{equation} \label{isoviaparametrization} L \mapsto \tilde{L}, \qquad W^3 \mapsto \tilde{W}^3, \qquad c\mapsto c(k), \qquad \lambda\mapsto \lambda(k). \end{equation} Here $\{\tilde{L}, \tilde{W}^3\}$ are the standard generators of $\cC^k$, where $(\tilde{W}^3)_{(5)} \tilde{W}^3 = \frac{c(k)}{3}$, and $k \mapsto (c(k), \lambda(k))$ is a rational parametrization of the curve $V(I)$. 

There are two subtleties that need to be mentioned. First, for a complex number $k_0$, the specialization $\cC^{k_0} := \cC^k / (k-k_0) \cC^k$ typically makes sense for all $k_0 \in \mathbb{C}$, even if $k_0$ is a pole of $c(k)$ or $\lambda(k)$. At these points, $\cC^{k_0}$ need not be obtained as a quotient of $\cW^I(c,\lambda)$. Second, even if $k_0$ is not a pole of $c(k)$ or $\lambda(k)$, the specialization $\cC^{k_0}$ can be a proper subalgebra of the \lq\lq honest" coset $\text{Com}(V^{k_0}(\gg), \cA^{k_0})$, even though generically these coincide. By Corollary 6.7 of \cite{CLII}, under mild hypotheses that are satisfied in all our examples, if $\gg$ is simple this can only occur for rational numbers $k_0 \leq -h^{\vee}$, where $h^{\vee}$ is the dual Coxeter number of $\gg$. Additionally, if $\gg$ contains an abelian subalgebra $\gh$, the coset becomes larger at the levels where the corresponding Heisenberg fields become degenerate, since it now contains these fields.

\subsection{Step 3: Explicit realization of $\cC^{\psi}(n,m)$ and $\cD^{\psi}(n,m)$ as quotients of $\cW(c,\lambda)$}
Recall that $\cC^{\psi}(n,m)$ and $\cD^{\psi}(n,m)$ are of types  $\cW(2,3,\dots, (m+1)(m+n+1)-1)$ and $\cW(2,3,\dots, (m+1)(n+1)-1)$, respectively. In this subsection, we outline the proof of the following result of \cite{CLIV}.

\begin{thm} \label{GRexplicitC} For $m \geq 1$ and $n\geq 0$, and for $m=0$ and $n\geq 3$, $\cC^{\psi}(n,m) \cong \cW_{I_{n,m}}(c,\lambda)$, where the ideal $I_{n,m} \subseteq \mathbb{C}[c,\lambda]$ is described explicitly via the parametrization 
$$\Phi_{\cC,n,m}: \mathbb{C} \rightarrow V(I_{n,m}), \qquad \Phi_{\cC,n,m}(\psi) = \big(c(\psi), \lambda(\psi)\big).$$
Here \begin{equation} \begin{split} c(\psi) &=  -\frac{(n \psi  - m - n -1) (n \psi - \psi - m - n +1 ) (n \psi +  \psi  -m - n)}{(\psi -1) \psi},
\\ \ 
\\ \lambda(\psi) & = -\frac{(\psi-1) \psi}{(n \psi - n - m -2) (n \psi - 2 \psi - m - n +2 ) (n \psi + 2 \psi  -m - n )}.
\end{split} \end{equation}
\end{thm}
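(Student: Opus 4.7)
The plan is to apply the classification of one-parameter quotients of the universal two-parameter VOA $\cW(c,\lambda)$ to reduce the theorem to computing the two rational functions $c(\psi)$ and $\lambda(\psi)$, then to verify each.

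By Corollary~\ref{cor:stronggencnm}, $\cC^{\psi}(n,m)$ is of type $\cW(2,3,\dots,(m+1)(m+n+1)-1)$ as a one-parameter VOA, and for generic $\psi$ it is simple, $\mathbb{N}$-graded with one-dimensional weight-zero piece, and admits a nontrivial primary weight $3$ field; these are exactly the mild hypotheses under which the classification result from \cite{LVI} produces a surjective VOA homomorphism $\cW(c,\lambda)\twoheadrightarrow \cC^{\psi}(n,m)$ specializing $c$ and $\lambda$ to unique elements $c(\psi),\lambda(\psi)\in\mathbb{C}(\psi)$. The ideal $I_{n,m}\subseteq\mathbb{C}[c,\lambda]$ is then by definition the kernel of the induced ring map $\mathbb{C}[c,\lambda]\to\mathbb{C}(\psi)$, and its variety is the Zariski closure of the image of $\Phi_{\cC,n,m}$. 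The first function, $c(\psi)$, is obtained by a direct Sugawara calculation: the coset Virasoro element is $L_{\cC}=L_{\cW^{\psi}(n,m)}-L_{V^{\psi-m-1}(\gg\gl_m)}$, the two central charges are given by \eqref{eq:c} and \eqref{sugawara} respectively, and algebraic simplification produces the stated closed form.

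The main obstacle is the computation of $\lambda(\psi)$. In $\cW(c,\lambda)$, $\lambda$ is first detectable through a single structure constant: by \eqref{Winf:standard4}, the coefficient of $W^3(w)(z-w)^{-4}$ in $W^3(z)W^4(w)$ equals $31-16\lambda(2+c)$. It thus suffices to identify a primary weight $3$ field $\tilde W^3\in\cC^{\psi}(n,m)$, rescale it so that $(\tilde W^3)_{(5)}\tilde W^3=\tfrac{c(\psi)}{3}$, form $\tilde W^4=\tilde W^3_{(1)}\tilde W^3$, and evaluate $\tilde W^3_{(3)}\tilde W^4$ modulo the other strong generators. I would proceed in three stages: (i) pass to the free field limit $\cW^{\infty}(n,m)$ of Theorem~\ref{thm:wfreelimit}, where $\tilde W^3$ is a quadratic $GL(m)$-invariant in the generators of $\tilde\cW$ and the affine factor, and compute the limit of the structure constant using the first and second fundamental theorems of invariant theory for $GL(m)$ together with the OPE identities \eqref{VOA:identities}; this determines the leading asymptotics of $c(\psi)$ and $\lambda(\psi)$ at $\psi=\infty$ and in particular the finite nonzero limit of $c(\psi)\lambda(\psi)$; (ii) match the parametrization at the specializations $\cC^{\psi}(n,0)\cong\cW^{\psi-n}(\gs\gl_n)$ and $\cC^{\psi}(1,m)\cong \operatorname{Com}(V^{\psi-m-1}(\gg\gl_m),V^{\psi-m-1}(\gs\gl_{m+1}))$, whose truncation curves inside $\cW(c,\lambda)$ are already known from \cite{LVI} and \cite{ACL}; (iii) use an a priori degree bound, coming from the observation that the critical levels of $\cW^{\psi}(n,m)$ and the zeros of the Shapovalov determinant of $\cW(c,\lambda)$ restricted to the curve must match up at specific weights, to constrain the numerator and denominator of $\lambda(\psi)$ to be at most cubic in $\psi$.

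The genuinely hard step is (i): executing the free field invariant theory calculation of the relevant quartic normally ordered expression in a manageable closed form for arbitrary $n,m$. Once the limiting value of $c\lambda$ and the denominator structure of $\lambda(\psi)$ are known from (i) and (iii), step (ii) reduces to a finite verification that the candidate formulas agree with the known realizations at enough interpolation points, at which stage the rational function $\lambda(\psi)$ is uniquely determined and must coincide with the claimed expression.
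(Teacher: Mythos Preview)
Your proposal has a genuine gap and misses the main idea of the paper's argument.

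First, the gap. You invoke the classification from \cite{LVI} to get a surjection $\cW(c,\lambda)\twoheadrightarrow \cC^{\psi}(n,m)$, but that classification applies to algebras \emph{generated} by the weight $2$ and weight $3$ fields. Knowing that $\cC^{\psi}(n,m)$ is of type $\cW(2,3,\dots,(m+1)(m+n+1)-1)$ does not tell you this; a priori the higher-weight strong generators might not lie in the subalgebra generated by $L$ and $W^3$. The paper handles this by first working with the subalgebra $\tilde{\cC}^{\psi}(n,m)$ generated by $L,W^3$, computing its truncation curve, and only afterwards proving $\tilde{\cC}^{\psi}(n,m)=\cC^{\psi}(n,m)$ via coincidences with principal $\cW$-algebras of type $A$ at special values of $\psi$ (Lemma \ref{coincindences:typeA}). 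Your proposal skips this entirely.

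Second, your route to $\lambda(\psi)$ is quite different from the paper's and, as written, is not a proof. Your steps (i)--(iii) amount to: compute an asymptotic at $\psi=\infty$, check a couple of known one-parameter families ($m=0$ and $n=1$), and hope for a degree bound on $\lambda(\psi)$ coming from matching singular loci. Step (iii) is not justified; there is no argument given that the numerator and denominator of $\lambda(\psi)$ are cubic, and ``zeros of the Shapovalov determinant must match'' is not a mechanism that produces such a bound. Without a rigorous degree bound, the finitely many constraints from (i) and (ii) do not determine a rational function.

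The paper's actual computation of $\lambda(\psi)$ exploits a structural feature you do not use: $\cW^{\psi}(n,m)$ is an extension of $V^{\psi-m-1}(\gg\gl_m)\otimes \tilde{\cC}^{\psi}(n,m)$ by the fields $G^{\pm,i}$ of weight $\tfrac{n+1}{2}$. Setting $G=G^{+,1}$, one writes the unknown OPEs $W^3(z)G(w)$, $W^4(z)G(w)$, $W^5(z)G(w)$ with undetermined coefficients and then imposes a handful of Jacobi identities of types $(L,W^3,G)$, $(W^3,W^3,G)$, and $(W^3,W^4,G)$. These force explicit polynomial relations among the unknowns (equations \eqref{Ba0a1a2first}--\eqref{Bgenjac:4}), and solving them yields $\lambda(\psi)$ directly, uniformly in $n$ and $m$, with no interpolation or asymptotic matching. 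The presence of the extra field $G$ outside the coset is precisely what makes $\lambda$ computable; your proposal works entirely inside the coset and therefore has no access to this constraint.
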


We have an analogous result for $\cD^{\psi}(n,m)$, which is proven in the same way.

\begin{thm} \label{GRexplicitD} For $m \geq 1$ and $n\geq 1$, and for $m=0$ and $n\geq 3$, $\cD^{\psi}(n,m) \cong \cW_{J_{n,m}}(c,\lambda)$, where
the ideal $J_{n,m}$ is described explicitly via the parametrization 
$$\Phi_{\cD,n,m}: \mathbb{C} \rightarrow V(J_{n,m}), \qquad \Phi_{\cD,n,m}(\psi) = \big(c(\psi), \lambda(\psi)\big).$$
Here \begin{equation} \begin{split} c(\psi) & = -\frac{(n \psi + m - n -1) (n \psi  - \psi + m - n +1) (n \psi + \psi +m - n )}{(\psi -1) \psi},
\\ \ 
\\ \lambda(\psi) & = -\frac{(\psi-1) \psi}{(n \psi + m - n -2) (n \psi - 2 \psi + m - n +2 ) (n \psi + 2 \psi +m - n)}.
\end{split} \end{equation}
\end{thm}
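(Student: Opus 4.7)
The plan is to follow the three-step strategy of \cite{CLIV} already sketched for Theorem \ref{GRexplicitC}. The corollary in Section \ref{sec:cosets} establishes that $\cD^{\psi}(n,m)$ is strongly generated by fields of conformal weights $2,3,\dots,(m+1)(n+1)-1$ as a one-parameter VOA, and the Virasoro vector together with the weight-$3$ primary generator $\tilde W^3$ can be chosen to satisfy the mild non-degeneracy hypotheses (in particular non-vanishing of $(\tilde W^3)_{(5)}\tilde W^3$) required to invoke the universal classification from \cite{LVI}. It follows that there exists a prime ideal $J_{n,m} \subseteq \mathbb{C}[c,\lambda]$ and a rational parametrization $\Phi_{\cD,n,m}:\mathbb{C}\to V(J_{n,m})$, $\psi \mapsto (c(\psi),\lambda(\psi))$, such that $\cD^{\psi}(n,m) \cong \cW_{J_{n,m}}(c,\lambda)$ as one-parameter VOAs. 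The content of the theorem is the explicit determination of this parametrization.

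The central charge $c(\psi)$ is a routine computation. Apply the Kac-Roan-Wakimoto formula \eqref{eq:c} to $\cV^{\psi}(n,m) = \cW^{\psi-n+m}(\gs\gl_{n|m}, f_{n|m})$, replacing $\gs\gl_{n|n}$ by $\mathfrak{psl}_{n|n}$ when $n=m$, and subtract the Sugawara central charge of the affine subalgebra $V^{-\psi-m+1}(\gg\gl_m)$ in the case $n\neq m$ (respectively $V^{-\psi-n+1}(\gs\gl_n)$ plus the Heisenberg contribution absorbed by the $U(1)$-orbifold when $n=m$). Simplifying the resulting rational function of $\psi$ yields the stated cubic-in-$\psi$ formula for $c(\psi)$. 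The boundary case $m=0$, $n\geq 3$ reduces to the well-known central charge of the principal $\cW$-algebra $\cW^{\psi-n}(\gs\gl_n)$.

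The parameter $\lambda(\psi)$ must be extracted from a specific OPE inside $\cD^{\psi}(n,m)$, and this is the main obstacle. Concretely, the conformal extension $V^{-\psi-m+1}(\gg\gl_m)\otimes \cD^{\psi}(n,m) \hookrightarrow \cV^{\psi}(n,m)$ by the $2m$ odd primary fields $G^{\pm,i}$ of weight $(n+1)/2$ in the representations $\mathbb{C}^m\oplus(\mathbb{C}^m)^*$ of $\gg\gl_m$ realizes $\tilde W^3$, after subtraction of the affine contribution, as an explicit $GL(m)$-invariant normally ordered polynomial in the $G^{\pm,i}$, their derivatives, and the affine currents, with coefficients rational in $\psi$. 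Once $\tilde W^3$ is normalized so that $(\tilde W^3)_{(5)}\tilde W^3 = c(\psi)/3$, the value of $\lambda(\psi)$ is pinned down by any single OPE coefficient depending nontrivially on $\lambda$: for example the coefficient of $\tilde W^4$ in the second-order pole of $\tilde W^3(z)\tilde W^3(w)$ appearing in \eqref{Winf:standard1}, or equivalently the coefficient of $\tilde W^3$ in the fourth-order pole of $\tilde L(z)\tilde W^5(w)$ from \eqref{Winf:standard3}. The hard part is the iterated OPE bookkeeping for $G^{+,i}(z)G^{-,j}(w)$ modulo the affine ideal, together with the verification that no further singular vectors in $\cW(c,\lambda)$ collapse the truncation curve to a proper subvariety of $V(J_{n,m})$. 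Since $\lambda(\psi)$ is rational, the direct computation can be shortcut by evaluating it at a handful of well-chosen values of $\psi$ where $\cD^{\psi}(n,m)$ admits an independent identification (for instance levels at which $\cV^{\psi}(n,m)$ has a known free-field realization, or at which $\cD^{\psi}(n,m)$ collapses to a principal $\cW$-algebra), and then invoking rational interpolation. The edge cases $(m\geq 1, n=0)$, $(m=0, n\in\{0,1,2\})$, and $n=m$ (where the $U(1)$-orbifold is needed to remove an extra Heisenberg factor) must be checked separately, either by direct identification of $\cD^{\psi}(n,m)$ or because the generating type degenerates; this explains the range restriction in the statement.
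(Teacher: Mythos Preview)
Your overall strategy---use the extension structure of $\cV^{\psi}(n,m)$ over $V^{-\psi-m+1}(\gg\gl_m)\otimes\cD^{\psi}(n,m)$ to pin down the parameter $\lambda$, then invoke the classification of \cite{LVI}---matches the paper's. But there are two points where your execution diverges from the paper's, one of which is a genuine gap.

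\textbf{The generation issue.} You assert that the classification of \cite{LVI} applies to $\cD^{\psi}(n,m)$ once ``mild non-degeneracy hypotheses'' are checked. But the classification only applies to one-parameter VOAs that are \emph{generated} by their weight-$2$ and weight-$3$ fields, and knowing the strong generating type $\cW(2,3,\dots,(m+1)(n+1)-1)$ does not by itself give this. The paper handles this carefully: it first works with the subalgebra $\tilde{\cD}^{\psi}(n,m)\subseteq\cD^{\psi}(n,m)$ generated by $L$ and $W^3$, determines its truncation curve, and only then shows $\tilde{\cD}^{\psi}(n,m)=\cD^{\psi}(n,m)$ by exhibiting coincidences with simple principal $\cW$-algebras $\cW_r(\gs\gl_s)$ at intersection points of the truncation curves (the analogue of Lemma \ref{coincindences:typeA}). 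These coincidences, combined with the known position of the first singular vector in $\cW^r(\gs\gl_s)$, force $\tilde{\cD}^{\psi}(n,m)$ to contain fields of every weight up to $(m+1)(n+1)-1$, hence to equal the full coset. Your proposal does not contain this step; the remark about ``no further singular vectors collapsing the truncation curve'' is not the same thing.

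\textbf{The extraction of $\lambda$.} Your plan is either to realize $W^3$ explicitly as a normally ordered polynomial in the $G^{\pm,i}$ and affine currents and compute its self-OPE, or to interpolate from special values of $\psi$. The paper's method is different and considerably cleaner: it never writes down $W^3$ explicitly. Instead, it posits the OPE $W^3(z)G(w)$ with undetermined scalar coefficients $a_0,a_1,a_2,a_3$ (and similarly $b_0,b_1$ for $W^4,W^5$), then imposes a handful of Jacobi identities of types $(L,W^3,G)$, $(W^3,W^3,G)$, and $(W^3,W^4,G)$, using only the known action of the affine currents on $G$ and the universal OPEs \eqref{Winf:standard1}--\eqref{Winf:standard4} inside $\cW(c,\lambda)$. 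These constraints determine all the $a_i,b_i$ up to the $\mathbb{Z}/2\mathbb{Z}$-sign ambiguity and yield $\lambda(\psi)$ directly as a rational function. This bypasses the iterated OPE bookkeeping you flagged as the hard part.
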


For $n\geq m$, these formulas are easily seen to satisfy 
$$ \Phi_{\cD, n, m}(\psi)  =  \Phi_{\cC, n-m,m} (\psi^{-1}) = \Phi_{\cD,m,n}(\psi'),\qquad   \frac{1}{\psi} + \frac{1}{\psi'} = 1.$$
Therefore if $m \geq 1$ and $n\geq 1$, or if $m=0$ and $n\geq 3$, $\cC^{\psi}(n,m)$ and $\cD^{\psi}(n,m)$ arise as $\cW(c,\lambda)$ quotients, and the isomorphisms in Theorem \ref{thm:triality} are an immediate consequence of Theorems \ref{GRexplicitC} and \ref{GRexplicitD}.  In the extreme case $$\cD^\psi(2, 0)  \cong \cC^{\psi^{-1}}(2, 0) \cong \cD^{\psi'}(0, 2),$$ the first isomorphism is {\it Feigin-Frenkel duality} for Virasoro algebra, and second is {\it coset realization} of Virasoro algebra. The remaining cases \begin{equation} \begin{split} & \cD^\psi(1, 0)  \cong \cC^{\psi^{-1}}(1,0) \cong \cD^{\psi'}(0, 1),
\\ & \cD^\psi(0, 0)  \cong \cC^{\psi^{-1}}(0,0) \cong \cD^{\psi'}(0, 0),\end{split} \end{equation} hold trivially because all these vertex algebras are just $\mathbb{C}$.

From now on, we only discuss the proof of Theorem \ref{GRexplicitC}; the argument for Theorem \ref{GRexplicitD} is similar. The first difficulty is that even though $\cC^{\psi}(n,m)$ is of type $\cW(2,3,\dots, (m+1)(m+n+1)-1)$, it is not immediately apparent that it arises as a one-parameter quotient of $\cW(c,\lambda)$ because we don't know yet that is is generated by the weights $2$ and $3$ fields. So instead, we consider the subalgebra $\tilde{\cC}^{\psi}(n,m) \subseteq \cC^{\psi}(n,m)$ generated by $L$ and $W^3$, which is normalized as usual. A priori, $\tilde{\cC}^{\psi}(n,m)$ need not be all of $\cC^{\psi}(n,m)$ and also need not be simple but since $\cC^{\psi}(n,m)$ is at worst an extension of $\tilde{\cC}^{\psi}(n,m)$, it follows that $\cW^{\psi}(n,m)$ is an extension of $V^{\psi - m-1}(\mathfrak{gl}_m) \otimes \tilde{\cC}^{\psi}(n,m)$. Moreover, by character considerations it is apparent that $\tilde{\cC}^{\psi}(n,m)$ is of type $\cW(2,3,\dots, t)$ for some $t \leq (m+1)(m+n+1)-1$, and hence is a quotient of $\cW^{I_{n,m}}(c,\lambda)$ for some ideal $I_{n,m}$.

Recall first that $\cW^{\psi}(n,m)$ has even primary fields $\{G^{\pm,i}|\ i = 1,\dots,m\}$ of weight $\frac{n+1}{2}$ in $\cW^{\psi}(n,m)$ transforming as $\mathbb{C}^m \oplus (\mathbb{C}^m)^*$ under $\mathfrak{gl}_m$. Letting $L^{\cW}$ denote the Virasoro field in $\cW^{\psi}(n,m)$, this means that 
$$L^{\cW}(z) G^{\pm, i}(w) \sim \frac{n+1}{2} G^{\pm,i}(w)(z-w)^{-2} + \partial G^{\pm,i}(w)(z-w)^{-1}.$$ 
Without loss of generality, $\{G^{\pm,i}\}$ can also be assumed primary for action of $V^{\psi - m-1}(\mathfrak{gl}_m)$. In particular, all OPEs between the generators of $V^{\psi - m-1}(\mathfrak{gl}_m)$ and $\{G^{\pm,i}\}$ have only a first-order pole, and this is just determined by the above action of $\gg\gl_m$.

Let $\{L, W^i\}$ be standard generators for $\tilde{\cC}^{\psi}(n,m)$, and recall that $L^{\cW} = L^{\mathfrak{gl}_m} + L$ where $L^{\gg\gl_m}$ is the Sugawara Virasoro field for $V^{\psi - m-1}(\mathfrak{gl}_m)$. Since the OPEs $L^{\mathfrak{gl}_m}(z) G^{\pm,i}(w)$ are completely determined by the action of $\gg\gl_m$, it follows that
the OPE $$L(z) G^{\pm,i}(w)$$ is completely determined from this data as well.

Let $G = G^{+,1}$ to be highest-weight vector for $\mathbb{C}^m$, and consider the OPE
\begin{equation} \label{BW3G}
\begin{split} W^3(z) G (w) & \sim a_0 G (w)(z-w)^{-3} + \big(a_1 \partial G + \dots  \big)(w)(z-w)^{-2} 
\\ & + \big(a_2 :LG: + a_3  \partial^2 G + \dots \big)(w) (z-w)^{-1},
\end{split} \end{equation}
Here $a_0, a_1,a_2,a_3$ are undetermined constants, and the omitted expressions are not needed. We will see that imposing certain Jacobi identities of type $(L, W^3, G)$ determines $a_1, a_2, a_3$ in terms of $a_0$.

First, we impose 
\begin{equation} \begin{split} & L_{(2)} (W^3_{(1)} G) - W^3_{(1)} (L_{(2)} G) -   (L_{(0)} W^3)_{(3)} G   -2 (L_{(1)} W^3)_{(2)} G \\ & - (L_{(2)} W^3)_{(1)} G = 0.\end{split} \end{equation}
This has weight $\frac{n+1}{2}$, and hence the left side is a scalar multiple of $G$. Using the OPE relations between $V^{\psi - m-1}(\mathfrak{gl}_m)$ and $G$ as well as the known formula for $L(z) G(w)$, together with \eqref{BW3G}, we get the following relation among the variables $a_0,a_1,a_2,a_3$:
\begin{equation} \label{Ba0a1a2first} -3 a_0 + a_1 + a_1 n - \frac{(m^2-1) a_1}{m (\psi-1)} - \frac{a_1 (m + n)}{m (n \psi -m - n)} = 0.\end{equation}
Next, we impose 
\begin{equation} \begin{split} & L_{(3)} (W^3_{(0)} G) - W^3_{(0)} (L_{(3)} G) -   (L_{(0)} W^3)_{(3)} G   -3 (L_{(1)} W^3)_{(2)} G \\ &  - 3(L_{(2)} W^3)_{(1)} G -(L_{(3)} W^3)_{(0)} G = 0.\end{split} \end{equation}
This gives us another relation among the variables $a_0,a_1,a_2,a_3$:
\begin{equation} \label{Ba0a1a2second} -6 a_0 + 2 a_2 + 3 a_3 + \frac{a_2 c}{2} + n (2 a_2  + 3 a_3)  -\frac{(m^2-1) (2 a_2 + 3 a_3)}{m (\psi - 1)} -  \frac{(2 a_2 + 3a_3) (m + n)}{m (n \psi -m - n)} =0.\end{equation}
 Finally, we impose 
\begin{equation}  L_{(2)} (W^3_{(0)} G) - W^3_{(0)} (L_{(2)} G) -   (L_{(0)} W^3)_{(2)} G   -2 (L_{(1)} W^3)_{(1)} G  - (L_{(2)} W^3)_{(0)} G = 0.\end{equation}
 This has weight $\frac{n+3}{2}$, but if we extract just the coefficient of $\partial G$, we get another relation
 \begin{equation} \label{Ba0a1a2third} -4 a_1 + 3 a_2 + 4 a_3 + 2 a_3 n - \frac{2 (m^2-1) a_3}{m (\psi - 1)}- \frac{2 a_3 (m + n)}{m (n \psi -m - n)}.\end{equation}
 Solving \eqref{Ba0a1a2first}, \eqref{Ba0a1a2second}, and \eqref{Ba0a1a2third}, we obtain
 \begin{equation} \label{Ba0a1a2} \begin{split} a_0 = & \frac{(n \psi - m - n -2) (n \psi - m - n -1) (n \psi + \psi -m - n) (n \psi  + 2 \psi -m - n)}{6 (\psi-1)^2 (n \psi -m - n)^2} a_3,
\\  a_1 = & \frac{(n \psi - m - n -2) (n \psi + 2 \psi -m - n )}{2 (\psi -1) (n \psi -m - n )} a_3,
\\ a_2 = &-\frac{2 \psi}{(\psi -1) (n \psi -m - n)} a_3.\end{split} \end{equation}

Next, we have
\begin{equation} \label{BW4W5G} 
\begin{split} & W^4(z) G(w) \sim b_0 G(w)(z-w)^{-4} + \cdots,
\\ & W^5(z) G(w) \sim b_1 G(w)(z-w)^{-5} + \cdots,\end{split} \end{equation} for some constants $b_0, b_1$. By imposing four more Jacobi identities, the constants $a_3,b_0, b_1$ are determined up to a sign, and the parameter $\lambda$ in $\cW(c,\lambda)$ is uniquely determined.

We begin with 
\begin{equation} \begin{split} &  W^3_{(3)} (W^3_{(1)} G) - W^3_{(1)} (W^3_{(3)} G) - (W^3_{(0)} W^3)_{(4)} G -3 (W^3_{(1)} W^3)_{(3)} G \\ & -3 (W^3_{(2)} W^3)_{(2)} G  - (W^3_{(3)} W^3)_{(1)} G =0.\end{split} \end{equation} This has weight $\frac{n+1}{2}$, and is therefore a scalar multiple of $G$. Using the OPE relations \eqref{Winf:standard1}-\eqref{Winf:standard4} together with the above data and \eqref{BW4W5G}, we compute this scalar to obtain the following relation
\begin{equation} \label{Bgenjac:1}
1 + 3 a_0 a_1 - b_0 + n - \frac{m^2-1}{m (\psi-1)} - \frac{m + n}{m (n \psi  -m - n)}=0.
\end{equation}
Next, we impose 
\begin{equation} 
\begin{split} W^3_{(4)} (W^3_{(0)} G) & - W^3_{(0)} (W^3_{(4)} G) 
- (W^3_{(0)} W^3)_{(4)} G - 4 (W^3_{(1)} W^3)_{(3)} G  -6 (W^3_{(2)} W^3)_{(2)} G  \\ & - 4 (W^3_{(3)} W^3)_{(1)} G -  (W^3_{(4)} W^3)_{(0)} G =0.\end{split}\end{equation}  Again, this has weight $\frac{n+1}{2}$ and is a scalar multiple of $G$, so we obtain
\begin{equation} \label{Bjac:2}
1 + 6 a_0 (a_2 + 2 a_3) - 2 b_0 + n - \frac{m^2-1}{m (\psi -1)} - 
\frac{m + n}{m (n \psi  -m - n)} = 0.
\end{equation}
Next we impose \begin{equation} W^3_{(0)} (W^4_{(5)} G) - W^4_{(5)} (W^3_{(0)} G)  - (W^3_{(0)} W^4)_{(5)} G = 0,\end{equation} which yields
\begin{equation} \label{Bgenjac:3} \begin{split} & \frac{1}{2}  \bigg(-40 a_3 b_0 + 5 a_0 \big((2 + c) \lambda - 16\big) + 4 b_1 \bigg)  -  \frac{8 a_2 (m n \psi + m \psi - m^2 - m n  - m +1)}{m (\psi -1)} 
 \\ & -  a_2  (3 c + 8 b_0) + \frac{8a_2(m + n)}{m (n \psi  -m - n)} = 0.\end{split} \end{equation}
Finally, we impose
\begin{equation} W^3_{(1)} (W^4_{(4)} G)  - W^4_{(4)} (W^3_{(1)} G)  - (W^3_{(0)} W^4)_{(5)} G - (W^3_{(1)} W^4)_{(4)} G,\end{equation} which yields
\begin{equation} \label{Bgenjac:4}
-8 a_1 b_0 + 5 a_0 \big((2 + c) \lambda-16 \big) + 2 b_1 =0.\end{equation}
Substituting the values of $a_0, a_1, a_2$ in terms of $a_3$ given by \eqref{Ba0a1a2} into the equations \eqref{Bgenjac:1}-\eqref{Bgenjac:4}, and solving for for $a_3, b_0, b_1, \lambda$ yields a unique solution for $b_0$ and $\lambda$, and a unique solution up to sign for $a_3$ and $b_1$. In particular, expressing everything in terms of the parameter $\psi$, the formula for $\lambda$ is exactly the generator of the ideal $I_{n,m}$ appearing earlier. We remark that the sign ambiguity in $a_0,b_1$ reflects $ \mathbb{Z}/2\mathbb{Z}$-symmetry of $\cW(c,\lambda)$ and does not affect isomorphism type.

We have thus shown that the simple quotient $\tilde{\cC}_{\psi}(n,m)$ of $\tilde{\cC}^{\psi}(n,m)$ is isomorphic to $\cW_{I_{n,m}}(c,\lambda)$. The final step is show that $\tilde{\cC}^{\psi}(n,m) = \cC^{\psi}(n,m)$, or equivalently, that $\cC^{\psi}(n,m)$ is generated by the weights $2$ and $3$ fields. For this purpose, we will find certain {\it coincidences}, or nontrivial isomorphisms, between the simple quotient $\tilde{\cC}_{\psi}(n,m)$ and the simple principal $\cW$-algebras $\cW_r(\mathfrak{sl}_s)$ which correspond to the curves $V(I_{s,0})$.

\begin{lemma} \label{coincindences:typeA} For $s\geq 3$, $m\geq 1$, and $n\geq 0$, we have isomorphisms of simple VOAs 
\begin{equation} \tilde{\cC}_{\psi}(n,m) \cong \cW_r(\gs\gl_s),\qquad \psi = \frac{m + n + s}{n} ,\qquad r =-s+ \frac{m + s}{m + n + s}.\end{equation}
\end{lemma}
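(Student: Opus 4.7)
The approach is to identify both sides as simple one-parameter quotients of the universal two-parameter VOA $\cW(c,\lambda)$ and then invoke the coincidence criterion established in \cite{LVI}.

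First, I would note that $\cW_r(\gs\gl_s)$ is precisely the simple quotient of the principal $\cW$-algebra $\cW^r(\gs\gl_s) = \cC^{r+s}(s,0)$, using the critically shifted level convention. By Theorem \ref{GRexplicitC} applied to $(n,m)=(s,0)$, this simple quotient corresponds to the point $\Phi_{\cC,s,0}(r+s) \in V(I_{s,0})\subset \mathbb{C}^2$ on the truncation curve inside the parameter space of $\cW(c,\lambda)$. Likewise, $\tilde{\cC}_{\psi}(n,m)$ is, by construction, the simple one-parameter quotient of $\cW(c,\lambda)$ along the curve $V(I_{n,m})$, specialized at the point $\Phi_{\cC,n,m}(\psi)$.

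The computational heart of the proof is then to verify that, with $\psi_1=(m+n+s)/n$ and $\psi_2=r+s=(m+s)/(m+n+s)$, the two parametrizations give the same $(c,\lambda)$-value, i.e. $\Phi_{\cC,n,m}(\psi_1)=\Phi_{\cC,s,0}(\psi_2)$. The factorizations occurring in the formulas of Theorem \ref{GRexplicitC} cooperate cleanly: one computes
\[
n\psi_1-m-n-1=s-1,\quad n\psi_1-\psi_1-m-n+1=\tfrac{ns-m-s}{n},\quad n\psi_1+\psi_1-m-n=\tfrac{ns+m+n+s}{n},
\]
together with $(\psi_1-1)\psi_1=(m+s)(m+n+s)/n^2$, yielding
\[
c(\psi_1)= -\frac{(s-1)(ns-m-s)(ns+m+n+s)}{(m+n+s)(m+s)}.
\]
The parallel calculation for $c(\psi_2)$ with $(n,m)\to(s,0)$ and $\psi\to\psi_2$ produces an identical expression after cancellation of three pairs of minus signs coming from $s\psi_2-s=-sn/(m+n+s)$ and $\psi_2-1=-n/(m+n+s)$. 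An analogous, slightly longer factorization matches the two expressions for $\lambda$.

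Having established the coincidence of the two truncation points, I would finish by applying Corollary 10.3 of \cite{LVI}: for $c\notin\{0,-2\}$, pointwise coincidences of simple one-parameter quotients of $\cW(c,\lambda)$ attached to the same $(c,\lambda)$ are genuine VOA isomorphisms. Since $s\geq 3$ and $m\geq 1$ force the central charge displayed above to be generically neither $0$ nor $-2$, this gives the required isomorphism $\tilde{\cC}_{\psi}(n,m)\cong\cW_r(\gs\gl_s)$. The main obstacle is the symbolic verification of the $\lambda$-equality, which involves cubic factors in both numerator and denominator; the shape of the $c$ factorization strongly predicts a parallel cancellation for $\lambda$, and the handful of parameter values for which $c\in\{0,-2\}$ can be addressed separately by inspecting the Shapovalov form in low weight, or by invoking the triality isomorphisms of Theorem \ref{thm:triality} to swap the offending coincidence with an equivalent one on a sister curve.
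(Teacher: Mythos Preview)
Your proposal is correct and follows essentially the same approach as the paper: both verify that the truncation curves $V(I_{n,m})$ and $V(I_{s,0})$ intersect at the common point $(c,\lambda)$ (your computed value of $c$ matches the paper's exactly), and then invoke the coincidence criterion of Corollary 10.3 of \cite{LVI}. One small remark: at this stage of the argument Theorem \ref{GRexplicitC} has not yet been fully established (this lemma is part of its proof), so for the $(s,0)$ curve you should cite \cite{LVI} directly rather than Theorem \ref{GRexplicitC}, and for the $(n,m)$ curve what is actually available is the parametrization of $V(I_{n,m})$ obtained from the Jacobi-identity computation for $\tilde{\cC}^{\psi}(n,m)$; with that adjustment your argument is the paper's.
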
 

\begin{proof} This is immediate from the fact that the curves $V(I_{n,m})$ and $V(I_{s,0})$ intersect at the point $(c,\lambda)$ given by
$$c = -\frac{( s-1) (n s -m - s) (m + n + s + n s)}{(m + s) (m + n + s)}, \quad \lambda =  \frac{(m + s) (m + n + s)}{(s-2) (2 m + 2 s - n s) (2 m + 2 n + 2 s + n s)},$$ which can be checked using the formulas for the defining ideals.
\end{proof}

We now recall that by Theorem 3.8 of \cite{CLIV}, for $\psi$ sufficiently large and $r,s,\psi$ are related as above, $\cW^r(\gs\gl_s)$ has a singular vector in weight $(m+1)(m+n+1)$ and no singular vector in lower weight. Therefore $\cW_r(\gs\gl_s)$ cannot truncate below weight $(m+1)(m+n+1)$, and will truncate to an algebra of type $\cW(2,3,\dots, (m+1)(m+n+1)-1)$ if and only if this singular vector is a decoupling relation for the field $W^{(m+1)(m+n+1)}$. Therefore in view of Lemma \ref{coincindences:typeA}, $\tilde{\cC}_{\psi}(n,m)$ contains the fields in weights $2,3,\dots, (m+1)(m+n+1)-1$. 

The universal algebra $\tilde{\cC}^{\psi}(n,m)$ specialized at this value of $\psi$ cannot truncate below weight $(m+1)(m+n+1)$, and therefore the same holds for the one-parameter algebra $\tilde{\cC}^{\psi}(n,m)$. Since $\cC^{\psi}(n,m)$ is already known to be of type $\cW(2,3,\dots, (m+1)(m+n+1) -1)$, and $\tilde{\cC}^{\psi}(n,m)$ is a subalgebra of $\cC^{\psi}(n,m)$ containing all the strong generating fields, we must have $\tilde{\cC}^{\psi}(n,m) = \cC^{\psi}(n,m)$. This completes the proof of Theorem \ref{GRexplicitC}. The proof of Theorem \ref{GRexplicitD} follows exactly the same strategy, and combining them completes the proof of Theorem \ref{thm:triality}.

Finally, the proof of Theorem \ref{thm:trialityBCD} follows a similar structure, and involves computing the explicit truncation curves realizing all the algebras $\cC^{\psi}_{iX}(n,m)$ as one-parameter quotients of the universal even-spin algebra $\cW^{\text{ev}}(c,\lambda)$.

Using the truncation curves that realize $\cC^{\psi}(n,m)$ and $\cD^{\psi}(n,m)$ as one-parameter quotients of $\cW(c,\lambda)$, one can classify the nontrivial pointwise isomorphisms between the simple quotients of these algebras, which we denote by $\cC_{\psi}(n,m)$ and $\cD_{\psi}(n,m)$. Aside from degenerate cases at central charge $c=0,-2$, such coincidences correspond to intersection points on the trunctation curves. For example, the isomorphisms between $\cC_{\psi}(n,m)$ and the principal $\cW$-algebras of type $A$ are given explicitly by Corollary 6.5 of \cite{CLIV}. 

Similarly, the pointwise coincidences between the simple quotients $\cC_{\psi, iX}(n,m)$ of the algebras $\cC^{\psi}_{iX}(n,m)$ aside from a few degenerate cases, correspond to intersection points on their truncation curves. For example, the pointwise coincidences between $\cC_{\psi, iX}(n,m)$ and principal $\cW$-algebras of type $C$ are given in Appendix B of \cite{CLV}. We end this section with a few application of these isomorphisms.

\subsection{Rationality results}
For each of the simple VOAs $\cC_{\psi, iX}(n,m)$, there are certain values of $\psi$ where it is isomorphic to $\cW_s(\gs\gp_{2r})$ at a level $r$ which is nondegenerate admissible for $\widehat{\gs\gp}_{2r}$, or $\cW_s(\gs\go_{2r})^{\mathbb{Z}/2\mathbb{Z}}$ at a level $r$ which is nondegenerate admissible for $\widehat{\gs\go}_{2r}$. By Arakawa's results \cite{ArIII,ArIV}, these VOAs are $C_2$-cofinite and rational. This observation allowed many new rationality results to be proven in \cite{CLV}.

First, we have an embedding of universal affine VOAs $V^k(\gs\gp_{2n}) \hookrightarrow V^k(\go\gs\gp_{1|2n})$ for all $n \geq 1$. By Prop. 8.1 and 8.2 of \cite{KWVIII}, this map descends to an embedding of simple affine VOAs 
$$L_k(\gs\gp_{2n})\hookrightarrow L_k(\go\gs\gp_{1|2n}),$$ for all positive integers $k$. By Theorem 8.1 of \cite{CLI}, the coset $$\text{Com}(L_k(\gs\gp_{2n}), L_k(\go\gs\gp_{1|2n}))$$ is the simple quotient of 
 $\text{Com}(V^k(\gs\gp_{2n}), V^k(\go\gs\gp_{1|2n}))$, which coincides with the simple quotient $\cC_{\psi,1C}(n,0)$ of $\cC^{\psi}_{1C}(n,0)$, for $k = - \frac{1}{2} (\psi +2n +1)$. Next, Corollary 4.1 of \cite{CLV} together with Feigin-Frenkel duality tells us that we have isomorphisms
 $$\text{Com}(L_k(\gs\gp_{2n}), L_k(\go\gs\gp_{1|2n})) \cong \cW_{\ell}(\gs\gp_{2n}),\qquad \ell = -(n + 1) + \frac{1 + k + n}{1 + 2 k + 2 n}.$$ Note that the level $\ell$ is nondegenerate admissible for $\widehat{\gs\gp}_{2n}$, so $\cW_{\ell}(\gs\gp_{2n})$ is $C_2$-cofinite and rational. Therefore both $L_k(\go\gs\gp_{1|2n})$ and its even subalgebra $L_k(\go\gs\gp_{1|2n})^{ \mathbb{Z}/2\mathbb{Z}}$ are extensions of $L_k(\gs\gp_{2n}) \otimes \cW_{\ell}(\gs\gp_{2n})$ which is $C_2$-cofinite and rational. This extension must be of finite index, since otherwise at least one of the finitely many irreducible modules of $L_k(\gs\gp_{2n}) \otimes \cW_{\ell}(\gs\gp_{2n})$ must appear with infinite multiplicity. This is impossible since conformal weight spaces of both $L_k(\go\gs\gp_{1|2n})$, and its even subalgebra  $L_k(\go\gs\gp_{1|2n})^{ \mathbb{Z}/2\mathbb{Z}}$, are finite-dimensional. It follows that both these extensions are $C_2$-cofinite. The rationality of $L_k(\go\gs\gp_{1|2n})^{\mathbb{Z}/2\mathbb{Z}}$ follows from Proposition 2.2 of \cite{CLV}. Finally, by Theorem 5.13 of \cite{CGN} we obtain

\begin{thm} (Theorem 7.1, \cite{CLV}) \label{rationalityosp} For all positive integers $k,n$, $L_k(\go\gs\gp_{1|2n})$ is a rational vertex superalgebra.
\end{thm}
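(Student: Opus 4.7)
The plan is to realize $L_k(\go\gs\gp_{1|2n})$ as a conformal extension of a $C_2$-cofinite, rational vertex algebra of the form $L_k(\gs\gp_{2n}) \otimes \cW_\ell(\gs\gp_{2n})$, where $\ell$ is a carefully chosen admissible level. Once that structure is in place, rationality will follow from general extension theory for rational VOAs.

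First I would set up the conformal embedding. The inclusion of universal affine VOAs $V^k(\gs\gp_{2n}) \hookrightarrow V^k(\go\gs\gp_{1|2n})$ is clear from the embedding of Lie superalgebras, and by Propositions 8.1--8.2 of \cite{KWVIII} this descends to an embedding of simple quotients $L_k(\gs\gp_{2n}) \hookrightarrow L_k(\go\gs\gp_{1|2n})$ for every positive integer $k$. Next, I would identify the coset. By Theorem~8.1 of \cite{CLI}, $\text{Com}(L_k(\gs\gp_{2n}), L_k(\go\gs\gp_{1|2n}))$ is the simple quotient of $\text{Com}(V^k(\gs\gp_{2n}), V^k(\go\gs\gp_{1|2n}))$, which is precisely the simple quotient $\cC_{\psi,1C}(n,0)$ of $\cC^{\psi}_{1C}(n,0)$ at $k = -\tfrac{1}{2}(\psi+2n+1)$.

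Next I would apply triality to recognize this coset as a principal $\cW$-algebra at an admissible level. Combining the relevant case of Theorem~\ref{thm:trialityBCD} (the $\cC^{\psi}_{1C}(n,0) \cong \cC^{\psi''}_{1C}(0,n)$ isomorphism of~\eqref{1c1c2c}) with Feigin-Frenkel duality, one obtains an isomorphism
\begin{equation*}
\cC_{\psi,1C}(n,0) \cong \cW_\ell(\gs\gp_{2n}), \qquad \ell = -(n+1) + \frac{1+k+n}{1+2k+2n}.
\end{equation*}
The crucial arithmetic check is that $\ell$ is a nondegenerate admissible level for $\widehat{\gs\gp}_{2n}$; this is where Arakawa's rationality theorem \cite{ArIV} applies and gives that $\cW_\ell(\gs\gp_{2n})$ is $C_2$-cofinite and rational.

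Now both $L_k(\go\gs\gp_{1|2n})$ and its even subalgebra $L_k(\go\gs\gp_{1|2n})^{\mathbb{Z}/2\mathbb{Z}}$ are conformal extensions of the $C_2$-cofinite, rational VOA $L_k(\gs\gp_{2n}) \otimes \cW_\ell(\gs\gp_{2n})$, and I would show these extensions are of finite index: since both extensions have finite-dimensional conformal weight spaces while $L_k(\gs\gp_{2n}) \otimes \cW_\ell(\gs\gp_{2n})$ has only finitely many simple modules, no simple module can appear with infinite multiplicity. Finite-index extensions of $C_2$-cofinite VOAs are $C_2$-cofinite, so both algebras inherit this property. Rationality of the even subalgebra $L_k(\go\gs\gp_{1|2n})^{\mathbb{Z}/2\mathbb{Z}}$ then follows from Proposition~2.2 of \cite{CLV}, and finally rationality of the full superalgebra $L_k(\go\gs\gp_{1|2n})$ follows from the general criterion in Theorem~5.13 of \cite{CGN} for a vertex superalgebra to be rational given that its even part is rational. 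The main obstacle in this program is the arithmetic verification that triality lands on a nondegenerate admissible level for every positive integer $k$, since outside that regime Arakawa's rationality result is unavailable; everything else is a packaging of general machinery.
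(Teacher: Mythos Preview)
Your proposal is correct and follows essentially the same route as the paper: the embedding $L_k(\gs\gp_{2n})\hookrightarrow L_k(\go\gs\gp_{1|2n})$ via \cite{KWVIII}, identification of the coset with $\cW_\ell(\gs\gp_{2n})$ at a nondegenerate admissible level via triality and Feigin--Frenkel duality, the finite-index extension argument for $C_2$-cofiniteness, and the passage to rationality through Proposition~2.2 of \cite{CLV} and Theorem~5.13 of \cite{CGN}. The only cosmetic difference is that the paper invokes Corollary~4.1 of \cite{CLV} rather than Theorem~\ref{thm:trialityBCD} directly for the coset identification, but these amount to the same input.
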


The rationality of $L_k(\go\gs\gp_{1|2n})$ was previously known only in the case $n=1$ \cite{CFK}.  It was shown in \cite{GK} that if $\gg$ is a simple Lie superalgebra that is not a Lie algebra, $L_k(\gg)$ is $C_2$-cofinite only if $\gg = \go\gs\gp_{1|2n}$ and $k \in \mathbb{N}$. Therefore Theorem \ref{rationalityosp} completes the classification of $C_2$-cofinite and rational affine vertex superalgebras. This is the analogue of Frenkel and Zhu's famous theorem that for a simple Lie algebra $\gg$, $L_k(\gg)$ is $C_2$-cofinite and rational if and only if $k \in \mathbb{N}$.

Using similar methods, the following results were also proven in \cite{CLV}.

\begin{thm} Let $\cW_{\psi-2m-1}(\gs\go_{2m+3}, f_{\text{subreg}})$ denote the $\cW$-algebra of $\gs\go_{2m+3}$ associated to the subregular nilpotent element. For all positive integers $m, r$, $\cW_{\psi-2m-1}(\gs\go_{2m+3}, f_{\text{subreg}})$ is $C_2$-cofinite and rational at the following levels:
\begin{enumerate}
\item $\psi = \frac{3 + 2 m + 2 r}{2m+2}$, where $m+1$ and $2r+1$ are coprime,
\item  $\psi = \frac{2 m + 2 r+1}{2m+1}$ where $r$ and $2m+1$ are coprime,
\item  $ \psi = \frac{2 m}{2 m-1}$.
\end{enumerate}
\end{thm}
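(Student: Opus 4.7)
The plan is to follow the same broad strategy as the proof of Theorem \ref{rationalityosp}, using the triality of Theorem \ref{thm:trialityBCD} together with Arakawa's rationality results to produce a rational, $C_2$-cofinite subalgebra of $\cW_{\psi-2m-1}(\gs\go_{2m+3}, f_{\text{subreg}})$ of finite index. Recall that $\cW^{\psi-2m-1}(\gs\go_{2m+3}, f_{\text{subreg}})$ contains a rank-one Heisenberg subalgebra $\cH(1)$ arising from the centralizer of the $\gs\gl_2$-triple of $f_{\text{subreg}}$, and by definition the $\mathbb{Z}/2\mathbb{Z}$-coset of this Heisenberg is $\cC^{\psi}_{1D}(1,m)$. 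Its simple quotient $\cC_{\psi,1D}(1,m)$ is a one-parameter quotient of the universal even-spin algebra $\cW^{\text{ev}}(c,\lambda)$, so it can be compared with other such quotients via intersection points of truncation curves in the $(c,\lambda)$-plane.

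First, I would use the explicit truncation curve for $\cC^{\psi}_{1D}(1,m)$ together with the truncation curves for the principal $\cW$-algebras $\cW^{\ell}(\gs\gp_{2r})$ and $\cW^{\ell}(\gs\go_{2r})^{\mathbb{Z}/2\mathbb{Z}}$, which are also one-parameter quotients of $\cW^{\text{ev}}(c,\lambda)$, to locate their intersection points. The three families of $\psi$ values listed should correspond precisely to those points at which the intersection yields a nondegenerate admissible level for $\widehat{\gs\gp}_{2r}$ or $\widehat{\gs\go}_{2r}$; the coprimality conditions $(m+1,2r+1)=1$ in (1) and $(r,2m+1)=1$ in (2), together with the special value $\psi = 2m/(2m-1)$ in (3), are exactly the nondegeneracy requirements of Kac-Wakimoto. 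Arakawa's theorem \cite{ArIV} then gives $C_2$-cofiniteness and rationality of the corresponding principal $\cW$-algebras, and the $\mathbb{Z}/2\mathbb{Z}$-orbifold properties follow from Miyamoto's result \cite{MiIII} and Carnahan-Miyamoto \cite{CM}. Hence $\cC_{\psi,1D}(1,m)$ is $C_2$-cofinite and rational at each listed $\psi$.

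Second, I would lift these properties to $\cW_{\psi-2m-1}(\gs\go_{2m+3}, f_{\text{subreg}})$ by realizing it as a finite simple current extension of a rational, $C_2$-cofinite subalgebra. At each prescribed value of $\psi$, the weight-one Heisenberg generator should have rational squared norm, so $\cH(1)$ extends to a rank-one lattice VOA $V_L$ inside a finite extension of $\cW_{\psi-2m-1}(\gs\go_{2m+3}, f_{\text{subreg}})$, yielding a conformal embedding
$$V_L \otimes \cC_{\psi,1D}(1,m) \hookrightarrow \cW_{\psi-2m-1}(\gs\go_{2m+3}, f_{\text{subreg}})$$
whose image has finite codimension in each conformal weight space. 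As in the proof of Theorem \ref{rationalityosp}, finite-dimensionality of the weight spaces combined with finite index of this extension forces $C_2$-cofiniteness; rationality then follows from Proposition 2.2 of \cite{CLV} together with Theorem 5.13 of \cite{CGN} on simple current extensions of rational $C_2$-cofinite VOAs.

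The main obstacle is the arithmetic matching in the first step: one must show that after the reparametrizations coming from triality, each of the three families of $\psi$ values corresponds precisely to a nondegenerate admissible level of the appropriate principal $\cW$-algebra, and that the three coprimality or integrality conditions encode exactly the Kac-Wakimoto nondegeneracy conditions. This requires careful comparison of the central charge $c(\psi)$ and the parameter $\lambda(\psi)$ for $\cC^{\psi}_{1D}(1,m)$ (explicit rational functions determined by the defining ideal of its truncation curve) with the corresponding functions for the candidate principal $\cW$-algebras, and it is presumably the fact that only these three families satisfy the arithmetic constraints that explains why the theorem is stated as three separate cases rather than in a uniform way.
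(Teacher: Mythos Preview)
Your proposal is correct and follows essentially the same route as the paper: realize $\cW_{\psi-2m-1}(\gs\go_{2m+3}, f_{\text{subreg}})$ as an extension of a rank one lattice VOA tensored with $\cC_{\psi,1D}(1,m)$, and then use truncation-curve intersections in $\cW^{\text{ev}}(c,\lambda)$ to identify $\cC_{\psi,1D}(1,m)$ at the listed $\psi$-values with a rational principal $\cW$-algebra of the form $\cW_s(\gs\go_{2r})^{\mathbb{Z}/2\mathbb{Z}}$ or $\cW_s(\gs\gp_{2r})$ at a nondegenerate admissible level. The only minor imprecision is your phrasing that ``$\cH(1)$ extends to a lattice VOA $V_L$ inside a finite extension'': what actually happens is that the simple $\cW$-algebra itself (possibly after a further simple current extension) is exhibited as an extension of $V_L \otimes \cC_{\psi,1D}(1,m)$, but this is exactly what the paper does and your subsequent argument is unaffected.
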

The proof involves exhibiting $\cW_{\psi-2m-1}(\gs\go_{2m+3}, f_{\text{subreg}})$ as an extension of a rank one lattice VOA tensored with the algebra $\cC_{\psi,1D}(1,m)$ at a point where it is coincident with a rational VOA of the form $\cW_s(\gs\go_{2r})^{\mathbb{Z}/2\mathbb{Z}}$ or $\cW_s(\gs\gp_{2r})$. Cases (1) and (2) are exceptional $\cW$-algebras, and are predicted to be rational as part of the Kac-Wakimoto rationality conjecture, which was refined by Arakawa and van Ekeren \cite{ArIII,AvE}. Case (3) gives new examples of rational $\cW$-algebras that are not part of these conjectures. Similarly, we have

\begin{thm} Let $\cW_{r-1/2}(\gs\gp_{2n+2}, f_{\text{min}})$ denote the $\cW$-algebra of $\gs\gp_{2n}$ associated to the minimal nilpotent element. For all positive integers $n, r$, $\cW_{r-1/2}(\gs\gp_{2n+2}, f_{\text{min}})$ is $C_2$-cofinite and rational. 
\end{thm}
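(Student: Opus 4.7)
The strategy parallels the proof of Theorem \ref{rationalityosp} and the preceding theorem on subregular $\cW$-algebras of $\gs\go_{2m+3}$: realize $\cW^k(\gs\gp_{2n+2}, f_{\text{min}})$ as a conformal extension of an affine VOA tensored with an affine coset from the Gaiotto-Rap\v{c}\'ak family, then exhibit coincidences of that coset with known rational $\cW$-algebras at the relevant parameter values, and conclude via the standard $C_2$-cofinite extension argument.

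First I would identify $\cW^k(\gs\gp_{2n+2}, f_{\text{min}})$ with the algebra $\cW^{\psi}_{2C}(n,1)$ of Section \ref{sec:gaiotto} at $\psi = k + n + 2$: the nilpotent $f_{\gb}$ that is principal in $\gs\gp_2 \subseteq \gs\gp_{2n+2}$ is conjugate to $f_{\text{min}}$, and the centralizer $\ga = \gs\gp_{2n}$ supplies an affine subalgebra $V^{\psi - n - 3/2}(\gs\gp_{2n})$ inside $\cW^{\psi}_{2C}(n,1)$. By Theorem \ref{thm:wfreelimit} and the strong generating type in case 2C, $\cW^{\psi}_{2C}(n,1)$ is a conformal extension of $V^{\psi - n - 3/2}(\gs\gp_{2n}) \otimes \cC^{\psi}_{2C}(n,1)$ by fields of weight one transforming as the standard $\gs\gp_{2n}$-module. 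Specializing to $k = r - 1/2$, so that $\psi = r + n + 3/2$, and passing to simple quotients realizes $\cW_{r-1/2}(\gs\gp_{2n+2}, f_{\text{min}})$ as a conformal extension of $L_r(\gs\gp_{2n}) \otimes \cC_{\psi, 2C}(n,1)$. Since $r$ is a positive integer, $L_r(\gs\gp_{2n})$ is $C_2$-cofinite and rational by Frenkel-Zhu.

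The main obstacle is establishing that for every positive integer $r$, the simple coset $\cC_{\psi, 2C}(n,1)$ at $\psi = r + n + 3/2$ is isomorphic to a $C_2$-cofinite rational VOA. Since $\cC^{\psi}_{2C}(n,1)$ is a one-parameter quotient of $\cW^{\text{ev}}(c,\lambda)$, this amounts to identifying a family of intersection points (indexed by $r$) between its truncation curve and the truncation curves of principal or subregular $\cW$-algebras of type $B$, $C$, or $D$ at nondegenerate admissible levels, where Arakawa's rationality theorems \cite{ArIV, AvE} apply. I would compute the explicit $(c,\lambda)$-parametrization of $\cC^{\psi}_{2C}(n,1)$ via the bootstrap of Theorem \ref{GRexplicitC} and its even-spin analogue, then match against the candidate parametrizations exactly as in the classification of coincidences in Appendix B of \cite{CLV}. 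Once this coincidence is secured, the tensor product $L_r(\gs\gp_{2n}) \otimes \cC_{\psi, 2C}(n,1)$ is $C_2$-cofinite and rational; the extension to $\cW_{r-1/2}(\gs\gp_{2n+2}, f_{\text{min}})$ is forced to have finite index by finite-dimensionality of the conformal weight spaces, which transfers $C_2$-cofiniteness, and rationality of the extension then follows from Theorem 5.13 of \cite{CGN} exactly as in the proof of Theorem \ref{rationalityosp}.
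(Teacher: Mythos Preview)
Your approach is essentially the same as the paper's, and is correct in outline. The paper's one-sentence sketch is precisely that $\cW_{r-1/2}(\gs\gp_{2n+2}, f_{\text{min}})$ is exhibited as an extension of $L_r(\gs\gp_{2n}) \otimes \cW_s(\gs\gp_{2r})$ for $s = -(r+1) + \frac{1+n+r}{3+2n+2r}$; this is exactly your proposed identification of the simple coset $\cC_{\psi,2C}(n,1)$ at $\psi = r+n+\tfrac{3}{2}$ with a rational principal $\cW$-algebra, made concrete.

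Two small corrections worth noting. First, the extension of $V^{\psi-n-3/2}(\gs\gp_{2n}) \otimes \cC^{\psi}_{2C}(n,1)$ to $\cW^{\psi}_{2C}(n,1)$ is by the $2n$ fields of weight $\tfrac{3}{2}$ (not weight $1$) transforming as the standard $\gs\gp_{2n}$-module; the weight-$1$ fields are already the affine currents. This slip is harmless for the finite-index and $C_2$-cofiniteness argument. Second, since $\cW_{r-1/2}(\gs\gp_{2n+2}, f_{\text{min}})$ is an ordinary VOA (all fields even), the passage from the even part to a superalgebra via Theorem 5.13 of \cite{CGN} is unnecessary here; the rationality of a simple $C_2$-cofinite extension of a rational $C_2$-cofinite VOA follows directly from Proposition 2.2 of \cite{CLV} (or standard extension theory), without the extra step used in the $\go\gs\gp_{1|2n}$ case.
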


Again, these were predicted to be rational by the Kac-Wakimoto conjecture, and the proof involves exhibiting $\cW_{r-1/2}(\gs\gp_{2n+2}, f_{\text{min}})$ as an extension of $L_r(\gs\gp_{2n}) \otimes \cW_s(\gs\gp_{2r})$ for $s = -(r+1) + \frac{1+n+r}{3+2n+2r}$.

\section{Orbifolds and the associated variety} \label{sec:orbifoldandassvar} 
A large portion of this paper has been devoted to methods for finding minimal strong generating sets for orbifold VOAs. We conclude by exploring the relationship between the orbifold functor and the associated variety and associated scheme functors.

We recall an important open conjecture in the subject: given a rational $C_2$-cofinite VOA $\cV$ and a finite group $G$ of automorphisms of $\cV$, it is expected that $\cV^G$ is also rational and $C_2$-cofinite. In the case where $G$ is a cyclic group, the $C_2$-cofiniteness of $\cV^G$ was proven by Miyamoto \cite{MiI}, and the rationality was proven by Miyamoto and Carnahan in \cite{CM}. More recently, it was shown in an important paper by McRae that under some natural hypotheses on $\cV$, namely that it is of CFT type and self-contragredient, the $C_2$-cofiniteness of $\cV^G$ would imply the rationality of $\cV^G$ for any finite automorphism group $G$ \cite{McRI}. So the preservation of $C_2$-cofiniteness by taking finite group orbifolds is a fundamental problem. It is perhaps fruitful to recast this question in a broader context.

\begin{question}
Let $\cV$ be a simple VOA, and let $G$ be a finite group of automorphisms of $V$. Do $X_{\cV}$ and $X_{\cV^G}$ always have the same dimension?
\end{question}
If $\cV$ and $\cV^G$ are both strongly finitely generated, the $C_2$-cofiniteness is equivalent to the associated variety having dimension zero. So preservation of $C_2$-cofiniteness would be a special case of preservation of this dimension.

We can also go further and ask whether the associated scheme functor and orbifold functors commute. In other words, given a simple VOA $\cV$ and a finite automorphism group $G$, the action of $G$ descends to $R_{\cV}$, and we can ask what the relationship between $R_{\cV^G}$ and $(R_{\cV})^G$. The map $\cV^G \hookrightarrow \cV$ induces a homomorphism $R_{\cV^G} \rightarrow \cR_{\cV}$ whose image clearly lies in the invariant subalgebra $(R_{\cV})^G$, so it is natural to ask whether this map can be an isomorphism. In general, the answer is no, as the following example of \cite{AL} illustrates.

Let $\cH$ be the rank one Heisenberg algebra with generator $\alpha$ satisfying $$\alpha(z) \alpha(w)\sim (z-w)^{-2}.$$ Recall that the orbifold $\cH^{ \mathbb{Z}/2\mathbb{Z}}$ is strongly generated by a weight $2$ field $:\alpha \alpha:$ and a weight $4$ field $:(\partial^2\alpha)\alpha:$. It is convenient to rescale these fields: we take
$L = \frac{1}{2} :\alpha\alpha:$, which is the Virasoro field, and 
$W = \frac{35}{132} :(\partial^2 \alpha) \alpha:$ to be our generating set. Let $\ell$ and $w$ be the images of $L$ and $W$ in $R_{\cH^{ \mathbb{Z}/2\mathbb{Z}}}$. It was shown in \cite{AL} that 
$$R_{\cH^{ \mathbb{Z}/2\mathbb{Z}}} \cong \mathbb{C}[\ell,w ] / I$$ where $I$ is the ideal generated by $w(w-\ell^2)$ and $\ell^3 w$. Note that $R_{\cV}$ is not reduced; in fact, the nilradical $\cN\subseteq R_{\cV}$ is generated by $w$, and the reduced ring $R_{\cH^{ \mathbb{Z}/2\mathbb{Z}}} / \cN \cong \mathbb{C}[\ell]$.

On other hand, $R_{\cH} \cong \mathbb{C}[a]$ where $a$ is the image of $\alpha$ in $R_{\cH}$. The action of $\theta \in  \mathbb{Z}/2\mathbb{Z}$ on $R_{\cH}$ sends $a\mapsto -a$, so $(R_{\cH})^{ \mathbb{Z}/2\mathbb{Z}} \cong \mathbb{C}[a^2]$. Therefore in this example, the reduced rings of $(R_{\cH})^{ \mathbb{Z}/2\mathbb{Z}}$ and $R_{\cH^{ \mathbb{Z}/2\mathbb{Z}}}$ are isomorphic.

Here is a slightly more interesting example where the associated scheme functor and orbifold functors do not commute, but they do commute at the level of reduced rings. Recall that the rank $2$ Heisenberg algebra $\cH(2)$ has automorphism group $O(2)$. Inside the subgroup $SO(2) \cong U(1)$ there is a copy of $\mathbb{Z}/3\mathbb{Z}$, and the orbifold $\cH(2)^{\mathbb{Z}/3\mathbb{Z}}$ is known to be of type $\cW(2,3^3,4, 5^3)$ \cite{MPS}. It it convenient to change basis for $\cH(2)$ and choose generating fields $\beta^1, \beta^2$ satisfying
$$\beta^1(z) \beta^2(w) \sim (z-w)^{-2},\qquad \beta^i(z) \beta^i(w) \sim 0,\ \text{for} \ i = 1,2.$$
The generator of $\mathbb{Z}/3\mathbb{Z}$ acts by $e^{\frac{2\pi i}{3}}$ on $\beta^1$ and by $e^{-\frac{2\pi i}{3}}$ on $\beta^2$. In this notation, $\cH(2)^{\mathbb{Z}/3\mathbb{Z}}$ is an extension of $\cH(2)^{SO(2)}$, and is generated as an $\cH(2)^{SO(2)}$-module by the weight $3$ fields
$$C^3 =\  :(\beta^1)^3:,\qquad D^3 = \ :(\beta^2)^3:.$$
Moreover, $\cH(2)^{SO(2)}$ is easily seen to be of type $\cW(2,3,4,5)$ with generators $L, W^3, W^4, W^5$. The Virasoro field is
$$L = \ :\beta^1 \beta^2,$$ which has central charge $c=2$, and we define
$$W^3 = \frac{1}{2}( :\beta^1 \partial \beta^2: - :(\partial \beta^1) \beta^2:.$$ which is primary of weight $3$. Then we take
\begin{equation*} \begin{split} W^4 & = (W^3)_{(1)} W^3 = \frac{1}{4} \big(  5 : \beta^1 ( \partial^2 \beta^2):  - 6 :(\partial \beta^1)( \partial \beta^2):  + 5 :(\partial^2 \beta^1) \beta^2:\big),
\\ W^5 & = (W^3)_{(1)} W^4 = \frac{5}{8} \big(  7 : \beta^1 ( \partial^3 \beta^2):  - 9 :(\partial \beta^1)( \partial^2 \beta^2): +9 :(\partial^2 \beta^1)( \partial \beta^2):  -7 :(\partial^3 \beta^1) \beta^2:\big).\end{split} \end{equation*}

We need two more fields of weight $5$ to get a strong generating set for $\cH(2)^{\mathbb{Z}/3\mathbb{Z}}$, namely,
$$C^5 =\  :(\partial^2 \beta^1)(\beta^1)^2:,\qquad D^5 = \  :(\partial^2 \beta^2)(\beta^2)^2:,$$ which is easily seen to be a minimal strong generating set. The corresponding elements of $R_{\cH(2)^{\mathbb{Z}/3\mathbb{Z}}}$, which we also denote by $L,W^3, W^4, W^5, C^3, D^3, C^5, D^5$,  are then a minimal generating set. Therefore 
$$R_{\cH(2)^{\mathbb{Z}/3\mathbb{Z}}}\cong \mathbb{C}[L,W^3, W^4, W^5, C^3, D^3, C^5, D^5] / I,$$ for some ideal $I$. By finding normally ordered relations among these generators and their derivatives, and then taking the image of these relations in $R_{\cH(2)^{\mathbb{Z}/3\mathbb{Z}}}$, we obtain the following elements of $I$:
\begin{equation} \begin{split} \label{nilradcalc}
& (W^5)^2 = 0,
\\ &  (W^4)^3 = 0,
\\ & (W^3)^3 - \frac{8}{27} L W^3 W^4  + \frac{1}{108} L^2 W^5  - 
 \frac{1}{1296} W^4 W^5 = 0,
 \\ & (C^5)^2 = (D^5)^2 = 0,
\\ & C^3 D^3 + 117 (W^3)^2 - 33 L W^4  - L^3  = 0.
 \end{split}
 \end{equation}
 More details about these calculations can be found in the thesis of Dan Graybill \cite{Gr}. It is immediate from the first four relations in \eqref{nilradcalc} that $W^3$, $W^4$, $W^5$, $C^5$, and $D^5$ lie in the nilradical $\cN \subseteq R_{\cH(2)^{\mathbb{Z}/3\mathbb{Z}}}$, so the reduced ring $$R_{\cH(2)^{\mathbb{Z}/3\mathbb{Z}}}/ \cN$$ is generated by $L$, $C^3$, and $D^3$. Also, it follows from the fifth relation in \eqref{nilradcalc} that $$C^3 D^3 - L^3 = 0,\ \text{in}\  R_{\cH(2)^{\mathbb{Z}/3\mathbb{Z}}}/ \cN.$$ Finally, it is not difficult to check that $$R_{\cH(2)^{\mathbb{Z}/3\mathbb{Z}}}/ \cN \cong \mathbb{C}[L,C^3, D^3] / (C^3 D^3 - L^3).$$
This is clearly isomorphic to $(R_{\cH(2)})^{\mathbb{Z}/3\mathbb{Z}}$ since $R_{\cH(2)} \cong \mathbb{C}[\beta^1, \beta^2]$ and the generators of $(R_{\cH(2)})^{\mathbb{Z}/3\mathbb{Z}}$ are clearly $\beta^1\beta^2$, $(\beta^1)^3$, and $(\beta^2)^3$. Note that $(R_{\cH(2)})^{\mathbb{Z}/3\mathbb{Z}}$ is already a reduced ring, so the reduced rings of $(R_{\cH(2)})^{\mathbb{Z}/3\mathbb{Z}}$ and $R_{\cH(2)^{\mathbb{Z}/3\mathbb{Z}}}$ are isomorphic.

One can check the isomorphism between the reduced rings of $R_{\cV^G}$ and $(R_{\cV})^G$ in many other similar but richer examples. We conclude with a rather speculative conjecture: 
\begin{conj} If $\cV$ is a simple, strongly finitely generated VOA which is $\mathbb{N}$-graded by conformal weight, $\cV[0] \cong \mathbb{C}$, and $G$ is a finite group of automorphisms of $\cV$, then the reduced rings of $R_{\cV^G}$ and $(R_{\cV})^G$ are isomorphic.
\end{conj}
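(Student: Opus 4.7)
The plan is a two-step reduction. First, I would show that the natural map $\varphi: R_{\cV^G} \to (R_\cV)^G$ induced by $\cV^G \hookrightarrow \cV$ is surjective; second, I would show its kernel is a nil ideal of $R_{\cV^G}$. These two facts together imply $(R_{\cV^G})_{\text{red}} \isomap ((R_\cV)^G)_{\text{red}}$, and since for any action of a finite group $G$ on a commutative $\mathbb{C}$-algebra $A$ a standard Reynolds argument (using that in a commutative ring a sum of nilpotents is nilpotent) gives $(A^G)_{\text{red}} = (A_{\text{red}})^G$, this yields the desired isomorphism.

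Surjectivity is a direct averaging argument. The subspace $C(\cV)$ is spanned by elements $a_{(-2)}b$ with $a,b \in \cV$ and is manifestly $G$-stable, so $G$ acts on $R_\cV$ by ring automorphisms. Given $\bar v \in (R_\cV)^G$, I would lift to $v \in \cV$; then $g v - v \in C(\cV)$ for every $g \in G$, so the average $\tilde v = |G|^{-1}\sum_{g \in G} g v$ lies in $\cV^G$ and satisfies $\tilde v - v \in C(\cV)$. Thus the class of $\tilde v$ in $R_{\cV^G}$ is sent by $\varphi$ to $\bar v$. The kernel of $\varphi$ is then
\[
K \;=\; \bigl(C(\cV) \cap \cV^G\bigr)\big/C(\cV^G).
\]

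The main step is to show that $K$ is a nil ideal, i.e., for every $u \in C(\cV) \cap \cV^G$ some iterated normally ordered power $:\!u \cdots u\!:$ lies in $C(\cV^G)$. The approach I would try is to decompose $\cV = \bigoplus_\chi \cV_\chi$ into $G$-isotypic components and, by Reynolds averaging applied to a presentation $u = \sum_i a^i_{(-2)} b^i$, rewrite $u$ as a sum over irreducible characters $\chi$ of $G$-invariant pairings between $\cV_{\chi^*}$ and $\cV_\chi$ under the $(-2)$-product. One then hopes to leverage the Poisson vertex algebra structure on $\text{gr}(\cV)$, together with the canonical map $\text{gr}(\cV^G) \to \text{gr}(\cV)^G$, to convert the vanishing of $\sigma_0(u)$ in $R_\cV$ into a nilpotence statement for $[u]$ in $R_{\cV^G}$. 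The two examples worked out in the preceding section (where the nilpotence bound is very small) suggest that such an argument should be essentially local in nature.

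The hard part will be precisely this last step. Li's filtration on $\cV^G$ is in general coarser than the restriction of Li's filtration from $\cV$: strong generators of $\cV^G$ that involve derivatives in $\cV$, such as $W = \,:\!(\partial^2\alpha)\alpha\!:\, \in \cH^{\mathbb{Z}/2\mathbb{Z}}$, sit in the weight-zero piece of $\text{gr}(\cV^G)$ but in higher $F^p(\cV)$, so Li-degree information in $\cV$ does not immediately bound the Li-degree in $\cV^G$, and the natural map $\text{gr}(\cV^G) \to \text{gr}(\cV)^G$ need not be injective. A potentially more promising alternate route is the geometric reformulation via the Nullstellensatz: $K \subseteq \text{nil}(R_{\cV^G})$ if and only if every character $R_{\cV^G} \to \mathbb{C}$ factors through the surjection $R_{\cV^G} \twoheadrightarrow (R_\cV)^G$, i.e., corresponds to a $G$-orbit of characters of $R_\cV$. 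This is intimately tied to the question of extending simple $\cV^G$-modules with one-dimensional top spaces to $g$-twisted $\cV$-modules, a subject in which significant progress has been made in the $C_2$-cofinite and rational setting by Miyamoto, Carnahan, and McRae \cite{MiIII, CM, McR2}. A full proof of the conjecture will likely need either to extend these module-extension results beyond the rational, $C_2$-cofinite case, or to exploit the nonassociativity of the Wick product directly to construct decoupling-type relations in $R_{\cV^G}$ in the spirit of the Dong-Nagatomo analysis of $\cH^{\mathbb{Z}/2\mathbb{Z}}$.
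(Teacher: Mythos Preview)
The statement you are addressing is a \emph{conjecture} in the paper, not a theorem; the paper offers no proof and explicitly calls it ``rather speculative.'' The only evidence given in the paper is the verification of two examples ($\cH^{\mathbb{Z}/2\mathbb{Z}}$ and $\cH(2)^{\mathbb{Z}/3\mathbb{Z}}$), where the reduced rings are computed by hand and shown to match. There is therefore no paper's proof to compare your proposal against.

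That said, your reduction is correct and is the natural way to frame the problem. The surjectivity of $\varphi: R_{\cV^G} \to (R_{\cV})^G$ via averaging is valid, as is the identification $(A^G)_{\text{red}} \cong (A_{\text{red}})^G$ for finite $G$ acting on a commutative $\mathbb{C}$-algebra. The conjecture is then equivalent to the single assertion that $K = (C(\cV)\cap \cV^G)/C(\cV^G)$ is a nil ideal of $R_{\cV^G}$, exactly as you say.

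Your honest assessment of the gap is also accurate: you do not have an argument for the nilpotence of $K$, and the obstacles you name are real. The mismatch between Li's filtration on $\cV^G$ and the restricted filtration from $\cV$ is precisely what makes the associated-graded comparison delicate, and the module-extension route via \cite{MiIII,CM,McR2} is currently available only under $C_2$-cofiniteness hypotheses that the conjecture does not assume. So what you have written is a correct and useful reformulation together with a candid identification of where the open problem lies, but it is not a proof, and neither is anything in the paper.
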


Note that $(R_{\cV})^G$ and $R_{\cV}$ have the same Krull dimension, so this statement would imply that the dimensions of $X_{\cV}$ and $X_{\cV^G}$ are the same. In particular, if $\cV$ is $C_2$-cofinite and satisfies the vertex algebra Hilbert theorem (that is, $\cV^G$ is strongly finitely generated), this would imply that $\cV^G$ is also $C_2$-cofinite.

\end{document}